\documentclass[11pt]{amsart}

\usepackage[paperheight=11in, paperwidth=8.5in, left=1in, top=1in, right=1in, bottom=1in]{geometry}
\usepackage[linktocpage=true, linktoc=all, colorlinks=true, linkcolor=black, citecolor=black, filecolor=black, urlcolor=black, pagebackref=false, pdfstartpage={1}, pdfstartview={FitH}, pdftitle={}, pdfauthor={}, pdfsubject={}, pdfcreator={}, pdfproducer={}, pdfkeywords={}]{hyperref}
\usepackage{graphicx} 
\usepackage{amsthm,amsfonts, amssymb, amscd,bm}

\usepackage{subcaption}
\usepackage{enumerate}
\usepackage{youngtab}
\usepackage{ytableau}
\usepackage{euscript}
\usepackage{booktabs,lscape}% tables
\usepackage{tabularx} 
\usepackage{array}    % For vertical centering in cells
\usepackage{nccmath, mathtools} %%%To fit large extended matrices
\usepackage{tikz}
\usetikzlibrary{matrix,arrows}
\tikzset{partition/.style={fill,circle,inner sep=1pt}}
\usetikzlibrary{positioning}
\usetikzlibrary{decorations.pathreplacing}
\usetikzlibrary{matrix,arrows}
\usetikzlibrary{calc}
\tikzset{partition/.style={fill,circle,inner sep=1pt},
         part/.style={baseline=0,scale=0.5,bend left=45},
         partlabel/.style={below}}
\usetikzlibrary{shapes,arrows}
\usetikzlibrary{snakes}
\tikzstyle{pnt}=[draw,ellipse,fill,inner sep=1pt]
\tikzstyle{opnt}=[draw,ellipse,inner sep=1pt]
\tikzstyle{opnt}=[ ]
\tikzstyle{pntt}=[draw,ellipse,fill,inner sep=0.5pt]
\tikzstyle{point}=[draw,ellipse,fill,inner sep=2pt]

\usepackage{multicol}

\newtheorem{theorem}{Theorem}[section]
\newtheorem{lemma}[theorem]{Lemma}
\newtheorem{corollary}[theorem]{Corollary}
\newtheorem{prop}[theorem]{Proposition}

\newtheorem{conj}[theorem]{Conjecture}
\theoremstyle{remark}
\newtheorem{remark}[theorem]{Remark}
\theoremstyle{remark}

\theoremstyle{definition}

\theoremstyle{definition}

\theoremstyle{definition}

\newtheorem{definition}[theorem]{Definition}

\newtheorem{eg_no_qed}[theorem]{Example}
\newenvironment{ex}[1][]{\begin{eg_no_qed}[#1]\pushQED{\qed}}{\popQED\end{eg_no_qed}}
\AfterEndEnvironment{ex}{\noindent\ignorespaces}

\newtheorem{innercustomgeneric}{\customgenericname}
\providecommand{\customgenericname}{}
\newcommand{\newcustomtheorem}[2]{%
  \newenvironment{#1}[1]
  {%
   \renewcommand\customgenericname{#2}%
   \renewcommand\theinnercustomgeneric{##1}%
   \innercustomgeneric
  }
  {\endinnercustomgeneric}
}

\newcustomtheorem{customthm}{Theorem}
\newcustomtheorem{customconj}{Conjecture}
\newcustomtheorem{customprop}{Proposition}
\numberwithin{equation}{section}

\newcommand{\C}{\mathbb{C}}           % Use for complex numbers.
\newcommand{\Z}{\mathbb{ Z}}           % Use for integers.
 
\newcommand{\Q}{\mathbb{ Q}}           % Use for quaternions
\newcommand{\spec}{\operatorname{Spec}}

\renewcommand{\ker}{\operatorname{ker }}

\newcommand{\im}{\operatorname{im }}

\newcommand{\rk}{\operatorname{rk}}

\newcommand{\GL}{\operatorname{GL}}

\newcommand{\ga}{\alpha}

\newcommand{\gd}{\delta}

\newcommand{\gs}{\sigma}

 \newcommand{\cb}{\mathcal{B}}
\newcommand{\cc}{\mathcal{C}}

 \newcommand{\ci}{\mathcal{I}}
 \newcommand{\cj}{\mathcal{J}}
 
 \newcommand{\cm}{\mathcal{M}}

  \newcommand{\cp}{\mathcal{P}} %Collection of cells
  
 \newcommand{\cs}{\mathcal{S}}

 \newcommand{\cv}{\mathcal{V}}

\newcommand{\SYT}{\mathrm{SYT}}

\newcommand{\al}{\alpha}

\newcommand{\SF}{\mathcal{B}}

\newcommand{\pl}{\mathsf{p}}

\newcommand{\E}{\Sigma}
\newcommand{\cupsigma}{\cc(\gs)}
\newcommand{\cell}{F_\gs}
\newcommand{\isigma}{\ci_\gs}

\newcommand{\Fl}{\mathcal{F}\ell}

\newcommand{\m}[2]{m(#1,#2)}

\newcommand{\mi}[1]{m(#1)}

\newcommand{\parsum}{t}

\title{Ideals defining components of two-row Springer fibers}

\author{Cristina Sabando-Alvarez}
\address{Department of Mathematics\\ Washington University in St. Louis \\ One Brookings Drive\\ St. Louis, Missouri\\ 63130\\ USA }
\email{sabandoalvarez@wustl.edu}

\author{Martha Precup}
\address{Department of Mathematics\\ Washington University in St. Louis \\ One Brookings Drive\\ St. Louis, Missouri\\ 63130\\ USA  }
\email{martha.precup@wustl.edu}

\begin{document}

\begin{abstract}  Springer fibers are subvarieties of the flag variety parameterized by nilpotent matrices. They are central objects of study in geometry representation theory. This paper focuses on two-row Springer fibers, those corresponding to nilpotent matrices with two Jordan blocks. Irreducible components of two-row Springer fibers are in bijection with two-row standard Young tableaux and also with noncrossing matchings. 

Inspired by the combinatorial commutative algebra of matrix Schubert varieties, we define a polynomial ideal for each noncrossing matching and prove that these ideals define the corresponding components of the Springer fiber. Our proofs leverage geometric descriptions of Springer fibers established by Fung, Stroppel--Webster, Fresse, and Goldwasser--Nadeem--Sun--Tymoczko.  Using these ideals to compute examples, we give two conjectural formulas for the cohomology class of each component of a two-row Springer fiber. We apply commutative algebra techniques to prove these conjectures for a specific family of two-row tableaux.
\end{abstract}

\maketitle

\setcounter{tocdepth}{1}
\tableofcontents

\section{Introduction}
Springer fibers are important geometric objects in representation theory. Their cohomology rings carry a natural action of the symmetric group or, more generally, the Weyl group. In this paper, we study Springer fibers in the Type $A$ flag variety associated to nilpotent matrices with two Jordan blocks. Utilizing the combinatorics of noncrossing matchings, we identify specific polynomial ideals that define the irreducible components of these Springer fibers.

Let $n$ be a positive integer and $\Fl_n(\C)$ denote the flag variety of nested sequences of subspaces $V_\bullet = (\{0\}=V_0\subset V_1\subset V_2\subset \cdots \subset V_n=\mathbb{C}^n)$, where $\dim(V_i)=i$ for all $0\leq i\leq n$. Such a sequence is called a (full) flag. We identify the flag variety $\mathcal{F}\ell(n)$ with the quotient space $GL_n(\C)/B$ where $GL_n(\C)$ is the group of $n\times n$ invertible matrices and $B\subseteq GL_n(\C)$ is the subgroup of upper triangular matrices.   The Springer fiber $\SF_N$ is the collection of flags fixed under the action of an $n\times n$ nilpotent matrix $N$, that is, 
\[
\SF_N := \{V_\bullet \in \Fl_n(\C) \mid N(V_i)\subseteq V_{i-1}\}.
\]
The irreducible components of the Springer fiber $\mathcal{B}_N$ are in bijection with standard Young tableaux whose shape is determined by the Jordan type of $N$ \cite{Spaltenstein1976,Vargas79}. As such, the study of Springer fibers and their components naturally embody the interplay between tableaux combinatorics and geometry. Springer fibers can be defined in the flag variety associated to any complex reductive group $G$. This paper focuses on the case where $G=GL_n(\C)$.

Borel's presentation identifies the cohomology of the flag variety $\Fl_n(\C)$ as a quotient of the polynomial ring $\Z[x_1, \ldots, x_n]$. This identification naturally raises the problem of finding explicit polynomial representatives for the cohomology class $[Y]$ of any subvariety $Y \subseteq \Fl_n(\C)$. This problem provides the primary motivation for the present work and drives broader research into the subvarieties of the flag variety.

The problem from the previous paragraph is most extensively studied in the context of Schubert varieties. Let $S_n$ be the symmetric group of permutations on $[n]:=\{1,2,\ldots, n\}$. For each $w\in S_n$, let $X_w$ denote the associated Schubert variety, a subvariety of the flag variety defined by rank conditions imposed by $w$. The cohomology class $[X_w]$ is represented by the Schubert polynomial $\mathfrak{S}_w(\mathbf{x})$, introduced by Lascoux and Schützenberger \cite{LascouxSchutzenberger1982}. Schubert polynomials possess remarkable combinatorial properties, most notably a well-known expansion in the monomial basis indexed by pipe dreams \cite{BergeronBilley1993}. In 1992, Fulton \cite{Fulton1992} characterized the ideals of Schubert varieties using determinantal formulas, now called Schubert determinantal ideals. Knutson and Miller \cite{KnutsonMiller2005} demonstrated that Schubert polynomials arise as the multidegrees of matrix Schubert varieties. They utilized Gr\"{o}bner theory to provide a geometric justification for combinatorial pipe dreams.

The success of combinatorial commutative algebra in the Schubert case, particularly the study of Schubert determinantal ideals, motivates the use of commutative algebra to study other subvarieties of the flag variety. A recent survey on Schubert geometry and the related commutative algebra is~\cite{WooYong2023}. Similar techniques were recently used by the second author and Goldin to investigate flat degenerations of Hessenberg varieties~\cite{GoldinPrecup25}, which include Springer fibers as a special case.

In this paper, we set out to develop the combinatorial commutative algebra tools to study components of two-row Springer fibers and their cohomology classes.   
Each component of a two-row Springer fiber is indexed by a unique two-row standard tableau $\gs$, or equivalently, a unique noncrossing matching $\cupsigma$. We focus on two-row Springer fibers as they have a concrete geometric description in terms of noncrossing matchings~\cite{Fung2003, StroppelWebster2012}.

A noncrossing matching is a planar diagram with vertex set $[n]$ consisting of a collection of noncrossing edges (called cups) such that no unmatched vertex lies below a cup and such that no vertex is incident to more than one edge. We say that a noncrossing matching is standard if there are no unmatched vertices.  In this paper, we place rays at each unmatched vertex and draw all cups so that they pass above the vertex set (not below); see Figure~\ref{fig:introex}.

\begin{figure*}[ht]
\centering
    \begin{subfigure}{0.32\textwidth}
        \centering
        \begin{tikzpicture}[scale=0.5, pnt/.style={circle, fill, inner sep=1.5pt}]
            % Define nodes 1 through 6
            \foreach \i in {1,...,6} {
                \node[pnt] at (\i,0) (\i) {};
                \node at (\i,-1) {\small \i};
            }
            % Reference nodes using (number)
            \draw (1) to [bend left=80] (6);
            \draw (2) to [bend left=80] (3);
            \draw (4) to [bend left=80] (5);
        \end{tikzpicture}
    \end{subfigure}
        \hfill
    \begin{subfigure}{0.32\textwidth}
        \centering
        \begin{tikzpicture}[scale=0.5, pnt/.style={circle, fill, inner sep=1.5pt}]
            \foreach \i in {1,...,6} {
                \node[pnt] at (\i,0) (\i) {};
                \node at (\i,-1) {\small \i};
            }
            \draw (1) to [bend left=80] (6);
            \draw (3) to [bend left=80] (4);
            \draw (2) to [bend left=80] (5);
        \end{tikzpicture}
    \end{subfigure}
    \hfill
    \begin{subfigure}{0.32\textwidth}
        \centering
        \begin{tikzpicture}[scale=0.5, pnt/.style={circle, fill, inner sep=1.5pt}]
            \foreach \i in {1,...,6} {
                \node[pnt] at (\i,0) (\i) {};
                \node at (\i,-1) {\small \i};
            }
            \draw (1) to [bend left=80] (2);
            \draw (4) to [bend left=80] (5);
            \draw (3) -- (3,1.5); % Vertical ray
            \draw (6) -- (6,1.5); % Vertical ray
        \end{tikzpicture}

    \end{subfigure}
    \caption{Three noncrossing matchings with vertex set $[6]$; the left and middle matchings are standard, while the matching on the right is not.}
\label{fig:introex}
\end{figure*}

The matching $\cupsigma$ associated to a two-row standard tableau $\gs$ is standard if and only if $\gs$ is rectangular.  To every standard noncrossing matching $\cupsigma$ on $[n]$ we attach a nesting sequence $S_\gs$ that counts, from left to right, the number of cups that have begun in our matching so far. That is, for each $1\leq i \leq n-1$, $S_\gs(i)$ is the number of cups with left end point less than or equal to $i$. For example, the nesting sequence for the leftmost standard matching $\cupsigma$ in Figure~\ref{fig:introex} is $(1,2,2,3,3)$; the nesting sequence for the middle standard matching in Figure~\ref{fig:introex} is $(1,2,3,3,3)$. Using the combinatorial data of the nesting sequence of a standard noncrossing matching we define, in Section~\ref{sec:IdealIsig} below, a polynomial ideal $\ci_\gs$ in the ring $\C[M_n]$ of functions on the affine variety $M_n$ of $n\times n$ matrices. Applying the decomposition of Springer fibers induced by the prime decomposition of tableaux, we extend the definition to non-rectangular two-row standard tableau $\gs$ (that is, we extend the definition to noncrossing matchings containing rays).

Defining the ideal $\ci_\gs$ is our first step to establish a commutative algebra framework for two-row Springer fibers.   Our main result is that $\isigma$ serves as a Springer-theoretic analogue of a Schubert determinantal ideal:

\begin{customthm}{1} \label{thm.intro}
Let $\pi: GL_n(\C) \to GL_n(\C)/B\simeq \Fl_n(\C)$ denote the projection map and let $\ell$ be a positive integer with $1\leq \ell \leq \lfloor \frac{n}{2}\rfloor$. Let  $\gs$ be a standard tableau of shape $(n-\ell, \ell)$ and $\cb_\gs$ be the irreducible component of $\SF_N$ corresponding to $\gs$. The ideal $\ci_\gs\subseteq \C[\mathbf{z}]$ is right $B$-invariant with vanishing set $\cv(\ci_\gs)$ such that $\pi^{-1}(\cb_\sigma) = \cv(\ci_\gs)\cap GL_n(\C)$. 
\end{customthm}

The problem of computing the cohomology class of an irreducible component of a Springer fiber was originally posed by Springer \cite{Springer1983}.  G\"uemes gave a formula for components of hook-type Springer fibers while Graham and Zierau \cite{Graham2011} gave a formula for components defined using the geometry of $GL_p(\C)\times GL_q(\C)$-orbits on the flag variety. More recently, Karp and the second author~\cite{KarpPrecup25} characterized irreducible components of Springer fibers equal to Richardson varieties, which subsume both of these special cases. Spink and Tewari then gave a combinatorial formula for the cohomology class of these ``Richardson components''~\cite{SpinkTewari25}.

Leveraging the commutative algebra techniques described in~\cite{KnutsonMiller2005, ITW18, MillerSturmfels2005}, our main theorem provides a method to compute the cohomology class $[\cb_\gs]$ for any two-row standard tableau~$\gs$. We generate various examples which suggest two conjectures. Although we state these conjectures below for rectangular two-row tableaux, we discuss in Section~\ref{sec:cohomology} how they can be extended to the general setting.

In order to state our first conjecture we introduce a little more notation. Given a cup $(i<j)$ in $\cupsigma$ we define the size $\gd(i<j)$ to be the number of cups nested within $(i<j)$, including itself. For example, the leftmost matching in Figure~\ref{fig:introex} has three cups $(1<6),(2<3),$ and $(4<5)$ with sizes $\gd(1<6)=3$ and $\gd(2<3)=\gd(4<5)=1$. 

If $\gs$ is a two-row rectangular tableau, $\SF_\gs$ is a Richardson variety only when the matching $\cupsigma$ is a union of cups of size one (see~\cite[Remark 3.2(ii)]{KarpPrecup25}). In particular, the components we consider  are Richardson varieties only when the noncrossing matching $\cc(\gs)$ is a union of rays and size one cups, like the rightmost matching in appearing in Figure~\ref{fig:introex}.

We can now state our first conjecture, which computes the class of $\cb_\gs$ as a sum of monomials. The monomials that appear are determined combinatorially by the nesting sequence $S_\gs$ and cups with size at least $2$ in $\cupsigma$. 
\begin{customconj}{1}\label{conj1.intro}
Let $\sigma$ be a standard tableau of shape $(\ell, \ell)$. The cohomology class of $\SF_\gs$ in $H^*(\Fl_n(\C);\Z)$ is represented by the polynomial
\[
\prod_{\substack{j\in [n]\\S_\gs(j)<\ell}} x_j^{2(\ell-S_\gs(j))}  \cdot  \prod_{\substack{(i<j)\\ \gd(i<j)\geq 2}} \left(  \sum_{k=i}^{j-1} x_ix_{i+1}\cdots \hat{x}_k \cdots x_{j-1} \right)  
\]
where the product is taken over all cups $(i<j)$ with size at least $2$ in $\cupsigma$.
\end{customconj}

The Schubert classes $[X_w]$ for $w\in S_n$ form a basis for the cohomology of the flag variety. Consequently, our motivating problem can be rephrased as follows: Compute the expansion of the class $[Y]$ for a subvariety $Y \subseteq \Fl_n(\C)$ in the Schubert basis. Our second conjecture computes the Schubert expansion of $[\SF_\gs]$ using a sequence of divided difference operators; in the statement below $w_0:=[n,n-1, \ldots, 2, 1]$ is the longest permutation.

\begin{customconj}{2} \label{conj2.intro} Let $\gs$ be a standard tableau of shape $(\ell,\ell)$ with corresponding cup diagram $\cupsigma$. Let $(i_1<j_1), (i_2<j_2), \ldots, (i_\ell<j_\ell)$ be any total ordering of the cups in $\cupsigma$ such that when a cup appears, every cup nested inside of that one has already appeared. 

Then 
\[
[\SF_\gs] = \left( \partial_{i_1} + \cdots + \partial_{j_1-1} \right)\circ \left( \partial_{i_2} + \cdots + \partial_{j_2-1} \right)\circ \cdots \circ \left( \partial_{i_\ell} + \cdots + \partial_{j_\ell-1} \right)(\mathfrak{S}_{w_0}).
\]
\end{customconj}

Conjectures~\ref{conj1.intro} and~\ref{conj2.intro} imply that class $[\SF_\gs]$ is encoded by the combinatorics of the associated matching. Figure~\ref{fig:intro2} displays the matchings and computes the cohomology class for both irreducible components of the Springer fiber $\SF_N$ for $N$ of Jordan type $(2,2)$. The reader can find more examples in Section~\ref{sec:cohomology} below. 

\begin{figure*}[t]
\centering
\renewcommand{\arraystretch}{2.5}
\setlength{\tabcolsep}{12pt} 

\begin{tabularx}{\textwidth}{c c l X}
\toprule
$\sigma$ & $\cc(\gs)$ & Monomial expansion of $[\SF_\sigma]$ & Schubert expansion of $[\SF_\sigma]$ \\ 
\midrule

% Row 1: 1 3 / 2 4
\begin{ytableau} 1&3 \\ 2&4 \end{ytableau} & 
\begin{tikzpicture}[scale=0.5, pnt/.style={circle, fill, inner sep=1.5pt}, baseline=(current bounding box.center)]
    \foreach \i in {1,...,4} {
        \node[pnt] at (\i,0) (\i) {};
        \node at (\i,-0.7) {\small \i};
    }
    \draw (1) to [bend left=80] (2);
    \draw (3) to [bend left=80] (4);
\end{tikzpicture} & 
$x_1^2x_2^2$ & 
$\mathfrak{S}_{3412} = (\partial_1+\partial_3) \mathfrak{S}_{4321}$ \\ 

% Row 2: 1 2 / 3 4
\begin{ytableau} 1&2 \\ 3&4 \end{ytableau} & 
\begin{tikzpicture}[scale=0.5, pnt/.style={circle, fill, inner sep=1.5pt}, baseline=(current bounding box.center)]
    \foreach \i in {1,...,4} {
        \node[pnt] at (\i,0) (\i) {};
        \node at (\i,-0.7) {\small \i};
    }
    \draw (1) to [bend left=80] (4);
    \draw (2) to [bend left=80] (3);
\end{tikzpicture} & 
$x_1^2(x_1x_2+x_1x_3+x_2x_3)$ & 
$\mathfrak{S}_{4132} + \mathfrak{S}_{3241} = \partial_2\circ (\partial_1+\partial _2+\partial_3)\mathfrak{S}_{4321}$ \\ 
\bottomrule
\end{tabularx}
\caption{Cohomology classes for each irreducible component of the Springer fiber $\SF_N$, where $N$ is a nilpotent matrix of Jordan type $(2,2)$.}
\label{fig:intro2}
\end{figure*}

The leftmost noncrossing matching in Figure~\ref{fig:introex} is part of a collection that we call matchings with \emph{one big cup}. As indicated by the name, these are precisely the matchings with a cup $(1<n)$ and all other cups of size $1$, or unions thereof. Both matchings in Figure~\ref{fig:intro2} belong to this collection. The middle matching appearing in Figure~\ref{fig:introex} does not belong. We prove the validity of our conjectures for diagrams with one big cup:

\begin{customprop}{1} Conjectures~\ref{conj1.intro} and~\ref{conj2.intro} both hold when the matching $\cupsigma$ has one big cup. 
\end{customprop}

Although computational data supports the validity of both conjectures, proving them using combinatorial commutative algebra presents a significant obstacle: in general, the generating set of the ideal $\ci_\gs$ presented here is not a Gr\"obner basis.

While noncrossing matchings are purely combinatorial objects, they encode representations of the quantum group $U_q(\mathfrak{sl}_2)$~\cite{CautisKamnitzerMorrison2014}. Kuperberg formally defined these matchings as a diagrammatic basis for the $\mathfrak{sl}_2$-web space, providing a framework to describe morphisms between tensor products of representations~\cite{TemperleyLieb1971,Kuperberg1996}. More broadly, the theory of $\mathfrak{sl}_n$-webs sits at the intersection of knot theory, representation theory, and combinatorics.

For two-row Springer fibers, the combinatorics of matchings simplifies the task of relating algebraic structures to the geometry of the flag variety. Extending these results to $\mathfrak{sl}_n$-webs for $n \geq 3$ introduces substantial complexity; however, recent progress by Cummings~\cite{Cummings2026} addresses this problem for two-column tableaux. It remains a question for future research to determine the extent to which $\mathfrak{sl}_n$-webs capture the underlying geometry of three-row Springer fibers and more general cases.

This paper is structured as follows. In Section~\ref{sec:notback} we review noncrossing matchings and two-row standard tableaux, establishing the bijection between them. We then define the prime decomposition of a standard tableau and demonstrate how it corresponds to the decomposition of its associated matching. Finally, we summarize key results regarding the geometry of Springer fibers in the two-row case.  Section ~\ref{sec:PreLemms} proves lemmas we use later: we introduce statistics to study standard noncrossing matchings, study rank conditions imposed by certain determinantal equations, and define an explicit map to decompose components of two-row Springer fibers based on the prime decomposition of their associated tableaux. In Section \ref{sec:IdealIsig} we introduce the ideal $\ci_\gs$, provide numerous examples, and state our main result, Theorem~\ref{thm.intro}. 
The proof of Theorem~\ref{thm.intro} is divided into two parts: In Section \ref{sec.binv} we prove the ideal $\ci_\gs$ is right $\mathcal{B}$-invariant and in Section \ref{sec.vanishingset} we study the vanishing set $\cv(\ci_\gs)$. Section~\ref{sec:cohomology} establishes the two conjectures for expressing the classes of the components $\SF_\gs$ and proves both for the one big cup case.

\subsection*{Acknowledgments} The authors are grateful to Sean Griffin, Alexander Woo, and Jay Yang for helpful conversations and also to Zijing Zhuang for aiding Macaulay2 and SageMath computations. Both authors are supported in part by NSF CAREER grant DMS-2237057.

%%%%%%%%%%%%

\section{Notation and Background}\label{sec:notback}
Let $n$ be a positive integer and $[n]:=\{1,2,\ldots, n\}$. A \emph{composition} of $n$ is a sequence $\mu = (\mu_1, \mu_2, \ldots, \mu_k)$ of positive integers such that  $\sum_{i=1}^k \mu_i =n $.   A \emph{partition} of $n$ is a composition $\lambda=(\lambda_1, \lambda_2, \ldots, \lambda_m)$ of $n$ such that $\lambda_i \geq \lambda_{i+1}$ for all $1\leq i\leq m-1$.  We focus our attention on two-row partitions of the form $\lambda = (n-\ell, \ell)$ where $\ell$ is a positive integer such that $\ell\leq \lfloor\frac{n}{2}\rfloor$.

%-----------------

\subsection{Noncrossing matchings and standard two-row tableaux}  

Let $\lambda$ be a partition of $n$. The \emph{Young diagram} of shape $\lambda$ is an array of $n$ left-justified boxes with $\lambda_i$ in the $i$-th row.  A \emph{standard tableau of shape $\lambda$} is a filling $\gs$ of the Young diagram of $\lambda$ with the numbers $1,2,\ldots, n$ so that each appears exactly once and so that entries strictly increase across rows from left to right and down columns from top to bottom. Given a standard tableau $\gs$ of $\lambda$ we set $|\gs|:=\sum_{i}\lambda_i$.  When $\lambda= (n-\ell, \ell)$ we say that $\sigma$ is a \emph{standard two-row tableau} and let $\SYT(n-\ell, \ell)$ denote the set of all two-row tableau of shape $(n-\ell, \ell)$. 

A \emph{noncrossing matching} on $[n]$ is a planar diagram consisting of:
\begin{itemize}
\item $n$ labeled vertices,
\item $\ell$ edges, called \emph{cups}, drawn so that all cups pass above (not below) vertices, and 
\item $n-2\ell$ \emph{rays} directed upward
\end{itemize}
such that each vertex is attached to exactly one cup or ray and no cup or ray crosses any other. We say that a noncrossing matching is \emph{standard} when the diagram contains no rays (in which case $n$ is necessarily even).  The reader see noncrossing matchings in Examples~\ref{ex.33},~\ref{ex.64}, and~\ref{ex.noncon} below.

There is a bijection between standard two-row tableaux and noncrossing matchings; see, for example, \cite[Lemma 7.1]{Fung2003}. 

\begin{lemma}\label{lemma.bijection} Given a standard tableau $\gs$ of shape $(n-\ell, \ell)$ there exists a unique noncrossing matching $\cupsigma$ such that the bottom row of $\gs$ contains all numbers which are the right endpoints of a cup, and the top row of $\gs$ contains all numbers which are left endpoints of a cup or endpoints of a ray. The association $\gs \mapsto \cupsigma$ defines a bijection from the set of standard tableaux of shape $(n-\ell,\ell)$ to the set of noncrossing matchings on $[n]$ with exactly $\ell$ cups and $n-2\ell$ rays.
\end{lemma}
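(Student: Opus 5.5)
The plan is to construct $\cupsigma$ from $\gs$ by an explicit stack (parenthesis-matching) algorithm, then show that it is the only matching with the stated endpoint property, and finally build the inverse map. I would read $1,2,\ldots,n$ in order. Each $k$ in the top row of $\gs$ is recorded as an ``opener'' and pushed onto a stack. Each $k$ in the bottom row is a ``closer'': we pop the most recently pushed unmatched opener $i$ and draw the cup $(i<k)$. Openers never popped receive rays.

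The first step is to check that the algorithm never tries to pop from an empty stack. Standardness of $\gs$ says that, for every $k$, the number of entries $\le k$ in the bottom row is at most the number $\le k$ in the top row. This is the ballot condition: the $j$-th bottom entry lies below the $j$-th top entry, which is smaller. So whenever a closer is read, the stack is nonempty.

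The second step is to verify that the result is a noncrossing matching whose rays lie under no cup. Noncrossing follows from last-in-first-out matching. If $(i<k)$ and $(i'<k')$ are cups with $i<i'<k$, then $i'$ was pushed after $i$ and is still on the stack when $k$ is read only if it lies above $i$, so it would have been popped first. Hence $k'<k$. An unmatched opener $r$ with $i<r<k$ for a cup $(i<k)$ would sit above $i$ on the stack at time $k$ and would be popped instead, a contradiction. Therefore rays are never nested under cups and can be drawn without crossings. By construction there are exactly $\ell$ cups (one per bottom entry) and $n-2\ell$ rays.

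The third step, uniqueness, is where I expect the real content to lie. Suppose a noncrossing matching $M$ has right endpoints equal to the bottom row of $\gs$. I would argue by induction on the closers in increasing order that $M$ pairs each closer $k$ with the nearest unmatched opener to its left, exactly as the stack does. Take the cup $(i<k)$ in $M$. Every vertex strictly between $i$ and $k$ lies under this cup, so it is not a ray endpoint. By noncrossing, each such vertex is matched by a cup lying entirely inside $(i,k)$. By induction those inner closers were matched within $(i,k)$ by the algorithm, so all openers strictly between $i$ and $k$ are already used when $k$ is processed. This shows that $i$ is the top of the stack, and hence $M=\cupsigma$.

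Finally, for bijectivity I would define the inverse map. Given a matching with $\ell$ cups and $n-2\ell$ rays, put the right endpoints in the bottom row and the remaining vertices in the top row, each row increasing. The rows then have sizes $n-\ell$ and $\ell$. Column strictness is again the ballot condition: every right endpoint is preceded by its own distinct left endpoint, so each prefix contains at least as many top entries as bottom entries. The composites of the two maps are identities, since each map is determined by the top/bottom row split and the uniqueness from the third step applies.
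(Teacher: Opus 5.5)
Your proof is correct. The paper does not prove this lemma itself; it only cites \cite[Lemma 7.1]{Fung2003}, so there is no in-paper argument to compare against. What you give is a self-contained version of the standard construction: read the entries left to right and match each bottom-row entry with the nearest unmatched top-row entry to its left.

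The ballot condition does double duty in your argument. It is exactly what keeps the stack nonempty whenever a closer is read, and conversely it forces column-strictness for the inverse map. The last-in-first-out arguments for noncrossing and for no ray lying under a cup are sound, as are the cup and ray counts and the inductive uniqueness argument.

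One step in the uniqueness induction should be stated explicitly: $i$ is still on the stack when $k$ is read.
\begin{itemize}
\item $i$ is a left endpoint of $M$, hence lies in the top row and is pushed.
\item By the induction hypothesis, every earlier pop pairs some closer $k''<k$ with its $M$-partner.
\item That partner is never $i$, since $M$ pairs $i$ with $k$.
\end{itemize}
With this in place, the fact that all openers strictly between $i$ and $k$ have already been popped does give that $i$ is on top.

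Compared with the bare citation, your route also proves the uniqueness clause of the statement, which the paper leaves implicit.
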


\begin{ex}\label{ex.33} Let $n=6$ and $\ell=3$.
The following tableau $\gs$ of shape $(3,3)$ has associated standard noncrossing matching $\cupsigma$ displayed to its right.
\begin{multicols}{2}
   \begin{center}
        \begin{ytableau}
        1&2&4\\
        3&5&6
        \end{ytableau}
   \end{center} 
\vspace*{-.3in}
\begin{tikzpicture}[scale=0.5]
\begin{scope}[shift={(8,0)}]
	\foreach \i in {0,...,5}{\node[pnt] at (\i,0)(\i){};}
        
	    \draw (0)  to [bend left=80](5);
		\draw(1)  to [bend left=80] (2);
        \draw(3)  to [bend left=80] (4);
		\draw (0,-1) node {$1$};
        \draw (1,-1) node {$2$};
        \draw (2,-1) node {$3$};
        \draw (3,-1) node {$4$};
        \draw (4,-1) node {$5$};
        \draw (5,-1) node {$6$};
	\end{scope}
\end{tikzpicture} \qedhere
 \end{multicols}
\vspace*{-.2in}
\end{ex}

\begin{ex}\label{ex.64} Let $n=10$ and $\ell=4$.
The following tableau $\gs$ of shape $(6,4)$ has associated standard noncrossing matching $\cupsigma$ displayed to its right.
\begin{multicols}{2}
   \begin{center}
        \begin{ytableau}
        1&2&3&5&8&9\\
        4&6&7&10
        \end{ytableau}
   \end{center} 
\vspace*{-.3in}
\begin{tikzpicture}[scale=0.5]
\begin{scope}[shift={(8,0)}]
	\foreach \i in {0,...,9}{\node[pnt] at (\i,0)(\i){};}
           \draw (0,0) -- (0,1.5);
	    \draw (1)  to [bend left=80](6);
		\draw(2)  to [bend left=80] (3);
        \draw(4)  to [bend left=80] (5);
        \draw (7,0) -- (7,1.5);
        \draw (8)  to [bend left=80](9);
		\draw (0,-1) node {$1$};
        \draw (1,-1) node {$2$};
        \draw (2,-1) node {$3$};
        \draw (3,-1) node {$4$};
        \draw (4,-1) node {$5$};
        \draw (5,-1) node {$6$};
        \draw (7,-1) node {$7$};
        \draw (8,-1) node {$9$};
        \draw (9,-1) node {$10$};
	\end{scope}
\end{tikzpicture} \qedhere
 \end{multicols}
\vspace*{-.2in}
\end{ex}

\begin{ex}\label{ex.noncon} Let $n=14$ and $\ell=7$.
The following tableau $\gs$ of shape $(7,7)$ has associated standard noncrossing matching $\cupsigma$ displayed to its right.
\begin{multicols}{2}
   \begin{center}
        \begin{ytableau}
        1&2&3&5&6&11&12\\
        4&7&8&9&10&13&14
        \end{ytableau}
   \end{center} 
\vspace*{-.3in}
\begin{tikzpicture}[scale=0.5]
\begin{scope}[shift={(8,0)}]
	\foreach \i in {0,...,13}{\node[pnt] at (\i,0)(\i){};}

	    \draw (0)  to [bend left=80](9);
		\draw(1)  to [bend left=80] (8);
        \draw(2)  to [bend left=80] (3);
         \draw(4)  to [bend left=80] (7);
         \draw(5)  to [bend left=80] (6);
            \draw(10)  to [bend left=80] (13);
             \draw(11)  to [bend left=80] (12);
		\draw (0,-1) node {$1$};
   
        \draw (1,-1) node {$2$};
     
        \draw (2,-1) node {$3$};
        \draw (3,-1) node {$4$};
   
        \draw (4,-1) node {$5$};
        \draw (5,-1) node {$6$};
         \draw (6,-1) node {$7$};
          \draw (7,-1) node {$8$};
         \draw (8,-1) node {$9$};
     \draw (9,-1) node {$10$};
     \draw (10,-1) node {$11$};
     \draw (11,-1) node {$12$};
     \draw (12,-1) node {$13$};
     \draw (13,-1) node {$14$};
	\end{scope}
\end{tikzpicture} \qedhere
 \end{multicols}
\vspace*{-.2in}
\end{ex}

We say that a noncrossing matching on $[n]$ is \emph{connected} if either $n=1$ and the diagram consists of a single ray or if $n>1$ and the pair $(1<n)$ is a cup. For example, the matching appearing in Example~\ref{ex.33} is connected, while those in Examples~\ref{ex.64} and~\ref{ex.noncon} are not. We call noncrossing matchings consisting of a single ray \emph{trivial matchings}. For example, the matching appearing in Example~\ref{ex.64} is the union of four connected components, two of which are trivial. It's clear that a connected, nontrivial noncrossing matching must be standard, but Example~\ref{ex.noncon} shows that not every standard noncrossing matching is connected. 

We conclude this section by defining some terminology for cups in a matching.  Given a (standard) noncrossing matching $\cc$ on $n$ vertices and a cup $(i<j)$ in $\cc$ we define 
\[
\delta(i<j):=\frac{1}{2}(j-i+1)
\]
to be the \emph{size of the cup $(i<j)$}, that is, $\delta(i<j)$ is the number of cups nested inside the cup connecting $i$ and $j$. We also set 
 \[
 \ga(i<j):=\#\{(s<t) \mid \text{ $(s<t) \ $  is a cup and $\ s<i<j<t$ }\}.
 \]
In other words, $\ga(i<j)$ is the number of cups that lie strictly above $(i<j)$ in $\cupsigma$.  For example, the cup $(5<8)$ appearing in the matching from Example~\ref{ex.noncon} has size $\delta(5<8)=2$ and there are $\ga(5<8)=2$ cups which lie strictly above it, namely $(2<9)$ and $(1<10)$.

%-----------------

\subsection{Connected noncrossing matchings and prime tableaux}
Let $\gs$ and $\omega$ be standard tableaux. We define the \emph{concatenation} of $\gs$ and $\omega$ to be the tableau obtained by concatenating the rows of $\gs$ and $\omega$ after adding $|\gs|$ to every entry of $\omega$. Concatenation is an associative operation. We say that the standard tableau $\gs$ is a \emph{prime} if it is nonempty and if it is not equal to the concatenation of two nonempty tableau. Every standard tableau has a unique prime decomposition \cite[Proposition 1.3.3.]{Tirrell2016}  

It is simple to determine if a two-row standard tableau $\gs$ is prime using the associated noncrossing matching $\cupsigma$.  

\begin{lemma}\label{lemma.primetoconn} Let $\gs$ be a standard tableau of shape $(n-\ell, \ell)$ and $\cupsigma$ be the associated noncrossing matching. The tableau $\gs$ is prime if and only if $\cupsigma$ is connected. 
\end{lemma}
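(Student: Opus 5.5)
The plan is to use the bijection of Lemma~\ref{lemma.bijection} to turn both "prime" and "connected" into statements about initial segments $[k]=\{1,\ldots,k\}$. The key observation is that a tableau $\gs$ of shape $(n-\ell,\ell)$ is a concatenation $\gs=\gs'\cdot\omega$ with $|\gs'|=k$, $0<k<n$, exactly when the entries $1,\ldots,k$ of $\gs$ themselves form a standard tableau $\gs'$ occupying an initial segment of each row. Shifting the remaining entries down by $k$ then automatically gives a standard tableau $\omega$.

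For the subset $\{1,\ldots,k\}$ of a standard tableau, the entries in each row are always a left-justified initial segment. The only condition needed for $\gs'$ and $\omega$ to both be valid tableaux is the column condition. Here there is one subtlety: $\gs'$ must have two rows or one row, and so must $\omega$, and concatenation puts $\omega$'s rows after $\gs'$'s rows. If $\gs'$ has a shorter second row than first row, then the first entry of $\omega$'s bottom row lands in a column of $\gs$ whose top entry may lie in $\gs'$. So I will characterize the split point in terms of the matching: $\gs=\gs'\cdot\omega$ at position $k$ if and only if no cup of $\cupsigma$ joins a vertex $\le k$ to a vertex $>k$.

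I expect this translation to be the main obstacle, since it requires checking that the concatenated filling is standard precisely when the matching restricted to $[k]$ and to $[n]\setminus[k]$ is self-contained. I plan to argue it as follows.

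\emph{The matching splits, so the tableau splits.} Suppose no cup crosses position $k$. Applying Lemma~\ref{lemma.bijection} to each part gives tableaux $\gs'$ and $\omega$. The matching of $\gs'\cdot\omega$ is the juxtaposition, because bottom-row entries are exactly right endpoints of cups. Uniqueness in Lemma~\ref{lemma.bijection} then gives $\gs=\gs'\cdot\omega$. I still need to check that the concatenation is standard. The column condition holds because in $\gs'$ the number of right endpoints is at most the number of left endpoints. Rays in $\gs'$ then just lengthen the top row, and column $c$ of the concatenation has its bottom entry from $\omega$ only when its top entry is at most that value.

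\emph{The tableau splits, so the matching splits.} Conversely, if $\gs=\gs'\cdot\omega$, then the matching of $\gs$ is the juxtaposition of the matchings of $\gs'$ and $\omega$. Such a juxtaposition is noncrossing, and it satisfies the top/bottom characterization, so again uniqueness identifies it with $\cupsigma$, and no cup crosses $k$.

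With this characterization, the lemma reduces to a statement about the matching: $\cupsigma$ admits no split point $0<k<n$ if and only if $\cupsigma$ is connected.

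\emph{Connected implies no split.} If $n=1$ there is no split point. If $(1<n)$ is a cup, it crosses every $k$ with $0<k<n$.

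\emph{No split implies connected.} Suppose $n>1$ and there is no split point. If vertex $1$ carries a ray, then no cup can pass over $1$, so no cup joins $1$ to anything and $k=1$ is a split point, a contradiction. So $1$ is the left end of a cup $(1<j)$. If $j<n$, any cup crossing position $j$ would have to start inside $(1<j)$ and end outside it, crossing $(1<j)$; a ray under $(1<j)$ is forbidden as well. Hence $k=j$ is a split point, a contradiction. Therefore $j=n$ and $\cupsigma$ is connected, which proves the lemma.
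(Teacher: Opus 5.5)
Your argument is correct and follows the paper's proof, which simply asserts that the lemma follows immediately from the bijection of Lemma~\ref{lemma.bijection}; you supply the omitted details by showing that $\sigma$ is a concatenation at $k$ exactly when no cup of $\mathcal{C}(\sigma)$ joins $[k]$ to $[n]\setminus[k]$, and that no such $k$ with $0<k<n$ exists exactly when $n=1$ or $(1<n)$ is a cup. One correction: your opening claim that the entries larger than $k$, shifted down, \emph{automatically} form a standard tableau $\omega$ is false (for $\sigma$ with rows $1\,2$ and $3\,4$ and $k=2$, what is left is a bottom row with an empty top row), but nothing later depends on it, so simply delete it.
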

\begin{proof} This follows immediately using the bijection $\gs \mapsto \cupsigma$ of Lemma~\ref{lemma.bijection}.
\end{proof}

Since nontrivial, connected noncrossing matchings are all standard, it follows that the only two-row prime tableaux are \emph{rectangular}, that is, of shape $(\ell, \ell)$.  Since every noncrossing matching is a disjoint union of connected noncrossing matchings  Lemma~\ref{lemma.primetoconn} immediately implies: 

\begin{corollary}\label{cupdecomposition} Every two-row tableau of shape $(n-\ell,\ell)$ is the concatenation of prime two-row rectangular tableau and one-row prime tableaux consisting of a single box.
\end{corollary}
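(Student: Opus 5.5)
The plan is to combine the unique prime decomposition of standard tableaux with Lemma~\ref{lemma.primetoconn}, and then use the cup/ray structure of noncrossing matchings to identify the prime pieces. Let $\gs$ be a standard tableau of shape $(n-\ell,\ell)$. By \cite[Proposition 1.3.3]{Tirrell2016}, $\gs$ is a concatenation $\gs = \gs_1 \cdots \gs_k$ of prime standard tableaux. Each factor $\gs_i$ is itself standard. It also has at most two rows, since concatenation joins rows row-by-row and so the factors cannot have more rows than $\gs$. By Lemma~\ref{lemma.primetoconn}, each two-row factor $\gs_i$ has a connected matching $\cc(\gs_i)$.

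The key step is to sort the factors according to the shape of their matchings. First suppose $\gs_i$ has one row of length $m$. It is then prime only when $m=1$, because a one-row tableau of length $m$ is the concatenation of $m$ single boxes; so such a factor is a single box. Now suppose $\gs_i$ has two rows, of shape $(m-p,p)$ with $p\ge 1$. Its matching is connected and not a single ray. By definition of connectedness, $(1<m)$ is then a cup. No ray can lie below a cup, and every vertex other than $1$ and $m$ lies below the cup $(1<m)$. Hence $\cc(\gs_i)$ has no rays, and it is standard. Standard matchings have exactly $m/2$ cups, which forces $p=m/2$. Thus $\gs_i$ is rectangular, matching the remark after Lemma~\ref{lemma.primetoconn}.

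One must also check that concatenation is compatible with this picture on the matching side. Concatenating tableaux corresponds to placing the matchings side by side after shifting the vertex labels; this is immediate from Lemma~\ref{lemma.bijection}, since the top and bottom rows stay separate. This justifies reading off the factors as the connected components of $\cupsigma$, and it is consistent with the count of $n-2\ell$ rays and $\ell$ cups. There is no real obstacle; the only care needed is the argument that a two-row prime factor cannot contain rays, which is exactly the "no ray below a cup" condition above.
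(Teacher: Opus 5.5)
Your proof is correct and follows essentially the paper's argument. In both, the key point is that a nontrivial connected matching contains the cup $(1<m)$ and therefore has no rays, so Lemma~\ref{lemma.primetoconn} forces every two-row prime to be rectangular. The only difference is cosmetic: you start from Tirrell's prime decomposition and classify its factors, whereas the paper reads the factors off directly as the connected components of $\cupsigma$.
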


\begin{ex} Consider the two-row tableau $\gs$ below.
\[
\begin{ytableau} 1&2&3&4 & 8 & 9 & 10 & 12 \\ 5&6&7&11&13& 14 \end{ytableau}
\]
We determine the prime decomposition of $\gs$ easily using the associated matching $\cupsigma$, displayed below.
\begin{figure*}[h]
\begin{tikzpicture}[scale=0.45]\begin{scope}[shift={(8,0)}]
		\foreach \i in {0,...,13}
		{
			\node[pnt] at (\i,0)(\i){};
		}
		
        \draw (0,0) -- (0,1.5);
        \draw(1)  to [bend left=80] (6);
		\draw(2)  to [bend left=80] (5);
		\draw(3)  to [bend left=80] (4);
		\draw (8)  to [bend left=80](13);
		\draw(9)  to [bend left=80] (10);
        \draw(11)  to [bend left=80] (12);
        \draw (7,0) -- (7,1.5);
		\draw (0,-1) node {$1$};
        \draw (1,-1) node {$2$};
        \draw (2,-1) node {$3$};
        \draw (3,-1) node {$4$};
        \draw (4,-1) node {$5$};
        \draw (5,-1) node {$6$};
        \draw (6,-1) node {$7$};
        \draw (7,-1) node {$8$};
        \draw (8,-1) node {$9$};
        \draw (9,-1) node {$10$};
        \draw (10,-1) node {$11$};
        \draw (11,-1) node {$12$};
        \draw (12,-1) node {$13$};
        \draw (13,-1) node {$14$};
	\end{scope}
 \end{tikzpicture} 
\end{figure*}

\noindent Since $\cupsigma$ is the union of two rays and two connected standard noncrossing matchings on $6$ vertices, we obtain a corresponding decomposition of $\gs$ as follows. 
\[
\ytableausetup{centertableaux} \begin{ytableau} 1\end{ytableau} \circ 
\begin{ytableau} 1&2&3  \\ 4&5&6 \end{ytableau} \circ
\begin{ytableau} 1 \end{ytableau} \circ
\begin{ytableau}  1 & 2 & 4 \\ 3&5& 6 \end{ytableau} \qedhere
\]
\end{ex}

%-----------------

\subsection{Springer fibers and two-row Springer fibers} \label{Springer} 
In this section we give a brief introduction to Springer fibers, highlighting results needed for our work. For a more detailed review, we refer the reader to \cite{TymoczkoSF,Jantzen2004}.

A \emph{complete flag in $\C^n$} is a nested sequences of subspaces of $\C^n$, 
\[
V_\bullet = (V_1\subset V_2\subset \cdots \subset V_n=\mathbb{C}^n),
\]

such that $\dim V_i=i$ for $1\leq i \leq n$. The collection of all complete flags is the \emph{flag variety}, denoted here by $\Fl_n(\C)$. It is a smooth projective variety of dimension ${n\choose 2}$. We identify $\mathcal{F}\ell_n(\C)$ with the homogeneous space $GL_n(\C)/B$ where $B$ is the subgroup of $GL_n(\C)$ of upper triangular matrices. Specifically, given a matrix $g\in GL_n(\C)$ we obtain a flag $V_\bullet$ by setting $V_i$ equal to the span of the first $i$ columns of $g$.

Let $N$ be a  $n\times n$ nilpotent complex matrix. The \emph{Springer fiber} $\cb_N$ is defined to be
\[
\cb_N:=\{V_\bullet\in \Fl_n(\C)\mid N\left(V_i\right)\subseteq V_{i-1}~\text{for all}~ 1\leq i\leq n\}.
\]
The irreducible components of the Springer fiber $\mathcal{B}_N$ are in bijection with standard Young tableaux corresponding to the partition type of $N$ \cite{Spaltenstein1976,Vargas79}. Given a standard tableau $\gs$, we let $\SF_\gs$ denote the corresponding irreducible component of $\SF_N$ under this bijection.

Fresse showed each component $\SF_\gs$ of the Springer fiber respects concatenation of tableau $\gs$ in the following sense~\cite[Proposition 7.1]{Fresse2011}

\begin{prop}[Fresse] \label{prop.Fresse} Given standard tableaux $\sigma$ and $\omega$, recall that $\sigma\circ \omega$ denotes the standard tableau obtained by concatenation. Then $\SF_{\sigma\circ \omega} \simeq \SF_\sigma \times \SF_\omega$. 
\end{prop}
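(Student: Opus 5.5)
We need to prove Fresse's proposition: $\SF_{\sigma\circ\omega}\simeq \SF_\sigma\times\SF_\omega$. Set $p=|\gs|$, $q=|\omega|$ and $n=p+q$, and let $\lambda,\mu,\nu$ be the shapes of $\gs,\omega,\gs\circ\omega$. Since concatenation adds row lengths, $\nu=\lambda+\mu$ as compositions (the rows sum entrywise).

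The first step is to choose the nilpotent compatibly. Take $N$ of Jordan type $\nu$ on $\C^n$ and decompose $\C^n=W\oplus W'$ into $N$-stable subspaces with $N|_W$ of type $\lambda$ and $N|_{W'}$ of type $\mu$. One way is to split each Jordan block of size $\lambda_i+\mu_i$ into pieces of sizes $\lambda_i$ and $\mu_i$. That splitting is not $N$-stable, so it is better to work with the Spaltenstein characterization, which we recall next.

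For $V_\bullet\in\SF_N$, the Jordan type of $N|_{V_i}$ for $i=1,\ldots,n$ gives a chain of Young diagrams, hence a standard tableau $\tau(V_\bullet)$. By Spaltenstein, $\SF_\gs$ is the closure of $\SF_N^\gs=\{V_\bullet : \tau(V_\bullet)=\gs\}$. A flag lies in $\SF_N^{\gs\circ\omega}$ exactly when $N|_{V_p}$ has type $\lambda$, the truncated flag $V_1\subset\cdots\subset V_p$ has tableau $\gs$, and the induced flag on $\C^n/V_p$, with its induced nilpotent, has tableau $\omega$. Here we use that the type of $N$ on $\C^n/V_p$ is $\nu-\lambda=\mu$, which is forced by the concatenation condition and must be verified using the fact that row lengths add.

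This gives a morphism
\[
\Phi\colon \SF_N^{\gs\circ\omega}\to \{(U,\text{flags})\}, \qquad V_\bullet\mapsto (V_p,\ V_\bullet|_{V_p},\ V_\bullet/V_p).
\]
The key step is to show that the set of $N$-stable $p$-dimensional subspaces $U$ with $N|_U$ of type $\lambda$ and $N|_{\C^n/U}$ of type $\mu$, where $\mu$ is obtained by subtracting rows (not by Littlewood--Richardson), is a single orbit of $Z_{GL_n}(N)$. It should also satisfy the following: over the representative $U=W$, the fiber of $\Phi$ is $\SF^{\gs}_{N|_W}\times\SF^{\omega}_{N|_{\C^n/W}}$. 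For a dimension count, $\dim\SF_\gs=n(\lambda)$, where $n(\lambda)=\sum_i(i-1)\lambda_i$, and similarly for the others. We have $n(\nu)=n(\lambda)+n(\mu)+\sum_i(i-1)\cdot 0$ plus corrections, so we must check that $\dim(\text{orbit})=n(\nu)-n(\lambda)-n(\mu)$. In the two-row setting we can compute concretely: $\SF_\gs$ for prime $\gs$ forces $V_p=\ker N^{a}\cap\operatorname{im}N^b$-type subspaces. I would aim to show that for $\lambda=(k,k)$ rectangular, the condition forces $V_p=\ker N^k$ when $\gs$ has rows equal to the first $p$ entries. Then $U$ is unique, the orbit is a point, and this matches the dimension count, since $n(\nu)=n(\lambda)+n(\mu)$ when rows are added and $\lambda$ is rectangular with $\ell(\mu)\le 2$.

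In general, once $U$ is fixed and canonical, $\Phi$ is an isomorphism $\SF_N^{\gs\circ\omega}\cong\SF^\gs\times\SF^\omega$, since a flag is determined by its part in $U$ and its image in $\C^n/U$. Taking closures, and using that the condition $V_p=U$ is closed and holds on the closure, yields $\SF_{\gs\circ\omega}\simeq\SF_\gs\times\SF_\omega$. The main obstacle is the canonicity of $V_p$ on the whole component for a general, non-rectangular $\gs$, where $U$ can vary. There I would fall back on a $Z(N)$-equivariant fibration argument and identify the component with a product only up to this homogeneous base. In the two-row case I would first try to prove directly, via the row-length condition, that $V_p=\ker N^{\lambda_2}+N^{\mu_1}(\cdot)$ is determined.
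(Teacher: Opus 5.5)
Your outline has genuine gaps and, as it stands, does not prove the statement. The first is that the description of $\SF_N^{\sigma\circ\omega}$ on which $\Phi$ rests is false in one direction and unjustified in the other.

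Take $N$ of type $(2,1)$ on $\C^3$ ($e_2\mapsto e_1\mapsto 0$, $e_3\mapsto 0$), $\sigma$ a single box and $\omega$ the column of height two, so $\sigma\circ\omega$ has rows $1\,2$ and $3$. The flag $\C e_1\subset\C\{e_1,e_3\}\subset\C^3$ has sub-tableau $\sigma$. The quotient $\C^3/\C e_1$ has type $(1,1)=\nu-\lambda$, and the quotient flag has tableau $\omega$. Yet $N$ vanishes on $\C\{e_1,e_3\}$, so $\tau(V_\bullet)$ is the other tableau of shape $(2,1)$.

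Hence $\Phi$ is not a bijection from $\SF_N^{\sigma\circ\omega}$ onto $\SF^\sigma\times\SF^\omega$, even though here $V_1=\C e_1$ is forced. The most you can hope for is an inclusion $\SF_N^{\sigma\circ\omega}\subseteq X:=\{V_p=U,\ V_\bullet|_U\in\SF^\sigma,\ V_\bullet/U\in\SF^\omega\}$, followed by a dimension comparison. Note that $n(\lambda+\mu)=n(\lambda)+n(\mu)$ for all $\lambda,\mu$, since $n$ is linear in the row lengths; your ``corrections'' and the rectangularity hypothesis are unnecessary.

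That inclusion, however, needs the quotient to have type $\mu$, which you only call ``forced''. It is not forced by the types of $V_p$ and $\C^n$. For $N$ of type $(3,1)$ ($e_3\mapsto e_2\mapsto e_1\mapsto 0$, $e_4\mapsto 0$), the subspace $\C\{e_1,e_2+e_4\}$ has type $(2)$, but its quotient has type $(2)\neq(1,1)$. Only the intermediate members of the chain can exclude this, and you give no argument.

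The second gap is that the whole construction, including the step ``$V_p=U$ holds on the closure'', needs $V_p$ to be one fixed subspace on a dense subset of the component. You call this the key step and do not prove it. You then suggest it may fail for non-rectangular $\sigma$, and your fallback (a fibration over a $Z_{GL_n}(N)$-orbit) would not give $\SF_{\sigma\circ\omega}\simeq\SF_\sigma\times\SF_\omega$. Within your framework, the dimension identity above would force such an orbit to be zero-dimensional, hence a point since $Z_{GL_n}(N)$ is connected. But that only follows once the fibre description is established.

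In the two-row setting, your remark that a rectangular first factor $\lambda=(k,k)$ forces $V_p=\ker N^k$ is correct and immediate: $V_p\subseteq\ker N^k$ and $\dim\ker N^k=2k$. Combine this with the $N$-module isomorphism $W/\ker N^k\cong N^k(W)$ for $W\supseteq\ker N^k$. Both parts of the type of $N|_W$ are at least $k$, since $\dim(W\cap\ker N^k)=2k$, so $N|_W$ has type $(k,k)+\mathrm{type}(W/\ker N^k)$. This would make your characterization exact in that case.

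For a ray as first factor, you need $V_1=\C e_1$, which your formula ``$\ker N^{\lambda_2}+N^{\mu_1}(\cdot)$'' does not establish. Even then, the first example above (which is exactly such a case) shows you still need the closure-and-dimension step.

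For comparison, the paper only cites Fresse for the general statement. In the two-row case it proves the isomorphism directly in Lemma~\ref{lem.taushift}: left multiplication by $\dot\tau_\mu$ carries the cell $F_\sigma$ onto $\iota(F_{\sigma_1},\ldots,F_{\sigma_k})$. It then takes closures using $\overline{F_\sigma}=\SF_\sigma$ (Theorem~\ref{thm.GNST}), which sidesteps both the stratification and the canonicity of $V_p$.
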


This result reduces the study of the geometry of the component $\SF_{\sigma \circ \omega}$ to that of $\SF_\sigma$ and $\SF_\omega$. We use it later to reduce our arguments to the case where $\gs$ is a prime two-row tableau, that is, $\cupsigma$ is a standard matching.

When $N$ has two Jordan blocks we say that $\SF_N$ is a \emph{two-row Springer fiber}. Given a tableau $\gs\in \SYT(n-\ell, \ell)$, Fung~\cite[Theorem 5.2]{Fung2003} gave a simple description of each component $\SF_\gs$ using the noncrossing matching $\cupsigma$. To state the result, we introduce some terminology. If $i$ is the endpoint of a ray, we let 
\[
c(i) = \#\left\{ \text{cups } (i'<j') \text{ in }\cupsigma\mid j'<i \right\}
\]
denote the number of cups that appear before the ray. Similarly, we let $r(i)$ be the number of rays that have appeared up through index $i$, including the ray with endpoint $i$. Notice that $i=r(i)+c(i)$.  The following is Fung's description of the irreducible component $\SF_\gs$~\cite{Fung2003} (statement adapted from~\cite[Proposition 7]{StroppelWebster2012}).

\begin{prop}[Fung]\label{prop.Fung} Let $N$ be a nilpotent matrix with Jordan type $(n-\ell, \ell)$ and $\gs\in \SYT(n-\ell, \ell)$.  Let $V_\bullet$ be a flag in the Springer fiber $\SF_N$. Then $V_\bullet\in \SF_\gs$ if and only if    
\begin{enumerate}
\item[\textup{(1)}] for each cup $(i<j)$ in $\cc(\sigma)$,  $N^{\delta(i<j)}\left(V_{j}\right)=V_{i-1}$ and
\item[\textup{(2)}] for each ray with end point $i$, $V_i=N^{-c(i)}\left(\im N^{n-\ell-r(i)}\right)$. 
\end{enumerate}
\end{prop}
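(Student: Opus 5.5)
The plan is to show that, for each $\gs\in\SYT(n-\ell,\ell)$, the locus $Y_\gs\subseteq\SF_N$ cut out by conditions (1) and (2) is a closed irreducible subvariety of dimension $\ell=\dim\SF_N$. We then check that distinct tableaux give distinct loci and that the $Y_\gs$ cover $\SF_N$. Since the components of $\SF_N$ are in bijection with $\SYT(n-\ell,\ell)$ \cite{Spaltenstein1976,Vargas79}, these facts identify the $Y_\gs$ with the components. Finally we match the labels with the Spaltenstein--Vargas labelling.

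\textbf{Closedness and irreducibility.} We fix a Jordan basis $e_1,\dots,e_{n-\ell}$, $f_1,\dots,f_\ell$ with $Ne_{k}=e_{k-1}$ and $Nf_k=f_{k-1}$. Condition (2) is closed because it prescribes $V_i$ outright. For condition (1), the containment $N^{\gd}(V_j)\subseteq V_{i-1}$ is closed. We then show that on $\SF_N$ the containment forces equality: $N^{\gd}$ lowers dimension by at most $\gd$ on $V_j$, and $\dim V_j-\dim V_{i-1}=2\gd$. The key step is to argue that nestedness of the cups inside $(i<j)$ forces $\dim\ker(N^{\gd}|_{V_j})=\gd+1$, by induction on the nested cups.

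For irreducibility and the dimension count, we build $Y_\gs$ as an iterated fibration, reading the matching left to right and choosing $V_1,V_2,\dots$ in turn. At a ray, $V_i$ is determined by (2). At the left endpoint $i$ of a cup, $V_i/V_{i-1}$ is a line in the $2$-dimensional space $N^{-1}(V_{i-1})/V_{i-1}$, which gives a $\mathbb{P}^1$ of choices. At a right endpoint $j$, condition (1) determines $V_j=N^{-\gd}(V_{i-1})$. This makes $Y_\gs$ a tower of $\ell$ $\mathbb{P}^1$-bundles, so it is smooth, irreducible, and of dimension $\ell$.

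The main obstacle is showing that this construction is well defined, that is, that at each step the subspaces $N^{-1}(V_{i-1})$ and $N^{-\gd}(V_{i-1})$ have the predicted dimension. We will prove this by induction on $n$, exploiting a quotient structure. Conditions (1) and (2) are designed so that $V_j$, for $j$ closing the outermost cup, is $N$-stable with $N|_{V_j}$ of Jordan type $(\gd,\gd)$. Passing to $V_j$ and to $\C^n/V_j$ then reduces the problem to smaller matchings, in the spirit of Proposition~\ref{prop.Fresse}.

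\textbf{Distinctness, covering, and labelling.} The generic flag of $Y_\gs$ determines $\gs$, because the equality pattern in (1) detects exactly which pairs $(i,j)$ are cups. The $Y_\gs$ cover $\SF_N$ because every flag in $\SF_N$ has the correct dimension behaviour at each step and so lies in the closure of some cell of the tower. With that in hand, the count shows the $Y_\gs$ are precisely the components. To match the labels, we recall that Spaltenstein's labelling records the Jordan types of $N|_{V_i}$ for generic $V_\bullet$ in the component. On the generic flag of $Y_\gs$, the Jordan type of $N|_{V_i}$ gains a box in the second row exactly when $i$ is a right endpoint of a cup. This recovers the tableau of Lemma~\ref{lemma.bijection}, so $Y_\gs=\SF_\gs$.
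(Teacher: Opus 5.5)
The paper does not prove this statement; it quotes it from Fung (in Stroppel--Webster's formulation). Your architecture is viable and close to Fung's own iterated $\mathbb{P}^1$-bundle picture: show that the locus $Y_\gs\subseteq\SF_N$ cut out by (1)--(2) is closed, irreducible and $\ell$-dimensional, deduce that it is a component since $\dim\SF_N=\ell$, then identify its label via generic Jordan types of $N|_{V_i}$.

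\textbf{Main gap.} The induction you propose for the step you call the main obstacle does not reach the essential case. Splitting along $V_j$ and $\C^n/V_j$ for the first outermost cup only reduces to prime tableaux. When $\cupsigma$ is connected, the outermost cup is $(1<n)$, so $V_j=\C^n$ and nothing is reduced.

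What you actually need is that $V_{i-1}\subseteq \im N^{\gd(i<j)}$ whenever $(i<j)$ is a cup. This is exactly what makes $N^{-1}(V_{i-1})/V_{i-1}$ two-dimensional and $\dim N^{-\gd}(V_{i-1})=j$. In the rectangular case it follows from three facts:
\begin{enumerate}
\item[(a)] The invariant $N^{S_\gs(k)}V_k=0$ propagates along your tower. At a left endpoint, $N^{-1}(\ker N^{S_\gs(k-1)})=\ker N^{S_\gs(k)}$; at a right endpoint, use Lemma~\ref{lem.length}.
\item[(b)] $S_\gs(i-1)=c(i)+\al(i<j)\le \ell-\gd(i<j)$.
\item[(c)] $\ker N^{\ell-\gd}=\im N^{\gd}$.
\end{enumerate}
Alternatively, peel the outer cup from the inside. $V_1$ is a line in $\ker N$, $V_{n-1}=N^{-(\ell-1)}(V_1)$, and $N$ induces a nilpotent of type $(\ell-1,\ell-1)$ on $V_{n-1}/V_1$, where the inner matching lives.

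Either way, you must also check that the subspaces forced at right endpoints and rays contain the earlier free choices and keep the flag $N$-stable. This uses the nested cups, as in Claims 1--2 of Proposition~\ref{prop.components}.

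\textbf{Closedness.} Your dimension count is backwards. Since $\dim\ker N^{\gd}=2\gd$, the map $N^{\gd}$ can drop dimension by $2\gd$, and on $Y_\gs$ one has $\dim\ker(N^{\gd}|_{V_j})=2\gd$, not $\gd+1$. With your stated bound, the containment $N^{\gd}(V_j)\subseteq V_{i-1}$ could never hold. The correct argument is one line: $\dim N^{\gd}(V_j)\ge j-2\gd=i-1$, so containment forces equality (Lemma~\ref{lem.linearalg}). No induction on nested cups is needed.

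\textbf{Covering.} As written it is unjustified, but it is also unnecessary. An irreducible closed subset of $\SF_N$ of dimension $\ell=\dim\SF_N$ is automatically a component, and the labelling alone identifies which one; it also gives distinctness.

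\textbf{Labelling.} This step does need an argument, and the invariant above supplies half of it. Let $p_k$ be the nilpotency index of $N|_{V_k}$, which determines its two-row Jordan type. The invariant gives $p_k\le S_\gs(k)$ on all of $Y_\gs$. Since $\{p_k\ge S_\gs(k)\}$ is open and $Y_\gs$ is irreducible, it suffices to exhibit a flag in $Y_\gs$ attaining $p_k=S_\gs(k)$ for each $k$. That recovers $\gs$, whose first row consists of the left endpoints.
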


\begin{remark}\label{rem.ray-condition} Suppose $N$ is a nilpotent matrix of Jordan type $\lambda = (n-\ell, \ell)$ and suppose $N$ is in Jordan canonical form with block sizes decreasing. That is, on the standard basis $\{e_1, e_2, \ldots, e_n\}$ of $\C^n$ the matrix $N$ acts by
\[
e_{n-\ell} \mapsto e_{n-\ell-1}\mapsto \cdots \mapsto e_1\mapsto 0 \quad \text{ and } \quad e_n \mapsto e_{n-1}\mapsto \cdots \mapsto e_{n-\ell+1} \mapsto 0.
\]
Consider the condition of Proposition~\ref{prop.Fung}(2) with respect to this choice of basis. Note that there are precisely $n-2\ell$ rays in $\cupsigma$ and thus $r(i)\leq n-2\ell$ so $n-\ell-r(i) \geq \ell$. In particular, we get that 
\[
\im N^{n-\ell-r(i)} = \C\{e_1, e_2,\ldots, e_{r(i)}\}
\]
and 
\[
N^{-c(i)}\left(\im N^{n-\ell-r(i)}\right) = \C\{e_1, e_2, \ldots, e_{r(i)+c(i)}\} \oplus\C\{ e_{n-\ell+1}, \ldots, e_{n-\ell +c(i)}\}.
\]
This is a subspace of dimension $i = r(i)+2c(i)$. Thus, Proposition~\ref{prop.Fung}(2) is precisely the condition that $V_i$ is the coordinate subspace spanned by the standard basis vectors $e_1, e_2, \ldots, e_{r(i)+c(i)}$ and $e_{n-\ell +1}, \ldots, e_{n-\ell + c(i)}$. 
\end{remark}

\begin{ex}
Consider the standard Young tableau $\gs$ and associated standard noncrossing matching $\cupsigma$ from Example~\ref{ex.33}. Let $N$ be a $6\times 6$ nilpotent matrix of Jordan type $(3,3)$. 

The matching $\cupsigma$ has three cups $(1<6)$, $ (2<3)$ and $(4<5)$ with sizes $\gd(1<6)=3$ and $\gd(2<3)=\gd(4<5)=1$. By Proposition~\ref{prop.Fung}, the complete flag $V_\bullet\in \SF_N$ lies in $\cb_\gs$ if and only if
\[
N^3\left(V_6\right)=\{0\}, \;  N\left(V_3\right)=V_1, \;  \text{ and }\; N\left(V_5\right)=V_3.
\]
Note that the first of these conditions holds vacuously since $N^3 = 0$. 
\end{ex}

\begin{ex}\label{ex.component.disconn} Let $N$ be a $6\times 6$ nilpotent matrix of Jordan type $(3,3)$ and consider the tableaux
\[
\begin{ytableau} 1&3&4&6\\ 2&5
\end{ytableau} \quad\quad\quad\quad \;
\begin{ytableau} 1 & 2 & 3& 6\\ 4 & 5
\end{ytableau}
\]
which correspond, respectively, the following matchings.\\

\begin{center}
\begin{tikzpicture}[scale=0.5]
	\begin{scope}[shift={(8,0)}]
		\foreach \i in {0,...,5}
		{
			\node[pnt] at (\i,0)(\i){};
		}

		\draw (0)  to [bend left=80](1);
     
		\draw(3)  to [bend left=80] (4);
          \draw (2,0) -- (2,1.5);
            \draw (5,0) -- (5,1.5);
		\draw (0,-1) node {$1$};
       
        \draw (1,-1) node {$2$};

        \draw (2,-1) node {$3$};
        \draw (3,-1) node {$4$};
        \draw (4,-1) node {$5$};
        \draw (5,-1) node {$6$};
	\end{scope}
\end{tikzpicture} \hspace*{.5in}
\begin{tikzpicture}[scale=0.5]
	\begin{scope}[shift={(8,0)}]
		\foreach \i in {0,...,5}
		{
			\node[pnt] at (\i,0)(\i){};
		}

		\draw (1)  to [bend left=80](4);
		\draw(2)  to [bend left=80] (3);
          \draw (0,0) -- (0,1.5);
            \draw (5,0) -- (5,1.5);
		\draw (0,-1) node {$1$};
       
        \draw (1,-1) node {$2$};
  
        \draw (2,-1) node {$3$};
      
         \draw (3,-1) node {$4$};
       
        \draw (4,-1) node {$5$};
        \draw (5,-1) node {$6$};
	\end{scope}
    \end{tikzpicture}
\end{center}

Consider the tableau/matching on the left first. There are two cups in $\cupsigma$, $(1<2)$ and $(4<5)$, and rays at $3$ and $6$. We have $\gd(1<2)=\gd(4<5)=1$, $c(3)=1$ and $c(6)=2$, and $r(3)=1$ and $r(6)=2$. Thus, $V_\bullet\in \SF_N$ is in the component $\SF_\gs$ if and only if 
\[
N(V_2) = \{0\},\; V_3 = N^{-1}\left(\im N^{3} \right),\; N(V_5) = V_3,\; \text{ and } \; V_6 = N^{-2}\left(\im N^{2} \right),
\]
where the last condition is the vacuous condition that $V_6 = \C^6$. 

Now consider the tableau/matching on the right. There are two cups in $\cupsigma$, $(2<5)$ and $(3<4)$, and rays at $1$ and $6$. We have $\gd(2<5)=2$ and $\gd(3,4)=1$, $c(1)=0$, and $c(6)=2$, and $r(1)=1$ and $r(6)=2$. Thus, $V_\bullet\in \SF_N$ is in the component $\SF_\gs$ if and only if 
\[
V_1 = \im N^{3},\; N(V_4)=V_2,\; N^2(V_5) = V_1,\; \text{ and } \; V_6 = N^{-2}\left(\im N^{2} \right),
\]
and as before, the last condition is the vacuous condition that $V_6 = \C^6$. 
\end{ex}

From here on, we fix a basis $\{e_1, e_2, \ldots, e_n\}$ of $\C^n$ so that $N$ is in Jordan canonical form. That is, $N$ acts on the standard basis $\{e_1, e_2, \ldots, e_n\}$ by
\begin{eqnarray}\label{eqn.Jbasis}
e_{n-\ell} \mapsto e_{n-\ell-1}\mapsto \cdots \mapsto e_1\mapsto 0 \quad \text{ and } \quad e_n \mapsto e_{n-1}\mapsto \cdots \mapsto e_{n-\ell+1} \mapsto 0.
\end{eqnarray}
Given a standard tableaux of shape $(n-\ell, \ell)$ and the associated noncrossing matching $\cupsigma$ we now identify an open set of flags, denoted below by $\cell$, such that $\cb_\sigma = \overline{\cell}$.

\begin{definition}\label{def:cell}
Let $\cupsigma$ be a standard noncrossing matching in which each cup $(i<j)$ is labeled by $a_i$. For each $i\in [n]$ we write
\[
c(i):= \#\{ \text{ cups }(i'<j')\text{ in $\cupsigma$} \mid j'\leq i\}
\]
to denote the number of complete cups that appear at or before vertex $i$ and 
\[
r(i) := \#\{ \text{ rays in $\cupsigma$ with endpoint $i'$} \mid i' \leq i \}
\]
for the number of rays that appear at or before vertex $i$.
We construct an affine cell $\cell$ in $\mathcal{F}\ell(n)$ by defining matrix representatives of the flags in $\cell$ column-by-column as follows.
\begin{enumerate}
\item  If $j$ is the right endpoint of a cup or endpoint of a ray then the $j$-th column of a matrix representative for a flag in $\cell$ is the standard basis vector $e_{r(j)+c(j)}$.

\item If $i$ be the left end point of a cup $(i<j)$, let $\al = \al(i<j)$ and label the cups above $(i<j)$ from bottom to top by $\left(a_{i_1}, a_{i_2}, \ldots, a_{i_{\ga}}\right)$. Then the $i$-th column of a matrix representative for a flag in $\cell$ is
\[
\left(0^{r(i)+c(i)},a_i,a_{i_1}, \ldots, a_{i_{\ga}}, 0^{n-\ell-r(i)-1}, 1,0^{\ell-c(i)-\ga-1}\right)^T
\]
where $a_{i}, a_{i_1},\ldots, a_{i_{\ga}}\in \C$. Note that $1$ appears in position $n-\ell+1 + c(i)+\ga$.
\end{enumerate}
\end{definition}

From the definition we see that the right end points of cups and the rays in $\cupsigma$ dictate the position of $1$'s in the first $n-\ell$ rows of the matrices in~$\cell$. Left end points of cups dictate positions of the $1$'s in the lower $\ell$ rows of matrices in~$\cell$.

\begin{ex}\label{ex.disconnected1}
Consider the standard Young tableau $\gs$ of shape $(6, 4)$ and its associated noncrossing matching pictured below. Here we have labeled each cup $(i<j)$ by $a_i$. 
\begin{multicols}{2}
\begin{center} \begin{ytableau}1&2&5&6&7 & 8\\3&4&9 & 10 \end{ytableau}
\end{center} 
\vspace*{-.2in}
\begin{tikzpicture}[scale=0.5] \begin{scope}[shift={(8,0)}]
\foreach \i in {0,...,9}{\node[pnt] at (\i,0)(\i){};}
\draw (0)  to [bend left=90](3);
\draw(1)  to [bend left=60] (2);
\draw (4,0) -- (4,1.5);
\draw (5,0) -- (5,1.5);
\draw (6)  to [bend left=90](9);
\draw(7)  to [bend left=60] (8);
     
\draw (0,-1) node {$1$};
\draw (0,0.5) node[label=\textcolor{blue}{$a_1$}] {};
\draw (1,-1) node {$2$};
\draw (1,-0.3) node[label=\textcolor{red}{$a_2$}] {};
\draw (2,-1) node {$3$};
\draw (3,-1) node {$4$};
\draw (4,-1) node {$5$};
\draw (5,-1) node {$6$};
\draw (6,-1) node {$7$};
\draw (6,0.5) node[label=\textcolor{purple}{$a_7$}] {};
\draw (7,-1) node {$8$};
\draw (7,-0.3) node[label=\textcolor{teal}{$a_8$}] {};
\draw (8,-1) node {$9$};
 \draw (9,-1) node {$10$};
\end{scope}
\end{tikzpicture} 
\end{multicols}
\noindent A generic matrix in $\cell$ is of the form:
\[
\begin{bmatrix} \textcolor{blue}{a_1} & \textcolor{red}{a_2} & 1 & 0 & 0 & 0 & 0 & 0 & 0 & 0 \\ 
0 & \textcolor{blue}{a_1} & 0 & 1 & 0 & 0 & 0 & 0 & 0 & 0\\
0 & 0 & 0 & 0 & 1 & 0 & 0 & 0 & 0 & 0\\
0 & 0 & 0 & 0 & 0 & 1 & 0 & 0 & 0 & 0\\
0 & 0 & 0 & 0 & 0 & 0 & \textcolor{purple}{a_7} & \textcolor{teal}{a_8} & 1 & 0\\
0 & 0 & 0 & 0 & 0 & 0 & 0 & \textcolor{purple}{a_7} & 0 & 1\\
1 & 0 & 0 & 0 & 0 & 0 & 0 & 0 & 0 & 0\\
0 & 1 & 0 & 0 & 0 & 0 & 0 & 0 & 0 & 0\\
0 & 0 & 0 & 0 & 0 & 0 & 1 & 0 & 0 & 0\\
0 & 0 & 0 & 0 & 0 & 0 & 0 & 1 & 0 & 0\\
\end{bmatrix}.
\]
We have, for example, that the $8$-th column is defined using (2) in Definition~\ref{def:cell} since $8$ is the left endpoint of a cup. In this case, $r(8)=2$ and $c(8) = 2$ since there are two rays and two completed cups which appear before the vertex $8$. There is one cup containing $(8<9)$ so $\alpha(8<9)=1$, and $n-\ell - r(8)-1 = 10-4-2-1 = 3$ and $\ell-c(8)-\alpha(8<9)-1 = 4-2-1-1 = 0$. Thus, the $8$-th column is:
\[
(0,0,0,0,a_8, a_7, 0,0,0, 1)^T
\]
as in the matrix above. 
\end{ex}

\begin{ex} \label{ex.n=4nested}
The equations display two standard noncrossing matching $\cupsigma$ and the corresponding generic matrices in $\cell$.
\vspace*{-.2in}
\begin{center}
\begin{tikzpicture}[scale=0.5]
\begin{scope}[shift={(8,0)}]
\foreach \i in {0,...,3}{\node[pnt] at (\i,0)(\i){};}

\draw (0)  to [bend left=80](3) ;
\draw(1)  to [bend left=90] (2);
\draw (0,-1) node {$1$};
\draw (0,0.5) node[label=\textcolor{blue}{$a_1$}] {};
\draw (1,-1) node {$2$};
\draw (1.1,-0.15) node[label=\textcolor{red}{$a_2$}] {};
\draw (2,-1) node {$3$};
\draw (3,-1) node {$4$};
\end{scope}
\end{tikzpicture}
\hspace*{1in}
\begin{tikzpicture}[scale=0.5]
	\begin{scope}[shift={(8,0)}]
		\foreach \i in {0,...,7}
		{
			\node[pnt] at (\i,0)(\i){};
		}

		\draw (0)  to [bend left=80](7) ;
      
		\draw(1)  to [bend left=80] (6);
        \draw(2)  to [bend left=80] (3);
        \draw(4)  to [bend left=80] (5);
		\draw (0,-1) node {$1$};
        \draw (0,0.5) node[label=\textcolor{blue}{$a_1$}] {};
        \draw (1,-1) node {$2$};
        \draw (1.2,0.25) node[label=\textcolor{red}{$a_2$}] {};
        \draw (2,-1) node {$3$};
        \draw (3,-1) node {$4$};
         \draw (2.4,0.05) node[label=\textcolor{violet}{$a_3$}] {};
         \draw (4.35,0.05) node[label=\textcolor{teal}{$a_5$}] {};
        \draw (4,-1) node {$5$};
        \draw (5,-1) node {$6$};
        \draw (6,-1) node {$7$};
        \draw (7,-1) node {$8$};
	\end{scope}
\end{tikzpicture}
\end{center}

\[ 
\begin{bmatrix}
\textcolor{blue}{a_1}& \textcolor{red}{a_2}& 1& 0\\
0& \textcolor{blue}{a_1}& 0& 1 \\
1& 0& 0& 0\\ 0&1&0& 0
\end{bmatrix} 
\quad\quad\quad
\quad\quad\quad
\begin{bmatrix}
\textcolor{blue}{a_1}& \textcolor{red}{a_2}& \textcolor{violet}{a_3}& 1& 0 & 0& 0& 0\\
 0& \textcolor{blue}{a_1}& \textcolor{red}{a_2}& 0&\textcolor{teal}{a_5}& 1& 0& 0 \\
0& 0& \textcolor{blue}{a_1}& 0&\textcolor{red}{a_2}&0 & 1&0\\
0& 0& 0&0& \textcolor{blue}{a_1}& 0&0&1\\
1& 0& 0& 0& 0& 0& 0& 0\\
0& 1&0& 0& 0& 0& 0& 0\\
0& 0& 1& 0& 0& 0& 0&0\\
0& 0& 0& 0&1&0& 0&0
\end{bmatrix}. \qedhere\]  
\end{ex}

\begin{ex} The following are, respectively, generic matrices in each of the cells $\cell$ for the tableaux and cup diagrams from Example~\ref{ex.component.disconn}.   
\[\begin{bmatrix}
a_1&1&0&0&0&0\\
0&0&1&0&0&0\\  
0&0&0&a_4&1&0\\ 
0&0&0&0&0&1\\ 
1&0&0&0&0&0\\ 
0&0&0&1&0&0
\end{bmatrix} \quad\quad\quad \begin{bmatrix}
1&0&0&0&0&0\\
0&a_2&a_3&1&0&0\\  
0&0&a_2&0&1&0\\ 
0&0&0&0&0&1\\ 
0&1&0&0&0&0\\ 
0&0&1&0&0&0
\end{bmatrix} \qedhere
\]
\end{ex}

Goldwasser--Nadeem--Sun--Tymoczko showed in \cite[Theorem 4.10]{GNGT2025} that the set of flags represented by matrices in $\cell$ are in the Springer fiber $\cb_N$ and their closure is precisely the irreducible component indexed by $\gs$.

\begin{theorem}\label{thm.GNST}
Let $N$ be a nilpotent matrix of Jordan type $(n-\ell,\ell)$ and assume $N$ is in Jordan canonical form. For each $\gs\in \SYT(n-\ell, \ell)$ the cell $\cell$ is in the Springer fiber $\cb_N$. Furthermore, $\overline{F_\sigma}$ is the irreducible component $\SF_\gs$ of $\cb_N$ corresponding to the tableau $\sigma$.  
\end{theorem}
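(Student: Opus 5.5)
The plan is to prove the two assertions separately. First, every flag represented by a matrix in $\cell$ lies in $\SF_N$. Second, the closure $\overline{\cell}$ is an irreducible closed subset of $\SF_N$ that has dimension $\dim \SF_N$ and lies inside $\SF_\gs$. By Proposition~\ref{prop.Fresse} and Corollary~\ref{cupdecomposition} one could first reduce to prime $\gs$. However, Definition~\ref{def:cell} is already written uniformly, with rays forcing coordinate columns via Remark~\ref{rem.ray-condition}, so I would argue directly and use the reduction only if bookkeeping demands it.

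\textbf{Step 1: $\cell$ is an affine space of dimension $\ell$ inside $GL_n(\C)/B$.} Each column of a matrix $g\in\cell$ contains a distinguished entry equal to $1$, in a row determined by the column index.
\begin{itemize}
\item For right endpoints and rays, the $1$ sits in row $r(j)+c(j)$ among the top $n-\ell$ rows.
\item For left endpoints, the $1$ sits in row $n-\ell+1+c(i)+\ga(i<j)$ among the bottom $\ell$ rows.
\end{itemize}
I would check that these rows are pairwise distinct. Then I would check that every other nonzero entry of a column lies in a row whose pivot belongs to a later column; for left endpoints these are the entries $a_i,a_{i_1},\dots$ in rows $r(i)+c(i)+1,\dots$, which are pivots of the right endpoints of the cups above $(i<j)$. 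Together these show that $g$ is invertible and is in reduced column-echelon form with respect to a fixed pivot permutation. Hence $g\mapsto gB$ is injective on $\cell$, and $\cell\cong\C^{\ell}$, with one free parameter $a_i$ per cup. For a nilpotent matrix of Jordan type $(n-\ell,\ell)$ one has $\dim\SF_N=\ell$ (from $\sum_i(i-1)\lambda_i$), so $\overline{\cell}$ is irreducible of the correct dimension.

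\textbf{Step 2: $N$-stability and Fung's conditions.} Write $g_k$ for the $k$-th column of $g$. I would show $Ng_k\in\mathrm{span}(g_1,\dots,g_{k-1})$ for all $k$.
\begin{itemize}
\item Coordinate columns $e_{r+c}$: $N$ sends $e_{r+c}$ to $e_{r+c-1}$, and one must locate $e_{r+c-1}$ among earlier columns. It is either an earlier coordinate column, or, for the right endpoint $j$ of a cup, a combination coming from the left endpoint column $g_i$.
\item Left endpoint columns $g_i$: applying $N$ shifts each Jordan block down by one. The top-block entries $(a_i,a_{i_1},\dots,a_{i_\ga})$ move up one row, and the lower $1$ moves to row $n-\ell+c(i)+\ga$. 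This vector should equal the column of the left endpoint of the cup directly above $(i<j)$ restricted appropriately, minus coordinate columns of already-closed cups and rays. This is the reason for the ``staircase'' pattern visible in Example~\ref{ex.n=4nested}.
\end{itemize}
I would organize this by induction on $\ga(i<j)$, or equivalently by peeling off outermost cups, proving along the way the sharper statement $N^{\gd(i<j)}(V_j)=V_{i-1}$ of Proposition~\ref{prop.Fung}(1). Proposition~\ref{prop.Fung}(2) for rays is immediate from Remark~\ref{rem.ray-condition}: in $\cell$, $V_i$ is spanned by the prescribed coordinate vectors, because the left-endpoint columns of completed cups combine with the right-endpoint columns $e_{r+c}$ to give exactly $e_{n-\ell+1},\dots,e_{n-\ell+c(i)}$ modulo the top block.

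\textbf{Step 3: conclusion.} Steps 1--2 and Proposition~\ref{prop.Fung} give $\cell\subseteq\SF_\gs$, hence $\overline{\cell}\subseteq\SF_\gs$. Both sets are irreducible of dimension $\ell$, and $\SF_\gs$ is closed, so $\overline{\cell}=\SF_\gs$.

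The main obstacle is Step 2, specifically establishing the equalities $N^{\gd(i<j)}(V_j)=V_{i-1}$ rather than mere containment. To handle it, I would first treat the connected case $(1<n)$ by induction on nesting depth, tracking $N$ applied to the left-endpoint columns as a shift of the label vector $(a_i,a_{i_1},\dots)$. I would then extend to the general case by concatenation, since columns belonging to different connected components occupy disjoint row ranges up to the coordinate shifts from rays.
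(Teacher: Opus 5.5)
The paper does not prove this statement; it imports it from \cite[Theorem 4.10]{GNGT2025}, so your argument has to stand on its own.

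Steps~1 and~3 do stand. The pivots sit in rows $r(j)+c(j)$ (right endpoints and rays) and $n-\ell+c(i)+\al(i<j)+1$ (left endpoints), and these rows form a permutation. Every other entry lies in a row pivoting a later column, namely one of the next $\al(i<j)+1$ right endpoints or rays after $i$. These need not belong to cups above $(i<j)$, but ``later'' is all you need. So $\cell$ embeds as an $\ell$-dimensional affine space, and irreducibility plus equidimensionality of $\SF_N$ finishes the argument.

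The gap is in Step~2, in the left-endpoint case, which is where all the content lies. You describe $Ng_i$ as the column of the left endpoint $i^*$ of the cup directly above, modulo earlier coordinate columns. This is false. In Example~\ref{ex.33}, with cups $(1<6),(2<3),(4<5)$, one has $g_4=a_4e_2+a_1e_3+e_6$ and $Ng_4=a_4e_1+a_1e_2+e_5$. Meanwhile $g_1=a_1e_1+e_4$ and the only earlier coordinate column is $g_3=e_1$, so $Ng_4\notin\C\{g_1,g_3\}$. In fact $Ng_4=g_2+(a_4-a_2)e_1$: it is the sibling cup $(2<3)$, not the parent, that is needed.

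The correct relation involves a further power of $N$. Write $Ng_i=a_ie_{r(i)+c(i)}+w$, with $e_0:=0$; the first term is an earlier coordinate column.
\begin{itemize}
\item No ray lies under $(i^*<j^*)$, and the cups strictly between $i^*$ and $i$ are complete. Hence $r(i^*)=r(i)$ and $c(i^*)=c(i)-d$ with $d=\tfrac12(i-i^*-1)$.
\item Also $\al(i^*<j^*)=\al(i<j)-1$ and $a_{i^*}=a_{i_1}$, and the labels above $(i^*<j^*)$ are $a_{i_2},\dots,a_{i_{\al}}$. One checks that $N^dw=g_{i^*}$ exactly.
\end{itemize}
So $Ng_i\in V_{i-1}$ reduces to the preimage identity $V_{i-1}=N^{-d}(V_{i^*})$. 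This follows from three inputs: Fung's condition~(1) for the children of $(i^*<j^*)$ lying left of $i$, which gives $N^d(V_{i-1})=V_{i^*}$; the count $\dim V_{i-1}=\dim V_{i^*}+\dim\ker N^d$; and Lemma~\ref{lem.linearalg}. When $\al(i<j)=0$, $V_{i-1}$ is directly the coordinate subspace of Remark~\ref{rem.ray-condition}.

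This input comes from same-depth siblings to the left. Their own condition~(1) requires $N$-stability of $V_{p-1}$, and hence involves shallower cups further left. So the induction must run over right endpoints from left to right, exactly as in Proposition~\ref{prop.components}, whose Claim~1 is this preimage identity. An induction on nesting depth $\al(i<j)$, as you propose, does not provide it.

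Conversely, what you call the main obstacle is free. Since $\dim V_j-\dim V_{i-1}=2\gd(i<j)=\dim\ker N^{\gd(i<j)}$, containment already gives equality by Lemma~\ref{lem.linearalg}. The coordinate-column case is also trivial, because $Ne_{r(j)+c(j)}=e_{r(j)+c(j)-1}$ is literally the previous coordinate column.

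Alternatively, for prime $\gs$ you can bypass the computation. Lemma~\ref{lemma.cellinJ} and Proposition~\ref{lemma.cellinM} give $\cell\subseteq\cv(\ci_\gs)$, and your Step~1 gives $\cell\subseteq GL_n(\C)$. Proposition~\ref{prop.components} then yields Fung's conditions, and none of these results uses Theorem~\ref{thm.GNST}.
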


\begin{remark}
Our notation for $\cell$ from Definition~\ref{def:cell} differs from that used in~\cite{GNGT2025}. To translate between the two, we note the matrix representatives of $\cell$ defined above comprise the image of the map from \cite[Definition 4.3]{GNGT2025}, denoted there by  $f_\mathcal{M}: \C^\ell \to M_\ell,\; v \mapsto w_\mathcal{M}+\Im({v})$, where $\mathcal{M}$ is a noncrossing matching.  In particular, when $\mathcal{M}= \cupsigma$ we have that $v=(a_i)_{(i<j)}$ with coordinates indexed by cups $(i<j)$ in $\cupsigma$, the permutation matrix $w_{\mathcal{M}}$ is precisely the matrix obtained from our description of the elements of $\cell$ by setting $a_i=0$ for all $i$, and $\Im(v)$ is obtained by subtracting $w_\mathcal{M}$ from the matrix in $\cell$ determined by $v$.
\end{remark}

%%%%%%%%%%%%%%%%%%%%%%%

\section{Preliminary Lemmas}\label{sec:PreLemms}

%-----------------

\subsection{Lemmas on standard noncrossing matchings}\label{sec:SYTmatchings}

In this section we introduce statistics to study standard noncrossing matchings and establish identities we will use to prove our results. Throughout this section we set $n=2\ell$ and let $\gs$ be a standard tableau of shape $(\ell, \ell)$ with associated standard noncrossing matching $\cupsigma$.

\begin{definition} For each $i\in [n-1]$ we define $S_\gs(i)\in [\ell]$ by the rule that 
\[ 
S_\gs(i) := \# \{(s<t ) \mid \text{$(s<t)$ is a cup in $\cupsigma$} \}\ .
\]
We call  $S_\gs:= (S_\gs(1), \ldots, S_\gs(n-1))$ the \emph{total cup sequence} of $\cupsigma$.
\end{definition}

\begin{ex}\label{ex.totalseq}
Let $\cupsigma$ be the following standard noncrossing matching.
\begin{center}
\begin{tikzpicture}[scale=0.5]\begin{scope}[shift={(8,0)}]
		\foreach \i in {1,...,14}
		{
			\node[pnt] at (\i,0)(\i){};
		}

		\draw(7)  to [bend left=45] (8);
        \draw(1)  to [bend left=80] (6);
		\draw(2)  to [bend left=80] (5);
		\draw(3)  to [bend left=80] (4);
		\draw (9)  to [bend left=80](14);
		\draw(10)  to [bend left=80] (11);
        \draw(12)  to [bend left=80] (13);
	
        \draw (1,-1) node {$1$};
       
        \draw (2,-1) node {$2$};
        \draw (3,-1) node {$3$};
       
        \draw (4,-1) node {$4$};
        \draw (5,-1) node {$5$};
        \draw (6,-1) node {$6$};
        \draw (7,-1) node {$7$};
        \draw (8,-1) node {$8$};
        \draw (9,-1) node {$9$};
        \draw (10,-1) node {$10$};
        \draw (11,-1) node {$11$};
        \draw (12,-1) node {$12$};
        \draw (13,-1) node {$13$};
        \draw (14,-1) node {$14$};
       
	\end{scope}
 \end{tikzpicture}
\end{center} 
The total cup sequence associated to $\cupsigma$ is $S_\gs=\left(1,2,3,3,3,3,4,4,5, 6,6,7,7 \right)$.
\end{ex}

Given a cup $(i<j)$, recall that $\delta(i<j)$ is the size of $(i<j)$ and $\al(i<j)$ denotes the number of cups strictly above $(i<j)$. 

\begin{lemma}\label{lem.length}  Let $(i<j)$ be a cup in the standard noncrossing matching $\cupsigma$. Then 
\[
S_\gs(j)-S_\gs(i)+1 = \delta(i<j)  .
\]
\end{lemma}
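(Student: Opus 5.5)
The plan is to compute the difference $S_\gs(j)-S_\gs(i)$ directly as a count of left endpoints lying in the interval $(i,j]$. I read $S_\gs(k)$ as the number of cups $(s<t)$ in $\cupsigma$ with left endpoint $s\le k$; this is the definition given in the introduction and confirmed by Example~\ref{ex.totalseq}. If $j=n$, I extend the definition by $S_\gs(n)=\ell$, which agrees with the same rule. With this reading,
\[
S_\gs(j)-S_\gs(i)=\#\{\text{cups }(s<t)\text{ in }\cupsigma \mid i<s\le j\}.
\]

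The first step removes the endpoint $j$ from this interval. Since $j$ is the right endpoint of $(i<j)$ and each vertex lies on exactly one cup, $j$ is not a left endpoint. Hence only left endpoints $s$ with $i<s<j$ contribute.

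The second step uses the noncrossing condition. Any cup $(s<t)$ with $i<s<j$ must satisfy $t<j$: if $t>j$, the cup $(s<t)$ would cross $(i<j)$. Conversely, $t\neq j$ because $j$ is already matched with $i$. So the vertices strictly between $i$ and $j$ are matched among themselves, since $\cupsigma$ is standard and has no rays. Consequently the cups with left endpoint in $(i,j)$ are exactly the cups nested strictly inside $(i<j)$. There are $\frac{1}{2}(j-i-1)$ of them, because the $j-i-1$ interior vertices are paired off.

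Finally, $S_\gs(j)-S_\gs(i)+1=\frac12(j-i-1)+1=\frac12(j-i+1)=\delta(i<j)$. The only point needing care is the noncrossing argument that interior vertices pair among themselves. This is immediate from the planarity of the matching and the absence of rays.
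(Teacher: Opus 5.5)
Your proof is correct and follows the same route as the paper. Both arguments identify $S_\sigma(j)-S_\sigma(i)$ with the number of cups nested strictly inside $(i<j)$ and use the noncrossing condition to see that the $j-i-1$ interior vertices pair among themselves. Your explicit convention $S_\sigma(n)=\ell$ for the case $j=n$, and your remark that $j$ itself is not a left endpoint, are small clarifications the paper leaves implicit.
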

\begin{proof} Let $(i<j)$ be a cup in $\cupsigma$. Every vertex $i'$ such that $i<i'<j$ must be matched with a $j'$ such that $i'<j'<j$ since the cups in $\cupsigma$ cannot cross. There are exactly $\frac{1}{2}(j-i-1)$ many cups nested in $(i<j)$. But this is exactly the value of $S_\gs(j)-S_\gs(i)$ and therefore,
\begin{equation*}
    S_\gs(j)-S_\gs(i)+1 =\frac{1}{2}(j-i-1)+1 = \frac{1}{2}(j-i+1)=\gd(i<j)
\end{equation*}
as desired.
\end{proof}

\begin{lemma}\label{lem.Sdiff}
Let $(i^*<j^*)$ be the unique cup directly above the cup $(i<j)$ in $\cupsigma$. Then
\[
S_\gs(i)-S_\gs(i^*)=\frac{1}{2} \left(i-i^*+1\right).
\]
\end{lemma}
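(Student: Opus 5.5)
The plan is a counting argument, exactly parallel to the proof of Lemma~\ref{lem.length}. Throughout, $S_\gs(k)$ counts the cups of $\cupsigma$ whose left endpoint is at most $k$, as in the introduction. Then $S_\gs(i)-S_\gs(i^*)$ is the number of cups whose left endpoint lies in the half-open interval $(i^*, i]$. So it suffices to show two things: the cup $(i<j)$ is one of these cups, and the remaining ones are exactly the cups with both endpoints strictly between $i^*$ and $i$.

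The key step is to show that every vertex $v$ with $i^*<v<i$ is matched to a vertex $w$ with $i^*<w<i$. Since $\cupsigma$ is standard, $v$ lies on some cup; let $w$ be its other endpoint. I would rule out the other positions of $w$ one at a time.
\begin{itemize}
\item If $w<i^*$ or $w>j^*$, the cup joining $v$ and $w$ crosses $(i^*<j^*)$.
\item If $i<w<j$, it crosses $(i<j)$.
\item $w$ cannot equal $i^*, i, j$ or $j^*$, since those vertices are already matched.
\item If $j<w<j^*$, then the cup $(v<w)$ satisfies $i^*<v<i<j<w<j^*$. It therefore lies strictly above $(i<j)$ and strictly below $(i^*<j^*)$, contradicting that $(i^*<j^*)$ is the cup directly above $(i<j)$.
\end{itemize}
The only remaining possibility is $i^*<w<i$. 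This last case is the one place where the hypothesis ``directly above'' is used, and it is the main point of the argument; everything else is bookkeeping.

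It follows that the $i-i^*-1$ vertices strictly between $i^*$ and $i$ are perfectly matched among themselves. They therefore form $\frac{1}{2}(i-i^*-1)$ cups, each with left endpoint in $(i^*, i)$. No other cup has its left endpoint in the open interval $(i^*,i)$, because every vertex there is an endpoint of one of these cups. At the vertex $i$ itself the cup $(i<j)$ contributes exactly one more left endpoint. Hence
\[
S_\gs(i)-S_\gs(i^*) = \tfrac{1}{2}(i-i^*-1)+1 = \tfrac{1}{2}(i-i^*+1),
\]
as claimed.
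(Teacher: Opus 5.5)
Your proof is correct and follows the paper's argument. Both proofs write $S_\gs(i)-S_\gs(i^*)$ as the number of cups with left endpoint in $(i^*,i]$, and both use the ``directly above'' hypothesis to show that the vertices strictly between $i^*$ and $i$ are matched among themselves; your case analysis is slightly more complete than the paper's, which only checks left endpoints in $(i^*,i)$ explicitly and leaves the noncrossing argument for right endpoints implicit.
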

\begin{proof}
Let $(i<j)$ and $(i^*<j^*)$ be cups in $\cupsigma$ with $(i^*<j^*)$ the cup directly above $(i<j)$. In other words, we have $i^*<i$, $j<j^*$, and $\ga(i^*<j^*)=\ga(i<j)-1$. By definition, $S_\gs(i)$ is the number of cups with left end point less than or equal to $i$. Thus, 

\begin{equation}\label{eqn.cupdiff}
S_\gs(i)-S_\gs(i^*)   = \#\{\text{cups } (i'<j')\mid i^*< i'\leq i\}.
\end{equation}

Since $(i^*<j^*)$ is the unique cup directly above $(i<j)$, every other cup $(i',j')$ with left end point $i'$ such that $i^*<i'<i$ cannot contain $(i<j)$ and therefore $j'<i$. In particular, $i-i^*-1$ is even and there are precisely $\frac{1}{2}(i-i^*-1)$ many cups $(i'<j')$ such that $i^*<i'<i$. Now~\eqref{eqn.cupdiff} tells us that $S_\gs(i)-S_\gs(i^*) = \frac{1}{2}(i-i^*-1) + 1 = \frac{1}{2}(i-i^*+1)$.
\end{proof}

Recall that $c(i)$ denotes the number of cups $(i',j')$ such that $j'\leq i$, that is, the number of cups that are completed before vertex $i$. 

\begin{lemma}\label{lem.rightend}
For any cup $(i<j)$ in $\cupsigma$. 
\begin{equation}
c(j)=c(i)+\gd(i<j).
\end{equation}
\end{lemma}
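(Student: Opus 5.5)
The plan is to count directly the cups that are completed at or before $j$ but not at or before $i$. Since $i<j$, every cup counted by $c(i)$ is also counted by $c(j)$. So
\[
c(j)-c(i)=\#\{\text{cups }(i'<j')\text{ in }\cupsigma \mid i< j'\leq j\},
\]
and it suffices to show that this set consists exactly of the cup $(i<j)$ together with the cups nested inside it. There are $\gd(i<j)$ such cups, by the definition of size as the number of cups nested inside $(i<j)$, counting itself.

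First inclusion: every cup nested inside $(i<j)$, and $(i<j)$ itself, has right endpoint $j'$ with $i<j'\leq j$. So each is counted.

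Reverse inclusion: take a cup $(i'<j')$ with $i<j'\leq j$.
\begin{itemize}
\item If $j'=j$, the cup is $(i<j)$, since $j$ lies on exactly one cup.
\item If $i<j'<j$, then $j'$ is a vertex strictly between $i$ and $j$. By the noncrossing argument already used in the proof of Lemma~\ref{lem.length}, its partner $i'$ also lies strictly between $i$ and $j$. Otherwise $i'<i<j'<j$ and the cups would cross. Note that $i'\neq i$, because $i$ is matched with $j$. Hence $(i'<j')$ is nested inside $(i<j)$.
\end{itemize}

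Combining the two inclusions gives $c(j)-c(i)=\gd(i<j)$. As a consistency check, the interval strictly between $i$ and $j$ contains $\frac12(j-i-1)$ nested cups, and adding $(i<j)$ gives $\frac12(j-i+1)$, which matches the formula defining $\delta$. This argument does not even need the matching to be standard: rays cannot lie below a cup, so no ray endpoint sits between $i$ and $j$.

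I expect no step to be a real obstacle. The only point needing care is the convention $j'\leq i$ (inclusive) in the definition of $c$ from Definition~\ref{def:cell}. This convention is what makes the difference count cups with $i<j'\leq j$, so that $(i<j)$ itself is included.
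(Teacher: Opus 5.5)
Your proof is correct and follows the same route as the paper: you write $c(j)$ as $c(i)$ plus the number of cups completed at vertices in $(i,j]$, and then identify that number with $\delta(i<j)$. The paper simply asserts this identification ``by definition,'' whereas you justify it with the noncrossing argument, which makes your version more rigorous.
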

\begin{proof}
Let $(i<j)$ be a cup in $\cupsigma$. By definition, $c(i)$ is the number of completed cups in the diagram appearing before vertex $i$. The size $\gd(i<j)$  of the cup $(i<j)$, by definition, counts the number of cups completed between $i$ and $j$. Therefore the number of cups completed before $i$, which is $c(i)$, plus the number of cups completed between $i$ and $j$, which is $\gd(i<j)$, is $c(j)$ the number of cups completed by the vertex $j$.
\end{proof}

\begin{lemma} \label{lemma.completedcups} Let $(i<j)$ be a cup in $\cupsigma$. Then $S_\gs(j)-\delta(i<j)-\alpha(i<j)=c(i)$ .  
\end{lemma}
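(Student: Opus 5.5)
Since $S_\gs(j)$ counts the cups whose left endpoint is at most $j$, the plan is to partition these cups according to where they sit relative to the fixed cup $(i<j)$ and count each class; the identity then falls out. (The displayed definition of $S_\gs$ in the paper omits the condition on the left endpoint; we use the intended meaning from the introduction, namely that $S_\gs(k)$ is the number of cups $(s<t)$ with $s\leq k$.) Because $\cupsigma$ is standard and noncrossing, any cup $(s<t)$ with $s\leq j$ falls into exactly one of three classes.
\begin{enumerate}
\item[(a)] \emph{Nested inside $(i<j)$, including $(i<j)$ itself}: $i\leq s<t\leq j$.
\item[(b)] \emph{Strictly above $(i<j)$}: $s<i<j<t$.
\item[(c)] \emph{Completed before $i$}: $s<t<i$.
\end{enumerate}

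First I will check that these classes exhaust all cups with $s\leq j$, using only noncrossingness. Suppose $s<i$. Since $t\neq i$ (the vertex $i$ is matched to $j$), either $t<i$, giving class (c), or $t>i$. In the latter case noncrossingness forces $t>j$, since $s<i<t<j$ would be a crossing; this is class (b). Suppose instead $i\leq s\leq j$. If $s=i$ the cup is $(i<j)$. If $s=j$ the cup would be $(j<t)$, which is impossible because $j$ is already a right endpoint. Otherwise $i<s<j$, and then $t<j$, because $t>j$ would again give a crossing. So the remaining cups lie in class (a). The three classes are visibly disjoint.

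Next I will count each class. Class (a) has exactly $\gd(i<j)$ elements by the definition of size, consistent with the proof of Lemma~\ref{lem.length}. Class (b) has $\ga(i<j)$ elements by definition. Class (c) consists of exactly the cups with $t<i$. Since $i$ is a left endpoint, $t\leq i$ is equivalent to $t<i$, so this count equals $c(i)$ as in Definition~\ref{def:cell}. Adding the three counts gives $S_\gs(j)=\gd(i<j)+\ga(i<j)+c(i)$, which rearranges to the claim.

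The only real obstacle is the careful case check that the three classes partition the cups with left endpoint at most $j$, in particular excluding $s=j$ and cups that cross $(i<j)$. Everything else is bookkeeping against the definitions. As an alternative, one could combine Lemma~\ref{lem.length} with Lemma~\ref{lem.rightend} to rewrite the claim as $S_\gs(i)-1-\ga(i<j)=c(i)$, and then prove that by a count at $i$ alone. I would use the direct partition instead, since it is shorter.
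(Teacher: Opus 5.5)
Your proof is correct and is essentially the paper's argument. The paper first applies Lemma~\ref{lem.length} to rewrite the left side as $S_\gs(i)-1-\ga(i<j)$, and then splits the cups with left endpoint less than $i$ into those strictly above $(i<j)$ and those completed before $i$; this is exactly the alternative you mention at the end, and your direct three-class partition at $j$ simply folds the nested-cup count from Lemma~\ref{lem.length} into the same bookkeeping, with your noncrossing case check spelling out a step the paper leaves implicit.
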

\begin{proof}
Let $(i<j)$ be a cup in $\cupsigma$.  
By Lemma~\ref{lem.length} we can write
    \begin{equation*}
        \begin{split}
          S_\gs(j)-\gd(i<j)-\alpha(i<j) &=  S_\gs(j)- \left(S_\gs(j)-S_\gs(i)+1\right)-\alpha(i<j)\\
           & = (S_\gs(i)-1)-\ga(i<j)  \\
        & = \#\{(i'<j')\mid i'<i \}-\#\{(i'<j')|~i'<i<j<j'\}\\
        &=\#\{(i'<j')\mid j'<i\}=c(i),
    \end{split}
\end{equation*}
where $(i'<j')$ denotes a cup in $\cupsigma$. 

\end{proof}

\begin{lemma} \label{lem.leftend} For any left end point $i$ of a cup $(i<j)$,
\begin{equation}\label{rem:i}
    i=2S_\gs(i)-\ga(i<j)-1.
\end{equation}
\end{lemma}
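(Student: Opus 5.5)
We prove the identity $i = 2S_\gs(i)-\ga(i<j)-1$ by counting the vertices $1,\dots,i-1$ that lie strictly to the left of $i$. This interprets $S_\gs(i)$ as in the introduction: the number of cups whose left endpoint is at most $i$. That is the reading already used in the proofs of Lemmas~\ref{lem.Sdiff} and~\ref{lemma.completedcups}.

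Since $\cupsigma$ is standard, each vertex $v<i$ is an endpoint of exactly one cup $(i'<j')$ with $i'<i$. Because cups do not cross and $i$ is itself a left endpoint, there are two kinds of such cups.
\begin{itemize}
\item Completed cups have $j'<i$. By definition there are $c(i)$ of them, and each contributes two vertices to $\{1,\ldots,i-1\}$.
\item Cups strictly above $(i<j)$ have $i'<i<j<j'$. There are $\ga(i<j)$ of them, and each contributes only its left endpoint.
\end{itemize}
Since these exhaust all vertices left of $i$, we get
\[
i-1 = 2c(i)+\ga(i<j).
\]

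Next, the computation in the proof of Lemma~\ref{lemma.completedcups} shows that $S_\gs(i)-1-\ga(i<j)=c(i)$. The reason is that the cups with left endpoint $<i$ are exactly the completed cups together with the cups above $(i<j)$. This gives
\[
c(i)=S_\gs(i)-1-\ga(i<j).
\]

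Substituting this into the vertex count yields
\[
i = 1+2\bigl(S_\gs(i)-1-\ga(i<j)\bigr)+\ga(i<j) = 2S_\gs(i)-\ga(i<j)-1,
\]
which is the claim.

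The only delicate point is the dichotomy used in the count: a cup with $i'<i$ either satisfies $j'<i$ or encloses $(i<j)$. This follows from the noncrossing condition together with the fact that $i$ and $j$ are already matched to each other, which rules out $j'=i$ and $i<j'<j$.
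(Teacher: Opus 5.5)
Your proof is correct and follows the paper's argument. The paper also combines $S_\gs(i)=c(i)+\ga(i<j)+1$ with the vertex count $i=2c(i)+\ga(i<j)+1$ and eliminates $c(i)$; you additionally supply the noncrossing dichotomy that justifies both identities, which the paper states without proof.
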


\begin{proof} 
By definition of the total cup sequence, $S_\gs(i) = c(i) + \al(i<j)+1$ where $c(i)$ is the total number of cups completed before the cup $(i<j)$. We also know that $i=2c(i)+\al(i<j)+1$. Thus
\begin{equation*}\label{eqn.cup.above}
\begin{split}
2S_\gs(i)-i &= 2\left( c(i)+ \al(i<j)+1\right) - \left(2c(i)+\al(i<j)+1\right)\\
&= \al(i<j)+1 
\end{split}
\end{equation*}
The formula now follows.
\end{proof}

\begin{corollary}
Let $(i^*<j^*)$ be the unique cup directly above a cup  $(i<j)$ in $\cupsigma$. Then
\[
i^*-2\left(S_\gs(i^*)-\ga(i<j)\right)=\ga(i<j).
\]
\end{corollary}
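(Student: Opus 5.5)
The plan is to apply Lemma~\ref{lem.leftend} to the cup $(i^*<j^*)$ and then rewrite $\ga(i^*<j^*)$ in terms of $\ga(i<j)$.

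Since $(i^*<j^*)$ is a cup in $\cupsigma$ with left endpoint $i^*$, Lemma~\ref{lem.leftend} gives
\[
i^* = 2S_\gs(i^*)-\ga(i^*<j^*)-1 .
\]

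Next we need to know how $\ga(i^*<j^*)$ compares with $\ga(i<j)$. As noted in the proof of Lemma~\ref{lem.Sdiff}, the hypothesis that $(i^*<j^*)$ is the unique cup directly above $(i<j)$ means $\ga(i^*<j^*)=\ga(i<j)-1$. To justify this directly, argue as follows. Any cup $(s<t)$ strictly above $(i^*<j^*)$ satisfies $s<i^*<i<j<j^*<t$, so it is also strictly above $(i<j)$. Conversely, let $(s<t)\neq(i^*<j^*)$ be a cup strictly above $(i<j)$. Because the matching is noncrossing, $(s<t)$ and $(i^*<j^*)$ are nested one way or the other. If $(s<t)$ were nested inside $(i^*<j^*)$, it would sit strictly between $(i^*<j^*)$ and $(i<j)$, contradicting that $(i^*<j^*)$ is directly above $(i<j)$. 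Hence $(s<t)$ lies strictly above $(i^*<j^*)$. The cups strictly above $(i<j)$ are therefore exactly those strictly above $(i^*<j^*)$, together with $(i^*<j^*)$ itself.

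Substituting $\ga(i^*<j^*)=\ga(i<j)-1$ into the displayed formula yields
\[
i^*=2S_\gs(i^*)-\ga(i<j).
\]
Rearranging gives $i^*-2S_\gs(i^*)+2\ga(i<j)=\ga(i<j)$, which is exactly $i^*-2\left(S_\gs(i^*)-\ga(i<j)\right)=\ga(i<j)$. The only step that needs any care is the identification $\ga(i^*<j^*)=\ga(i<j)-1$ from noncrossingness; once that is in hand, the result is pure algebra.
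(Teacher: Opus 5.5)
Your proposal is correct and matches the paper's proof: apply Lemma~\ref{lem.leftend} to the cup $(i^*<j^*)$ and substitute $\ga(i^*<j^*)=\ga(i<j)-1$. The paper simply asserts that last identity, so your noncrossing argument for it is a welcome addition.
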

\begin{proof} Since $(i^*<j^*)$ lies directly above $(i<j)$, we know that $\ga(i^*<j^*)=\ga(i<j)-1$. The statement now follows directly by applying~\eqref{rem:i} to the cup $(i^*<j^*)$.
\end{proof}

%----------------------------
\subsection{A linear algebra lemma on rank conditions}
In this section we prove a lemma regarding the rank of a matrix satisfying certain vanishing conditions. This result will give a technical foundation for our choice of generators of our ideals $\ci_\gs$ in later sections. 

\begin{lemma} \label{lem.inv_matrix}
Let $m\geq 3$ be an odd integer and $b = \frac{1}{2}(m+1)$. Let $\binom{[m]}{b}$ and $\binom{[m]}{b-1}$ denote the set of all subsets of $[m]$ of cardinality $b$ and $b-1$, respectively. Let $M$ denote the incidence matrix with rows indexed by $\binom{[m]}{b-1}$ and column by $\binom{[m]}{b}$ defined by
\[
M_{Q,R} = \left\{ \begin{array}{ll} 1 & \text{ if $Q\subseteq R$} \\  0 & \text{ otherwise} \end{array} \right.
\]
for each $Q\in \binom{[m]}{b-1}$ and $R\in \binom{[m]}{b}$. 
Then the matrix $M$ is invertible. 
\end{lemma}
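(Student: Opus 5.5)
We will show that $M$ is square and then that $MM^T$ is invertible. Since $b-1 + b = m$, complementation $Q\mapsto [m]\setminus Q$ is a bijection $\binom{[m]}{b-1}\to\binom{[m]}{b}$, so $M$ is a square matrix of size $\binom{m}{b}$. It therefore suffices to show $\det M\neq 0$. We compute $MM^T$, which is indexed by pairs $Q,Q'\in\binom{[m]}{b-1}$; its entry counts the sets $R\in\binom{[m]}{b}$ with $Q\cup Q'\subseteq R$. This count is $m-b+1 = b$ if $Q=Q'$, it is $1$ if $|Q\cap Q'|=b-2$, and it is $0$ otherwise. Hence
\[
MM^T = b\,I + A,
\]
where $A$ is the adjacency matrix of the Johnson graph $J(m,b-1)$.

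Next we use the known spectrum of the Johnson graph $J(m,k)$ with $k=b-1$. Its eigenvalues are
\[
\theta_j=(k-j)(m-k-j)-j, \qquad 0\le j\le \min(k,m-k).
\]
Substituting $k=b-1$ and $m-k=b$, the eigenvalues of $MM^T$ are $b+(b-1-j)(b-j)-j = (b-j)^2$ for $0\le j\le b-1$. All of these are positive since $j\le b-1<b$. Therefore $MM^T$ is invertible, and so is $M$.

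To avoid quoting the Johnson scheme, we can instead argue directly through the standard up/down operator identity. Let $U_k$ be the inclusion map from $k$-sets to $(k+1)$-sets and $D_{k+1}$ its transpose. These satisfy
\[
D_{k+1}U_k - U_{k-1}D_k = (m-2k)\,I \quad\text{on } \C^{\binom{[m]}{k}}.
\]
By induction this gives injectivity of $U_k$ for $k<m/2$. Here $M^T=U_{b-1}$ with $b-1<m/2$, so $M^T$ is injective, and being square it is invertible.

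In this second route, the induction is where care is needed. One shows $D_{k+1}U_k = U_{k-1}D_k+(m-2k)I$ is positive definite, because $U_{k-1}D_k = U_{k-1}U_{k-1}^T$ is positive semidefinite and $m-2k>0$. This requires only the commutation identity, which is a routine double-counting check: for a $k$-set $Q$, the coefficient of $Q'$ in each side counts sets $R\supseteq Q\cup Q'$ of size $k+1$ versus sets $P\subseteq Q\cap Q'$ of size $k-1$. These counts agree when $Q\ne Q'$, and they differ by $(m-k)-k$ when $Q=Q'$. We expect this to be the only step requiring verification, and it is elementary. Note that $m\ge3$ odd is used only to make $b$ an integer with $b-1<m/2$.
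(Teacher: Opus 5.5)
Your proof is correct, but it takes a different route from the paper.

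\textbf{The paper's route.} The paper is constructive. It writes down an explicit candidate inverse $N$ with $N_{R,Q}=q_{\#(Q\cap R)}$. The rationals $q_k$ are defined by $q_{b-1}=1/b$ and $(k+1)q_k=-(b-k-1)q_{k+1}$, which gives $q_k=(-1)^{b-1-k}/\bigl(b\binom{b-1}{k}\bigr)$. It then checks $MN=I$ by counting, for fixed $Q$ and $T$, how many $R\supseteq Q$ meet $T$ in $k$ versus $k+1$ elements.

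\textbf{Your route.} You never produce an inverse; instead you show that the Gram matrix $MM^T$ is positive definite.
\begin{itemize}
\item Your first route uses the Johnson-scheme spectrum as an outside input. In return it yields every eigenvalue $(b-j)^2$ of $MM^T$, and hence $|\det M|$.
\item Your second route is fully self-contained and arguably shorter than the paper's argument. At $k=b-1$ one has $m-2k=1$, so the commutation relation reads $MM^T=U_{b-2}U_{b-2}^T+I$. That is the identity plus a Gram matrix, which is visibly positive definite.
\item The second route also proves at once that every inclusion matrix $U_k$ with $k<m/2$ is injective.
\end{itemize}

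\textbf{One small correction.} No induction is needed in your second route. The matrix $U_{k-1}U_{k-1}^T$ is positive semidefinite for every $k$ unconditionally, so your positivity argument is already direct.

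\textbf{What the paper's approach buys.} It gives an explicit closed form for $M^{-1}$ with rational entries depending only on $\#(Q\cap R)$. However, only invertibility is used later, in Lemma~\ref{lem.rankcond}, so your approach suffices for the paper's purposes.
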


\begin{proof} Since $2b = m+1$ we have in particular that $b=m-b+1$ and $m-b=b-1$ so
\[
\binom{m}{b} = \frac{m!}{b!(m-b)!} =  \frac{m!}{(m-b+1)! (b-1)!} = \binom{m}{b-1}.
\]
In particular, the matrix $M$ is square. For $k = 0, 1, \ldots, b-1$ we define rational numbers $q_k\in \Q$ by 
\begin{align}\label{eqn.formula_inv}
q_{b-1} = \frac{1}{b} \; \text{ and } \; q_{k} = -\frac{(b-k-1)q_{k+1}}{k+1} \; \text{ for all $k=0, \ldots, b-2$}.
\end{align}
For example, if $m=9$ then $b=5$ and the reader can confirm that $q_4 = \frac{1}{5}$, $q_3 =-\frac{1}{20}$, $q_2 = \frac{1}{30}$, $q_1= -\frac{1}{20}$, and $q_0 = \frac{1}{5}$. 

Define an $\binom{m}{b}\times \binom{m}{b}$ matrix $N$ with rows indexed by elements of $\binom{[m]}{b}$ and columns by elements of $\binom{[m]}{b-1}$ by the rule 
\[
N_{R,Q} = q_{\#(Q\cap R)} 
\]
for each $Q\in \binom{[m]}{b-1}$ and $R\in \binom{[m]}{b}$. In other words, the value of each entry of $N$ is uniquely determined by the cardinality of the intersection of $Q$ with $R$.  We'll prove $MN$ is the identity matrix. 

Let $Q,T\in \binom{[m]}{b-1}$. 
By definition of the matrix $M$ we have
\begin{align}\label{eqn.productMN}
(MN)_{Q,T} = \sum_{R\in \binom{[m]}{b}} M_{Q,R}N_{R,T} = \sum_{\substack{R\in \binom{[m]}{b}\\ Q \subset R}} N_{R,T} .
\end{align}
There are precisely $b$ many subsets $R\in \binom{[m]}{b}$ containing $Q$, so the sum above has $b$ many terms. 

Suppose $T=Q$. When $Q\subset R$ we have $N_{R,Q}=q_{b-1}$ and the sum~\eqref{eqn.productMN} becomes
\[
(MN)_{Q,Q} = b \cdot q_{b-1} = b \cdot \frac{1}{b} = 1.
\]

Next suppose that $T\neq Q$. Let $k = \#(Q\cap T)$.  Given any subset $R\in \binom{[m]}{b}$ with $Q\subset R$ we have either $\#(T\cap R)=k$ or $\#(T\cap R) = k+1$. The first case occurs when $R=Q\sqcup\{\ell\}$ for some $\ell\notin T\cup Q$. There are $m-2(b-1)+k = k+1$ many possible such $\ell$, so there are $k+1$ many subsets $R\in \binom{[m]}{b}$ such that $Q\subset R$ and $\#(T\cap R) = k$. The second case of $\#(T\cap R)=k+1$ occurs when $R=Q\sqcup \{\ell\}$ for some $\ell\in T\setminus Q$. There are $b-1-k$ many possible values of $\ell$, so there are $b-1-k$ many subsets $R\in \binom{[m]}{b}$ such that $Q\subset R$ and $\#(T\cap R) = k$. Thus in this case, the sum~\eqref{eqn.productMN} becomes
\[
(MN)_{Q,T} = (k+1)q_k + (b-1-k)q_{k+1}.
\]
Our assumption that $T\neq Q$ implies $k\leq b-2$. Using the definition of $q_k$ from~\eqref{eqn.formula_inv} gives us
\[
(k+1)q_k + (b-1-k)q_{k+1} = -(k+1)\frac{(b-k-1)q_{k+1}}{k+1} + (b-1-k)q_{k+1}=0
\]
as desired. Thus, $M$ is invertible. 
\end{proof}

\begin{ex} Let $m = 5$. We have $\binom{5}{3} = \binom{5}{2} = 10$. Relative to the order the elements are listed in the sets 
\begin{align*}
\binom{[5]}{2} & = \{ 12,13,14,15,23,24,25,34,35,45 \} \\
\binom{[5]}{3} & = \{ 123,124,125,134,135,145,234,235, 245, 345\} , 
\end{align*}
the incidence matrix of Lemma~\ref{lem.inv_matrix} is 
\[
M= \begin{bmatrix} 1&1&1&0&0&0&0&0&0&0 \\ 1&0&0&1&1&0&0&0&0&0 \\ 0&1&0&1&0&1&0&0&0&0\\ 0&0&1&0&1&1&0&0&0&0 \\ 1&0&0&0&0&0&1&1&0&0\\ 0&1&0&0&0&0&1&0&1&0 \\ 0&0&1&0&0&0&0&1&1&0\\ 0&0&0&1&0&0&1&0&0&1\\ 0&0&0&0&1&0&0&1&0&1\\ 0&0&0&0&0&1&0&0&1&1  \end{bmatrix}.
\]
In this case, the rational numbers $q_0, q_1, q_{2}$ defined as in the proof of Lemma~\ref{lem.inv_matrix} are 
\[
q_2 = \frac{1}{3},\; q_1 = -\frac{1}{6} = -\frac{q_2}{2},\; q_0 = - 2q_1 = \frac{1}{3} .
\]
These numbers are the entries of the inverse matrix:

\[
M^{-1} = \frac{1}{6} \begin{bmatrix}
2 & 2 & -1 & -1 & 2 & -1 & -1 & -1 & -1 & 2 \\
2 & -1 & 2 & -1 & -1 & 2 & -1 & -1 & 2 & -1 \\
2 & -1 & -1 & 2 & -1 & -1 & 2 & 2 & -1 & -1 \\
-1 & 2 & 2 & -1 & -1 & -1 & 2 & 2 & -1 & -1 \\
-1 & 2 & -1 & 2 & -1 & 2 & -1 & -1 & 2 & -1 \\
-1 & -1 & 2 & 2 & 2 & -1 & -1 & -1 & -1 & 2 \\
-1 & -1 & -1 & 2 & 2 & 2 & -1 & 2 & -1 & -1 \\
-1 & -1 & 2 & -1 & 2 & -1 & 2 & -1 & 2 & -1 \\
-1 & 2 & -1 & -1 & -1 & 2 & 2 & -1 & -1 & 2 \\
2 & -1 & -1 & -1 & -1 & -1 & -1 & 2 & 2 & 2
\end{bmatrix} . \qedhere
\]
\end{ex}

Let $B$ be a matrix, $R$ be a subset of the index set for the rows of $B$, and $C$ be a subset of the index set for the columns of $B$. Assume $\#R = \#C$. We let $\pl_{R,C}(B)$ be the determinant  of the submatrix of $B$ defined using the rows indexed by elements of $R$ and columns by elements of $C$. The following is a statement of Laplace's theorem for determinants in a special case; see~\cite[Theorem 10.33]{Shafarevich}.

\begin{theorem}[Generalized Laplace expansion with $2\times 2$ minors] \label{thm.gen.laplace}
 Let $B$ be an $n\times n$ matrix and $\binom{[n]}{2}$ be the set of subsets of $[n]$ of cardinality $2$. Let $I\in\binom{[n]}{2}$. Then
 \begin{equation}\label{prop:GLE}
\det\left(B\right)=\sum_{S\in\binom{[n]}{2}} (-1)^{m(I,S)}\,\pl_{I, S}\left(B\right)\pl_{[n]\setminus I,\,[n]\setminus S}\left(B\right)
 \end{equation}
 where $m(I,S)=\sum_{i\in I} i+\sum_{s\in S}s$.
\end{theorem}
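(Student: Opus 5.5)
We must prove the generalized Laplace expansion along a fixed pair of rows $I=\{i_1<i_2\}$. We will expand the determinant via the Leibniz formula and group the permutations according to where they send the two rows in $I$.

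Write $\det(B)=\sum_{w\in S_n}\operatorname{sgn}(w)\prod_{r=1}^n B_{r,w(r)}$. For each $w$, set $S=w(I)\in\binom{[n]}{2}$. This partitions $S_n$ into classes indexed by $S$. Fix $S=\{s_1<s_2\}$, and let $I^c=[n]\setminus I$ and $S^c=[n]\setminus S$, both ordered increasingly. A permutation $w$ in the class of $S$ is determined by two pieces of data:
\begin{itemize}
\item a bijection $u\colon I\to S$, which we identify with an element of $S_2$ via the order-preserving identifications $I\cong[2]\cong S$;
\item a bijection $v\colon I^c\to S^c$, which we identify with an element of $S_{n-2}$ in the same way.
\end{itemize}
The product then factors as $\prod_{r\in I}B_{r,u(r)}\cdot\prod_{r\in I^c}B_{r,v(r)}$. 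Summing over $u$ and $v$ would give $\pl_{I,S}(B)\,\pl_{I^c,S^c}(B)$, provided we have the sign identity
\[
\operatorname{sgn}(w)=(-1)^{m(I,S)}\operatorname{sgn}(u)\operatorname{sgn}(v).
\]

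The main step is proving this sign identity. Let $\tau_I$ be the permutation listing $i_1,i_2$ first and then $I^c$ in increasing order, and define $\tau_S$ the same way for $S$. Then $\tau_S^{-1}w\tau_I$ is the block-diagonal permutation $u\oplus v$, whose sign is $\operatorname{sgn}(u)\operatorname{sgn}(v)$. So it suffices to compute $\operatorname{sgn}(\tau_I)$. Its inversions are the pairs consisting of an element of $I$ and a smaller element of $I^c$. Element $i_1$ contributes $i_1-1$ such pairs, and $i_2$ contributes $i_2-2$. Hence
\[
\operatorname{sgn}(\tau_I)=(-1)^{i_1+i_2-3}=(-1)^{i_1+i_2+1},
\]
and likewise $\operatorname{sgn}(\tau_S)=(-1)^{s_1+s_2+1}$. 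The product of these two signs is $(-1)^{m(I,S)}$, since the two $+1$ terms cancel mod $2$. This gives the sign identity.

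Summing over all classes $S$ then yields the stated formula. The only place requiring care is the inversion count for $\tau_I$ and $\tau_S$; everything else is routine bookkeeping. Alternatively, we could simply cite Shafarevich, Theorem 10.33, specialized to $k=2$.
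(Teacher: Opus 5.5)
Your proof is correct. It differs from the paper only in that the paper gives no argument at all: it invokes Shafarevich's Theorem 10.33, the Laplace expansion along an arbitrary set of $k$ rows, in the case $k=2$.

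Your route is the standard self-contained Leibniz-formula proof:
\begin{itemize}
\item partition $S_n$ according to $S=w(I)$;
\item identify each class with $S_2\times S_{n-2}$;
\item extract the sign from the block decomposition $\tau_S^{-1}w\tau_I=u\oplus v$.
\end{itemize}
The one delicate point is the inversion count $i_1+i_2-3$ for $\tau_I$. You handle it correctly: every element below $i_1$ lies in $I^c$, and every element below $i_2$ except $i_1$ does. This gives $\mathrm{sgn}(\tau_I)\,\mathrm{sgn}(\tau_S)=(-1)^{m(I,S)}$.

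The two approaches buy different things. The citation is shorter and covers the general-$k$ statement. Your argument independently checks the sign convention, which is worth having for two reasons.
\begin{itemize}
\item Sign conventions for Laplace expansions vary between sources.
\item The paper relies on the parity of $m(I,S)$ in the proof of Lemma~\ref{lem.rankcond}, where it replaces $m(I,S)$ by $1+\sum_{s\in S}s$.
\end{itemize}
Your proof also makes explicit that the indices in $m(I,S)$ are \emph{positions} in the matrix being expanded. In Lemma~\ref{lem.rankcond} the expansion is applied to the submatrix with rows $Q'\sqcup\{2r-1,2r\}$. There the paired rows occupy consecutive positions, so their position sum is odd, which justifies the paper's simplification.
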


\begin{ex} Consider the $4\times 4$ matrix $A=(a_{ij})_{i,j\in [4]}$. Let $I = \{1,2\}$. Then
\begin{align*}
\det(A) = \left| \begin{matrix} a_{1,1} & a_{1,2} & a_{1,3} & a_{1,4} \\ a_{2,1} & a_{2,2} & a_{2,3} & a_{2,4} \\ a_{3,1} & a_{3,2} & a_{3,3} & a_{3,4}\\ a_{4,1} & a_{4,2} & a_{4,3} & a_{4,4}\end{matrix} \right| &= \left| \begin{matrix} a_{1,1} & a_{1,2}\\ a_{2,1} & a_{2,2} \end{matrix} \right| \cdot \left| \begin{matrix} a_{3,3} & a_{3,4}\\ a_{4,3} & a_{4,4} \end{matrix} \right| -  \left| \begin{matrix} a_{1,1} & a_{1,3}\\ a_{2,1} & a_{2,3} \end{matrix} \right| \cdot \left| \begin{matrix} a_{3,2} & a_{3,4}\\ a_{4,2} & a_{4,4} \end{matrix} \right| \\
&  \quad +   \left| \begin{matrix} a_{1,1} & a_{1,4}\\ a_{2,1} & a_{2,4} \end{matrix} \right| \cdot \left| \begin{matrix} a_{3,2} & a_{3,3}\\ a_{4,2} & a_{4,3} \end{matrix} \right| +  \left| \begin{matrix} a_{1,2} & a_{1,3}\\ a_{2,2} & a_{2,3} \end{matrix} \right| \cdot \left| \begin{matrix} a_{3,1} & a_{3,4}\\ a_{4,1} & a_{4,4} \end{matrix} \right| \\ \\
& \quad \quad  -   \left| \begin{matrix} a_{1,2} & a_{1,4}\\ a_{2,2} & a_{2,4} \end{matrix} \right| \cdot \left| \begin{matrix} a_{3,1} & a_{3,3}\\ a_{4,1} & a_{4,3} \end{matrix} \right| +  \left| \begin{matrix} a_{1,3} & a_{1,4}\\ a_{2,3} & a_{2,4} \end{matrix} \right| \cdot \left| \begin{matrix} a_{3,1} & a_{3,2}\\ a_{4,1} & a_{4,2} \end{matrix} \right| \\
\end{align*}
is the expansion of $\det(A)$ from~\eqref{prop:GLE}.
\end{ex}

\begin{lemma}\label{lem.rankcond} Let $a,k\geq 2$ be a positive integers with $k\geq a+1$ and let $A$ be $2a\times k$ matrix such that 
\begin{equation}\label{vanmins}
\sum_{k=1}^a \pl_{\{k,2a-k+1\},\{c_1,c_2\}}(A)=0 \quad \textup{ for all } \quad 1\leq c_1<c_2\leq k.
\end{equation}
Then $\rk(A)\leq a$, that is, all $(a+1)\times(a+1)$-minors of $A$ are zero.
\end{lemma}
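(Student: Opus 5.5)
The plan is to read the vanishing conditions~\eqref{vanmins} as a statement about a symplectic form: they say that the column space of $A$ is an isotropic subspace of $\C^{2a}$, and isotropic subspaces of a $2a$-dimensional symplectic space have dimension at most $a$. This avoids any direct manipulation of $(a+1)\times(a+1)$ minors. In particular, I would not need the generalized Laplace expansion of Theorem~\ref{thm.gen.laplace} or the incidence-matrix Lemma~\ref{lem.inv_matrix}, although those could give an alternative, more computational, route.

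\textbf{Step 1: the form.} Define a bilinear form on $\C^{2a}$ by
\[
\omega(u,v) := \sum_{s=1}^{a} \left( u_s v_{2a-s+1} - u_{2a-s+1} v_s \right).
\]
I am writing $s$ for the summation index to avoid the clash with $k$ in the statement. This form is bilinear and alternating, and its Gram matrix $J$ has entries $\pm 1$ on the antidiagonal and zeros elsewhere. Hence $\det J = \pm 1$ and $\omega$ is nondegenerate.

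\textbf{Step 2: translate the hypothesis.} Let $u^{(1)}, \ldots, u^{(k)}$ be the columns of $A$. Expanding each $2\times 2$ minor gives
\[
\pl_{\{s,2a-s+1\},\{c_1,c_2\}}(A) = A_{s,c_1}A_{2a-s+1,c_2} - A_{s,c_2}A_{2a-s+1,c_1}.
\]
Summing over $s$ shows that the left-hand side of~\eqref{vanmins} is exactly $\omega(u^{(c_1)}, u^{(c_2)})$. So the hypothesis says $\omega(u^{(c_1)},u^{(c_2)})=0$ for all $c_1<c_2$. The case $c_1=c_2$ is automatic because $\omega$ is alternating, and the case $c_1>c_2$ follows by skew-symmetry. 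By bilinearity, $\omega$ then vanishes identically on $W\times W$, where $W$ is the column space of $A$.

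\textbf{Step 3: dimension count.} Because $\omega$ is nondegenerate, the map $\C^{2a}\to (\C^{2a})^*$, $v\mapsto \omega(\cdot,v)$, is an isomorphism. Consequently $\dim W^{\perp_\omega} = 2a-\dim W$. Step 2 gives $W\subseteq W^{\perp_\omega}$, so $\dim W\le 2a-\dim W$, that is, $\rk(A)=\dim W\le a$. Therefore all $(a+1)\times(a+1)$ minors of $A$ vanish. The hypotheses $k\ge a+1$ and $a\ge 2$ only ensure that such minors exist and play no other role.

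I do not expect a serious obstacle. The only points needing care are sign conventions: the orientation of the row pair $\{s,2a-s+1\}$, with $s<2a-s+1$, must match the form $\omega$ consistently, and one should check that $J$ really is nonsingular. Both are immediate from the explicit formulas above.
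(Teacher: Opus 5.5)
Your proof is correct, and it follows a genuinely different route from the paper's.

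The paper's route is combinatorial. It permutes rows so that the pairs become $\{2r-1,2r\}$ and sorts the $(a+1)$-row subsets $R$ according to their set $U$ of unpaired rows. It then uses the generalized Laplace expansion (Theorem~\ref{thm.gen.laplace}) along a single pair: for each choice $Q$ of $b-1$ pairs, the sum of the minors $\pl_{R,[a+1]}(A)$ over all $R\supseteq Q'$ is a combination of the quadrics in~\eqref{vanmins}. Finally it inverts the subset-incidence matrix of Lemma~\ref{lem.inv_matrix} to isolate each individual minor.

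You instead recognize the left side of~\eqref{vanmins} as $\omega(u^{(c_1)},u^{(c_2)})$ for a nondegenerate alternating form. The column space $W$ is therefore isotropic, so $\dim W\le 2a-\dim W$.

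Your argument has several advantages:
\begin{itemize}
\item It is much shorter and makes Lemma~\ref{lem.inv_matrix} unnecessary.
\item It works over any field, whereas the paper's inverse matrix has denominators.
\item It uses neither $a\ge 2$ nor $k\ge a+1$. This is a small bonus, since in Proposition~\ref{prop.aug.rank} the lemma is applied with $a=\alpha(i<j)+1$, which equals $1$ when $\alpha(i<j)=0$.
\end{itemize}

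What the paper's computation buys in exchange: read as an identity of polynomials, it expresses every $(a+1)\times(a+1)$ minor as a $\Q$-linear combination of products of the quadrics with $(a-1)\times(a-1)$ minors. So the minors lie in the ideal generated by the quadrics. This ideal-theoretic statement does not follow from your pointwise dimension count, although the paper itself only ever uses the pointwise rank bound.
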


\begin{proof}
Up to permuting the rows of $A$, the assumptions from~\eqref{vanmins} are equivalent to: 
\begin{equation}\label{vanmins1}
\sum_{k=1}^a \pl_{\{2k-1,2k\},\{c_1,c_2\}}(A)=0 \textup{ for all } 1\leq c_1<c_2\leq k.
\end{equation}
Since exchanging rows does not change the rank, it suffices to prove $\rk (A) \leq a$ under the assumptions of~\eqref{vanmins1}.

We will use the generalized Laplace expansion of $A$ to show that if \eqref{vanmins} holds, then all $(a+1)\times(a+1)$ minors of $A$ vanish. 
Without loss of generality, we may assume that $k=a+1$. Otherwise, we may choose a subset of $a+1$ columns of $A$ and use the argument below.

We group the rows of $A$ into sets of pairs of the form $\{2k-1, 2k\}$ with $k=1, \ldots, a$. Any subset $R\subset[2a]$ of cardinality $a+1$ must contain at least one such pair. We classify the subsets of cardinality $a+1$ in $[2a]$ according to the ``unpaired" indices they contain. 

Let $U$ be a subset of $[2a]$ such that
\begin{align}\label{eqn.nopairs}
\{2k-1,2k\}\nsubseteq U\; \text{ for all } \; k=1, \ldots, a.
\end{align}
We use the notation `$U$' because this will be the set of ``unpaired'' indices within a subset of $[2a]$ of cardinality $a+1$. Note that $U$ could be the empty set. If $U$ is indeed the subset of unpaired indices in the subset $R\subset [2a]$ of cardinality $\#R = a+1$, then $\#R-\#U \geq 2$ is even and the condition $\#U + (\#R-\#U) = a+1$ implies that $\#U$ has the same parity as $a+1$ and $\#U \leq a-1$. For example, if $a=3$ with pairs $\{1, 2\}, \,\{3,4\},\, \{5,6\}$ the subsets $U$ that arise in the way are either the empty set or any of the two element subsets of $[2a]=[6]$ that do not consist of a pair. No one element subset of $[6]$ can occur as the set of unpaired elements of a subset of $[6]$ of cardinality $a+1=4$ because the cardinality has the wrong parity.

Given a subset $U$ satisfying~\eqref{eqn.nopairs} such that $\#U$ has the same parity as $a+1$ and $\#U\leq a-1$ we consider the collection $\cs_U$ of all subsets $R\subseteq [2a]$ of cardinality $a+1$ such that $U\subseteq R$ and
\[
k\in R\backslash U\Rightarrow\begin{cases}
    k-1\in R, & \text{if $k$ is even } \\
    k+1\in R,  & \text{if $k$ is odd. }
\end{cases}
\]
In other words, $\cs_U$ is the set of subsets of $[2a]$ of cardinality $a+1$ such that $R\setminus U$ is a union of pairs. Each $R\in \cs_U$ contains exactly $b:=\frac{1}{2}(a+1-\#U)$ pairs and there are precisely $\binom{a-\#U}{b}$ elements in~$\cs_U$. To complete the proof, it suffices to show that $\pl_{R,[a+1]}(A)=0$ for all $R\in \cs_U$ and $U\subset [2a]$ such that~\eqref{eqn.nopairs} holds, $\#U$ has the same parity as $a+1$, and $\#U\leq a-1$.

Fix a $U$ satisfying the relevant hypotheses and set
\[
P=\{k\in[a] \mid  \{2k-1, 2k\}\cap U = \emptyset\}.
\]
In other words $P$ is the indexing set of all pairs $\{2k-1, 2k\}$ that do not contain any element of $U$, so $P$ has cardinality $a-\#U$. Recall that $b$ is the number of pairs in any $R\in \cs_U$, that is $2b = a+1-\#U$, and thus $\# P =a-\#U = 2b-1$ is odd. 
The set $P$ keeps track of the possible pairs, each of the form $\{2k-1,2k\}$ with $k\in P$, that we can add to the set $U$ in order to obtain an element of $\cs_U$.

Given a subset $Q\subset P$ of cardinality $b-1$, let $Q'$ be the disjoint union 
\[
Q':=\{2k-1,2k \mid  k\in Q\} \sqcup U
\]
of the $b-1$ pairs indexed by elements of $Q$ and $U$. In order to obtain an element of $\cs_U$ we must add one more pair to the set. Note that we may add any pair $\{2r-1, 2r\}$ for $r\in P\setminus Q$. Consider such a subset $R=Q'\sqcup \{2r-1,2r\}\in \cs_U$. Applying the generalized Laplace expansion with $I= \{2r-1,2r\}$ to the determinant $\pl_{R, [a+1]}$ we have 
\begin{align}\label{eqn.genlaplace}
\pl_{R, [a+1]} (A) 
= \pl_{\left( Q'\sqcup\{2r-1, 2r\}\right), [a+1]}(A) 
= \sum_{S\in \binom{[a+1]}{2}} (-1)^{m(I,S)} \pl_{\{2r-1,2r\}, S}(A) \, \pl_{Q',[a+1]\setminus S}(A),
\end{align}
where we may take $m(I,S) = 1+\sum_{s\in S}s$ since the sum of elements in $I$ is odd. 
Summing over all $R$ we obtain by adding a pair to $Q'$ we have,
\begin{align*}
\sum_{\substack{R\in \cs_U\\ Q'\subseteq R}} \pl_{R,[a+1]}(A) &=  
\sum_{r\in P\setminus Q} \pl_{\{2r-1,2r\}\sqcup Q' , [a+1]}(A) \\ 
&= \sum_{r=1}^a \pl_{\{2r-1,2r\}\sqcup Q' , [a+1]}(A) \quad \text { since $\pl_{\{2r-1, 2r\} \sqcup Q'}(A)=0$ for all $r\notin P\setminus Q$ }\\
&= \sum_{r=1}^a \left( \sum_{S\in \binom{[a+1]}{2}} (-1)^{m(I,S)} \pl_{\{2r-1, 2r\},S}(A) \pl_{Q', [a+1]\setminus S}(A) \right) \quad \text{ by~\eqref{eqn.genlaplace}} \\
&= \sum_{S\in \binom{[a+1]}{2}} \left(  \left( \sum_{r=1}^a \pl_{\{2r-1, 2r\},S}(A) \right) (-1)^{m(I,S)}  \pl_{Q', [a+1]\setminus S}(A)\right)\\
&= 0 \quad\quad \text{by \eqref{vanmins1}.}
\end{align*}

In particular, for each subset $Q$ of $P$ with cardinality $b-1$ we have
\begin{equation}\label{eqn:GLP2}
\sum_{\substack{R\in\cs_U\\ Q'\subseteq R}}  \pl_{R,[a+1]}\left(A\right) = 0.
\end{equation}
Thus, the column vector $\vec{p}_U:=\left(\pl_{R, [a+1]}\left(A\right)\right)_{R\in\cs_U}$ with entries indexed by elements of $\cs_U$ satisfies the matrix equation
\[
M\vec{p}_U=\vec{0}
\]
where $M$ is the invertible incidence matrix as in Lemma~\ref{lem.inv_matrix} with $m= \#P$ (which is odd) defined by 
\begin{equation*}
M_{Q, R}:=\begin{cases}
        1& \text{ if } Q'\subseteq R\\
        0& \text{otherwise}
    \end{cases} 
\end{equation*}
for all $Q\in \binom{P}{b-1}$ and $R\in \cs_U$ (we may identify $R$ with a subset in $\binom{P}{b}$ associating $R$ to the index set of the pairs in $R\setminus U$).
Since $M$ is invertible by Lemma~\ref{lem.inv_matrix}, we conclude $\vec{p}_U=\vec{0}$ and thus $\pl_{R, [a+1]} = 0$ for all $R\in \cs_U$, as desired. 
\end{proof}

%%%%%%%%%%%%%%%%%%%%%%

\subsection{Decomposing components of two-row Springer fibers}
In this section, we make the isomorphism from Proposition~\ref{prop.Fresse} explicit in the two-row case using the cell $\cell$ corresponding to $\gs\in \SYT(n-\ell, \ell)$. 

Suppose $\gs$ has prime decomposition $\gs=\gs_1\circ \gs_2 \circ \cdots \circ \gs_k$. For each $i$ with $1\leq i \leq k$, let $\mu_i:= |\gs_i|$ and set $\mu_0=0$. The sequence $\mu:=(\mu_1,\mu_2, \ldots, \mu_k)$ is a composition of $n$ and we let $\parsum_i = \mu_1+\cdots + \mu_{i-1}+\mu_i$ for $i=1, \ldots, k$ be the $i$-th partial sum of $\mu$. Since $\gs$ has two-rows, we have by Corollary~\ref{cupdecomposition} that either $\mu_i = 1$ (which occurs when $\parsum_i$ is the endpoint of a ray in $\cupsigma$) or $\mu_i$ is even (which occurs when the subdiagram supported on the vertices $\parsum_{i-1}+1, \parsum_{i-1}+2, \ldots, \parsum_i$ is a connected standard noncrossing matching in $\cupsigma$). When $\mu_i\geq 2$ we call the vertices $\{\parsum_{i-1}+1, \parsum_{i-1}+2, \ldots, \parsum_i\}$ a \emph{vertex block}. We say that the set $\{\parsum_{i-1}+1, \parsum_{i-1}+2, \ldots, \parsum_{i-1}+\frac{\mu_i}{2}\}$ is the \emph{first half} of the block and that $\{\parsum_{i-1}+\frac{\mu_i}{2}+1 , \parsum_{i-1}+\frac{\mu_i}{2}+2, \ldots, \parsum_i \}$ is the \emph{second half}. For example, in the cup diagram from Example~\ref{ex.disconnected1}, the vertices ${7,8,9,10}$ form a block with first half $\{7,8\}$ and second half $\{9,10\}$.

\begin{definition}\label{def.tau_gs} Let $\tau_\mu\in S_n$ be the permutation with one-line notation that lists (in increasing order) all endpoints of rays and all indices in the first half of every block determined by the prime decomposition of $\gs$, followed by all indices appearing in the second half of any block (in increasing order). 
\end{definition}

 If $\gs$ and $\cupsigma$ are as in Example~\ref{ex.disconnected1}, then $\mu_1 = \mu_4= 4$ and $\mu_2 = \mu_3 = 1$ so $\tau_\mu = [1,2,5,6,7,8,3,4,9,10]$. 
Note that the permutation $\tau_\mu$ is uniquely determined by the composition $\mu$ defined by the size of each prime factor of $\gs$, and not on the prime factors themselves. For example, any prime tableau $\gs$ of shape $(\ell, \ell)$ will have $\tau_\mu$ equal to the identity.

Recall that for all $i\in [n]$, $c(i)$ is the number of completed cups that appear at or before vertex $i$ in $\cupsigma$ and $r(i)$ is the number of rays that appear at or before vertex $i$ in $\cupsigma$. 

\begin{lemma}\label{lemma.tau_gs.formula} Suppose $\gs$ is a standard tableau of shape $(n-\ell, \ell)$ with prime decomposition $\gs=\gs_1\circ\gs_2\circ \cdots \circ \gs_k$. Let $\mu_i = |\gs_i|$ and $\parsum_i = \mu_1+\mu_2+\cdots +\mu_i$ for all $1\leq i \leq k$. The permutation $\tau_\mu$ of Definition~\ref{def.tau_gs} is given explicitly by the following formulas.
\begin{enumerate}
\item   If $t=\parsum_i$ is the endpoint of a ray in $\cupsigma$, then
\[
\tau_\mu^{-1}(t) = r(t)+c(t).
\]
\item  If $\mu_i\geq 2$ is even and $t=\parsum_{i-1}+j$ for some $1\leq j \leq \frac{\mu_i}{2}$ occurs in the first half of the block $\{\parsum_{i-1}+1, \parsum_{i-1}+2, \ldots, \parsum_i\}$ in $\cupsigma$, then 
\[
\tau_\mu^{-1}(t) = r(\parsum_{i-1})+c(\parsum_{i-1})+j.
\]
\item  If $\mu_i\geq 2$ is even and $t=\parsum_{i-1}+\frac{\mu_i}{2}+j$ for some $1\leq j \leq \frac{\mu_i}{2}$ occurs in the second half of the block $\{\parsum_{i-1}+1, \parsum_{i-1}+2, \ldots, \parsum_i\}$ in $\cupsigma$, then
\[
\tau_\mu^{-1}(t) = n-\ell + c(\parsum_{i-1}) +j. 
\]
\end{enumerate}
\end{lemma}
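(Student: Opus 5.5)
The plan is a direct count from Definition~\ref{def.tau_gs}. The one-line notation of $\tau_\mu$ lists the values $\tau_\mu(1), \tau_\mu(2), \ldots, \tau_\mu(n)$, so $\tau_\mu^{-1}(t)$ is the position of $t$ in this list. We therefore need to count how many entries precede $t$. The list has two parts. The first part consists of the ray endpoints and first-half indices, in increasing order. Its length is $(n-2\ell)+\ell = n-\ell$, since each of the $\ell$ cups contributes exactly one index to some first half: a block of size $\mu_i$ contains $\mu_i/2$ cups. The second part consists of all second-half indices, in increasing order.

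\textbf{Ray endpoints and first halves.} Let $t$ be a ray endpoint or a first-half index. Its position is the number of ray endpoints and first-half indices that are at most $t$.

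For all indices $\le \parsum_{i-1}$, the blocks before position $i$ are complete. So this count equals the number of rays up to $\parsum_{i-1}$, namely $r(\parsum_{i-1})$, plus half the total size of the preceding blocks. Each cup lies in a single block, so that half-size equals the number of cups completed by $\parsum_{i-1}$, namely $c(\parsum_{i-1})$.

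In case (2), the entries of block $i$ that are $\le t$ add $j$ more, giving $r(\parsum_{i-1})+c(\parsum_{i-1})+j$.

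In case (1), $t=\parsum_i$ with $\mu_i=1$, so we add $1$ for the ray itself. This gives $r(\parsum_{i-1})+1+c(\parsum_{i-1})$. Since $r(t)=r(\parsum_{i-1})+1$ and $c(t)=c(\parsum_{i-1})$, this equals $r(t)+c(t)$.

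\textbf{Second halves.} In case (3), $t$ is preceded by all $n-\ell$ entries of the first part. It is also preceded by the second-half indices $<t$. These are the second halves of earlier blocks, which total $c(\parsum_{i-1})$ indices by the same half-size argument, together with the first $j-1$ second-half indices of block $i$. So the position is $n-\ell+c(\parsum_{i-1})+j$.

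The only point needing care is the identity relating half block sizes to completed cups. A connected standard block on $\mu_i$ vertices has $\mu_i/2$ cups, all of which close by $\parsum_i$. No cup crosses a block boundary, because blocks come from the prime decomposition (Corollary~\ref{cupdecomposition}). Hence the cups completed by $\parsum_{i-1}$ are exactly the cups in earlier blocks, and $c(\parsum_{i-1})=\sum_{i'<i,\,\mu_{i'}\ge2}\mu_{i'}/2$. This is routine.
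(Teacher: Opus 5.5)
Your proposal is correct and follows essentially the same route as the paper: both read $\tau_\mu^{-1}(t)$ as the position of $t$ in the one-line notation and count the preceding ray endpoints and first-half (resp.\ second-half) indices. Both then use that each block of size $\mu_i\geq 2$ carries exactly $\mu_i/2$ cups, which turns these counts into $r(\parsum_{i-1})+c(\parsum_{i-1})$ (resp.\ $c(\parsum_{i-1})$); your explicit remark that no cup crosses a block boundary just makes precise the identity the paper uses implicitly.
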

\begin{proof} Suppose $\mu_i\geq 2$. The number of entries in the first half of the block $\{\parsum_{i-1}+1, \parsum_{i-1}+2, \ldots, \parsum_i\}$ (which is, of course, also the number entries in the second half of the $i$-th block) is $\frac{\mu_i}{2}$. The key fact used in the proof below is that, since the subdiagram of $\cupsigma$ on the $\mu_i$ vertices $\{\parsum_{i-1}+1, \parsum_{i-1}+2, \ldots, \parsum_i\}$ is the standard matching $\cc(\sigma_i)$, there are $\frac{\mu_i}{2}$ cups in $\cc(\sigma_i)$. 

By definition, the first $n-\ell$ entries of the one-line notation for the permutation $\tau_\mu$ list, in order, all endpoints of rays and all indices in the first half of every block. 

Suppose $\mu_i =1$ and $t=\parsum_i$ is the endpoint of a ray. The number $r(\parsum_{i-1})+c(\parsum_{i-1})$ is equal to the number of rays and complete cups that have already appeared, which is precisely the number of earlier rays plus the number of indices in the first half of the earlier blocks. The fact that $t$ is the endpoint of a ray implies $r(t)+c(t) = r(\parsum_{i-1})+c(\parsum_{i-1}) +1$, and the formula in (1) follows.  

Next, suppose $\mu_i\geq 2$ and $t= \parsum_{i-1}+j$ is the $j$-th entry in the first half of the corresponding block. The position of $t$ in the one-line notation of $\tau_\mu$ is exactly the total number of completed cups and rays appearing before the start of the block plus $j$. Since the total number of completed cups and rays appearing before block $i$ is $r(\parsum_{i-1})+c(\parsum_{i-1})$, the formula from (2) follows. 

By definition, the last $\ell$ entries of the one-line notation for the permutation $\tau_\mu$ list, in order, all the indices in the second half of every block.
Suppose $\mu_i\geq 2$ and $t=\parsum_{i-1}+\frac{\mu_i}{2}+j$ is the $j$-th entry in the second half of the corresponding block. The sum total number of entries in the second half of every earlier block is given by the number of completed cups appearing before block $i$, which is $c(\parsum_{i-1})$. Thus, the position of $t$ in the one-line notation of $\tau_\mu$ is equal to $n-\ell + c(\parsum_{i-1})+j$, which proves (3).
\end{proof}

Let $\dot \tau_\mu$ denote the permutation matrix whose $i$-th column is the standard basis vector $e_{\tau_{\mu}(i)}$. 
We study the image of a generic matrix in $F_\gs$ after left multiplication by $\dot\tau_\mu$.

\begin{ex} \label{ex.disconnected2} Let $\gs$ and $\cupsigma$ be as in Example~\ref{ex.disconnected1}; we have $\tau_\mu = [1,2,5,6,7,8,3,4,9,10]$.  A generic matrix in $\cell$ was also computed in Example~\ref{ex.disconnected1}. The image after left multiplication by $\dot\tau_\mu$ is:
\[
\begin{bmatrix} 
\textcolor{blue}{a_1} & \textcolor{red}{a_2} & 1 & 0 & 0 & 0 & 0 & 0 & 0 & 0 \\ 
0 & \textcolor{blue}{a_1} & 0 & 1 & 0 & 0 & 0 & 0 & 0 & 0\\
1 & 0 & 0 & 0 & 0 & 0 & 0 & 0 & 0 & 0\\
0 & 1 & 0 & 0 & 0 & 0 & 0 & 0 & 0 & 0\\
0 & 0 & 0 & 0 & 1 & 0 & 0 & 0 & 0 & 0\\
0 & 0 & 0 & 0 & 0 & 1 & 0 & 0 & 0 & 0\\
0 & 0 & 0 & 0 & 0 & 0 & \textcolor{purple}{a_7} & \textcolor{teal}{a_8} & 1 & 0\\
0 & 0 & 0 & 0 & 0 & 0 & 0 & \textcolor{purple}{a_7} & 0 & 1\\
0 & 0 & 0 & 0 & 0 & 0 & 1 & 0 & 0 & 0\\
0 & 0 & 0 & 0 & 0 & 0 & 0 & 1 & 0 & 0\\
\end{bmatrix}. 
\]
Note that this matrix is block diagonal with blocks of size $\mu_1=4,\, \mu_2=1,\, \mu_3=1$, and $\mu_4=4$. The projection to each block is either the $1\times 1$ matrix $[1]$ or a $4\times 4$ matrix in $F_{\sigma_1} = F_{\sigma_4}$ (see Example~\ref{ex.n=4nested}).
\end{ex}

For a composition $\mu= (\mu_1, \mu_2, \ldots, \mu_k)$ we consider the inclusion map
\begin{align}\label{eqn.inclusion}
\iota: GL_{\mu_1}(\C) \times GL_{\mu_2}(\C) \times \cdots \times GL_{\mu_k}(\C) \hookrightarrow GL_n(\C)
\end{align}
defined by embedding each $GL_{\mu_i}(\C)$ as a diagonal block of $GL_n(\C)$. The inclusion map induces a closed embedding of flag varieties, $\bar\iota:\Fl_{\mu_1}(\C) \times \Fl_{\mu_2}(\C)\times \cdots \times \Fl_{\mu_k}(\C) \hookrightarrow \Fl_n(\C)$.

\begin{lemma}\label{lem.taushift} Let $\gs=\gs_1\circ \cdots \circ \gs_k$ be the prime decomposition of $\gs$ and $\iota$ denote the inclusion map from~\eqref{eqn.inclusion} above. Then
\[
\iota(F_{\gs_1}, F_{\gs_2}, \ldots,F_{\gs_k} ) = \dot\tau_\mu \cell.
\]  
In particular, $\iota$ induces an isomorphism $\SF_{\gs_1}\times \SF_{\gs_2}\times \cdots \times \SF_{\gs_k}\simeq \SF_\gs $.
\end{lemma}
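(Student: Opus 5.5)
The plan is to compare $\dot\tau_\mu \cell$ with $\iota(F_{\gs_1},\ldots,F_{\gs_k})$ column by column, using the explicit description of $\tau_\mu^{-1}$ from Lemma~\ref{lemma.tau_gs.formula}, and then pass to closures.

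First fix a generic matrix $g\in\cell$. Left multiplication by $\dot\tau_\mu$ sends the standard basis vector $e_s$ to $e_{\tau_\mu(s)}$, so row $s$ of $g$ becomes row $\tau_\mu(s)$. The entry in row $r$ of $\dot\tau_\mu g$ is therefore the entry of $g$ in row $\tau_\mu^{-1}(r)$. I will check, for each column $t$ lying in the $i$-th block or ray, that every nonzero entry of column $t$ of $\dot\tau_\mu g$ lies in rows $\parsum_{i-1}+1,\ldots,\parsum_i$. Restricting to those rows should give column $t-\parsum_{i-1}$ of a generic element of $F_{\gs_i}$ (with the same labels $a$).

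There are three cases.

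\emph{Ray endpoint.} If $t$ is a ray endpoint, column $t$ of $g$ is $e_{r(t)+c(t)}$ by Definition~\ref{def:cell}(1). By Lemma~\ref{lemma.tau_gs.formula}(1), $\tau_\mu(r(t)+c(t))=t$, so the column becomes $e_t$. This is the $1\times 1$ block $[1]$.

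\emph{Right endpoint.} If $t=\parsum_{i-1}+\frac{\mu_i}{2}+j$ is a right endpoint in block $i$, column $t$ is $e_{r(t)+c(t)}$. Since $r(t)=r(\parsum_{i-1})$ and $c(t)=c(\parsum_{i-1})+j'$, where $j'$ is the local count of completed cups inside $\gs_i$, Lemma~\ref{lemma.tau_gs.formula}(2) gives $\tau_\mu(r(t)+c(t))=\parsum_{i-1}+j'$. This matches the local formula for $F_{\gs_i}$, where $r=0$ and $c=j'$. Here I use that the right endpoints of a block cannot lie in its first half in a way that breaks the count; because $c$ counts completed cups, $j'\le\mu_i/2$.

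\emph{Left endpoint.} If $t$ is a left endpoint in block $i$, the nonzero entries of column $t$ lie in rows $r(t)+c(t)+1,\ldots,r(t)+c(t)+\ga+1$ (the $a$'s) and in row $n-\ell+1+c(t)+\ga$ (the $1$). Here $\ga$ counts only cups inside the block, since $\cc(\gs_i)$ is connected and no outer cups cover it. The $a$-rows are $r(\parsum_{i-1})+c(\parsum_{i-1})+(\text{local index})$ with local index at most $\mu_i/2$. This needs $c_{\mathrm{local}}(t)+\ga+1\le \mu_i/2$, which holds because the cups counted are distinct cups of $\gs_i$. Lemma~\ref{lemma.tau_gs.formula}(2) therefore maps these rows to $\parsum_{i-1}+(\text{local index})$. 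The $1$-row is $n-\ell+c(\parsum_{i-1})+j$ with $j=c_{\mathrm{local}}(t)+\ga+1\le\mu_i/2$, and Lemma~\ref{lemma.tau_gs.formula}(3) maps it to $\parsum_{i-1}+\frac{\mu_i}{2}+j$. This is exactly the local position $\frac{\mu_i}{2}+1+c_{\mathrm{local}}+\ga$ prescribed by Definition~\ref{def:cell}(2) for $F_{\gs_i}$, with $n-\ell$ replaced by $\mu_i/2$.

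These three cases show that $\dot\tau_\mu g$ is block diagonal with blocks in $F_{\gs_i}$. Running the argument backwards, any choice of parameters in each $F_{\gs_i}$ arises from some $g\in\cell$, which gives equality of sets.

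For the isomorphism, note that $\dot\tau_\mu$ acts on $\Fl_n(\C)$ by an automorphism, so $\overline{\dot\tau_\mu\cell}=\dot\tau_\mu\overline{\cell}=\dot\tau_\mu\SF_\gs$ by Theorem~\ref{thm.GNST}. Since $\bar\iota$ is a closed embedding, the closure of $\bar\iota(\prod F_{\gs_i})$ equals $\bar\iota(\prod \overline{F_{\gs_i}})=\bar\iota(\prod\SF_{\gs_i})$. Composing with $\dot\tau_\mu^{-1}$ gives the isomorphism $\prod\SF_{\gs_i}\simeq\SF_\gs$.

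The main obstacle is the index bookkeeping in the left-endpoint case: one must verify that $c$, $r$ and $\ga$ restrict to their local versions on a prime block, and that the ranges of rows land within the correct half of the block.
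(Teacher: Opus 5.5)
Your proposal is correct and follows the paper's proof essentially step for step. It uses the same three-case column computation of $\dot\tau_\mu$ on a generic element of $\cell$ via Lemma~\ref{lemma.tau_gs.formula}, including the bound $c_i+\ga+1\le \mu_i/2$ that places rows in the correct half of the block, and then the same closure argument via Theorem~\ref{thm.GNST}. One cosmetic fix: in the right-endpoint case do not write $t=\parsum_{i-1}+\frac{\mu_i}{2}+j$, since right endpoints can lie in the first half of a block (e.g.\ vertex $3$ for the cups $(1<6),(2<3),(4<5)$); your computation only uses the local completed-cup count $j'$, so nothing breaks.
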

\begin{proof} Denote the $t$-th column of a matrix $A\in \cell$ by $v_t$; recall that the vector $v_t$ is defined explicitly using $\cupsigma$ in Definition~\ref{def:cell}. The product $\dot\tau_\mu A$ has columns $\dot\tau_\mu(v_1), \dot\tau_\mu(v_2), \ldots, \dot\tau_\mu(v_n)$. We compute $\dot\tau_\mu(v_t)$ for each $t$ by considering cases: either $t$ is the endpoint of a ray, the right endpoint of a cup, or the left endpoint of a cup in $\cupsigma$. 

When $t$ is the endpoint of a ray, $v_t = e_{r(t)+c(t)}$ and Lemma~\ref{lemma.tau_gs.formula}(1) yields $\dot\tau_\mu(v_t) = e_{\tau_\mu(r(t)+c(t))} = e_t$. 

When $t$ is the right endpoint of a cup, we also have $v_t = e_{r(t)+c(t)}$. Since $t$ is incident to a cup, it must appear in some block of $\gs$. Say $t$ is in the block $\{\parsum_{i-1}+1, \parsum_{i-1}+2, \ldots, \parsum_i\}$ for some $i$ with $\mu_i\geq 2$. This means the positive integer $t-\parsum_{i-1}$ is an entry of the prime factor $\gs_i$ of $\gs$. Since $r(t)+c(t)$ counts the total number of complete cups and rays that appear up through vertex $t$, we have that $r(t)+c(t) = r(\parsum_{i-1})+c(\parsum_{i-1})+c_i(t-s_{i-1})$ where $c_i$ denotes the cup count for the diagram $\cc(\gs_i)$ of the prime factor $\gs_i$. We know the connected component of the matching $\cupsigma$ on the block $\{\parsum_{i-1}+1, \parsum_{i-1}+2, \ldots, \parsum_i\}$ is the standard matching associated to $\gs_i$ and has exactly $\frac{\mu_i}{2}$ many cups, so $c_i(t-\parsum_{i-1}) \leq \frac{\mu_i}{2}$. Thus, $\parsum_{i-1}+c_i(t-\parsum_{i-1})$ is in the first half of the block and applying Lemma~\ref{lemma.tau_gs.formula}(2) with $j=c_i(t-\parsum_{i-1})$ yields 
\[
\tau_\mu(r(t)+c(t)) = \tau_\mu(r(\parsum_{i-1})+c(\parsum_{i-1})+c_i(t-\parsum_{i-1})) = \parsum_{i-1}+c_i(t-\parsum_{i-1}).
\]
We conclude $\dot\tau_\mu(v_t) = e_{\parsum_{i-1}+c_i(t-\parsum_{i-1})}$. Note, in particular, that the only nonzero entries of the column $\dot\tau_\mu (v_t)$ for $t$ in the block $\{\parsum_{i-1}+1, \parsum_{i-1}+2, \ldots, \parsum_i\}$ are in rows indexed by the same set.

When $t$ is the left endpoint of a cup, we denote the right endpoint by $\E(t)$ and let $\ga=\ga(t<\E(t))$. By Definition~\ref{def:cell} we have 
\[
v_t = a_t e_{r(t)+c(t)+1}+a_{t_1}e_{r(t)+c(t)+2}+\cdots + a_{t_{\ga}}e_{r(t)+c(t)+\ga +1} + e_{n-\ell+c(t)+\ga+1}.
\]
First, consider the indices of the form $r(t)+c(t)+b$ for $1\leq b\leq \ga+1$. As above, since $t$ is adjacent to a cup it appears in a block of $\gs$. Say $t$ is in the block $\{\parsum_{i-1}+1, \parsum_{i-1}+2, \ldots, \parsum_i\}$ for some $i$ with $\mu_i\geq 2$. Letting $c_i$ denote the cup count on $\cc(\gs_i)$,  we have 
\[
r(t)+c(t)+b = r(\parsum_{i-1})+c(\parsum_{i-1})+c_i(t-s_{i-1}) +b.
\]
Because the numbers $c_i(t-\parsum_{i-1})$ and $\ga=\ga(t<\E(t))$ count distinct sets of cups in $\cc(\gs_i)$ and since neither of these numbers counts the cup $(t<\E(t))$ we have that $c_i(t-\parsum_{i-1}) +b \leq c_i(t-\parsum_{i-1}) + \ga+1 \leq  \frac{\mu_i}{2}$. In particular, the number $\parsum_{i-1}+c_i(t-\parsum_{i-1})+b$ must be in the first half of the block. Applying  Lemma~\ref{lemma.tau_gs.formula}(2) with $j = c_i(t-\parsum_{i-1})+b$ yields 
\[
\tau_\mu(r(t)+c(t)+b) = \tau_\mu(r(\parsum_{i-1})+c(\parsum_{i-1})+c_i(t-\parsum_{i-1})+b) = \parsum_{i-1}+c_i(t-\parsum_{i-1})+b.
\]
Next, consider the index $n-\ell+c(t)+\ga+1 = n-\ell + c(\parsum_{i-1})+c_i(t-\parsum_{i-1}) +\ga +1$. We showed above that $c_i(t-\parsum_{i-1}) +\ga +1 \leq \frac{\mu_i}{2}$. Thus $\parsum_{i-1}+\frac{\mu_i}{2}+ c_i(t-\parsum_{i-1}) +\ga +1$ appears in the second half of the block and applying  Lemma~\ref{lemma.tau_gs.formula}(3) with $j=c_i(t-\parsum_{i-1}+\ga+1)$,
\begin{align*}
\tau_\mu(n-\ell + c(t)+\ga +1) &= \tau_\mu(n-\ell + c(\parsum_{i-1})+c_i(t-\parsum_{i-1})+\ga +1)\\
&= \parsum_{i-1}+\frac{\mu_i}{2}+  c_i(t-\parsum_{i-1}) +\ga +1.
\end{align*}
In summary, we have proved that 
\begin{align*}
\dot\tau_\mu (v_t) &=  a_t e_{\parsum_{i-1}+c_i(t-\parsum_i)+1}+a_{t_1}e_{\parsum_{i-1}+c_i(t-\parsum_i)+2}+\cdots \\
& \quad \quad \quad \quad \quad + a_{t_{\ga}}e_{\parsum_{i-1}+c_i(t-\parsum_i)+\ga +1} + e_{\parsum_{i-1}+\frac{\mu_i}{2} + c_{i}(t-\parsum_{i-1}) + \ga+1}.
\end{align*}
Once again, we observe that the only nonzero entries of the column $\dot\tau_\mu (v_t)$ for $t$ in the block $\{\parsum_{i-1}+1, \parsum_{i-1}+2, \ldots, \parsum_i\}$ are only in rows indexed by the same set.

Consider the matrix $\dot\tau_\mu A$. Our computations above show the matrix is block diagonal. If $t$ is the endpoint of a cup, then it corresponds to a $1\times 1$ block equal to $[1]$. When $\mu_i\geq 2$ is even, then the diagonal block of $A$ corresponding to the rows and columns indexed by $\{\parsum_{i-1}+1, \parsum_{i-1}+2, \ldots, \parsum_i\}$ is the $\mu_i\times \mu_i$ matrix with columns equal to $v_j = e_{c_i(j)}$ when $j$ is the right endpoint of a cup, and if $j$ is the left endpoint of a cup then, 
\[
v_j = a_t e_{c_i(j)+1} + a_{t_1}e_{c_i(j)+2}+\cdots + a_{t_\ga}e_{c_i(j)+\ga+1}+ e_{\frac{\mu_i}{2} + c_i(j)+\ga+1}
\]
where $j=t-\parsum_{i-1}$. We have $\ga = \ga(t,\E(t)) = \ga(j, \E(j))$ since the number of cups nested above $(t,\E(t))$ in $\cupsigma$ and the number of cups nested above $(j,\E(j))$ in $\cc(\gs_i)$ are equal. Note that we can write
\[
v_j = (0^{c_i(j)}, a_t, a_{t_1}, \ldots, a_{\ga}, 0^{\frac{\mu_i}{2}}, 1, 0^{\mu_i - c_i(j) - \ga -1}).
\]
Comparing with Definition~\ref{def:cell} for the prime tableau $\gs_i$, we conclude the diagonal submatrix corresponding to the rows and columns indexed by $\{\parsum_{i-1}+1, \parsum_{i-1}+2, \ldots, \parsum_i\}$  is a matrix representative for the flags in $F_{\gs_i}$ and the proof of the first statement of the lemma is complete.

The second statement follows immediately from Theorem~\ref{thm.GNST}, since
\[
\SF_\gs \simeq \dot \tau_\mu \SF_\gs = \overline{\dot\tau_\mu F_\gs} \simeq \overline{F_{\gs_1}} \times \overline{F_{\gs_2}} \times \cdots \times \overline{F_{\gs_k}}  = \SF_{\gs_1}\times \SF_{\gs_2}\times \cdots \times \SF_{\gs_k}   
\]
This completes the proof. 
\end{proof}

%%%%%%%%%%%%%%%%%%%%%%%%%%
\section{The ideal $\texorpdfstring{\isigma}{isigma}$}\label{sec:IdealIsig}

We now define a polynomial ideal $\isigma$ associated to each two-row tableau $\gs$ of shape $(n-\ell, \ell)$, or equivalently, to the noncrossing matching $\cupsigma$. 

Let $\mathbf{z}$ denote the set of $n^2$ variables  $\{z_{i,j}\mid 1\leq i,j\leq n\}$ and $\C[\mathbf{z}] = \C[z_{1,1},z_{1,2}, z_{1,3} \ldots]$. Throughout this paper, we identify $z_{i,j}$ with the coordinate function on the affine variety $M_n$ of $n\times n$ matrices, returning the $(i,j)$ entry.  Let $GL_n(\C)$ denote the algebraic group of $n\times n$ complex invertible matrices.  As is typical, we identify $GL_n(\C)$ as a principal open subset in $M_n$.

Given an ideal $\ci\subset \C[\mathbf{z}]$, its \emph{vanishing set} $\cv(\ci)$ is the affine subvariety of $n\times n$ matrices $A$ such that $f(A)=0$ for all $f\in \ci$. 

\subsection{Definition for standard noncrossing matchings} Our first step is to identify a the ideal $\ci_\gs$ when $\gs$ has shape $(\ell, \ell)$, that is, when $\cupsigma$ is a standard noncrossing matching. The ideal $\isigma$ is defined using the total cup sequence of $\cupsigma$ from Section~\ref{sec:SYTmatchings}. We start with the following monomial ideal.

\begin{definition}\label{def:JSigma} Let $\gs$ be a standard tableau of shape $(\ell, \ell)$. We set
\begin{eqnarray*}
\cj_\gs &:=& \sum_{\substack{j\in [n]\\ S_\sigma(j)<\ell}}\left< z_{i,j}, \, z_{i+\ell, j} \mid S_\gs(j)+1\leq i \leq \ell \right> \subset \C[\mathbf{z}].
\end{eqnarray*}
\end{definition}

\begin{ex}\label{ex.Jsigmaideal1}
 Consider the following standard Young tableau and its associated standard noncrossing matching. 
 \begin{multicols}{2}
\begin{center}
\ytableausetup{centertableaux}
  \begin{ytableau}
          1&2&3\\
          4&5&6
 \end{ytableau} 
 \end{center}
\vspace*{-.3in}
\begin{tikzpicture}[scale=0.5]	
	\begin{scope}[shift={(8,0)}]
		\foreach \i in {0,...,5}
		{
			\node[pnt] at (\i,0)(\i){};
		}
		\draw(0)  to [bend left=80] (5);
		\draw(1)  to [bend left=80] (4);
		\draw(2)  to [bend left=80] (3);
		\draw (0,-1) node {$1$};
        \draw (1,-1) node {$2$};
        \draw (2,-1) node {$3$};
        \draw (3,-1) node {$4$};
        \draw (4,-1) node {$5$};
        \draw (5,-1) node {$6$};
	\end{scope}
\end{tikzpicture}
\end{multicols}
\vspace*{-.2in}
\noindent We have $n=6$, $\ell=3$ and total cup sequence is $S_\gs=(1,2,3,3,3)$, so
\begin{equation*}
  \cj_{\gs}=\langle z_{2,1}, z_{3,1}, z_{5,1}, z_{6,1},z_{3,2}, z_{6,2}\rangle.\qedhere
\end{equation*}
The ideal $\cj_\gs$ corresponds to certain entries of matrices in the cell $F_\gs$ that must be equal to zero. In this case $\cell$ consists of matrices of the form
\[
\begin{bmatrix} a_1 & a_2 & a_3 & 1 & 0 & 0\\  \mathbf{0} & a_1 & a_2 & 0 & 1 & 0 \\  \mathbf{0} & \mathbf{0} & a_1 & 0 & 0 & 1 \\ 1 & 0 & 0 & 0 & 0 & 0\\  \mathbf{0} & 1 & 0 & 0 & 0 & 0\\  \mathbf{0} & \mathbf{0} & 1 & 0 & 0 & 0  \end{bmatrix} ,\,\text{ with } \, a_1,a_2,a_3\in \C
\]
where the bold entries indicate the zero corresponding to the variables in $\cj_\gs$.  
\end{ex}

We define a particular matrix for each cup $(i<j)$ in $\cupsigma$ with size $\gd(i<j)\geq 2$.  Given such a cup we let $M_{(i<j)}$ be the $(2\al(i<j)+2)\times (j-i)$ matrix,
\begin{equation}\label{eqn.matrix}
M_{(i<j)}:=  \begin{bmatrix}
    z_{S_\gs(i)-\al(i<j),i} & z_{S_\gs(i+1)-\al(i<j),i+1}& \cdots & z_{S_\gs(j-1)-\al(i<j),j-1}\\
    z_{S_\gs(i)-\al(i<j)+1,i}& z_{S_\gs(i+1)-\al(i<j)+1,i+1} & \cdots &z_{S_\gs(j-1)-\al(i<j)+1,j-1}\\
    \vdots & \vdots &\ddots& \vdots\\
    z_{S_\gs(i),i} & z_{S_\gs(i+1),i+1}& \cdots & z_{S_\gs(j-1),j-1}\\
z_{S_\gs(i)-\al(i<j)+\ell,i} & z_{S_\gs(i+1)-\al(i<j)+\ell,i+1}& \cdots & z_{S_\gs(j-1)-\al(i<j)+\ell,j-1}\\
    z_{S_\gs(i)-\al(i<j)+\ell+1,i}& z_{S_\gs(i+1)-\al(i<j)+\ell+1,i+1} & \cdots &z_{S_\gs(j-1)-\al(i<j)+\ell+1,j-1}\\
    \vdots & \vdots &\ddots& \vdots\\
    z_{S_\gs(i)+\ell,i} & z_{S_\gs(i+1)+\ell,i+1}& \cdots & z_{S_\gs(j-1)+\ell,j-1}\\
\end{bmatrix}.
\end{equation}

\begin{remark}\label{rem.evaluation} Recall that $N$ is a nilpotent matrix of Jordan type $\lambda=(\ell, \ell)$ in Jordan canonical form; see~\eqref{eqn.Jbasis}. Let 
\[
v_k = \left[z_{1,k}, z_{2,k}, \ldots, z_{S_\gs(k),k}, 0^{\ell-S_\gs(k)}, z_{\ell+1,k}, z_{\ell+2,k}, \ldots, z_{\ell+S_\gs(k),k}, 0^{\ell-S_\gs(k)}\right]^T
\] 
denote the $k$-th column of the matrix $Z=(z_{i,j})_{1\leq i,j\leq n}$ where all variables in $\cj_\gs$ are set equal to $0$.   Then 
\[
N^{S_{\gs}(k)-\al(i<j)-1}v_k = \begin{bmatrix} z_{S_\gs(k)-\al(i<j), k}\\ z_{S_\gs(k)-\al(i<j)+1, k}\\ \vdots\\ z_{S_\gs(k), k}\\ 0^{\ell - \al(i<j)-1} \\ z_{S_\gs(i)-\al(i<j)+\ell, k}\\ z_{S_\gs(i)-\al(i<j)+\ell+1, k}\\ \vdots\\ z_{S_\gs(i)+\ell, k}\\ 0^{\ell - \al(i<j)-1} \end{bmatrix} .
\]
In particular, we note that the matrix $M_{(i<j)}$ defined in~\eqref{eqn.matrix} is equal to 
\begin{eqnarray*}
\begin{bmatrix}  | & | &  & |  \\   N^{S_\gs(i)-\al(i<j)-1}v_i &  N^{S_\gs(i+1)-\al(i<j)-1}v_{i+1} & \cdots &  N^{S_\gs(j-1)-\al(i<j)-1}v_{j-1} \\ | & | &  & |   \end{bmatrix}
\end{eqnarray*}
with the rows of zeros removed.    
\end{remark}

Given a $k\times m$ matrix $Z$ with entries in $\C[\mathbf{z}]$ and subsets $R\subseteq [k]$ and $C\subseteq [m]$, we let $\pl_{R,C}(Z)\in \C[\mathbf{z}]$ be the determinant  of the submatrix of $Z$ determined by the rows in $R$ and in $C$. 

\begin{definition}\label{def:MSigma} Let $\gs$ be a standard tableau of shape $(\ell, \ell)$. For each cup $(i<j)$ in $\cupsigma$ with size $\gd(i<j)\geq 2$ and for each pair of indices $i\leq c_1<c_2 \leq j-1$, we set 
\begin{align*}
\pl_{c_1, c_2} ^{(i<j)} &:= \sum_{k=0}^{\ga(i<j)} \pl_{\{k+1, 2\ga(i<j)+2-k\}, \{c_1-i+1, c_2-i+1\}}(M_{(i<j)}) \\
& = \sum_{k=0}^{\al(i<j)} \begin{vmatrix}  z_{S_\gs(c_1) - \al(i<j) +k, c_1} &  z_{S_\gs(c_2) - \al(i<j) +k, c_2} \\ z_{\ell + S_\gs(c_1) -k , c_1} & z_{\ell + S_\gs(c_2)-k, c_2} \end{vmatrix}  
\end{align*}
and define the ideal
\[
\cm_{\gs,(i<j)}: = \left< \pl_{c_1,c_2}^{(i<j)} \mid i \leq c_1, c_2 \leq j-1  \right> .
\]  
Now we set
\[
\cm_\sigma:= \sum \cm_{\gs,(i<j)} \subseteq \C[\mathbf{z}]
\]
where the sum is taken over all cups in $\cupsigma$ with size at least $2$. 
We define the ideal $\ci_\gs$ in $\C[\mathbf{z}]$ to be the sum
\[
\isigma := \cj_{\gs} + \cm_\gs.
\]
\end{definition}

\begin{ex}\label{1component.a} 
We consider the following standard Young tableau and its associated standard noncrossing matching as in Example~\ref{ex.Jsigmaideal1}, where we computed $\cj_\gs$. 
\begin{multicols}{2}
\begin{center}
\ytableausetup{centertableaux}
  \begin{ytableau}
          1&2&3\\
          4&5&6
 \end{ytableau} 
 \end{center}

\vspace*{-.3in}
\begin{tikzpicture}[scale=0.5]	
	\begin{scope}[shift={(8,0)}]
		\foreach \i in {0,...,5}
		{
			\node[pnt] at (\i,0)(\i){};
		}
		\draw(0)  to [bend left=80] (5);
		\draw(1)  to [bend left=80] (4);
		\draw(2)  to [bend left=80] (3);
		\draw (0,-1) node {$1$};
 
        \draw (1,-1) node {$2$};
 
        \draw (2,-1) node {$3$};
        \draw (3,-1) node {$4$};
    
        \draw (4,-1) node {$5$};
        \draw (5,-1) node {$6$};
	\end{scope}
\end{tikzpicture}
\end{multicols}
\vspace*{-.2in}
There are two cups of size two or more, namely $(1<6)$ and $(2<5)$. These gives us matrices
\begin{equation*}
M_{(1<6)} = \begin{bmatrix}
    z_{1,1}& z_{2,2}& z_{3,3} & z_{3,4} &z_{3,5}\\
    z_{4,1}& z_{5,2}& z_{6,3} & z_{64} &z_{6,5}
    \end{bmatrix}
\quad \text{ and } \quad
M_{(2<5)} = \begin{bmatrix}
    z_{1,2}& z_{2,3}& z_{2,4} \\
    z_{2,2}& z_{3,3}& z_{3,4}\\
    z_{4,2} & z_{5,3} & z_{5,4}\\
    z_{5,2}& z_{6,3}& z_{6,4}
    \end{bmatrix}.
\end{equation*}
Since $\ga(1<6)=0$ the ideal $\cm_{\gs,(1<6)}$ is generated by all $2\times 2$ minors of the matrix $M_{(1<6)}$.  Since $\ga(2<5) = 1$, $\cm_{\gs,(2<5)}$ is generated by sums of $2\times 2$ minors of $M_{(2<5)}$:
\begin{align*}
\pl_{2,3}^{(2<5)} &= \pl_{\{1,4\},\{1,2\}}(M_{(2<5)})+\pl_{\{2,3\},\{1,2\}}(M_{(2<5)}),\\
\pl_{2,4}^{(2<5)}&=\pl_{\{1,4\},\{1,3\}}(M_{(2<5)})+\pl_{\{2,3\},\{1,3\}}(M_{(2<5)}),\\
\pl_{3,4}^{(2<5)}&= \pl_{\{1,4\},\{2,3\}}(M_{(2<5)})+\pl_{\{2,3\},\{2,3\}}(M_{(2<5)}).
\end{align*}
In particular, $\cm_{\gs,(2<5)}$ is generated by the polynomials:
\begin{eqnarray}\label{eq.M25}
\pl_{2,3}^{(2<5)} &=  z_{1,2}z_{6,3}-z_{2,3}z_{5,2}+z_{2,2}z_{5,3}-z_{3,3}z_{4,2}, \\ 
\nonumber \pl_{2,4}^{(2<5)}&= z_{1,2}z_{6,4}-z_{2,4}z_{5,2}+z_{2,2}z_{5,4}-z_{3,4}z_{4,2},\\ 
\pl_{3,4}^{(2<5)} &= z_{2,3}z_{6,4}-z_{2,4}z_{6,3}+z_{3,3}z_{5,4}-z_{3,4}z_{5,3}. \nonumber
\end{eqnarray}
Now $\ci_\gs = \cj_\gs+ \cm_{\gs, (1<6)}+ \cm_{\gs,(2<5)}$. 
\end{ex}

\begin{ex}\label{1component.b}
 Consider the following standard Young tableau and its associated standard noncrossing matching.
 \begin{multicols}{2}
   \begin{center}
        \begin{ytableau}
        1&2&4\\
        3&5&6
        \end{ytableau}
   \end{center} 
\vspace*{-.3in}
\begin{tikzpicture}[scale=0.5]
	\begin{scope}[shift={(8,0)}]
		\foreach \i in {0,...,5}
		{
			\node[pnt] at (\i,0)(\i){};
		}

		\draw (0)  to [bend left=80](5);
		\draw(1)  to [bend left=80] (2);
        \draw(3)  to [bend left=80] (4);
		\draw (0,-1) node {$1$};
     
        \draw (1,-1) node {$2$};
       
        \draw (2,-1) node {$3$};
        \draw (3,-1) node {$4$};
      
        \draw (4,-1) node {$5$};
        \draw (5,-1) node {$6$};
	\end{scope}
\end{tikzpicture} 
 \end{multicols}
\vspace*{-.2in}
\noindent The nesting sequence is $S_\gs = (1,2,2,3,3)$. Thus,
\begin{equation*}
  \cj_{\gs}=\langle z_{2,1}, z_{3,1}, z_{5,1},z_{6,1}, z_{3,2}, z_{6,2},z_{3,3} , z_{6,3}\rangle. 
\end{equation*}
There is only one cup with size greater than one, namely $(1<6)$ and
\begin{equation*}
M_{(1<6)}=\begin{bmatrix}
    z_{1,1}& z_{2,2}& z_{2,3} & z_{3,4} &z_{3,5}\\
    z_{4,1}& z_{5,2}& z_{5,3} & z_{6,4} &z_{6,5}
    \end{bmatrix}.
\end{equation*}
The ideal $\cm_{\gs,(1<6)}$ is generated by all $2\times2$ minors of $M_{(1<6)}$.  
We have  $\ci_\gs= \cj_\gs+ \cm_{\gs,(1<6)}$.
\end{ex}

\begin{ex}\label{2component} Let $n=12$, $\ell=6$ and consider the following standard tableau $\gs$ and corresponding standard noncrossing matching.
\begin{multicols}{2}
 \begin{center}
     \begin{ytableau}
        1&2&3&7&8&10\\
        4&5&6&9&11&12
    \end{ytableau}
 \end{center}  
\vspace*{-.3in}
  \begin{tikzpicture}[scale=0.5]\begin{scope}[shift={(8,0)}]
		\foreach \i in {0,...,11}
		{
			\node[pnt] at (\i,0)(\i){};
		}
		
        \draw(0)  to [bend left=80] (5);
		\draw(1)  to [bend left=80] (4);
		\draw(2)  to [bend left=80] (3);
		\draw (6)  to [bend left=80](11);
		\draw(7)  to [bend left=80] (8);
        \draw(9)  to [bend left=80] (10);
		\draw (0,-1) node {$1$};
       
        \draw (1,-1) node {$2$};
       
        \draw (2,-1) node {$3$};
        \draw (3,-1) node {$4$};
       
        \draw (4,-1) node {$5$};
        \draw (5,-1) node {$6$};
        \draw (6,-1) node {$7$};
        \draw (7,-1) node {$8$};
        \draw (8,-1) node {$9$};
        \draw (9,-1) node {$10$};
        \draw (10,-1) node {$11$};
        \draw (11,-1) node {$12$};
	\end{scope}
 \end{tikzpicture}
\end{multicols}
\vspace*{-.1in}
\noindent The cup sequence is $S(\gs) = (1,2,3,3,3,3,4,5,5,6,6)$. Note that $\cupsigma$ has two connected components. The cups $(1<6)$, $(2<5)$, and $(7<12)$ give us the matrices
\begin{equation*}
M_{(1<6)} = \begin{bmatrix} z_{1,1}& z_{2,2}& z_{3,3} & z_{3,4} &z_{3,5}\\ z_{7,1}& z_{8,2}& z_{9,3} & z_{9,4} &z_{9,5} \end{bmatrix}
\quad  \quad
M_{(2<5)} = \begin{bmatrix} z_{1,2}& z_{2,3}& z_{2,4} \\
z_{2,2}& z_{3,3}& z_{3,4}\\ z_{7,2} & z_{8,3} & z_{8,4}\\
z_{8,2}& z_{9,3}& z_{9,4} \end{bmatrix}
\end{equation*} 
    \quad \text{and}\quad
\begin{equation*}
 M_{(7<12)}=\begin{bmatrix} z_{4,7}& z_{5,8}& z_{5,9}& z_{6,10}& z_{6,11} \\ z_{10,7}& z_{11,8}& z_{11,9}& z_{12,10}& z_{12,11} 
\end{bmatrix}.
\end{equation*}
We also have
\begin{align*}
\cj_\gs&=\left\langle z_{2,1}, z_{3,1}, z_{3,2}, z_{8,1}, z_{9,1}, z_{9, 2} \right \rangle + \left\langle z_{5,7},z_{6,7}, z_{6,8}, z_{6,9}, z_{11,7},z_{12,7}, z_{12,8}, z_{12,9}\right\rangle \\
& \quad \quad +\left\langle z_{i,j} \mid i \in \{4,5,6,10,11,12\}, 1\leq j \leq 6 \right\rangle.
\end{align*}
Since $\ga(1<6)=\ga(7<12)=1$ the ideals $\cm_{\gs,(1<6)}$ and $\cm_{\gs,(7<12)}$ are generated by all $2\times 2$ minors of the matrices $M_{(1<6)}$ and $M_{(7<12)}$, respectively. Since $\ga(2<5) = 2$, $\cm_{\gs,(2<5)}$ is generated by a sum of the $2\times 2$ minors of $M_{(2<5)}$ as in~\eqref{eq.M25} of Example~\ref{1component.a}. 

\end{ex}

\subsection{Definition for arbitrary noncrossing matchings} It remains to define the ideal $\ci_\gs$ when $\gs$ is an arbitrary two-row tableau of shape $(n-\ell, \ell)$. Let $\gs=\gs_1\circ\gs_2\circ \cdots \circ \gs_k$ be the prime decomposition of $\gs$. Recall from Corollary~\ref{cupdecomposition} that each $\gs_i$ is either a prime two-row rectangular tableau or the unique prime tableau with a single box. If $\gs_i$ is a prime two-row rectangular tableau, then let $\ci_{\gs_i}$ be the ideal from Definition~\ref{def:MSigma} above. If $\gs_i$ is a prime tableau with a single box, then set $\ci_{\gs_i} = \{0\}$. 

For each $i$ with $1\leq i \leq k$, set $\mu_i = |\gs_i|$ and consider the composition $\mu=(\mu_1, \mu_2, \ldots, \mu_k)$ with partial sums $\parsum_i = \mu_1+\mu_2+\cdots +\mu_{i}$, $1\leq i \leq k$. By convention, we set $\mu_{0}=\parsum_0=0$. 

\begin{definition}\label{def.general} For each $i=1, 2, \ldots, k$ let $\ci_{\gs_i}'$ be the ideal obtained from $\ci_{\gs_i}$ by adding $\parsum_{i-1}$ to variable index, i.e.~$z_{a,b}\mapsto z_{a+\parsum_{i-1}, b+\parsum_{i-1}}$.  Set
\[
\ci_\gs' := \left( \sum_{i=1}^k \ci_{\gs_i}' \right)+ \mathcal{K}_\mu.
\]
where $\mathcal{K}_\mu:=\left< z_{i,j}\mid \parsum_{p-1}+1\leq j \leq \parsum_p<i \, \text{ for all }\, p= 1, \ldots, k-1 \right>$. We define
\begin{eqnarray}\label{eqn.gen.definition}
\ci_\gs := \tau_\mu^{-1} \ci_\gs'
\end{eqnarray}
to be the image of the ideal $\ci_\gs'$ under the action of the permutation $\tau_\mu^{-1}$ on $\C[\mathbf{z}]$ given by $z_{a,b}\mapsto z_{\tau_\mu^{-1}(a), b}$. 
\end{definition}

\begin{ex} \label{ex.2component.cont} Consider the tableau $\gs$ and corresponding cup diagram from Example~\ref{2component} above. We show that the ideal $\ci_\gs$ computed in that example aligns with the definition given above. Here $\gs$ has two connected components where $\gs_1$ is the tableau of size $6$ from Example~\ref{1component.a} and $\gs_2$ is the tableau of size $6$ from Example~\ref{1component.b}. We have $\tau_\mu =[1,2,3,7,8,9,4,5,6,10,11,12]$.  In this case, $|\gs_1|=|\gs_2|=6$ and 
\[
\ci_{\gs_1} = \cj_{\gs_1}+ \cm_{\gs_1,(1<6)}+\cm_{\gs_1, (2<5)},
\]
where each of $\cj_{\gs_1}$, $\cm_{\gs_1,(1<6)}$, and $\cm_{\gs_1, (2<5)}$ were computed in Example~\ref{1component.a}, and we have 
\[
\ci_{\gs_2} = \cj_{\gs_2}+ \cm_{\gs_2,(1<6)},
\]
where $\cj_{\gs_2}$ and $\cm_{\gs_2,(1<6)}$ were computed in Example~\ref{1component.b}. To compute $\ci_\gs'$, we begin by setting $\ci_{\gs_1}'=\ci_{\gs_1}$ and let $\ci_{\gs_2}'$ be the ideal $\ci_{\gs_2}$ with $\parsum_1=6$ added to all variable indices. For example, $\ci_{\gs_2}'$ contains all $2\times 2$ minors of the matrix
\[
M_{(1<6)}':=\begin{bmatrix} z_{7,7}& z_{8,8}& z_{8,9} & z_{9,10} &z_{9,11}\\ z_{10,7}& z_{11,8}& z_{11,9} & z_{12,10} &z_{12,11}
\end{bmatrix},
\]
which is the image of the matrix $M_{(1<6)}$ from Example~\ref{1component.b} under the map $z_{a,b}\mapsto z_{a+6,b+6}$. Now,
\[
\ci_\gs' = \ci_{\gs_1}'+\ci_{\gs_2}' + \left< z_{i,j} \mid 7\leq i \leq 12, 1 \leq j \leq 6\right>.
\]
Translating by $\tau_\mu^{-1} = [1,2,3,7,8,9,4,5,6,10,11,12]$ gives us 
\begin{align*}
\ci_\gs &= \tau_\mu^{-1}\ci_{\gs_1}'+ \tau_\mu^{-1}\ci_{\gs_2}' + \tau_\mu^{-1}\left< z_{i,j} \mid 7\leq i \leq 12, 1 \leq j \leq 6\right>\\
&= \tau_\mu^{-1}\ci_{\gs_1}'+ \tau_\mu^{-1}\ci_{\gs_2}' +\left< z_{i,j} \mid i\in \{4,5,6,10,11,12\}, 1 \leq j \leq 6\right>.
\end{align*}
It is easy to confirm using Example~\ref{2component} that
\[
\tau_\mu^{-1}\ci_{\gs_1}' = \cm_{\gs,(1<6)}+\cm_{\gs, (2<5)}+ \langle z_{2,1}, z_{3,1}, z_{3,2}, z_{8,1}, z_{9,1}, z_{9,2} \rangle
\]
and 
\[
\tau_\mu^{-1}\ci_{\gs_2}' = \cm_{\gs,(7<12)}+ \langle z_{5,7}, z_{6,7}, z_{6,8}, z_{6,9}, z_{11,7}, z_{12,7}, z_{12, 8}, z_{12,9} \rangle.
\]
Thus, the ideal $\ci_\gs$ is indeed same one computed in Example~\ref{2component}. For example, $\tau_\mu^{-1}\cdot\ci_{\gs_2}'$ contains the $2\times 2$ minors of the matrix 
\[
\tau_\mu^{-1}  M_{(1<6)}' = \begin{bmatrix} z_{4,7}& z_{5,8}& z_{5,9} & z_{6,10} &z_{6,11}\\ z_{10,7}& z_{11,8}& z_{11,9} & z_{12,10} &z_{12,11}
\end{bmatrix}
\]
which is the matrix $M_{(7<12)}$ used to compute the ideal $\cm_{\gs,(7<12)}$ in Example~\ref{2component}.
\end{ex}

\begin{remark} When $\gs$ is a rectangular tableau which is not prime (as in Examples~\ref{2component} and~\ref{ex.2component.cont}) it is straightforward to check that the ideal $\ci_{\gs}$ from Definition~\ref{def:MSigma} and that from Definition~\ref{def.general} coincide. In fact, both define the ideal $\ci_\gs$ using the same generating set.
\end{remark}

\begin{ex}\label{ex.disconnected3} Let $\gs$ be the tableau from Example~\ref{ex.disconnected1}, pictured below.
\begin{multicols}{2}
\begin{center} \begin{ytableau}1&2&5&6&7 & 8\\3&4&9 & 10 \end{ytableau}
\end{center} 
\vspace*{-.2in}
\begin{tikzpicture}[scale=0.5] \begin{scope}[shift={(8,0)}]
\foreach \i in {0,...,9}{\node[pnt] at (\i,0)(\i){};}
\draw (0)  to [bend left=80](3);
\draw(1)  to [bend left=80] (2);
\draw (4,0) -- (4,1.5);
\draw (5,0) -- (5,1.5);
\draw (6)  to [bend left=80](9);
\draw(7)  to [bend left=80] (8);
     
\draw (0,-1) node {$1$};
\draw (1,-1) node {$2$};
\draw (2,-1) node {$3$};
\draw (3,-1) node {$4$};
\draw (4,-1) node {$5$};
\draw (5,-1) node {$6$};
\draw (6,-1) node {$7$};
\draw (7,-1) node {$8$};
\draw (8,-1) node {$9$};
 \draw (9,-1) node {$10$};
\end{scope}
\end{tikzpicture} 
\end{multicols}
\noindent The tableau and matching decomposes into four components, $\gs_1$, $\gs_2$, $\gs_3$, $\gs_4$. The components $\gs_2$ and $\gs_3$ are each equal to the prime tableau with a single box. We have that $\gs_1=\gs_4$ are both equal to the following.
\begin{multicols}{2}
\begin{center} \begin{ytableau}1&2\\3&4 \end{ytableau}
\end{center} 
\vspace*{-.2in}
\begin{tikzpicture}[scale=0.5] \begin{scope}[shift={(8,0)}]
\foreach \i in {0,...,3}{\node[pnt] at (\i,0)(\i){};}
\draw (0)  to [bend left=80](3);
\draw(1)  to [bend left=80] (2);
     
\draw (0,-1) node {$1$};
\draw (1,-1) node {$2$};
\draw (2,-1) node {$3$};
\draw (3,-1) node {$4$};
\end{scope}
\end{tikzpicture} 
\end{multicols}
\vspace*{-.1in}
\noindent We have that 
\[
\ci_{\gs_1}=\ci_{\gs_4} = \langle z_{2,1}, z_{4,1} \rangle + \mathrm{minors}\left(2, \begin{bmatrix} z_{1,1} & z_{2,2} & z_{2,3}\\ z_{3,1} & z_{4,2} & z_{4,3} \end{bmatrix} \right).
\]
Now, $\ci_{\gs_1}'=\ci_{\gs_1}$, $\ci_{\gs_2}=\ci_{\gs_3} = \{0\}$, and 
\[
\ci_{\gs_4}' = \langle z_{8,7}, z_{10,7} \rangle + \mathrm{minors}\left(2, \begin{bmatrix} z_{7,7} & z_{8,8} & z_{8,9}\\ z_{9,7} & z_{10,8} & z_{10,9} \end{bmatrix} \right).
\]
We have 
\begin{align*}
\ci_\gs' &= \ci_{\gs_1}' + \ci_{\gs_4}' + \langle z_{i,j} \mid 5\leq i \leq 10, 1\leq j \leq 4 \rangle \\
& \quad\quad\quad\quad \qquad\qquad + \langle z_{i,5} \mid 6\leq i \leq 10  \rangle+\langle z_{i,6} \mid 7\leq i \leq 10 \rangle.
\end{align*}
Finally, we have $\tau_\mu^{-1} = [1,2,7,8,3,4,5,6,9,10]$ and
\begin{align*}
\ci_\gs = \tau_\mu^{-1} \ci_{\gs}' &= \tau_\mu^{-1}\ci_{\gs_1}' + \tau_\mu^{-1}\ci_{\gs_4}' + \langle z_{i,j} \mid i\in \{3,4,5,6,9,10\}, 1\leq j \leq 4 \rangle \\
& \quad\quad\quad\quad \qquad\qquad + \langle z_{i,5} \mid i\in \{4,5,6,9,10\}  \rangle+\langle z_{i,6} \mid i\in \{5,6,9,10\} \rangle.
\end{align*}
where 
\[
 \tau_\mu^{-1}\ci_{\gs_1}' = \langle z_{2,1}, z_{8,1} \rangle + \mathrm{minors}\left(2, \begin{bmatrix} z_{1,1} & z_{2,2} & z_{2,3}\\ z_{7,1} & z_{8,2} & z_{8,3} \end{bmatrix} \right)
\]
and
\[
 \tau_\mu^{-1}\ci_{\gs_4}' = \langle z_{6,7}, z_{10,7} \rangle + \mathrm{minors}\left(2, \begin{bmatrix} z_{5,7} & z_{6,8} & z_{6,9}\\ z_{9,7} & z_{10,8} & z_{10,9} \end{bmatrix} \right).
\]
The reader may wish to compare the ideal
\begin{align*}
\mathcal{K}_\mu &= \langle z_{i,j} \mid 5\leq i \leq 10, 1\leq j \leq 4 \rangle  + \langle z_{i,5} \mid 6\leq i \leq 10  \rangle \\
 &\qquad \qquad \qquad +\langle z_{i,6} \mid 7\leq i \leq 10 \rangle
\end{align*}
with the matrices $\dot\tau_\mu \cell$ computed in Example~\ref{ex.disconnected2}. The ideal $\mathcal{K}_\mu$ is the prime monomial ideal that forces any matrix in its vanishing set into block upper triangular form, with blocks of sizes $\mu = (4,1,1,4)$. 
\end{ex}

Our computations in Macaulay2 point to the fact that the ideal $\isigma$ is prime for all two-row tableaux. 
\begin{conj}
Let $\ell$ be a positive integer with $1\leq \ell \leq \lfloor \frac{n}{2}\rfloor$ and  $\gs\in \SYT(n-\ell, \ell)$. The ideal $\ci_\gs$ is prime.
\end{conj}
We prove the conjecture for a special case in Lemma~\ref{lemma.Grobner} below.

%%%%%%%%%%%%%%%%%%%%%%%%%%%%%%%%%%

\subsection{Statement of the main theorem} 

With the definition of the ideal $\ci_\gs$ in hand, we are ready to state the main theorem of this paper.
Let $N$ be a nilpotent matrix of Jordan type $\lambda = (n-\ell, \ell)$ and suppose $N$ is in Jordan canonical form; see~\eqref{eqn.Jbasis}. 
We consider the action of the subgroup $B$ of upper triangular matrices in $GL_n(\C)$ on $\C[\mathbf{z}]$ induced by right multiplication. Let $\pi:GL_n(\C)\to GL_n(\C)/B$ denote the projection map. The main theorem of this paper is: 

\begin{theorem}\label{thm.ideal} Let $\ell$ be a positive integer with $1\leq \ell \leq \lfloor \frac{n}{2}\rfloor$ and $\gs\in \SYT(n-\ell, \ell)$. The ideal $\ci_\gs$ is right $B$-invariant and $\pi^{-1}(\cb_\sigma) = \cv(\ci_\gs)\cap GL_n(\C)$. 
\end{theorem}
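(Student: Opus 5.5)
The plan is to reduce to the prime case, check $B$-invariance generator by generator, and then compare $\cv(\ci_\gs)\cap GL_n(\C)$ with $\pi^{-1}(\SF_\gs)$ by proving the two inclusions separately.

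\emph{Reduction to the prime case.} By Definition~\ref{def.general}, $\ci_\gs=\tau_\mu^{-1}\ci'_\gs$, where $\ci'_\gs$ is the sum of the shifted ideals $\ci'_{\gs_i}$ and the monomial ideal $\mathcal{K}_\mu$. The vanishing set of $\mathcal{K}_\mu$ is the set of block upper triangular matrices with block sizes $\mu$. Block upper triangular matrices are stable under right multiplication by $B$, and acting by $\tau_\mu^{-1}$ on row indices commutes with the right $B$-action. So $B$-invariance of $\ci_\gs$ follows from $B$-invariance of each $\ci_{\gs_i}$, viewed inside the corresponding diagonal block. For the vanishing set, I would combine Lemma~\ref{lem.taushift} with the identity $\pi^{-1}(\overline{\cell})=\overline{\cell B}\cap GL_n(\C)$. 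A block upper triangular invertible matrix is the product of its block diagonal part with a unipotent block upper triangular matrix, and the latter can be absorbed into $B$. This reduces the statement to $\gs$ prime of shape $(\ell,\ell)$, so from now on $n=2\ell$ and $\ci_\gs=\cj_\gs+\cm_\gs$.

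\emph{$B$-invariance.} Right multiplication by $b\in B$ replaces column $k$ of $Z$ by $\sum_{k'\le k} b_{k'k}v_{k'}$. Since $S_\gs$ is weakly increasing, the set of rows killed by $\cj_\gs$ in column $k'$ contains those killed in column $k$ when $k'\le k$, so $\cj_\gs$ is $B$-stable. For $\cm_{\gs,(i<j)}$, I would use Remark~\ref{rem.evaluation}. Modulo $\cj_\gs$, the generator $\pl^{(i<j)}_{c_1,c_2}$ equals $\omega_\al\bigl(N^{S_\gs(c_1)-\al-1}v_{c_1},\,N^{S_\gs(c_2)-\al-1}v_{c_2}\bigr)$, where $\al=\al(i<j)$ and $\omega_\al$ is the skew form $\omega_\al(x,y)=\sum_{k}\bigl(x_k y_{\ell+\al+1-k}-y_k x_{\ell+\al+1-k}\bigr)$. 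This form satisfies $\omega_\al(Nx,y)=\omega_\al(x,Ny)$-type relations. Substituting the new columns, each generator expands bilinearly into terms of three kinds.
\begin{itemize}
\item[(a)] Terms $\pl^{(i<j)}_{c_1',c_2'}$ with $i\le c_1',c_2'$; these lie in the ideal.
\item[(b)] Terms involving a column $k'<i$ with $S_\gs(k')\le S_\gs(i)-\al-1$; the truncated vector $N^{S_\gs(c)-\al-1}v_{k'}$ vanishes modulo $\cj_\gs$.
\item[(c)] Terms involving columns $k'<i$ that lie inside a cup above $(i<j)$; I would show these are multiples of generators of $\cm_{\gs,(i'<j')}$ for the cups $(i'<j')$ above, using Lemmas~\ref{lem.Sdiff} and~\ref{lem.leftend} to match indices.
\end{itemize}
I expect the bookkeeping in case (c) to be delicate.

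\emph{Vanishing set.} For $\pi^{-1}(\SF_\gs)\subseteq\cv(\ci_\gs)\cap GL_n(\C)$, it suffices to check that the generic matrix of $\cell$ from Definition~\ref{def:cell} kills every generator. The monomials in $\cj_\gs$ vanish directly, as in Example~\ref{ex.Jsigmaideal1}. For $\pl^{(i<j)}_{c_1,c_2}$, the columns of $M_{(i<j)}$ evaluated on $\cell$ have the Toeplitz shape of Example~\ref{ex.n=4nested}, and the alternating sum telescopes to zero. Then Theorem~\ref{thm.GNST}, $B$-invariance and closedness of $\cv(\ci_\gs)$ give the inclusion.

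For the reverse inclusion, take $g\in\cv(\ci_\gs)\cap GL_n(\C)$ with flag $V_\bullet$.
\begin{itemize}
\item The vanishing of $\cj_\gs$ says $V_k\subseteq\ker N^{S_\gs(k)}$ for each $k$. Since $S_\gs(k)\le k-S_\gs(k)+\cdots$, this places $V_\bullet$ in $\SF_N$; this step needs a check with Lemma~\ref{lemma.completedcups}.
\item For each cup $(i<j)$ with $\gd(i<j)\ge2$, Lemma~\ref{lem.rankcond} with $a=\al+1$ and $k=j-i$ shows $\rk M_{(i<j)}\le\al+1$. By Remark~\ref{rem.evaluation} this bounds the dimension of the span of the truncated images of $v_i,\dots,v_{j-1}$. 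Together with $V_{i-1}\subseteq\ker N^{S_\gs(i-1)}$ and the identities of Lemmas~\ref{lem.length} and~\ref{lem.rightend}, this should give $N^{\gd(i<j)}V_j\subseteq V_{i-1}$.
\item Invertibility of $g$ and a dimension count then upgrade the containment to the equality $N^{\gd(i<j)}(V_j)=V_{i-1}$. For cups of size one the condition follows from $\cj_\gs$ alone.
\end{itemize}
Fung's criterion, Proposition~\ref{prop.Fung}, then puts $V_\bullet\in\SF_\gs$.

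The main obstacle is this last translation: turning the rank bound on $M_{(i<j)}$, which sees only the truncated images $N^{S_\gs(k)-\al-1}v_k$, into the full condition $N^{\gd(i<j)}(V_j)=V_{i-1}$. I would try induction on the cups from the innermost outward, so that the conditions for the nested cups are already available when treating $(i<j)$.
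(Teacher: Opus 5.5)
Your reduction to the prime case is fine. The forward inclusion is essentially right, with one correction: on $\cell$ the relevant columns of $M_{(i<j)}$ are either zero or all equal, so each minor vanishes individually rather than by telescoping.

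\textbf{The gap is in the reverse inclusion} $\cv(\ci_\gs)\cap GL_n(\C)\subseteq\pi^{-1}(\SF_\gs)$. The only information you extract from $\ci_\gs$ is $N^{S_\gs(k)}a_k=0$ (from $\cj_\gs$) and $\rk M_{(i<j)}\le\al(i<j)+1$ (via Lemma~\ref{lem.rankcond}). These conditions do not cut out $\pi^{-1}(\SF_\gs)$, so no induction built on them alone can succeed. Take the rainbow of Example~\ref{1component.a} ($\ell=3$, $S_\gs=(1,2,3,3,3)$) and $A=[e_1\,|\,e_2\,|\,e_5\,|\,e_4\,|\,e_3\,|\,e_6]$.
\begin{itemize}
\item $A$ is invertible and kills $\cj_\gs$.
\item $M_{(1<6)}(A)$ has rank one, so $\cm_{\gs,(1<6)}$ vanishes.
\item The columns of $M_{(2<5)}(A)$ are $a_2=e_2$, $Na_3=e_4$, $Na_4=0$ (read in rows $1,2,4,5$), so its rank is $2=\al(2<5)+1$.
\end{itemize}
Nevertheless $N(a_3)=e_4\notin F_2=\C\{e_1,e_2\}$, so $\pi(A)\notin\SF_N$. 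Correspondingly $\pl^{(2<5)}_{2,3}(A)=1$. The same example refutes two other claims in your sketch: $\cj_\gs$ does not place the flag in $\SF_N$, and the size-one condition $N(F_4)=F_2$ for the cup $(3<4)$ does not follow from $\cj_\gs$. The sums of minors say strictly more than the rank bound (an isotropy condition on the span of the truncated columns), and that extra content has to be used.

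\textbf{The missing idea} is the extended cup matrix $\widetilde A_{(i<j)}$ of Proposition~\ref{prop.aug.rank}. It prepends the columns $N^{m(i)}a_1,\dots,N^{m(i)}a_{i-1}$, where $m(i)=S_\gs(i)-\al(i<j)-1$, to the truncated columns of $M_{(i<j)}$. In the example, $\widetilde A_{(2<5)}$ has columns $e_1,e_2,e_4,0$ and rank $3$, which detects the failure.

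The paper proves $\rk\widetilde A_{(i<j)}=\al(i<j)+1$ by verifying the hypotheses of Lemma~\ref{lem.rankcond} for the new columns.
\begin{itemize}
\item When $c_1<i\le c_2$, it uses $B$-invariance: replace $a_i$ by $a_i+a_{c_1}$.
\item When $c_1<c_2\le i$, the sum either vanishes term by term or collapses, via Lemma~\ref{pairingdeterminants}, to a generator $\pl^{(i'<j')}_{c_1,c_2}$ of a cup $(i'<j')$ above $(i<j)$.
\end{itemize}
Combined with the count $i-2m(i)=\al(i<j)+1$, this gives $N^{-m(i)}\bigl(N^{m(i)}(F_i)\bigr)=F_i$. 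Hence $N^{S_\gs(k)-S_\gs(i)}a_k\in F_i$ for $i<k<j$ (Corollaries~\ref{cor.preimage} and~\ref{cor.rank_condition}). This is what ties the truncated images back to the flag. Only then does the induction over right endpoints (Proposition~\ref{prop.components}) prove both $N(a_k)\in F_{k-1}$ and $N^{\gd(i<j)}(F_j)=F_{i-1}$.

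\textbf{A smaller issue} of the same kind affects your $B$-invariance case (a). If $i\le q<c_1$ but $S_\gs(q)<S_\gs(c_1)$, the new term is not $\pl^{(i<j)}_{q,c_2}$. For the rainbow with $q=2$, $c_1=3$, $c_2=4$, it reduces modulo $\cj_\gs$ to the $2\times 2$ minor of $M_{(1<6)}$ in columns $2,4$. The correct dichotomy is $S_\gs(q)=S_\gs(c_1)$ versus $S_\gs(q)<S_\gs(c_1)$, and the latter is handled by induction on $\al$ through the cups above.
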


This is a restatement of Theorem~\ref{thm.intro} from the Introduction. The proof consists of two main parts. We prove $B$-invariance in Section~\ref{sec.binv}. We then study the vanishing set of $\ci_\gs$ in Section~\ref{sec.vanishingset}.

%%%%%%%%%%%%%%%%%%%%%%%%%%%%%%%%%%%%

\section{$B$-invariance of $\ci_\gs$}\label{sec.binv}
In this section, we take our first step in the proof of Theorem~\ref{thm.ideal}. We show that $\ci_\gs$ is invariant under the action of $B$ on $\C[\mathbf{z}]$ induced by right multiplication of matrices.

Recall that $B=TU\subset GL_n(\C)$ where $T$ is the subgroup of diagonal matrices and $U$ is generated by the matrices
\begin{eqnarray}\label{gen}
b_{q,r} = I_n+cE_{q,r} ,\; 1\leq q<r \leq n,\, \text{ and }\, c\in \C
\end{eqnarray}
where $E_{q,r}$ is the $n\times n$ elementary matrix with unique nonzero entry equal to $1$ in row $q$ and column $r$. 
The group $B$ acts on $M_n(\C)$ by right multiplication and there is an induced action on $\C[\mathbf{z}]$. This action multiplies columns by any nonzero scalar and adds any scalar multiple of a column to `later' ones.  For example, the action of the matrix from~\eqref{gen} on the $r$-th column~is 
\[
\begin{bmatrix} z_{1,r}\\z_{2,r} \\ \vdots \\ z_{n,r} \end{bmatrix} \cdot b_{q,r} = \begin{bmatrix}  z_{1,r} + cz_{1,q}\\ z_{2,r} + cz_{2,q} \\ \vdots \\ z_{n,r} + cz_{n,q} \end{bmatrix}.
\]
We begin with a lemma which simplifies our computations later.

\begin{lemma}\label{pairingdeterminants} For any positive integer $a\geq 2$ and set of indeterminates $\{v_{i,j}^k \mid i,j\in \{1,2\} \text{ and } 0\leq k \leq a \}$, we have
\begin{eqnarray}\label{lem:ident1}
v_{1,1}^0v_{2,2}^0+\displaystyle\sum_{k=1}^{a-1}\begin{vmatrix} v_{1,1}^k&v_{1,2}^k\\v_{2,1}^k&v_{2,2}^k\end{vmatrix}-v_{2,1}^av_{1,2}^a
=\sum_{k=0}^{a-1}\begin{vmatrix}
v_{1,1}^k&v_{1,2}^{k+1}\\
v_{2,1}^{k+1}&v_{2,2}^k
\end{vmatrix}.
\end{eqnarray}
\end{lemma}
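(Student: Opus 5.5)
This is a polynomial identity, so the plan is to expand both sides into monomials and match them term by term. No earlier result is needed.

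Expanding the right-hand side gives
\[
\sum_{k=0}^{a-1}\left(v_{1,1}^k v_{2,2}^k - v_{1,2}^{k+1}v_{2,1}^{k+1}\right).
\]
I split this into two sums. The first is $\sum_{k=0}^{a-1} v_{1,1}^k v_{2,2}^k$, which is the sum of the diagonal products with superscripts $0,\dots,a-1$. For the second, I reindex with $m=k+1$ to get $-\sum_{m=1}^{a} v_{1,2}^m v_{2,1}^m$, the sum of the negated anti-diagonal products with superscripts $1,\dots,a$.

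Next I expand the left-hand side. Each determinant $\begin{vmatrix} v_{1,1}^k&v_{1,2}^k\\v_{2,1}^k&v_{2,2}^k\end{vmatrix}$ for $1\le k\le a-1$ equals $v_{1,1}^kv_{2,2}^k - v_{1,2}^kv_{2,1}^k$.

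Adding the boundary term $v_{1,1}^0v_{2,2}^0$ to the diagonal products gives $\sum_{k=0}^{a-1} v_{1,1}^kv_{2,2}^k$. Adding the boundary term $-v_{2,1}^av_{1,2}^a$ to the anti-diagonal products gives $-\sum_{m=1}^{a}v_{1,2}^mv_{2,1}^m$.

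These two sums agree exactly with the two sums obtained from the right-hand side, using only the commutativity of the indeterminates. This proves the identity.

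The computation is routine. The only step needing care is the index shift $m=k+1$ in the anti-diagonal sum, together with the check that each boundary term lands in the correct range of its sum. The hypothesis $a\ge 2$ only ensures the middle sum is nonempty; the identity holds verbatim for $a=1$ as well.
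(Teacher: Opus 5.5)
Your proof is correct, but it takes a slightly different route from the paper. The paper argues by induction on $a$. It checks $a=2$ by hand. In the inductive step it expands the $k=a$ determinant and uses its anti-diagonal product $-v_{1,2}^a v_{2,1}^a$ as the boundary term needed to apply the inductive hypothesis. It then pairs the leftover diagonal product $v_{1,1}^a v_{2,2}^a$ with the new boundary term $-v_{1,2}^{a+1}v_{2,1}^{a+1}$ to form one more shifted determinant.

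That is the same regrouping you perform, carried out one index at a time. Your direct expansion does it in a single step through the shift $m=k+1$. This makes clear that the identity is nothing more than a reindexing of the anti-diagonal products, and it removes the need for a base case.

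Your version also yields a small bonus: as you note, the identity holds verbatim for $a=1$, which the stated hypothesis $a\geq 2$ excludes. This matters, because the proof of Proposition~\ref{binv} invokes the lemma with $a=\alpha$ for every $\alpha\geq 1$. The case $a=1$ is therefore actually used there, and your argument covers it with no extra work.
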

\begin{proof} We use induction on $a$ to show the identity~\eqref{lem:ident1}. When $a=2$,
\begin{align*}
        v_{1,1}^0v_{2,2}^0+\begin{vmatrix}
        v_{1,1}^1&v_{1,2}^1\\
        v_{2,1}^1&v_{2,2}^1
    \end{vmatrix}-v_{1,2}^2v_{2,2}^2&=v_{1,1}^0v_{2,2}^0+v_{1,1}^1v_{2,2}^1-v_{1,2}^1v_{2,1}^1-v_{1,2}^2v_{2,1}^2\\
    &=\begin{vmatrix}
         v_{1,1}^0&v_{1,2}^1\\
        v_{2,1}^1&v_{2,2}^0
    \end{vmatrix}+\begin{vmatrix}
         v_{1,1}^1&v_{1,2}^2\\
        v_{2,1}^2&v_{2,2}^1
    \end{vmatrix}.
\end{align*}
Now suppose the identity~\eqref{lem:ident1} holds for some $a$ with $a\geq 2$. Then,

\begin{align*}
v_{1,1}^0v_{2,2}^0+&\sum_{k=1}^a\begin{vmatrix}
        v_{1,1}^k&v_{1,2}^k\\
        v_{2,1}^k&v_{2,2}^k
    \end{vmatrix}-v_{1,2}^{a+1}v_{2,1}^{a+1}\\
&= v_{1,1}^0v_{2,2}^0+\sum_{k=1}^{a-1}\begin{vmatrix} v_{1,1}^k&v_{1,2}^k\\ v_{2,1}^k&v_{2,2}^k \end{vmatrix} + v_{1,1}^av_{2,2}^a-v_{1,2}^av_{2,1}^a - v_{2,1}^{a+1}v_{1,2}^{a+1} \\
&=\sum_{k=0}^{a-1}\begin{vmatrix}
        v_{1,1}^k&v_{1,2}^{k+1}\\
        v_{2,1}^{k+1}&v_{2,2}^k
    \end{vmatrix}+v_{1,1}^av_{2,2}^a-v_{1,2}^{a+1}v_{2,1}^{a+1} \quad \text{ (by the induction hypothesis)}\\
    &=\displaystyle\sum_{k=0}^{a}\begin{vmatrix}
        v_{1,1}^k&v_{1,2}^{k+1}\\
        v_{2,1}^{k+1}&v_{2,2}^k
    \end{vmatrix}.\end{align*}
Thus, the identity \eqref{lem:ident1} holds by induction. 
\end{proof}

The next lemma reduces the proof of $B$-invariance to the rectangular case.

\begin{lemma} Suppose $\ci_\gs$ is $B$-invariant for all $\gs\in \SYT(\ell, \ell)$. Then $\ci_\gs$ is $B$-invariant for all $\gs\in \SYT(n-\ell, \ell)$. 
\end{lemma}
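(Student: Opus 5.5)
The idea is to separate the two ingredients of Definition~\ref{def.general}. Write $\ci_\gs=\tau_\mu^{-1}\ci_\gs'$. The permutation $\tau_\mu^{-1}$ acts only on row indices of the variables $z_{a,b}$, that is, by left multiplication on $M_n$. Right multiplication by $B$ commutes with left multiplication by any fixed matrix. Hence $\ci_\gs$ is right $B$-invariant if and only if $\ci_\gs'$ is. So it suffices to prove $B$-invariance of
\[
\ci_\gs' = \Big(\sum_{i=1}^k \ci_{\gs_i}'\Big) + \mathcal{K}_\mu .
\]
Since $B$ is generated by $T$ and the elementary matrices $b_{q,r}$ of~\eqref{gen}, it is enough to check, for each generator $f$ of $\ci_\gs'$ and each such group element $b$, that $f\cdot b\in \ci_\gs'$.

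First I will handle $\mathcal{K}_\mu$ on its own. Its generators are the variables $z_{i,j}$ with $j$ in block $p$ and $i>\parsum_p$, so its vanishing set is the set of block upper triangular matrices for $\mu$. The torus $T$ only rescales variables. Acting by $b_{q,r}$ sends $z_{i,r}\mapsto z_{i,r}+cz_{i,q}$ with $q<r$. If $r$ lies in block $p$ and $i>\parsum_p$, then $q$ lies in a block $p'\le p$, so $i>\parsum_{p'}$. Thus $z_{i,q}\in\mathcal{K}_\mu$, and $\mathcal{K}_\mu$ is $B$-invariant.

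Next consider a generator $f$ of $\ci_{\gs_i}'$. It involves only the variables $z_{a,b}$ with $a,b$ in block $i$, that is, in $\{\parsum_{i-1}+1,\ldots,\parsum_i\}$. The torus preserves $\ci_{\gs_i}'$, because the rectangular ideal $\ci_{\gs_i}$ is $T$-invariant by hypothesis (it is $B$-invariant) and the shift respects diagonal matrices. For $b_{q,r}$ the argument splits by the position of $r$.
\begin{itemize}
\item If $r$ is not in block $i$, then $f$ is unchanged.
\item If $r$ and $q$ both lie in block $i$, then $b_{q,r}$ restricts to an upper unitriangular elementary matrix of $GL_{\mu_i}$. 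The hypothesis, applied to the prime rectangular tableau $\gs_i$ and transported by the index shift $z_{a,b}\mapsto z_{a+\parsum_{i-1},b+\parsum_{i-1}}$, gives $f\cdot b_{q,r}\in \ci_{\gs_i}'$.
\item If $r$ lies in block $i$ but $q\le \parsum_{i-1}$, then $f\cdot b_{q,r}=f(z_{a,r}\mapsto z_{a,r}+cz_{a,q})$. Expanding in powers of $c$, every term of positive degree contains a factor $z_{a,q}$ with $a>\parsum_{i-1}\ge \parsum_{p'}$, where $p'$ is the block containing $q$. Such a variable lies in $\mathcal{K}_\mu$. So $f\cdot b_{q,r}\equiv f \pmod{\mathcal{K}_\mu}$.
\end{itemize}
The blocks with $\mu_i=1$ contribute $\ci_{\gs_i}=0$, so they need no check.

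I expect the main obstacle to be only bookkeeping. The crucial point is the third case: the generators of $\ci_{\gs_i}'$ must involve only row indices inside block $i$, and each $z_{a,q}$ that appears must have $a$ strictly below the block of $q$. Both facts follow directly from the definitions by inspection.
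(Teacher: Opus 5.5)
Your proposal is correct and follows the paper's proof. You reduce to $\ci_\gs'$ because $\tau_\mu^{-1}$ only permutes rows, show $\mathcal{K}_\mu$ is $B$-invariant, and handle each $\ci_{\gs_i}'$ via the hypothesis for the block-diagonal part of $B$ and modulo $\mathcal{K}_\mu$ for the cross-block unipotent part. This matches the paper's factorization $B=B'U'$, and your generator-by-generator check is simply an explicit, ideal-level version of the paper's vanishing-set phrasing.
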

\begin{proof} Let $\gs = \gs_1\circ\gs_2\circ \cdots \circ \gs_k$ be the prime decomposition of $\gs$. Recall that $\mu_i = |\gs_i|$ and $\parsum_{i} = \mu_1+\cdots +\mu_i$, with the convention that $\parsum_{0} = 0$.   As the action of $\tau_\mu$ on $M_n$ by left multiplication by $\dot \tau _\mu$ permutes rows, but not columns, it suffices to prove that the ideal $\ci_{\gs}' = \mathcal{K}_\mu+ \sum_{i=1}^k \ci_{\gs_i}'$ from Definition~\ref{def.general} is right $B$-invariant, where

\[
\mathcal{K}_\mu:= \left< z_{i,j}\mid \parsum_{p-1}+1\leq j \leq \parsum_p<i \, \text{ for all }\, p= 1, \ldots ,k-1 \right>.
\]
A matrix belongs to $\cv(\mathcal{K}_\mu)$ if and only if it is block upper-triangular with blocks of size $\mu_1, \mu_2, \ldots, \mu_k$. Right multiplication by any upper-triangular matrix preserves this block upper-triangular form, so $\mathcal{K}_\mu$ is $B$-invariant.

Consider the decomposition $B = B'U'$ where $B'$ is the subgroup of $B$ of block diagonal matrices with blocks of sizes $\mu_1$, $\mu_2, \ldots, \mu_k$ and $U'$ is the subgroup of strictly block upper triangular matrices relative to the same block sizes. 
In other words, $B'$ is the Borel subgroup of the Levi subgroup $\iota(GL_{\mu_1}(\C)\times \cdots \times GL_{\mu_k}(\C))$ and $U'$ is the unipotent radical of the parabolic subgroup of $GL_n(\C)$ corresponding to this Levi subgroup.

For each $i$ with $\mu_i\geq 2$, the ideal $\ci_{\gs_i}'$ is the image of the ideal $\ci_{\gs_i}$ corresponding to the rectangular tableau $\gs_i$ under the map which shifts every index by $\parsum_{i-1}$. Thus each $\ci_{\gs_i}'$ is an ideal in the polynomial ring with indeterminates $z_{a,r}$ where $a$ and $r$ belong to the block $\{\parsum_{i-1}+1,\parsum_{i-1}+2, \ldots, \parsum_{i} \}$. 
Each  $A\in \cv(\ci_\gs')$ must be block upper triangular, since $A\in \cv(\mathcal{K}_\mu)$. Right multiplication by $U'$ only affects entries of $A$ in the strictly upper triangular blocks.  Thus, $U'$ acts trivially on $\sum_{i=1}^k \ci_{\gs_i}'$.  The assumption that each $\ci_{\gs_i}$ is $B$-invariant (where $B \subseteq GL_{\mu_i}(\C)$) implies $\sum_{i=1}^k \ci_{\gs_i}'$ is $B'$-invariant. We conclude $\ci_\gs'$ is right $B$-invariant, as desired.
\end{proof}

We are now ready to state and prove the main result of this section:

\begin{prop}\label{binv} Let $\gs\in \SYT(\ell, \ell)$. The ideal $\ci_\gs$ is right $B$-invariant. 
\end{prop}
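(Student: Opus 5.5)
Since $B=TU$ and $U$ is generated by the elementary matrices $b_{q,r}=I_n+cE_{q,r}$ of~\eqref{gen}, it suffices to check that each generator of $\ci_\gs=\cj_\gs+\cm_\gs$ is sent into $\ci_\gs$ by the torus $T$ and by every $b_{q,r}$. The torus is immediate: $\cj_\gs$ is monomial, and each $\pl^{(i<j)}_{c_1,c_2}$ is a sum of $2\times 2$ minors on the fixed column pair $\{c_1,c_2\}$. So a diagonal matrix rescales it by the scalar $t_{c_1}t_{c_2}$. The substance is the unipotent part. The action of $b_{q,r}$ replaces column $r$ of $Z$ by column $r$ plus $c$ times column $q$, with $q<r$. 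Writing $g\cdot b_{q,r}=g+c\,D_{q,r}g$, it is enough to show that the derivation $D_{q,r}$, which sends $z_{a,r}$ to $z_{a,q}$ and kills all other variables, maps each generator into $\ci_\gs$. Since $D_{q,r}^2=0$ on multilinear-in-column generators, the higher terms vanish.

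First I treat $\cj_\gs$. We have $D_{q,r}z_{a,r}=z_{a,q}$ for $a\in\{S_\gs(r)+1,\dots,\ell\}\cup\{\ell+S_\gs(r)+1,\dots,2\ell\}$. Because $S_\gs$ is weakly increasing and $q<r$, we get $S_\gs(q)\le S_\gs(r)$, so $z_{a,q}\in\cj_\gs$. Next, for a generator $\pl^{(i<j)}_{c_1,c_2}$, I use Remark~\ref{rem.evaluation}. Modulo $\cj_\gs$, column $k$ of $M_{(i<j)}$ is $N^{S_\gs(k)-\ga-1}v_k$ with zero rows removed, and the generator is the bilinear form
\[
\omega(x,y)=\sum_{k=0}^{\ga}\left(x_{k+1}y_{2\ga+2-k}-x_{2\ga+2-k}y_{k+1}\right)
\]
evaluated on columns $c_1,c_2$. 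This is a symplectic-type pairing of the top $\ga+1$ coordinates of one Jordan string with the bottom of the other. Applying $D_{q,r}$ gives $\omega$ evaluated on the pair where column $r$ is replaced by the truncation of $v_q$ shifted by $N^{S_\gs(r)-\ga-1}$. I then split into cases. \emph{Case 1:} $q$ and $r$ lie outside $\{c_1,c_2\}$; the derivative is zero. \emph{Case 2:} $q$ is in the range $[i,j-1]$ of the same cup. If $S_\gs(q)=S_\gs(r)$, the result is literally $\pl^{(i<j)}_{q,c}$ (or $0$ if $q=c$). If $S_\gs(q)<S_\gs(r)$, the shift differs by $d=S_\gs(r)-S_\gs(q)$, and the substituted column is $N^{d}$ applied to column $q$ of $M_{(i<j)}$; I then need $\omega(N^d x,y)\in\ci_\gs$. \emph{Case 3:} $q<i$, i.e.\ $q$ lies outside the cup. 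Here I would compare with the generators of the cup $(i^*<j^*)$ containing $q$, or of an outer cup, using Lemma~\ref{lem.Sdiff} and Lemma~\ref{lem.leftend} to relate the index shifts $S_\gs(\cdot)-\ga$ between the two cups.

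The key identity for the shifted cases is Lemma~\ref{pairingdeterminants}. When $N^d$ is applied to one argument, the pairing $\omega(N^d x,y)$ becomes a sum of $2\times 2$ determinants with the superscripts offset by one, exactly the right-hand side of~\eqref{lem:ident1}. The lemma rewrites this as a sum of the unshifted minors, which is a generator of the form $\pl^{(i'<j')}_{\cdot,\cdot}$ for a cup with $\ga$ one smaller (the cup directly above), plus boundary terms $v^0_{1,1}v^0_{2,2}$ and $v^a_{2,1}v^a_{1,2}$. These boundary terms contain a variable of $\cj_\gs$, because the shifted row index exceeds $S_\gs(\cdot)$ or $\ell+S_\gs(\cdot)$. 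Iterating $d$ times handles general shifts. Outer cups of size one have no $\cm$-generators, so in that case I expect the boundary terms alone to vanish modulo $\cj_\gs$, again by the monotonicity of $S_\gs$.

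I expect the main obstacle to be the index bookkeeping in Case 3 and the shifted subcase of Case 2: verifying that after applying Lemma~\ref{pairingdeterminants}, the remaining sum is precisely a generator $\pl^{(i^*<j^*)}_{c,c'}$ of an enclosing cup with matching rows $S_\gs(c)-\ga(i^*<j^*)+k$. I would first work through this on Example~\ref{1component.a}, checking $D_{2,3}$ and $D_{1,3}$ on $\pl^{(2<5)}$, to fix the correct correspondence before writing the general induction on $\ga(i<j)$.
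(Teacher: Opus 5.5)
This is essentially the paper's argument. The steps match: torus rescaling, monotonicity of $S_\gs$ for $\cj_\gs$, and the generator $\pl^{(i<j)}_{q,c_2}$ directly when $S_\gs(q)=S_\gs(r)$. When $S_\gs(q)<S_\gs(r)$, Lemma~\ref{pairingdeterminants} modulo $\cj_\gs$ trades the $\al$-pairing with shift $S_\gs(r)$ for the $(\al-1)$-pairing of the cup directly above with shift $S_\gs(r)-1$. The paper runs this as an induction on $\al(i<j)$. Your right-to-left use of the lemma is the same regrouping, and the boundary products you discard are exactly those containing $z_{S_\gs(r),q}$ and $z_{S_\gs(r)+\ell,q}$. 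What you leave open is precisely the bookkeeping the paper supplies, and two of your remarks need correcting.

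First, Case~3 is not a separate case. If $q<i\le r$, then $S_\gs(q)\le S_\gs(i-1)<S_\gs(i)\le S_\gs(r)$. The reduction then runs verbatim on the substituted column itself, whose entries are $z_{S_\gs(r)-\al+k,q}$ and $z_{S_\gs(r)+\ell-k,q}$. So you never need ``$N^d$ applied to column $q$ of $M_{(i<j)}$,'' which does not exist when $q<i$.

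Second, your stopping rule is misidentified. A cup directly above another cup has size at least $2$, so ``outer cups of size one'' never arise. The iteration stops in one of two ways:
\begin{itemize}
\item the current shift reaches $S_\gs(q)$, which yields $\pl^{(i_C<j_C)}_{q,c_2}$ for the cup $C=(i_C<j_C)$ reached; or
\item it reaches $\al=0$ with the shift still exceeding $S_\gs(q)$, where the single remaining minor has both column-$q$ entries in $\cj_\gs$.
\end{itemize}
In the first case you must still check that $q$ indexes a column of $M_{(i_C<j_C)}$. Set $d=S_\gs(r)-S_\gs(q)$. Each step upward lowers the $S_\gs$-value of the left endpoint by at least one (Lemma~\ref{lem.Sdiff}), so $S_\gs(i_C)\le S_\gs(i)-d\le S_\gs(q)$. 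Since a left endpoint is the smallest index with its $S_\gs$-value, this gives $i_C\le q$.
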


\begin{proof} To prove $\ci_\gs$ is stable under the action of $B=TU$, we argue that 
\begin{enumerate}
\item each generator of $\ci_\gs$ is fixed under the rescaling action of right multiplication by $\mathrm{diag}(t_1,t_2, \ldots, t_n)\in T$ by $z_{a,r} \mapsto t_r z_{a,r}$ for each $r\in [n]$, and 
\item the image of each generator of $\isigma$ under the action of $b_{q,r}$ for $1\leq q<r\leq n$ as in~\eqref{gen}, given by $z_{a,r}\mapsto z_{a,r}+cz_{a,q}$ for all $1\leq a \leq n$ and $c\in \C$, is an element of the ideal $\ci_\gs$.
\end{enumerate}

Recall from Definition~\ref{def:MSigma} that $\ci_\gs = \cj_\gs+\cm_\gs$. First, we show that each generator of $\ci_\gs$ is invariant under the re-scaling action, as in (1). Since $\cj_\gs$ is a monomial ideal, invariance under re-scaling is obvious.

Recall that 
\[
\cm_\gs = \sum_{\substack{(i<j)\\ \gd(i<j)\geq 2}} \cm_{\gs, (i<j)}
\]
where $\cm_{\gs, (i<j)}$ is as in Definition~\ref{def:MSigma}.
Given a cup $(i<j)$ in $\cupsigma$ of size at least $2$ and $c_1,\,c_2$ with $i\leq c_1<c_2\leq j-1$ recall that
\[
\pl_{c_1,c_2}^{(i<j)}= \sum_{k=0}^{\al(i<j)}\pl_{\{k+1, 2\al(i<j)+2-k\}, \{c_1-i+1,c_2-i+1\}} (M_{(i<j)})
\]
denotes a generator of $\cm_{\gs,(i<j)}$. For notational simplicity, write $\al=\al(i<j)$ for the remainder of the proof. Recall that the columns of the matrix $M_{(i<j)}$ are indexed by $i,i+1,\dots, j-1$ and each generator selects two columns and takes a particular sum of $2\times 2$ minors in these columns. We write $\left\{c_1, c_2\right\}\subseteq \left\{i, i+1, \dots, j-1\right\}$ for the indices of these columns. By definition,
\begin{align}\label{eqn.sum}
\pl_{c_1,c_2}^{(i<j)}= \sum_{k=0}^{\al} \begin{vmatrix}
    z_{S_\sigma(c_1)-\al+k, c_1}&z_{S_\sigma(c_2)-\al+k, c_2}\\
    z_{S_\sigma(c_1)+\ell-k, c_1} &  z_{S_\sigma(c_2)+\ell-k, c_2}
\end{vmatrix} \ .
\end{align}
We have
\begin{align*}
\pl_{c_1,c_2}^{(i<j)} \cdot \mathrm{diag}(t_1, t_2, \ldots, t_n) &=\displaystyle\sum_{k=0}^{\al}\begin{vmatrix}
          t_{c_1}z_{S_\sigma(c_1)-\al+k, c_1}& t_{c_2}z_{S_\sigma(c_2)-\al+k, c_2}\\
    t_{c_1} z_{S_\gs(c_1)+\ell-k, c_1} &  t_{c_2} z_{S_\gs(c_2)+\ell-k, c_2}   
        \end{vmatrix}\\
        &=t_{c_1}t_{c_2}\, \displaystyle\sum_{k=0}^{\al} \begin{vmatrix}
          z_{S_\sigma(c_1)-\al+k, c_1}& z_{S_\sigma(c_2)-\al+k, c_2}\\ z_{S_\gs(c_1)+\ell-k, c_1} &   z_{S_\gs(c_2)+\ell-k, c_2}   
        \end{vmatrix} \\
        &= t_{c_1}t_{c_2}\, \pl_{c_1,c_2}^{(i<j)}\in\cm_{\gs,(i<j)}
\end{align*}
by multi-linearity.
Thus $\ci_\gs$ is invariant under rescaling.

Next we argue invariance under the action from (2). We treat the cases of generators in the ideals $\cj_\gs$ and $\cm_{(i<j)}$ separately. 

Let $z_{a,r}$ be a generator of $\cj_\gs$. Let $b_{q,r}=I_n+cE_{q,r}$ as in~\eqref{gen}. We want to show that for all $1\leq q<r \leq n$,  
\[
z_{a,r}\cdot b_{q,r}= z_{a,r}+cz_{a,q}\in \cj_\gs.
\]
By definition of the ideal $\cj_\gs$ we have that $S_\gs(r)<\ell$ and either  $S_\gs(r)+1\leq a \leq \ell$  or  $S_\gs(r)+\ell+1\leq a \leq n$. Because the nesting sequence of $\gs$ is weakly increasing it must be that $S_\gs(q)\leq S_\gs(r)<\ell$ and, therefore, $S_\gs(q)+1\leq a\leq \ell$ or $S_\gs(q)+\ell+1\leq a\leq n$.  Thus, $z_{a,q} \in\cj_\gs$ and $z_{ar}+cz_{aq}\in \cj_\gs$.

To prove that $\pl_{c_1,c_2}^{(i<j)}\cdot b_{q,r}\in \ci_\gs$, we consider two possibilities: either $r=c_1$ or $r=c_2$. (If $r\neq c_1$ and $r\neq c_2$ then the action of $b_{q,r}$ fixes all variables appearing in the sum~\eqref{eqn.sum} and there is nothing to show.) Note that exchanging the columns $c_1$ and $c_2$ in the definition of the generator~\eqref{eqn.sum} has the effect of exchanging the columns in each $2\times 2$ minor $\pl_{\{k+1, 2\al + 2-k\},\{c_1,c_2\}}(M_{(i<j)})$, which results in $-\pl_{c_1,c_2}^{(i<j)}$ so, up to symmetry, it suffices to consider $r=c_1$.

Suppose from now on that $r=c_1$. This implies $i\leq r \leq j-1$ since $r$ must index a column of $M_{(i<j)}$. By multi-linearity,
\begin{align*}
  \pl_{r,c_2}^{(i<j)}\cdot b_{q,r} &= \sum_{k=0}^{\al}\begin{vmatrix}
    z_{S_\sigma(r)-\al+k, r}+ cz_{S_\sigma(r)-\al+k, q}&z_{S_\sigma(c_2)-\al+k, c_2}\\
 z_{S_\sigma(r)+\ell-k, r}+ cz_{S_\sigma(r)+\ell-k, q} &  z_{S_\sigma(c_2)+\ell-k, c_2}
\end{vmatrix} \\
   &=  \pl_{ r,c_2}^{(i<j)}+c\,\underbrace{\displaystyle\sum_{k=0}^{\al}\begin{vmatrix}
    z_{S_\sigma(r)-\al+k, q}&z_{S_\sigma(c_2)-\al+k, c_2}\\
    z_{S_\gs(r)+\ell-k, q} &  z_{S_\gs(c_2)+\ell-k, c_2}
\end{vmatrix}}_{(*)}
\end{align*}
Since $\pl_{r,c_2}^{(i<j)} = \pl_{c_1,c_2}^{(i<j)}\in\cm_{\gs,(i<j)}$, it remains to show that the expression $(*)$ is an element of $\ci_\sigma$. We consider two cases: either $S_\gs(q)=S_\gs(r)$ or $S_\gs(q)<S_\gs(r)$.

If $S_\gs(q)=S_{\gs}(r)$ then it follows immediately that $i\leq q$. Indeed, since $i$ is the left endpoint of a cup, if $q<i$ then $S_\gs (r) = S_\gs(q)<S_\gs(i)$, which contradicts the fact that $i\leq r$ and the nesting sequence is weakly increasing.  As $i\leq q$, the set $\{q ,c_2\}$ is a valid choice of columns of $M_{(i<j)}$. In this case we get
\begin{align*}
(*)&=\sum_{k=0}^{\al}\begin{vmatrix}
          z_{S_\sigma(q)-\al+k, q}&z_{S_\sigma(c_2)-\al+k, c_2}\\
    z_{S_\gs(q)+\ell-k, q} &  z_{S_\gs(c_2)+\ell-k, c_2}   
        \end{vmatrix}\\
&=\sum_{k=0}^{\al} \pl_{\{k+1, 2 \al +2-k\},\{q,c_2\}}(M_{(i<j)})=\pl_{q,c_2}^{(i<j)}\in\cm_{\gs,(i<j)}.
\end{align*}
Thus, $(*)\in \ci_\gs$ and we are done in this case.

Now suppose $S_\gs(q)<S_{\gs}(r)$. We prove $(*)\in \ci_\gs$ by induction on $\al = \al(i<j)\geq 0$, the number of cups strictly above the cup containing $(i<j)$ in $\cupsigma$. Since $S_\gs(q)<S_\gs(r) \leq \ell$, it follows that $z_{S_\gs(r), q}$ and $z_{S_{\gs}(r)+\ell, q}$ are elements of $\cj_\gs$. These variables appear in the first and last minor (for $k=0$ and $k=\al$) of the sum $(*)$.  

If $\alpha=0$, that is, if $(i<j)$ has no cup appearing above it in $\cupsigma$, then we get 
\begin{align*}
(*) = \begin{vmatrix} z_{S_\gs(r),q} & z_{S_\gs(c_2),c_2} \\ z_{S_\gs(r)+\ell, q} & z_{S_\gs(c_2) + \ell , c_2} \end{vmatrix}  \in \cj_\sigma,
\end{align*}
proving the base case.

Assume from now on that $\al\geq 1$ so there is some cup $(i^*< j^*)$ directly above $(i<j)$ in $\cupsigma$ and let $\al^*= \al(i^*<j^*)=\al-1$. Modulo the ideal $\cj_\gs$, we get
\begin{align*}
(*)\; \mathrm{mod}\; \cj_\gs =
z_{S_\sigma(r)-\al, q}\, z_{S_\gs(c_2)+\ell, c_2}+\displaystyle\sum_{k=1}^{\al-1}&\begin{vmatrix}
z_{S_\sigma(r)-\al+k, q}&z_{S_\sigma(c_2)-\al+k, c_2}\\
z_{S_\gs(r)+\ell-k, q} &  z_{S_\gs(c_2)+\ell-k, c_2}
\end{vmatrix}\\
&\quad\quad\quad\quad -z_{S_\gs(r)+\ell-\al, q}\, z_{S_\sigma(c_2), c_2}.
\end{align*}
Using Lemma \ref{pairingdeterminants} and re-writing the result in terms of $\al^*$ yields
\begin{align}\label{eqn.nextcup}
(*)\; \mathrm{mod}\; \cj_\gs &= \sum_{k=0}^{\al-1}\begin{vmatrix}
z_{S_\sigma(r)-\al+k, q}&z_{S_\sigma(c_2)-\al+k+1, c_2}\\
z_{S_\gs(r)+\ell-k-1, q} &  z_{S_\gs(c_2)+\ell-k, c_2}
\end{vmatrix}\\ 
&= \sum_{k=0}^{\alpha^*} \begin{vmatrix}
z_{(S_\sigma(r)-1)-\al^*+k, q}&z_{S_\sigma(c_2)-\al^*+k, c_2}\\ \nonumber
z_{(S_\gs(r)-1)+\ell-k, q} &  z_{S_\gs(c_2)+\ell-k, c_2}
\end{vmatrix}.
\end{align}

If $S_\gs(q)=S_\gs(r)-1$, then $S_\gs(q) = S_{\gs}(r)-1 \geq S_{\gs}(i)-1 \geq  S_{\gs}(i^*)$. This implies $i^*\leq q$ and thus $\{q,c_2\}$ is a valid choice of columns in the matrix $M_{(i^*<j^*)}$ since $i^*\leq q < c_2\leq j^*$ and~\eqref{eqn.nextcup} becomes
\begin{align*}
(*)\; \mathrm{mod}\; \cj_\gs &= \sum_{k=0}^{\al^*} \begin{vmatrix}
z_{S_\gs(q)-\al^*+k, q}&z_{S_\sigma(c_2)-\al^*+k, c_2}\\
z_{S_\gs(q)+\ell-k, q} &  z_{S_\gs(c_2)+\ell-k, c_2}
\end{vmatrix}\\ &= \sum_{k=0}^{\al^*} \pl_{\{k+1, 2\al^* +2-k\},\{q,c_2\}} (M_{(i^*<j^*)}) = \pl_{q,c_2}^{(i^*<j^*)} \in \cm_{\gs,(i^*<j^*)}.
\end{align*}
If $S_\gs(q)\neq S_\gs(r)-1$, let $r'>q$ denote the minimum value such that $S_\gs(r') = S_\gs(r)-1$. Then $S_\gs(q)<S_\gs(r')$ and~\eqref{eqn.nextcup} becomes
\begin{align*}
(*)\; \mathrm{mod}\; \cj_\gs = (*)' \; \text{ where } \;  (*)' := \sum_{k=0}^{\alpha^*} \begin{vmatrix}
z_{S_\gs(r')-\al^*+k, q}&z_{S_\sigma(c_2)-\al^*+k, c_2}\\
z_{S_\gs(r')+\ell-k, q} &  z_{S_\gs(c_2)+\ell-k, c_2}
\end{vmatrix} .
\end{align*}
Since $\al^* < \al$ and $c_2$ indexes a column of $M_{(i^*<j^*)}$ we have by induction that $(*)'\in \ci_\gs$.   We conclude $(*)\in \ci_\gs$, as desired. This shows that $\ci_\gs$ is right $B$-invariant.
\end{proof}

%%%%%%%%%%%%%%%%%%%%%%%%%%%

\section{The vanishing set of \texorpdfstring{$\ci_\gs$}{ci	extunderscore gs}}\label{sec.vanishingset}

We now work to prove the second part of Theorem~\ref{thm.ideal}.

\subsection{The containment $\cv(\ci_\gs) \cap GL_n(\C) \subseteq \pi^{-1}(\cb_\gs)$}

Throughout this section, $A$ denotes an $n\times n$ matrix,
\[
A = \begin{bmatrix} \; | & | & & | \; \\ a_1 & a_2 & \cdots & a_n \\
| & | & & | \end{bmatrix},
\]
where $a_1, a_2, \ldots, a_n\in M_{n,1}(\C)$ are the columns of $A$, with $a_k^T:= [a_{1,k}\;a_{2,k}\; \cdots\; a_{n,k}]$ for each $1\leq k\leq n$. We let
\[
F_k := \C\{a_1, a_2, \ldots, a_k\}
\]
denote the span of the first $k$ columns of $A$.

Our first lemma reduces our study of the vanishing set of $\ci_\gs$ to the case where $\gs$ is prime. 

\begin{lemma}\label{lem.reduction} Let $\gs\in \SYT(n-\ell,\ell)$ with prime decomposition $\gs = \gs_1\circ \gs_2 \circ \cdots \circ \gs_k$  and associated composition $\mu=(\mu_1, \mu_2, \ldots, \mu_k)$ with $\mu_i := |\gs_i|$. Set $\parsum_i = \mu_1+\mu_2+\cdots +\mu_i$.
Let $A\in \cv(\ci_\gs)\cap GL_n(\C)$. 
\begin{enumerate}
\item For all $1\leq i \leq k$, 
\[
F_{\parsum_i} = \C\{e_1, e_2, \ldots, e_{r(\parsum_i)+c(s_i)}\} \oplus \C\{e_{n-\ell+1}, e_{n-\ell+2},, \ldots, e_{n-\ell+c(\parsum_i)}\}.
\]
\item If $\mu_i\geq 2$, that is, if $\gs_i$ is a rectangular prime tableau, then 
\[
N^{\mu_i/2} (F_{\parsum_i}) = F_{\parsum_{i-1}} ,
\]
and if $N_i$ denotes the nilpotent matrix obtained by restricting $N$ to the quotient space $F_{\parsum_i}/F_{\parsum_{i-1}}$, then $N_i$ has Jordan form $(\frac{\mu_i}{2},\frac{\mu_i}{2})$ and is in Jordan canonical form with respect to the standard basis. 
\end{enumerate}
In particular, if $\cv(\ci_{\gs_i})\cap GL_{m_i}(\C) \subseteq \pi^{-1}(\SF_{\gs_i})$ for all $i$, then $\cv(\ci_\gs)\cap GL_n(\C)\subseteq  \pi^{-1}(\SF_\gs)$. 
\end{lemma}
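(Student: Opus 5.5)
The plan is to work in the row-permuted coordinates. Since $\ci_\gs=\tau_\mu^{-1}\ci_\gs'$, a matrix $A$ lies in $\cv(\ci_\gs)$ if and only if $A':=\dot\tau_\mu A$ lies in $\cv(\ci_\gs')$. The convention to check here is that $z_{a,b}\mapsto z_{\tau_\mu^{-1}(a),b}$ corresponds to left multiplication by $\dot\tau_\mu$ on matrices. I would confirm this on Example~\ref{ex.disconnected3}, where $\mathcal{K}_\mu$ matches the block-triangular shape of $\dot\tau_\mu\cell$ in Example~\ref{ex.disconnected2}.

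Since $A'\in\cv(\mathcal{K}_\mu)$, the matrix $A'$ is block upper triangular with blocks of sizes $\mu_1,\ldots,\mu_k$. Because $A'$ is invertible, each diagonal block $A'_i$ is invertible. It follows that the span of the first $\parsum_i$ columns of $A'$ is exactly $\C\{e_1,\ldots,e_{\parsum_i}\}$.

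Applying $\dot\tau_\mu^{-1}$, we get $F_{\parsum_i}=\C\{e_{\tau_\mu^{-1}(1)},\ldots,e_{\tau_\mu^{-1}(\parsum_i)}\}$. By Lemma~\ref{lemma.tau_gs.formula}, the indices $1,\ldots,\parsum_i$ map to two sets:
\begin{itemize}
\item the rays and first-half indices, which give positions $1,\ldots,r(\parsum_i)+c(\parsum_i)$;
\item the second-half indices, which give positions $n-\ell+1,\ldots,n-\ell+c(\parsum_i)$.
\end{itemize}
This uses that the number of first-half entries up to block $i$ equals the number of completed cups $c(\parsum_i)$. Together these give statement (1).

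For (2), take $\mu_i\ge2$. Since $N$ acts by $e_m\mapsto e_{m-1}$ on both Jordan strings, $N^{\mu_i/2}$ applied to the coordinate subspace in (1) lowers each string's top index by $\mu_i/2$. Block $i$ contributes $\mu_i/2$ to both $c$ and $r+c$, because it contains no rays. Hence the image is the coordinate subspace with indices $r(\parsum_{i-1})+c(\parsum_{i-1})$ and $c(\parsum_{i-1})$, which is $F_{\parsum_{i-1}}$ by (1).

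One check is needed: the first string has length $n-\ell\ge r+c$, so truncation at $e_1$ behaves correctly. For $i=1$ with $\parsum_0=0$, we need $F_{\mu_1}$ to be killed, and indeed $N^{\mu_1/2}$ kills the span of $e_1,\ldots,e_{\mu_1/2}$ together with $e_{n-\ell+1},\ldots,e_{n-\ell+\mu_1/2}$.

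The quotient $F_{\parsum_i}/F_{\parsum_{i-1}}$ has basis the images of $e_{r+c+1},\ldots,e_{r+c+\mu_i/2}$ (with $r,c$ evaluated at $\parsum_{i-1}$) and of $e_{n-\ell+c+1},\ldots,e_{n-\ell+c+\mu_i/2}$. On this basis $N$ acts as two Jordan chains of length $\mu_i/2$. The bottom vectors map into $F_{\parsum_{i-1}}$, so they are zero in the quotient. Ordering the basis as listed, and relabeling it as the standard basis via $\tau_\mu$, gives Jordan canonical form of type $(\frac{\mu_i}{2},\frac{\mu_i}{2})$.

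For the final assertion, the flag of $A$ is the $\dot\tau_\mu^{-1}$-translate of the flag of $A'$, and $A'$ is block upper triangular. The $i$-th diagonal block $A'_i$ determines the induced flag on $F_{\parsum_i}/F_{\parsum_{i-1}}$. Moreover, $A'_i\in\cv(\ci_{\gs_i})$ after unshifting indices, because $\ci'_{\gs_i}$ involves only variables inside block $i$. By hypothesis, each $A'_i$ therefore lies in $\pi^{-1}(\SF_{\gs_i})$, taken for the nilpotent $N_i$ identified in (2).

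I would then conclude in one of two ways:
\begin{itemize}
\item Use Fung's conditions (Proposition~\ref{prop.Fung}). Ray conditions are exactly (1), by Remark~\ref{rem.ray-condition}. Cup conditions $N^{\gd}(V_j)=V_{i-1}$ for cups inside block $i$ reduce, modulo $F_{\parsum_{i-1}}$, to the corresponding conditions for $\SF_{\gs_i}$ in the quotient. Lifting back uses (2) and that $V_{i-1}\supseteq F_{\parsum_{i-1}}$. Fung's description applies since membership in $\SF_N$ follows blockwise as well.
\item Argue that the flag lies in $\overline{\dot\tau_\mu^{-1}\iota(F_{\gs_1},\ldots,F_{\gs_k})}$, modulo the unipotent part $U'$ of the parabolic, which does not change the flag up to the Levi factor. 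Then apply Lemma~\ref{lem.taushift}.
\end{itemize}

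The main obstacle is this last step. Off-diagonal blocks of $A'$ are arbitrary, so the flag is not literally $\iota$ of the block flags. I would handle it with the Fung-criterion route, because the lifting of cup conditions through $F_{\parsum_{i-1}}$ is the delicate point, and (2) was set up precisely to make it work.
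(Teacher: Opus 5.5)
Your proposal is correct and follows essentially the paper's route: pass to $\dot\tau_\mu A\in\cv(\ci_\gs')$, use $\mathcal{K}_\mu$ to get block upper triangularity, and read off (1) from Lemma~\ref{lemma.tau_gs.formula}. Then obtain (2) from the two Jordan strings, and conclude by checking Fung's conditions (Proposition~\ref{prop.Fung}) blockwise on the quotient flags in $F_{\parsum_i}/F_{\parsum_{i-1}}$ given by the diagonal blocks. The lifting step you single out is routine: for a cup $(a<b)$ in block $i$, the quotient condition gives $N^{\gd(a<b)}(F_b)\subseteq F_{a-1}$ because $F_{\parsum_{i-1}}\subseteq F_{a-1}$, and equality follows from Lemma~\ref{lem.linearalg} since $b-(a-1)=2\gd(a<b)=\dim\ker N^{\gd(a<b)}$.
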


\begin{proof} Suppose $A\in \mathcal{V}(\isigma)$, or equivalently, $\dot \tau_\mu A\in \mathcal{\isigma'}$. 
The monomial ideal 
\[
\mathcal{K}_\mu = \left< z_{i,j} \mid \parsum_{p-1}\leq j \leq \parsum_p <i , \text{ for all } p=1, \ldots, k-1 \right>
\]
in $\ci_{\gs}'$ forces the matrix $\dot\tau_\mu A$ to be block upper-triangular with blocks of size $(\mu_1, \mu_2, \ldots, \mu_k)$. Since $\dot\tau_\mu A$ is invertible, this implies
\[
\dot \tau_\mu (F_{\parsum_i}) = \C\{e_1, e_2, \ldots, e_{\parsum_i }\}
\]
for each $1\leq i \leq k$. By Lemma~\ref{lemma.tau_gs.formula} 
\[
F_{\parsum_i} = \C\{e_1, e_2, \ldots, e_{r(\parsum_i)+c(\parsum_i)}\}\oplus \C\{e_{n-\ell + 1}, e_{n-\ell+2}, \ldots, e_{n-\ell+c(\parsum_i)}\}
\]
for each $1\leq i \leq k$. This proves (1). 

When $\mu_i\geq 2$, we get that $r(\parsum_i)+c(\parsum_i) = r(\parsum_{i-1}) +c(\parsum_{i-1})+\frac{\mu_i}{2}$ since there are exactly $\frac{\mu_i}{2}$ cups in $\gs_i$. In particular,
\begin{align*}
F_{\parsum_i}/F_{\parsum_{i-1}} \simeq &\; \C\{e_{r(\parsum_{i-1})+c(\parsum_{i-1}) +1}, e_{r(\parsum_{i-1})+c(\parsum_{i-1}) +2},\ldots,  e_{r(\parsum_{i-1})+c(\parsum_{i-1}) +\frac{\mu_i}{2}}  \}\\ 
&\quad \qquad \qquad  \oplus \C\{ e_{n-\ell +c(\parsum_{i-1})+1}, e_{n-\ell +c(\parsum_{i-1})+2}, \ldots, e_{n-\ell +c(\parsum_{i-1})+\frac{\mu_i}{2}} \}.
\end{align*}
Since $N$ has fixed Jordan form as~\eqref{eqn.Jbasis} above, $N(F_{\parsum_i}) = F_{\parsum_{i-1}}$ and $N_i$ is in standard Jordan form with blocks determined by the partition $(\frac{\mu_i}{2}, \frac{\mu_i}{2})$. This proves (2). 

For each $\gs_i$ with $\mu_i\geq 2$, the pair $(\parsum_{i-1}+1, \parsum_{i})$ is a cup of $\gs$. Furthermore, every cup of $\gs$ is nested in one of these.  Statements (1) and (2) of our lemma tell us the flag $F_\bullet$ represented by the matrix $A$ always satisfies condition (2) of Proposition~\ref{prop.Fung} (see Remark~\ref{rem.ray-condition}) and $F_\bullet$ satisfies condition (1) for each of the cups $(\parsum_{i-1}+1, \parsum_{i})$ where $\mu_i\geq 2$.

Let $A_i$ denote the $\mu_i\times \mu_i$ matrix obtained by projecting $\dot\tau_\mu A$ to the $\mu_i\times \mu_i$ diagonal block. Each $A_i\in GL_{\mu_i}(\C)$ is a matrix representative for the flag in $F_{\parsum_i}/F_{\parsum_{i-1}}$ given by $(0\subset F_{\parsum_{i-1}+1}/F_{\parsum_{i-1}} \subset F_{\parsum_{i-1}+2}/F_{\parsum_{i-1}} \subset \cdots \subset F_{\parsum_i}/F_{\parsum_{i-1}})$. On the other hand, by definition of the ideal $\isigma'$, each $A_i$ in an element of $\cv(\ci_{\gs_i})$. If $\cv(\ci_{\gs_i})\cap GL_{\mu_i}(\C) \subseteq \pi^{-1}(\ci_{\gs_i})$, then the flag $(0\subset F_{\parsum_{i-1}+1}/F_{\parsum_{i-1}} \subset F_{\parsum_{i-1}+2}/F_{\parsum_{i-1}} \subset \cdots \subset F_{\parsum_i}/F_{\parsum_{i-1}})$ satisfies condition (2) of Proposition~\ref{prop.Fung} for every cup nested within $(\parsum_{i-1}+1, \parsum_i)$ with respect to $N_i$. Thus, by Proposition~\ref{prop.Fung} and Lemma~\ref{lem.taushift},  $A\in \pi^{-1}(\cb_\gs)$. 
\end{proof}

We can now focus our attention on the case where $\gs$ is a prime standard tableau of shape $(\ell,\ell)$ and $\cupsigma$ is a connected, standard noncrossing matching. Our first lemma characterizes the vanishing set of $\cj_\gs$ using the nesting sequence of $\gs$.

\begin{lemma}\label{lem.kernel} Let $\gs$ be a standard tableau of shape $(\ell,\ell)$ and $\cj_\gs$ the monomial ideal from Definition~\ref{def:JSigma}. We have $A\in \cv(\cj_\gs)$ if and only if $N^{S_\gs(k)} a_k =0$ for all $k$.
\end{lemma}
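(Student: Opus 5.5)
This is a direct coordinate computation: I will work out $N^{m}$ on a single column and compare the entries it kills with the generators of $\cj_\gs$. Here $n=2\ell$ and $N$ is in Jordan form~\eqref{eqn.Jbasis}, so $N e_1 = 0$, $N e_{p} = e_{p-1}$ for $2\leq p\leq \ell$, $N e_{\ell+1}=0$, and $N e_{\ell+p} = e_{\ell+p-1}$ for $2\leq p\leq \ell$.

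First I compute $N^m a$ for a column $a = (a_{1},\ldots,a_{n})^T$ and $0\leq m\leq \ell$. We have $N^m e_p = e_{p-m}$ when $m< p\leq \ell$, and $N^m e_p=0$ when $p\leq m$; the analogous statement holds for $e_{\ell+p}$. Hence
\[
N^m a = \sum_{p=m+1}^{\ell} a_{p}\, e_{p-m} + \sum_{p=m+1}^{\ell} a_{\ell+p}\, e_{\ell+p-m}.
\]
The vectors $e_{p-m}$ and $e_{\ell+p-m}$ for $m+1\leq p\leq \ell$ are distinct standard basis vectors. Therefore $N^m a=0$ if and only if $a_p = a_{\ell+p} = 0$ for all $m+1\leq p\leq \ell$.

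Next I apply this with $a=a_k$ and $m=S_\gs(k)\leq \ell$. By the computation, $N^{S_\gs(k)}a_k=0$ exactly when the entries $a_{i,k}$ and $a_{i+\ell,k}$ vanish for all $S_\gs(k)+1\leq i\leq \ell$. If $S_\gs(k)<\ell$, this is precisely the vanishing of the generators $z_{i,k}, z_{i+\ell,k}$ of $\cj_\gs$ coming from column $k$ in Definition~\ref{def:JSigma}. If $S_\gs(k)=\ell$, both conditions are vacuous: $N^\ell=0$, and column $k$ contributes no generators.

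Since $\cj_\gs$ is generated by these variables column by column, $A\in\cv(\cj_\gs)$ if and only if the condition holds for every $k$. This gives the equivalence.

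The one point needing care is the range of $k$. The sequence $S_\gs$ is defined for $k\in[n-1]$, while Definition~\ref{def:JSigma} ranges over $j\in[n]$. I adopt the convention $S_\gs(n)=\ell$, the total number of cups, under which column $n$ contributes nothing on either side. Beyond that there is no real obstacle; the proof is routine bookkeeping with the Jordan basis.
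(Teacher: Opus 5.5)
Your proposal is correct and follows the paper's proof. Both compute $N^{S_\gs(k)}a_k$ explicitly in the Jordan basis as the column shifted up by $S_\gs(k)$ within each block, and identify the entries that must vanish with the generators $z_{i,k}, z_{i+\ell,k}$ of $\cj_\gs$. Your extra remarks on the case $S_\gs(k)=\ell$ and the convention $S_\gs(n)=\ell$ are harmless bookkeeping that the paper leaves implicit.
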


\begin{proof} By definition,
\[
(N^{S_\gs(k)}a_k)^{\mathrm{T}} = \left[ a_{S_\gs(k)+1 ,k} \,\cdots \, a_{\ell, k} \; 0 \, \cdots \, 0 \; a_{S_\gs(k)+\ell + 1,k} \, \cdots \, a_{n,k}\; 0 \, \cdots \, 0 \right]
\]
where each string of zeros in the vector above is $S_\gs(k)$ long. Thus $N^{S_\gs(k)}a_k$ is the zero vector if and only if 
\[
a_{i, k}=0 \; \text{ for all  }\; S_\gs(k)+1\leq i \leq \ell \;\text{ and }\; S_\gs(k)+\ell +1 \leq i \leq n.
\]
These are precisely the coordinate functions in the ideal $\cj_\gs$, which proves the claim.
\end{proof}

We introduce some notation for rest of this section to streamline our arguments. For each cup $(i<j)$ in $\cupsigma$ and each $k$ with $1\leq k \leq j-1$, define
\[
\m{i}{k} := S_\gs(k)-\al(i<j)-1.
\]
For simplicity, when $i=k$ we write $\mi{i} := \m{i}{i}$. 

Recall the definition of the matrix $M_{(i<j)}$ from~\eqref{eqn.matrix} with entries equal to variables in the ring $\C[\mathbf{z}]$. Let $A$ be an $n\times n$ matrix. Consider the matrix 
\[
A_{(i<j)}:=\begin{bmatrix}
a_{S_\gs(i)-\al(i<j),i} & a_{S_\gs(i+1)-\al(i<j),i+1}& \cdots & a_{S_\gs(j-1)-\al(i<j),j-1}\\
    a_{S_\gs(i)-\al(i<j)+1,i}& a_{S_\gs(i+1)-\al(i<j)+1,i+1} & \cdots &a_{S_\gs(j-1)-\al(i<j)+1,j-1}\\
    \vdots & \vdots &\ddots& \vdots\\
    a_{S_\gs(i),i} & a_{S_\gs(i+1),i+1}& \cdots & a_{S_\gs(j-1),j-1}\\
a_{S_\gs(i)-\al(i<j)+\ell,i} & a_{S_\gs(i+1)-\al(i<j)+\ell,i+1}& \cdots & a_{S_\gs(j-1)-\al(i<j)+\ell,j-1}\\
    a_{S_\gs(i)-\al(i<j)+\ell+1,i}& a_{S_\gs(i+1)-\al(i<j)+\ell+1,i+1} & \cdots &a_{S_\gs(j-1)-\al(i<j)+\ell+1,j-1}\\
    \vdots & \vdots &\ddots& \vdots\\
    a_{S_\gs(i)+\ell,i} & a_{S_\gs(i+1)+\ell,i+1}& \cdots & a_{S_\gs(j-1)+\ell,j-1}\\
\end{bmatrix}
\]
obtained by evaluating $M_{(i<j)}$ at $A$. By Remark~\ref{rem.evaluation}, if $a_1, a_2, \ldots, a_n$ are the columns of $A$, then (up to removing rows of zeros) we have
\begin{align*}
A_{(i<j)} &= \begin{bmatrix}  | & | &  & |  \\   N^{S_\gs(i)-\al(i<j)-1}a_i &  N^{S_\gs(i+1)-\al(i<j)-1}a_{i+1} & \cdots &  N^{S_\gs(j-1)-\al(i<j)-1}a_{j-1} \\ | & | &  & |   \end{bmatrix} \\ 
&= \begin{bmatrix}  | & | &  & |  \\   N^{\mi{i}}a_i &  N^{\m{i}{i+1}}a_{i+1} & \cdots &  N^{\m{i}{j-1}}a_{j-1} \\ | & | &  & |   \end{bmatrix}.
\end{align*}
The next result is a linchpin for latter arguments. It tells us that the minors in $\ci_\gs$ force certain rank conditions on $A\in \cv(\ci_\gs)$ using Lemma~\ref{lem.rankcond}. 
We call the matrix $\widetilde{A}_{(i<j)}$ defined in~\eqref{eqn.extmatrix} below the \emph{extended cup matrix} associated to~$A$.

\begin{prop}\label{prop.aug.rank} Let $\gs$ be a standard tableau of shape $(\ell, \ell)$ and suppose $(i<j)$ is a cup in $\cc(\gs)$ of size at least $2$. If $A\in GL_n(\C)\cap\cv(\ci_\gs)$ then the matrix
\begin{eqnarray}\label{eqn.extmatrix}
\;\;\;\;\;\widetilde{A}_{(i<j)} :=  \begin{bmatrix} | &  & | & | & | &  & |  \\ N^{\mi{i}}a_1 & \cdots & N^{\mi{i}}a_{i-1} & N^{\mi{i}}a_i & N^{\m{i}{i+1}}a_{i+1} & \cdots & N^{\m{i}{j-1}}a_{j-1} \\| &   & | & | & | & &|  \end{bmatrix}
\end{eqnarray}
has $\rk \widetilde{A}_{(i<j)} =\al(i<j)+1$. 
\end{prop}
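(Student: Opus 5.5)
The plan is to prove the two inequalities $\rk \widetilde{A}_{(i<j)}\geq \al(i<j)+1$ and $\rk \widetilde{A}_{(i<j)}\leq \al(i<j)+1$ separately. Throughout write $\al=\al(i<j)$, and note that $\mi{i}=S_\gs(i)-\al-1\geq 0$, since the $\al$ cups above $(i<j)$ and the cup itself all begin at or before $i$.

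\emph{Lower bound.} This uses only invertibility of $A$ and Lemma~\ref{lem.leftend}. The first $i$ columns of $\widetilde{A}_{(i<j)}$ are $N^{\mi{i}}a_1,\ldots,N^{\mi{i}}a_i$, and they span $N^{\mi{i}}(F_i)$. Since $A\in GL_n(\C)$ we have $\dim F_i=i$. Because $N$ has two Jordan blocks of size $\ell\geq \mi{i}$, we get $\dim\ker N^{\mi{i}}=2\mi{i}$. Hence
\[
\rk \widetilde{A}_{(i<j)}\geq \dim N^{\mi{i}}(F_i)\geq i-2\mi{i}=i-2S_\gs(i)+2\al+2=\al+1,
\]
where the last equality is exactly~\eqref{rem:i}.

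\emph{Upper bound, step 1 (locating the columns).} By Lemma~\ref{lem.kernel}, $N^{S_\gs(k)}a_k=0$ for every $k$, since $A\in\cv(\cj_\gs)$. Every column of $\widetilde{A}_{(i<j)}$ is then $N^{m}a_k$ with $m\geq S_\gs(k)-\al-1$; for $k<i$ this uses $S_\gs(k)\leq S_\gs(i)$. So every column lies in $\ker N^{\al+1}$, which has dimension $2\al+2$. After deleting zero rows as in Remark~\ref{rem.evaluation}, $\widetilde{A}_{(i<j)}$ becomes a $(2\al+2)\times(j-1)$ matrix. Its last $j-i$ columns form $A_{(i<j)}$, and its earlier columns use the same row indices $S_\gs(i)-\al,\ldots,S_\gs(i)$ and $S_\gs(i)-\al+\ell,\ldots,S_\gs(i)+\ell$.

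\emph{Upper bound, step 2 (the relations).} The goal is to show that the paired sums of $2\times 2$ minors
\[
\sum_{k=0}^{\al}\pl_{\{k+1,2\al+2-k\},\{c_1,c_2\}}\bigl(\widetilde{A}_{(i<j)}\bigr)
\]
vanish for all column pairs $c_1<c_2$, not just those with $c_1\geq i$. For $c_1,c_2\in[i,j-1]$ this is the vanishing of the generators $\pl^{(i<j)}_{c_1,c_2}$ of $\cm_{\gs,(i<j)}$. For $q<i\leq c_2$, use Proposition~\ref{binv}, or more precisely its proof. With $r=i$, the expression $(*)$ there is exactly the above sum for the column pair $\{q,c_2\}$ of $\widetilde{A}_{(i<j)}$, because column $q$ is read in the rows indexed by $S_\gs(i)$. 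That proof shows $(*)\in\ci_\gs$, so the sum vanishes at $A$. For $q<q'<i$, apply the same reasoning once more. Act on $\pl^{(i<j)}_{i,i+1}$ by $b_{q,i}$ and then by $b_{q',i+1}$, or use a suitable combination. By $B$-invariance and multilinearity, the coefficient of $c\,c'$ must lie in $\ci_\gs$ after subtracting terms already known to lie in it. That coefficient is the paired sum in the columns $\{q,q'\}$ read at row levels $S_\gs(i)$ and $S_\gs(i+1)$. If $S_\gs(i+1)\neq S_\gs(i)$ this needs an adjustment: I would instead use two columns $i\leq r<r'$ sharing the value $S_\gs(i)$, or rerun the inductive argument of Proposition~\ref{binv} directly with both columns shifted.

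\emph{Upper bound, step 3 (conclusion).} Once all these relations hold, reorder the rows so that the pairs are $\{k+1,2\al+2-k\}$. If $\al\geq 1$, Lemma~\ref{lem.rankcond} with $a=\al+1$ gives $\rk\leq\al+1$ when there are at least $\al+2$ columns; otherwise the bound is trivial. If $\al=0$, the relations say that all $2\times2$ minors vanish, so the rank is at most $1$.

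I expect the main obstacle to be step 2 for pairs of columns both lying left of $i$. Extracting these relations cleanly from $B$-invariance requires care with the row offsets $S_\gs(r)$, and the induction on $\al$ from Proposition~\ref{binv} may need to be repeated with both columns shifted.
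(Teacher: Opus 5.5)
Your lower bound, the reduction to a $(2\al+2)$-row matrix, the mixed case $c_1<i<c_2$, and the final appeal to Lemma~\ref{lem.rankcond} all match the paper. The genuine gap is in Step~2 for column pairs $c_1<c_2\le i$. This case includes the pairs $\{q,i\}$: your $(*)$-argument comes from the generator $\pl^{(i<j)}_{i,c_2}$, which requires $c_2>i$, so $c_2=i$ is not actually covered.

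For these pairs the $B$-invariance mechanism cannot work. Since $B$ acts by column operations, a generator $\pl^{(i<j)}_{c_1,c_2}$ only yields paired sums whose two columns are read at the levels $S_\gs(c_1)$ and $S_\gs(c_2)$ of two \emph{distinct} indices in $[i,j-1]$. Because $\gd(i<j)\ge 2$, the vertex $i+1$ is a left endpoint, so $S_\gs(i+1)=S_\gs(i)+1$. Hence $i$ is the only index of $[i,j-1]$ at level $S_\gs(i)$. Your ``adjustment'' is therefore always needed, and the columns $i\le r<r'$ with value $S_\gs(i)$ do not exist. Concretely, take the rainbow of Example~\ref{1component.a} and the cup $(2<5)$. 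The relation required for columns $\{1,2\}$ is $a_{1,1}a_{5,2}-a_{2,2}a_{4,1}$, which is the generator $\pl^{(1<6)}_{1,2}$. A degree count in the column grading shows it does not lie in $\cj_\gs$ plus the $B$-translates of $\cm_{\gs,(2<5)}$.

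The missing idea is that these relations come from generators attached to ancestor cups, via a direct computation rather than $B$-invariance. For $c\le i$, Lemma~\ref{lem.kernel} forces column $c$ of $\widetilde A_{(i<j)}$ to be supported in the first $S_\gs(c)-\mi{i}$ entries of each half. So the $k$-th term of the paired sum for $\{c_1,c_2\}$ vanishes unless $S_\gs(i)-S_\gs(c_2)\le k\le \al-(S_\gs(i)-S_\gs(c_2))$.

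\begin{itemize}
\item If $S_\gs(c_1)+S_\gs(c_2)\le i$, every term is zero. By Lemma~\ref{lem.leftend}, this condition is equivalent to $2S_\gs(i)-S_\gs(c_1)-S_\gs(c_2)>\al$.
\item Otherwise, let $(i'<j')$ be the cup above $(i<j)$ with $\al(i'<j')=\al-(2S_\gs(i)-S_\gs(c_1)-S_\gs(c_2))$. One checks $S_\gs(i')\le S_\gs(c_1)$, which forces $i'\le c_1$, so $c_1,c_2$ are columns of $M_{(i'<j')}$. Apply Lemma~\ref{pairingdeterminants} exactly $S_\gs(c_2)-S_\gs(c_1)$ times and reindex. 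The paired sum becomes exactly $\pl^{(i'<j')}_{c_1,c_2}(A)$, which vanishes since $A\in\cv(\ci_\gs)$.
\end{itemize}

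Your fallback of rerunning the induction with both columns shifted points toward this, but it aims at the wrong target. The identity lands in $\cm_{\gs,(i'<j')}$, not among $B$-translates of $\cm_{\gs,(i<j)}$.
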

\begin{proof} Let $A\in GL_n(\C)\cap\cv(\ci_\gs)$ and set $\al=\al(i<j)$. 

It suffices to prove $\rk \widetilde A_{(i<j)} \leq \al+1$. Given this fact, the assertion $\rk \widetilde A_{(i<j)} = \al+1$ follows. Indeed, $N^{\mi{i}}\left( F_i \right) = \C\{ N^{\mi{i}}a_1, \ldots,N^{\mi{i}}a_i \}$ is a subspace of the column space of $\widetilde A_{(i<j)}$ so $\rk \widetilde A_{(i<j)}$ is at least $\dim N^{\mi{i}}\left( F_i \right)$. Since $\dim F_i = i$ and $\dim \ker N^{\mi{i}} = 2\mi{i}$ we have 
\[
\rk \widetilde A_{(i<j)}  \geq \dim N^{\mi{i}}\left( F_i \right) \geq i - 2\mi{i} = \al +1,
\]
where the last equality follows by Lemma~\ref{lem.leftend} using the fact that $\mi{i}=S_\gs(i)-\al-1$. 

We proceed with our proof that $\rk  \widetilde A_{(i<j)} \leq \al+1$. Consider the projection from $\C^n$ to $\C^{2\al+2}$ defined by
\[
\left[ v_1, v_2, \ldots, v_n\right]^T \mapsto \left[v_1,v_2, \ldots, v_{\al+1}, v_{\ell+1}, v_{\ell+2}, \ldots, v_{\ell+\al+1}\right]^T.
\]
By Lemma~\ref{lem.kernel}, we know $a_k\in \ker N^{S_\gs(k)}$ for all $k$. For all $i\leq k \leq j-1$ we have $S_\gs(k) - \m{i}{k} = \al+1$ and thus
\[
N^{\m{i}{k}} a_k \in \ker N^{\al +1} = \C\{e_1, e_2, \ldots, e_{\al+1}\} \oplus \C\{ e_{\ell+1}, e_{\ell+2}, \ldots, e_{\ell+\al+1}\}.
\]
Now consider $k$ with $1\leq k \leq i$. If $S_\gs(k)\leq \mi{i}$ then $N^{\mi{i}}a_k = 0$ and otherwise,
\begin{align}\label{eqn.Nm(i)}
N^{\mi{i}} a_k \in \ker N^{S_\gs(k)-\mi{i}} &= \C\{e_1, e_2, \ldots, e_{S_\gs(k)-\mi{i}}\} \\ \nonumber
& \qquad \qquad \oplus \C\{e_{\ell+1}, e_{\ell+2}, \ldots, e_{\ell + S_\gs(k)-\mi{i}}\}.
\end{align}
In particular, the matrix obtained from $\widetilde A_{(i<j)}$ by replacing each column with its projection to $\C^{2\al+2}$ deletes rows $\al+2, \al+3, \ldots, \ell$ and $\ell+\al+2, \ell+\al+3, \ldots, 2\ell =n$ of $\widetilde A_{(i<j)}$, all of which are zero.  The rank of the resulting $(2\al+2)\times (j-1)$ matrix is equal to the rank of the original. Abusing notation, we denote this matrix by $\widetilde A_{(i<j)}$ as well.

For any choice of columns $c_1$ and $c_2$ of $\widetilde A_{(i<j)}$ we consider
\begin{eqnarray}\label{eqn.minorsextended}
\pl_{c_1,c_2}\left(\widetilde A_{(i<j)}\right) := \sum_{k=0}^{\al(i<j)}\pl_{\{k+1, 2\al(i<j)+2-k\}, \{c_1,c_2\}} \left(\widetilde A_{(i<j)}\right) .
\end{eqnarray}
The assertion that $\rk \widetilde A_{(i<j)} \leq \al(i<j)+1$ follows directly from Lemma~\ref{lem.rankcond} once we prove~\eqref{eqn.minorsextended} is equal to $0$ for all $1\leq c_1<c_2\leq j-1$. 

If $i\leq c_1<c_2\leq j-1$, then $\pl_{c_1, c_2} = \pl_{c_1, c_2}^{(i<j)}=\sum_{k=0}^{\al}\pl_{\{k+1, 2\al+2-k\}, \{c_1,c_2\}}$ is a generator of $\ci_\gs$, and $\pl_{c_1, c_2}\left(\widetilde A_{(i<j)}\right) = \pl_{c_1, c_2}\left(A\right)=0$ since $A\in \cv(\ci_\gs)$. 

Now suppose $c_1< i <c_2\leq j-1$ and consider $u_{c_1} = I_n + E_{c_1, i} \in B$. Right multiplication of $A$ by $u_{c_1}$ replaces the $i$-th column $a_i$ of $A$ by the sum $a_i+a_{c_1}$. We have 
\[
\pl_{c_1,c_2}\left(\widetilde A_{(i<j)}\right) =  \pl_{i,c_2}^{(i<j)}\left(A\, u_{c_1}\right) -  \pl_{i,c_2}^{(i<j)}\left(A\right)=0-0=0
\]
since $\pl_{i,c_2} = \pl_{i,c_2}^{(i<j)}$ is a generator of $\ci_\gs$, $A\in \cv(\isigma)$, and $Au_{c_1}\in \cv(\ci_\gs)$ as well because $\ci_\gs$ is right $B$-invariant by Proposition~\ref{binv}.

For the rest of the proof, we consider the case of $1\leq c_1<c_2\leq i$.  We start by computing the form of the $c$-th column for some $c\leq i$. First, if $S_\gs(c) \leq \mi{i}$ then $N^{\mi{i}}a_{c} =0$ so the $c$-th column of $\widetilde A_{(i<j)}$ is $0$ and~\eqref{eqn.minorsextended} is obviously $0$ if $c$ is selected as $c_1$ or $c_2$. We may therefore assume that $S_\gs(c) > \mi{i}$. Because $N^{\mi{i}} a_{c}\in \ker N^{S_\gs(c) - \mi{i}}$, the $c$-th column of $\widetilde A_{(i<j)}$ is of the form 
\begin{align} \label{eqn.colform}
\left[ a_{\mi{i}+1, c}\  \right.,  a_{m(i)+2, c}\ , &\cdots ,  a_{\mi{i}+(S_\gs(c)-\mi{i}), c}\ , 0^{S_\gs(i) - S_\gs(c)} \ ,   \\  
\nonumber &  a_{\ell+\mi{i}+1, c} \ ,    a_{\ell + m(i)+2, c}\ , \cdots , a_{\ell + \mi{i}+(S_\gs(c)-\mi{i}), c}\ ,\left. 0^{S_\gs(i) - S_\gs(c)}  \right]^T.
\end{align}
Note that this vector has $2\al(i<j)+2$ many entries since $\mi{i} = S_\gs(i)-\alpha(i<j)-1$ and thus 
\[
(S_\gs(c)-\mi{i}) + (S_\gs(i) -  S_{\gs}(c) )  =  \alpha(i<j)+1.
\]
Now, for $c_1<c_2\leq i$, the equation~\eqref{eqn.minorsextended} becomes
\begin{eqnarray}\label{eqn.minorsextended2}
\pl_{c_1,c_2}\left(\widetilde A_{(i<j)}\right) = \sum_{k=0}^{\al(i<j)} \begin{vmatrix} a_{\mi{i}+k+1, c_1} & a_{\mi{i}+k+1, c_2}\\ a_{\ell+\mi{i}+\al(i<j) +1 - k, c_1} & a_{\ell+\mi{i}+\al(i<j) +1 - k, c_2}  \end{vmatrix} .
\end{eqnarray}

Suppose $S_\gs(c_1)+S_\gs(c_2)\leq i$. Under this condition, we claim that every minor in the sum~\eqref{eqn.minorsextended2} is $0$. Indeed, since the $c$-th column has form~\eqref{eqn.colform}, for all $0\leq k \leq \al(i<j)$ we have 
\begin{align}\label{eqn.ineq1}
a_{\mi{i}+k+1, c} \neq 0 & \Rightarrow  \mi{i}+1 \leq \mi{i} +k + 1 \leq \mi{i}+(S_\gs(c)-\mi{i}) \\
\nonumber &\Rightarrow k \leq S_\gs(c)-\mi{i}  -1
\end{align}
and 
\begin{align} \label{eqn.ineq2}
&a_{\ell+\mi{i}+\al(i<j) +1 - k, c} \neq 0 \\
\nonumber  & \qquad \qquad \Rightarrow  \mi{i}+1  \leq  \mi{i} + \alpha(i<j)+1 - k \leq  \mi{i} + (S_\gs(c)-\mi{i}) \\
\nonumber & \qquad \qquad \Rightarrow  \mi{i}+ \al(i<j) +1 - S_\gs(c) \leq k.
\end{align}
Thus, using~\eqref{eqn.ineq1} for $c=c_1$ and~\eqref{eqn.ineq2} for $c=c_2$ we have
\begin{align*}
a_{\mi{i}+k+1, c_1}&a_{\ell+\mi{i}+\al(i<j) +1 - k, c_2} \neq 0 \\ &\Rightarrow   \mi{i}+ \al(i<j) +1 - S_\gs(c_2)  \leq S_\gs(c_1)-\mi{i}  -1 \\
&\Rightarrow  2\mi{i} +\al (i<j) +1 \leq S_\gs(c_1)+S_\gs(c_2) -1 \\
&\Rightarrow i  < S_\gs(c_1)+S_\gs(c_2),
\end{align*}
where the last equality follows since $2\mi{i} +\al (i<j) +1 = 2S_\gs(i) -\al(i<j)-1 = i$ by Lemma~\ref{lem.leftend}. By the same reasoning with the roles of $c_1$ and $c_2$ reversed we have 
\[
a_{\mi{i}+k+1, c_2}a_{\ell+\mi{i}+\al(i<j) +1 - k, c_1} \neq 0 \Rightarrow i  < S_\gs(c_1)+S_\gs(c_2).
\]
Thus, if $S_\gs(c_1)+S_\gs(c_2)\leq i$, we have shown that  
\[
\begin{vmatrix} a_{\mi{i}+k+1, c_1} & a_{\mi{i}+k+1, c_2}\\ a_{\ell+\mi{i}+\al(i<j) +1 - k, c_1} & a_{\ell+\mi{i}+\al(i<j) +1 - k, c_2}  \end{vmatrix} =0
\]
for all $0\leq k \leq \al(i<j)$. This proves~\eqref{eqn.minorsextended2} is $0$ in this case since every minor in the sum is $0$, proving our claim.

Now assume $S_\gs(c_1) + S_\gs(c_2)>i$.  Using Lemma~\ref{lem.leftend} once more, we have 
\begin{align*}
i<S_\gs(c_1) + S_\gs(c_2) &\Leftrightarrow i + 1 \leq  S_\gs(c_1) + S_\gs(c_2) \\
& \Leftrightarrow 2 S_{\gs}(i)-\al(i<j) \leq S_\gs(c_1) + S_\gs(c_2) \\
& \Leftrightarrow 2 S_{\gs}(i)- S_\gs(c_1)-S_\gs(c_2) \leq \al(i<j). 
\end{align*}
Since $c_1<c_2\leq i$ we know $S_\gs(c_1)\leq S_\gs(c_2)\leq S_\gs(i)$ and thus $2 S_{\gs}(i)- S_\gs(c_1)-S_\gs(c_2)\geq 0$. For the remainder of the proof, let $(i'<j')$ be the unique cup above $(i<j)$ such that 
\begin{eqnarray}\label{eqn.i'<j'def}
\al(i'<j') = \al(i<j) - \left( 2 S_{\gs}(i)- S_\gs(c_1)-S_\gs(c_2) \right) \geq 0.
\end{eqnarray}
Since the cup $(i'<j')$ has $2 S_{\gs}(i)- S_\gs(c_1)-S_\gs(c_2)$ fewer cups nested above it than $(i<j)$, 
\begin{align*}
S_\gs(i') \leq  S_\gs(i) - \left(2 S_{\gs}(i)- S_\gs(c_1)-S_\gs(c_2) \right) \Rightarrow S_\gs(i')+S_\gs(i) \leq S_\gs(c_1)+S_\gs(c_2).
\end{align*}
Because $S_\gs(i)\geq S_\gs(c_2)$, the last inequality implies $S_\gs(i') \leq S_\gs(c_1)$. Furthermore, as $i'$ is the left endpoint of a cup, it is the minimal index with cup sequence value $S_\gs(i')$, and we conclude $i'\leq c_1$. Thus $c_1$ and $c_2$ are vertices of the cup diagram $\cupsigma$ nested inside the cup $(i'<j')$, and they index columns in the matrix $A_{(i'<j')}$.

Since $S_\gs(c)-\mi{i}-1 = \al(i<j) - \left( S_\gs(i) - S_\gs(c_1) \right)$ and $\mi{i}+\al(i<j) +1 -S_\gs(c_1) = S_\gs(i)-S_{\gs}(c_2)$, using  equations~\eqref{eqn.ineq1} and~\eqref{eqn.ineq2} for $c_1$ and $c_2$ yields the inequalities:
\begin{align}
\label{eqn.ineqc1A} a_{\mi{i}+k+1, c_1} \neq 0 &\Rightarrow k \leq   \al(i<j) - \left(S_\gs(i)-S_\gs(c_1)\right) \\
\label{eqn.ineqc2A} a_{\mi{i}+k+1, c_2} \neq 0 &\Rightarrow k \leq   \al(i<j) - \left(S_\gs(i)-S_\gs(c_2)\right) \\
\label{eqn.ineqc1B} a_{\ell + \mi{i} + \al(i<j)+1-k,c_1} \neq 0 &\Rightarrow S_\gs(i)-S_\gs(c_1) \leq k \\
\label{eqn.ineqc2B} a_{\ell + \mi{i} + \al(i<j)+1-k,c_2} \neq 0 &\Rightarrow S_\gs(i)-S_\gs(c_2) \leq k .
\end{align}
Note that $c_1< c_2$ implies $S_\gs(c_1)\leq S_\gs(c_2)$. By~\eqref{eqn.ineqc1A} and ~\eqref{eqn.ineqc2A}, we have 
\[
a_{\mi{i}+k+1, c_1} = a_{\mi{i}+k+1, c_2} = 0 \; \text{ when } \; k > \al(i<j) - \left( S_\gs(i)-S_{\gs(c_2)} \right)
\]
and by~\eqref{eqn.ineqc1B} and ~\eqref{eqn.ineqc2B}, we have  
\[
a_{\ell + \mi{i} + \al(i<j)+1-k,c_1} = a_{\ell + \mi{i} + \al(i<j)+1-k,c_2} = 0 \; \text{ when } \; k< S_{\gs(i)} - S_\gs(c_2).
\]
Thus, the only minors in the sum~\eqref{eqn.minorsextended2} which are nonzero occur for $k$ such that $S_\gs(i)-S_\gs(c_2) \leq k \leq \al(i<j)- \left( S_\gs(i)-S_\gs(c_2) \right)$. Equation~\eqref{eqn.minorsextended2} becomes
\begin{align}\label{eqn.minorsextended3}
\pl_{c_1, c_2}(\widetilde A_{(i<j)}) & =  \sum_{k=S_\gs(i)-S_\gs(c_2)}^{\al(i<j) - \left( S_\gs(i)-S_\gs(c_2) \right)} \begin{vmatrix} a_{\mi{i}+k+1, c_1} & a_{\mi{i}+k+1, c_2}\\ a_{\ell+\mi{i}+\al(i<j) +1 - k, c_1} & a_{\ell+\mi{i}+\al(i<j) +1 - k, c_2}  \end{vmatrix}. 
\end{align}

Let $t(c_1, c_2):= S_\gs(c_2)-S_{\gs}(c_1) \geq 0$. 
By~\eqref{eqn.ineqc1B}, $a_{\ell + \mi{i} + \al(i<j)+1-k,c_1}=0$ for all $k< S_\gs(i)-S_\gs(c_1)$, i.e., for all $k$ such that $S_\gs(i)-S_\gs(c_2)\leq k \leq S_\gs(i)-S_\gs(c_2) +t(c_1,c_2)-1$. Similarly, by~\eqref{eqn.ineqc1A}, $a_{\mi{i}+k+1, c_1}=0$ for all $k> \al(i<j) - \left( S_\gs(i) - S_\gs(c_1)\right)$, i.e., for all $k$ such that 
\begin{align*}
\al(i<j) - \left( S_\gs(i) - S_\gs(c_1)\right) + 1\leq k &\leq \al(i<j) - \left( S_\gs(i) - S_\gs(c_1)\right) +t(c_1, c_2)\\ & \qquad \qquad \qquad  =  \al(i<j) - \left( S_\gs(i) - S_\gs(c_2)\right).
\end{align*}
This means we can further decompose~\eqref{eqn.minorsextended3} as: 
\begin{align*}
\pl_{c_1, c_2}(\widetilde A_{(i<j)}) & =   \left( \sum_{k=S_\gs(i)-S_\gs(c_2)}^{S_\gs(i) -S_\gs(c_2) + t(c_1,c_2)-1}  a_{\mi{i}+k+1, c_1}a_{\ell+\mi{i}+\al(i<j) +1 - k, c_2} \right)  \\
& \quad \quad + \left( \ \sum_{k=S_\gs(i)-S_\gs(c_2) + t(c_1, c_2)}^{\al(i<j)-\left( S_\gs(i)-S_\gs(c_1) \right)} \begin{vmatrix} a_{\mi{i}+k+1, c_1} & a_{\mi{i}+k+1, c_2}\\ a_{\ell+\mi{i}+\al(i<j) +1 - k, c_1} & a_{\ell+\mi{i}+\al(i<j) +1 - k, c_2}  \end{vmatrix} \ \right) \\
& \qquad \quad \quad  -  \left( \sum_{k=\al(i<j) - \left( S_\gs(i)-S_\gs(c_1) \right)+1 }^{\al(i<j) - \left( S_\gs(i)-S_\gs(c_2) \right)} a_{\mi{i}+k+1, c_2}a_{\ell+\mi{i}+\al(i<j) +1 - k, c_1}  \right).
\end{align*}
If $t(c_1,c_2) = 0$, then the first and last expressions above are both $0$, otherwise there are $t(c_1,c_2)$ terms in each of the first and last sums. 
We simplify this expression by applying Lemma~\ref{pairingdeterminants} exactly $t(c_1, c_2)$ many times to obtain:
\begin{align*}\label{eqn.minorsextended4}
\pl_{c_1, c_2}(\widetilde A_{(i<j)}) & =   \sum_{k=S_\gs(i)-S_\gs(c_2)}^{\al(i<j)-\left( S_\gs(i)-S_\gs(c_1) \right)} \begin{vmatrix} a_{\mi{i}+k+1, c_1} & a_{\mi{i}+k+t(c_1,c_2)+1, c_2}\\ a_{\ell+\mi{i}+\al(i<j) +1 - \left( k + t(c_1,c_2) \right) , c_1} & a_{\ell+\mi{i}+\al(i<j) +1 - k, c_2}  \end{vmatrix}.
\end{align*}
Finally, we re-index the sum over $0\leq k \leq \al(i<j) - \left( 2S_\gs(i) - S_\gs(c_1)-S_\gs(c_2) \right) = \al (i'<j')$. Using the definition of $\al(i'<j')$ from~\eqref{eqn.i'<j'def} and the fact that $\mi{i} = S_\gs(i)-\al(i<j)-1$ we get  
\begin{align*}
S_\gs(c_1) - \al(i'<j')+k & = \mi{i} + k + S_\gs(i)-S_{\gs}(c_2)+1  \\
S_\gs(c_2) - \al(i'<j')+k &= \mi{i} + k + S_\gs(i)-S_{\gs}(c_2) + t(c_1, c_2)+1 \\
S_\gs(c_2)+k & = \mi{i} +\al(i<j) +1 - \left( k + S_\gs(i)-S_\gs(c_2) \right) \\
S_\gs(c_1)+k & = \mi{i} +\al(i<j) +1 - \left( k + S_\gs(i)-S_\gs(c_2) +t(c_1,c_2)  \right) \ ,
\end{align*}
and reindexing gives us 
\[
\pl_{c_1, c_2}(\widetilde A_{(i<j)})  = \sum_{k=0}^{\al(i'<j')} \begin{vmatrix} a_{S_\gs(c_1)-\al(i'<j')+k, c_1} &  a_{S_\gs(c_2)-\al(i'<j')+k, c_2} \\  a_{\ell-S_\gs(c_1)-k,c_1}  & a_{\ell + S_\gs(c_2) - k, c_2 }\end{vmatrix} = \pl_{c_1, c_2}^{(i'<j')} (A).
\]
Thus, $\pl_{c_1, c_2}(\widetilde A_{(i<j)}) = \pl_{c_1, c_2}^{(i'<j')} (A)=0$ since $\pl_{c_1, c_2}^{(i'<j')}$ is a generator of $\cm_{\gs, (i'<j')} \subseteq \ci_\gs$ and $A\in \cv(\ci_\gs)$. This completes the proof. 
\end{proof}

The following lemma is a straightforward application of the rank-nullity theorem. 

\begin{lemma}\label{lem.linearalg} Suppose $T: \C^n \to \C^n$ is a linear transformation and $A$ and $B$ are subspaces of $\C^n$ such that $B\subseteq A$ and $T(A)\subseteq B$. If 
\[
\dim A = \dim B + \dim\,\ker(T)
\]
then 
\begin{enumerate}[(i)]
\item \label{lem.linearalg1} $T(A)=B$ and 
\item \label{lem.linearalg2} $A=T^{-1}(B)$.
\end{enumerate}
\end{lemma}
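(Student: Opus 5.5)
The plan is to prove both parts from rank–nullity applied to the restriction $T|_A \colon A \to \C^n$. The only other ingredient is the chain $B \subseteq T^{-1}(B)$, which is automatic since $T(A)\subseteq B$ forces $A \subseteq T^{-1}(B)$.

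For part \eqref{lem.linearalg1}, I would first note that
\[
\dim T(A) = \dim A - \dim(\ker T \cap A).
\]
Since $\ker T\cap A \subseteq \ker T$, this gives $\dim T(A) \geq \dim A - \dim\ker T = \dim B$. Combined with $T(A) \subseteq B$, the dimensions agree and hence $T(A) = B$. As a byproduct, equality holds in the inequality, so $\dim(\ker T\cap A) = \dim \ker T$, that is, $\ker T \subseteq A$.

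For part \eqref{lem.linearalg2}, we already have $A \subseteq T^{-1}(B)$. To compare dimensions, apply rank–nullity to $T$ restricted to $T^{-1}(B)$. Its image lies in $B$ and its kernel is $\ker T$, because $\ker T \subseteq T^{-1}(B)$ as $0\in B$. This yields
\[
\dim T^{-1}(B) \leq \dim B + \dim \ker T = \dim A.
\]
The inclusion $A\subseteq T^{-1}(B)$ together with this bound gives $A = T^{-1}(B)$. Alternatively, one can argue directly: for $v \in T^{-1}(B)$, part (i) gives some $a \in A$ with $T(a) = T(v)$, so $v - a \in \ker T \subseteq A$, and therefore $v\in A$.

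There is no real obstacle here. The only point needing care is to state explicitly the inclusion $\ker T \subseteq T^{-1}(B)$ (or $\ker T\subseteq A$) used in the dimension count for part (ii). Note that the hypothesis $B \subseteq A$ is not actually needed for either conclusion, but it is harmless.
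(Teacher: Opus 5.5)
Your argument is correct and is exactly what the paper has in mind: the paper gives no written proof, only the remark that the lemma is a straightforward application of the rank--nullity theorem, and your two applications of rank--nullity (to $T|_A$ and to $T|_{T^{-1}(B)}$) supply those details. Your side observations are also accurate: equality in part (i) forces $\ker T\subseteq A$, and the hypothesis $B\subseteq A$ is never used.
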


The next two results use the linear algebraic statements of Lemma~\ref{lem.linearalg} to obtain two corollaries of Proposition~\ref{prop.aug.rank}. Recall that $F_i =  \C\{a_1, a_2, \ldots, a_i\}$ denotes the subspace spanned by the first $i$ columns of $A$.

\begin{corollary} \label{cor.preimage} Suppose $\gs$ is a rectangular tableau and $A\in \GL_n(\C)\cap \cv(\ci_\gs)$. 
For each cup $(i<j)$ of $\cupsigma$ with size $\gd(i<j)\geq 2$, $\dim N^{\mi{i}}\left(F_i\right) = \al(i<j)+1$ and 
\[
N^{-\mi{i}} \left(N^{\mi{i}}\left(F_i\right)\right)=F_i.
\]
\end{corollary}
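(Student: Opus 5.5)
The plan is to squeeze $\dim N^{\mi{i}}(F_i)$ between a lower bound coming from the kernel of $N^{\mi{i}}$ and an upper bound coming from Proposition~\ref{prop.aug.rank}, and then to apply Lemma~\ref{lem.linearalg} with $T=N^{\mi{i}}$.

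\textbf{Step 1: the dimension.} Since $A$ is invertible, $\dim F_i = i$. Because $N$ has two Jordan blocks of size $\ell$ and $\mi{i}\le \ell$, we have $\dim\ker N^{\mi{i}} = 2\mi{i}$. (Here $\mi{i}=S_\gs(i)-\al(i<j)-1\geq 0$, since $S_\gs(i)\geq \al(i<j)+1$.) Rank--nullity then gives
\[
\dim N^{\mi{i}}(F_i)\geq i-2\mi{i}=\al(i<j)+1,
\]
where the equality is Lemma~\ref{lem.leftend}. In the other direction, $N^{\mi{i}}(F_i)$ is spanned by the first $i$ columns of the extended cup matrix $\widetilde A_{(i<j)}$. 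Proposition~\ref{prop.aug.rank} says this matrix has rank $\al(i<j)+1$, so $\dim N^{\mi{i}}(F_i)\leq \al(i<j)+1$. The two bounds give equality.

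\textbf{Step 2: the key consequence.} Equality in the rank--nullity bound forces
\[
\dim(F_i\cap \ker N^{\mi{i}}) = i-(\al(i<j)+1)=2\mi{i}=\dim\ker N^{\mi{i}}.
\]
Hence $\ker N^{\mi{i}}\subseteq F_i$. This containment is the crux of the argument.

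\textbf{Step 3: the preimage.} Apply Lemma~\ref{lem.linearalg} with $T=N^{\mi{i}}$, ambient subspace $F_i$, and target $N^{\mi{i}}(F_i)$. Its hypotheses would require $N^{\mi{i}}(F_i)\subseteq F_i$ and $\dim F_i=\dim N^{\mi{i}}(F_i)+\dim\ker T$. The first of these is not obviously available, so I will avoid it and argue directly. The inclusion $F_i\subseteq N^{-\mi{i}}(N^{\mi{i}}(F_i))$ is automatic. For the reverse, take $v$ with $N^{\mi{i}}v=N^{\mi{i}}w$ for some $w\in F_i$. Then $v-w\in\ker N^{\mi{i}}\subseteq F_i$ by Step 2, so $v\in F_i$. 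Equivalently, one can compare dimensions: $\dim N^{-\mi{i}}(N^{\mi{i}}(F_i))=\dim N^{\mi{i}}(F_i)+2\mi{i}=i$ by Step 1, which is the identity in part (ii) of Lemma~\ref{lem.linearalg}.

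\textbf{Main obstacle.} Nearly all the difficulty has already been absorbed into Proposition~\ref{prop.aug.rank}. The only points needing care here are two checks. First, $\dim\ker N^{\mi{i}}=2\mi{i}$ requires $\mi{i}\leq\ell$, which holds because $S_\gs(i)\leq \ell$. Second, the first $i$ columns of $\widetilde A_{(i<j)}$ are exactly $N^{\mi{i}}a_1,\dots,N^{\mi{i}}a_i$: by definition the $i$-th column uses the exponent $\m{i}{i}=\mi{i}$, so there is no exponent mismatch at column $i$.
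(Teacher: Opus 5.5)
Your proof is correct and follows the paper's argument: the same squeeze of $\dim N^{\mi{i}}(F_i)$ between the rank--nullity bound $i-2\mi{i}=\al(i<j)+1$ from Lemma~\ref{lem.leftend} and the rank bound from Proposition~\ref{prop.aug.rank}, followed by the preimage identity. The only difference is that the paper cites part (ii) of Lemma~\ref{lem.linearalg} with $A=F_i$, $B=N^{\mi{i}}(F_i)$, $T=N^{\mi{i}}$ without checking its hypothesis $B\subseteq A$, whereas you prove the needed containment $\ker N^{\mi{i}}\subseteq F_i$ directly; your caution is reasonable, though that hypothesis is not actually needed for conclusion (ii), so the paper's citation is fine in substance.
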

\begin{proof} We claim $i = \dim F_i = \dim N^{\mi{i}}\left( F_i \right) + 2\,\mi{i}$. Given this claim, the second part of the corollary follows from Lemma~\ref{lem.linearalg}~\eqref{lem.linearalg2} with $A=F_i$, $B=N^{\mi{i}} \left( F_i \right)$, and $T=N^{\mi{i}}$ since $\dim \ker N^{\mi{i}} = 2\,\mi{i}$.

By Lemma~\ref{lem.leftend} we have 
\begin{eqnarray}\label{eqn.preimage1}
i-2\,\mi{i} =  i - 2S_\gs(i) +2\ga(i<j) +2 = \ga(i<j)+1.
\end{eqnarray}
We know $N^{\mi{i}} \left( F_i \right)$ is a subspace of the column space of the extended cup matrix $\widetilde{A}_{(i<j)}$. By Proposition~\ref{prop.aug.rank}, $\rk \widetilde A_{(i<j)}\leq \ga(i<j)+1$ and thus $\dim N^{\mi{i}} \left( F_i \right) \leq \ga(i<j)+1$. On the other hand, since $\dim F_i=i$ and $\dim \ker N^{\mi{i}} = 2\mi{i}$ we have 
\begin{align*}
\dim N^{\mi{i}} \left( F_i \right) &\geq i - 2\mi{i} = \al(i<j)+1
\end{align*}
where the last equality follows from~\eqref{eqn.preimage1}. This proves 
\begin{eqnarray}\label{eqn.preimage2}
\dim N^{\mi{i}}\left(F_i\right)=\ga(i<j)+1,
\end{eqnarray}
which proves the first part of the corollary.
The claim follows from~\eqref{eqn.preimage1} and~\eqref{eqn.preimage2}, and proves the second part.
\end{proof}

\begin{corollary} \label{cor.rank_condition} Suppose $(k^*<\E(k^*))$ is a cup in $\cupsigma$ with size $\gd(k^*<\E(k^*))\geq 2$ and $k^*<k< \E(k^*)$. Then 
\[
N^{\m{k^*}{k} - \mi{k^*}}(a_k)\in  A_{k^*}.
\]
In particular, if $(k^*<\E(k^*))$ is the unique cup above the cup containing $k$ (as either the left or right endpoint) then each of the following hold.
\begin{enumerate}[(i)]
\item \label{lem.rank.1} If $k$ is the left endpoint of the cup then
\[
N^{\frac{1}{2}(k-k^*+1)}(a_k) \in F_{k^*}.
\]
\item \label{lem.rank.2} If $k$ is the right end point of the cup and $\E^{-1}(k)$ denotes the left endpoint then 
\[
N^{\frac{1}{2}(\E^{-1}(k)-k^*-1)+\delta(\E^{-1}(k)<k)}(a_k) \in F_{k^*}.
\]
\end{enumerate}
\end{corollary}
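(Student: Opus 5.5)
We read the conclusion as $N^{\m{k^*}{k}-\mi{k^*}}(a_k)\in F_{k^*}$, i.e.\ we take "$A_{k^*}$" to be a typo for $F_{k^*}$. The standing hypotheses of Corollary~\ref{cor.preimage} apply: $\gs$ is rectangular and $A\in GL_n(\C)\cap\cv(\ci_\gs)$. The plan is to show that the column space of the extended cup matrix $\widetilde A_{(k^*<\E(k^*))}$ equals $N^{\mi{k^*}}(F_{k^*})$, and then pull back along $N^{\mi{k^*}}$ using Corollary~\ref{cor.preimage}.

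\emph{Step 1: the column space.} Write $\al=\al(k^*<\E(k^*))$. By Proposition~\ref{prop.aug.rank}, $\rk \widetilde A_{(k^*<\E(k^*))}=\al+1$. The first $k^*$ columns of this matrix span $N^{\mi{k^*}}(F_{k^*})$, which is therefore contained in the column space. By Corollary~\ref{cor.preimage}, $\dim N^{\mi{k^*}}(F_{k^*})=\al+1$. Equal dimensions force the column space to be exactly $N^{\mi{k^*}}(F_{k^*})$.

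\emph{Step 2: pulling back.} For $k^*<k<\E(k^*)$, the column $N^{\m{k^*}{k}}a_k$ of $\widetilde A_{(k^*<\E(k^*))}$ lies in $N^{\mi{k^*}}(F_{k^*})$ by Step 1. Since $S_\gs$ is weakly increasing, the exponent $d:=\m{k^*}{k}-\mi{k^*}=S_\gs(k)-S_\gs(k^*)$ is nonnegative. Hence $N^{\mi{k^*}}(N^{d}a_k)\in N^{\mi{k^*}}(F_{k^*})$, so
\[
N^{d}a_k\in N^{-\mi{k^*}}\bigl(N^{\mi{k^*}}(F_{k^*})\bigr)=F_{k^*},
\]
where the equality is Corollary~\ref{cor.preimage}. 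This proves the main claim.

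\emph{Step 3: the special cases.} Now suppose $(k^*<\E(k^*))$ is the cup directly above the cup containing $k$; it remains to compute $d=S_\gs(k)-S_\gs(k^*)$ in each case.
\begin{enumerate}[(i)]
\item If $k$ is a left endpoint, Lemma~\ref{lem.Sdiff} gives $d=\frac12(k-k^*+1)$.
\item If $k$ is a right endpoint with left endpoint $i=\E^{-1}(k)$, write $d=(S_\gs(k)-S_\gs(i))+(S_\gs(i)-S_\gs(k^*))$. Lemma~\ref{lem.length} gives $S_\gs(k)-S_\gs(i)=\delta(i<k)-1$, and Lemma~\ref{lem.Sdiff} applied to $i$ gives $S_\gs(i)-S_\gs(k^*)=\frac12(i-k^*+1)$. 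Summing, $d=\frac12(i-k^*-1)+\delta(i<k)$, as claimed.
\end{enumerate}

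The only real content is Step 1, which rests on the rank equality of Proposition~\ref{prop.aug.rank}. The rest is bookkeeping with the cup-sequence identities.
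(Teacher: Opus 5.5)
Your proof is correct and follows the paper's argument: the rank bound from Proposition~\ref{prop.aug.rank} combined with $\dim N^{\mi{k^*}}(F_{k^*})=\al(k^*<\E(k^*))+1$ from Corollary~\ref{cor.preimage} places $N^{\m{k^*}{k}}a_k$ in $N^{\mi{k^*}}(F_{k^*})$. You then pull back via Corollary~\ref{cor.preimage} and compute the exponent with Lemmas~\ref{lem.Sdiff} and~\ref{lem.length}, as the paper does; you are also right that $A_{k^*}$ in the statement should read $F_{k^*}$, which is what the paper's proof actually establishes.
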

\begin{proof} Consider the extended cup matrix for the cup $(k^* < \E(k^*))$,
\begin{eqnarray*}
 \widetilde A_{(k^*<\E(k^*))}=\begin{bmatrix}
| &  & | & | & | &  \\   
N^{\mi{k^*}}a_1 &  \cdots& N^{\mi{k^*}}a_{k^*}&  N^{\m{k^*}{k^*+1}}a_{k^*+1} & N^{\m{k^*}{k^*+2}} a_{k^*+2} & \cdots 
\\ | &  & | & | & | &
\end{bmatrix}.
\end{eqnarray*}
By Proposition~\ref{prop.aug.rank}, we have $\rk \widetilde A_{(k^*<\E(k^*))} = \al(k^*<\E(k^*))+1$. Thus, the rank of the submatrix spanned by the first $k^*$ columns of $\widetilde A_{(k^*<\E(k^*))}$ and any later column is at most $\al(k^*<\E(k^*))+1$.  Because $k^*<k < \E(k^*)$ we get
\begin{eqnarray} \label{eqn.rank1}
\rk \begin{bmatrix} | & & | & | \\ N^{\mi{k^*}}a_1 &  \cdots& N^{\mi{k^*}}a_{k^*} & N^{\m{k^*}{k}}a_k\\ | & & | & |  \end{bmatrix} \leq \al(k^*<\E(k^*))+1.
\end{eqnarray}
By Corollary~\ref{cor.preimage}, $\dim  N^{\mi{k^*}}\left(F_{k^*} \right) = \al(k^*<\E(k^*))+1$ and therefore~\eqref{eqn.rank1} tells us 
\begin{equation}\label{eqn.inclusion1}
N^{\m{k^*}{k}}(a_k) \in N^{\mi{k^*}}\left(F_{k^*} \right).
\end{equation}
Notice that 
\begin{align*}
\m{k^*}{k} &= (\m{k^*}{k} - \mi{k^*}) + \mi{k^*} = S_\gs(k) - S_\gs(k^*) + \mi{k^*}. 
\end{align*}
Using this, \eqref{eqn.inclusion1} becomes
\begin{align} \label{eqn.inclusion2}
\nonumber & N^{\mi{k^*}} \left( N^{S_\gs(k)-S_\gs(k^*)}(a_k) \right)\in N^{\mi{k^*}}(F_{k^*} )\\ 
 &\quad\quad \Rightarrow  N^{S_\gs(k)-S_\gs(k^*)}(a_k) \in N^{-\mi{k^*}} \left( N^{\mi{k^*}}(F_{k^*} )\right) \quad \text{(by definition of preimage)}  \nonumber \\
 & \quad\quad \Rightarrow  N^{S_\gs(k)-S_\gs(k^*)}(a_k) \in F_{k^*} \quad \text{(by Corollary~\ref{cor.preimage}).} 
\end{align}

If $k$ is the left endpoint of a cup and $(k^*<\E(k^*))$ is the unique cup directly above $(k<\E(k))$ then 
\[
S_\gs(k)-S_\gs(k^*) = \frac{1}{2}\left(k-k^*+1\right)
\]
by Lemma~\ref{lem.Sdiff}. Now~\eqref{eqn.inclusion2} yields statement (i).

On the other hand, if $k$ is the right endpoint of a cup $(\E^{-1}(k)<k)$ and $(k^*<\E(k^*))$ is the unique cup directly above it then combining Lemmas~\ref{lem.Sdiff} and~\ref{lem.length} gives us
\begin{align*}
S_\gs(k)-S_\gs(k^*) &= S_\gs(k)-S_{\gs}(\E^{-1}(k)) + S_{\gs}(\E^{-1}(k)) - S_\gs(k^*) \\
&= \gd(\E^{-1}(k)<k) -1 + \frac{1}{2}\left(\E^{-1}(k)-k^*+1\right) \\
&= \gd(\E^{-1}(k)<k) + \frac{1}{2}\left(\E^{-1}(k)-k^*-1\right). 
\end{align*}
Now~\eqref{eqn.inclusion2} implies statement (ii). 
\end{proof}

Our next proposition combines the previous two corollaries to prove our key technical result: any invertible matrix in $\cv(\ci_\gs)$ satisfies the conditions defining $\SF_\gs$ from Fung's Proposition~\ref{prop.Fung}.

\begin{prop}\label{prop.components} Let $\gs$ be a prime standard tableau of shape $(\ell,\ell)$ and suppose $A\in \cv(\ci_\gs) \cap GL_n(\C)$. Let $F_\bullet$ be the flag represented by $A$, so $F_i = \C\{a_1, \ldots, a_i\}$ for all $i=1, \ldots, n$ where $a_1, a_2, \ldots, a_n$ denote the columns of $A$. For each cup $(i<j)$ in $\cc(\sigma)$ we have:
\begin{enumerate}[(i)]
\item \label{prop:bigprop1} $N\left(a_k\right)\in F_{k-1}$ for all $k\leq j$ and
\item \label{prop:bigprop2} $N^{\gd(i<j)}\left(F_j\right) = F_{i-1}$.
\end{enumerate}
\end{prop}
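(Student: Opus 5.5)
We plan to argue by strong induction on the nesting structure of $\cupsigma$, proving (i) and (ii) simultaneously for a cup $(i<j)$ under the assumption that both hold for every cup nested strictly inside $(i<j)$ and for every cup lying entirely to the left of $i$. Since $\gs$ is prime, $(1<n)$ is the outermost cup and every other cup sits under it. We therefore order the cups by right endpoint and induct on $j$.

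\textbf{Proving (ii) from (i).} Suppose (i) holds through index $j$, so $N(F_k)\subseteq F_{k-1}$ for $k\le j$. The cups nested inside $(i<j)$ already satisfy (ii), and the vertices $i+1,\dots,j-1$ decompose into maximal inner cups $(i_1<j_1),\dots,(i_p<j_p)$. Chaining their conditions gives $N^{\gd(i_q<j_q)}(F_{j_q})=F_{i_q-1}$, so $N^{\gd(i<j)-1}(F_{j-1})\subseteq F_i$ and hence $N^{\gd(i<j)}(F_j)\subseteq F_{i-1}$. For the reverse inclusion we compare dimensions with Lemma~\ref{lem.linearalg}. Here $\dim F_j-\dim F_{i-1}=j-i+1=2\gd(i<j)$ and $\dim\ker N^{\gd(i<j)}=2\gd(i<j)$, using $\gd(i<j)\le \ell$. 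Lemma~\ref{lem.linearalg}(i) then gives equality, provided $F_{i-1}\subseteq N^{\gd(i<j)}(F_j)$, equivalently $\ker N^{\gd(i<j)}\subseteq F_j$ once we pull back. To obtain this we rather apply Lemma~\ref{lem.linearalg} with $T=N^{\gd(i<j)}$, $A=F_j$, $B=F_{i-1}$. The hypothesis needed is $\dim F_j=\dim F_{i-1}+\dim\ker T$, which is exactly the count above, together with $T(F_j)\subseteq F_{i-1}$, which we just established. So (ii) follows once (i) is known up to $j$.

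\textbf{Proving (i).} We split by the type of $k$.
\begin{itemize}
\item If $k$ is the right endpoint of a cup $(k'<k)$, then (ii) for that cup (inductively, or by the previous step) gives $N^{\gd}(F_k)=F_{k'-1}$. Combined with (i) below $k$, this yields $N(a_k)\in F_{k-1}$, by repeating the nested chaining argument with one fewer power.
\item If $k$ is the left endpoint of a cup and no cup lies above it, then $k=1$, since $\gs$ is prime. Here $S_\gs(1)=1$, so Lemma~\ref{lem.kernel} gives $N(a_1)=0$.
\item If $k$ is the left endpoint of a cup and $(k^*<\E(k^*))$ is the cup directly above it, we use Corollary~\ref{cor.rank_condition}(i), which requires $\gd(k^*<\E(k^*))\ge2$; this holds automatically since that cup contains another. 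It gives $N^{\frac12(k-k^*+1)}(a_k)\in F_{k^*}$. Between $k^*$ and $k$ there are $\tfrac12(k-k^*-1)$ complete cups, and by induction $N^{\gd}(F_{j'})=F_{i'-1}$ for each of them. This lets us trace back: with $m=\tfrac12(k-k^*-1)$, the preimage $N^{-(m+1)}(F_{k^*})$ intersected appropriately lies in $F_{k-1}$.
\end{itemize}

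The main obstacle is this last case. Lemma~\ref{lem.linearalg}(ii) gives $F_{k-1}=N^{-m}(F_{k^*})$, obtained by chaining the (ii)-conditions of the consecutive sibling cups together with the dimension count $k-1-k^*=2m$. We then need $N(a_k)\in N^{-m}(F_{k^*})$, which is precisely the statement $N^{m+1}(a_k)\in F_{k^*}$ from Corollary~\ref{cor.rank_condition}(i). The delicate point is verifying the preimage identity $F_{k-1}=N^{-m}(F_{k^*})$. This requires that the sibling cups' conditions compose, i.e.\ that $N^{a}(F_{x})=F_y$ and $N^{b}(F_y)=F_z$ give full preimages at each stage. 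We handle this with Lemma~\ref{lem.linearalg}(ii) applied to each sibling separately and then composed.
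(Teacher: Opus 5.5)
There is a genuine gap at the right endpoint $j$ of the cup $(i<j)$ itself. In your step ``(ii) from (i)'', chaining the inner cups gives $N^{\gd(i<j)-1}(F_{j-1})=F_i$, so $N^{\gd(i<j)}(F_{j-1})=N(F_i)\subseteq F_{i-1}$. For the last column, however, (i) only gives $N(a_j)\in F_{j-1}$. Hence $N^{\gd(i<j)}(a_j)\in N^{\gd(i<j)-1}(F_{j-1})=F_i$, which is one step short of $F_{i-1}$.

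This cannot be fixed using only (i) and the conditions for nested cups. Condition (i) is just the Springer fiber condition, while (ii) is what singles out the component. For example, take $n=4$ and the matching $(1<4),(2<3)$. The invertible matrix with columns $e_1,e_3,e_2+e_4,e_4$ satisfies (i) for every $k$, and nothing is nested in $(2<3)$. Yet $N(F_3)=\C\{e_1+e_3\}\neq F_1$; in fact this flag lies in the other component. Your right-endpoint bullet for (i) then gets $N(a_j)\in F_{j-1}$ from (ii) for $(i<j)$ ``by the previous step''. That step itself assumed (i) through index $j$, so at $k=j$ the argument is circular.

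The missing ingredient is Corollary~\ref{cor.rank_condition}(ii), the rank condition on the right endpoint coming from the cup above; your proposal never uses it. In the example, the flag is excluded exactly by a $2\times 2$ minor of $M_{(1<4)}$ involving column $3$. Here is how it closes the gap.

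For $j<n$, primality gives a cup $(i^*<j^*)$ directly above $(i<j)$, of size at least $2$. The corollary then yields $N^{\frac12(i-i^*-1)+\gd(i<j)}(a_j)\in F_{i^*}$. Combine this with $F_{i-1}=N^{-\frac12(i-i^*-1)}(F_{i^*})$, which is your sibling-chaining identity applied at $k=i$. This gives $N^{\gd(i<j)}(a_j)\in F_{i-1}$.

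With that in hand, your inclusion-plus-dimension argument proves (ii) for $(i<j)$, using (i) only for $k\le i$, which your induction and left-endpoint case do supply. Then (i) at $k=j$ follows from $N^{\gd(i<j)-1}(N(a_j))\in F_i$ together with $F_{j-1}=N^{-(\gd(i<j)-1)}(F_i)$.

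The outermost cup $(1<n)$ has no cup above it and needs separate treatment. There (ii) is automatic because $N^\ell=0$, and (i) at $k=n$ holds because $F_{n-1}$ is an $N$-stable hyperplane.

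This is exactly the structure of the paper's proof (its Claims 1--3). The preimage identity you flag as the delicate point is routine. The right-endpoint condition is where the ideal's minors are genuinely needed.
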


\begin{proof} Notice that $F_{i-1}\subseteq F_j$ and
\begin{align*}
\dim F_{i-1} + \dim \ker N^{\gd(i<j)} &= (i-1) + 2\gd(i<j)\\
&= (i-1)+(j-i+1) = j = \dim F_j,
\end{align*}
Thus to prove~\eqref{prop:bigprop2} it suffices, by Lemma~\ref{lem.linearalg}, to show that $N^{\gd(i<j)}(F_j) \subseteq F_{i-1}$. Lemma~\ref{lem.linearalg} also implies that if $N^{\gd(i<j)}(F_j) \subseteq F_{i-1}$, then
\begin{eqnarray}\label{eqn.bigprop.preimage1}
F_j = N^{-\gd(i<j)}(F_{i-1}).
\end{eqnarray}
In particular, we get that~\eqref{eqn.bigprop.preimage1} holds whenever~\eqref{prop:bigprop2} does, which is useful in the proof below.

We proceed by induction on $j$, that is, by induction on the right endpoint of the cup $(i<j)$.  Our base case is the cup $(i<j)$ such that $j$ is minimal.  Any such cup must have size $\gd(i<j) = 1$, so $i=j-1$.  
The assumption that $j$ is minimal also implies that every $k$ with $1\leq k\leq i=j-1$ is the left endpoint of a cup and the unique cup above each such cup $(k<\E(k))$ is $(k-1<\E(k-1))$. In particular, by Corollary~\ref{cor.rank_condition}\eqref{lem.rank.1} we have 
\begin{align}\label{eqn.bigprop.base1}
N^{\frac{1}{2}(k-k^* +1)}(a_k)\in F_{k^*} \Rightarrow N(a_k ) \in F_{k-1} \quad \text{ for all $1\leq k \leq i=j-1$.}
\end{align}
The cup directly above $(j-1, j)$ is $(j-2<\E(j-2))$. Thus Corollary~\ref{cor.rank_condition}\eqref{lem.rank.2} implies
\begin{eqnarray}\label{eqn.bigprop.base2}
N^{\frac{1}{2}((j-1)-(j-2)-1) + \gd(j-1<j)}(a_j) \in F_{j-2} \Rightarrow N(a_j) \in F_{j-2}.
\end{eqnarray}
This also implies $N(a_j)\in F_{j-1}$ since $F_{j-2}\subset F_{j-1}$. Together with~\eqref{eqn.bigprop.base1} this gives us $N(a_k)\in F_{k-1}$ for all $k\leq j$, proving~\eqref{prop:bigprop1} in the base case. We also have $N(F_{j-1})\subseteq F_{i-1}= F_{j-2}$ by~\eqref{eqn.bigprop.base1} and $N(a_j)\in F_{j-2}$ by~\eqref{eqn.bigprop.base2} and thus $N(F_j) \subseteq F_{j-2}$, proving~\eqref{prop:bigprop2} in this case.  This concludes the proof of the base case.

Now suppose $(i<j)$ is an arbitrary cup in $\cupsigma$ and that statements~\eqref{prop:bigprop1} and \eqref{prop:bigprop2} hold for all cups $(i'<j')$ such that $j'<j$. Assume that $j<n$. (We address the case $j=n$, which is much more straightforward, below.) Since $\gs$ is prime, we may assume that there exists a cup $(i^*<j^*)$ directly above $(i<j)$.

We use the induction hypothesis to prove three auxiliary claims:
\
\begin{itemize}
\item \textbf{Claim 1:} If $1<k\leq i$ is the left endpoint of a cup and $(k^*<\E(k^*))$ is the unique cup directly above the cup $(k<\E(k))$, then $N^{\frac{1}{2}(k-k^*-1)} (F_{k-1}) = F_{k^*}$ and $F_{k-1} =  N^{-\frac{1}{2}(k-k^*-1)}( F_{k^*})$.

\

\item \textbf{Claim 2:} $N^{\gd(i<j)-1}(F_{j-1}) = F_i$ and $F_{j-1} = N^{-\gd(i<j)+1}(F_i)$.

\

\item \textbf{Claim 3:} $N^{\gd(i<j)} (a_j)\in F_{i-1}$.
\end{itemize}

\

\textbf{Proof of Claim 1:} Any  cup that appears between $k^*$ and $k$ is of the form $(k'<\E(k'))$ where $k^*<k'<\E(k')<k$. In particular, the induction hypothesis applies to the cup $(k'<\E(k'))$ and there are $\frac{1}{2}(k-k^*-1)$ cups with this property. Let $(i_1<j_1),(i_2<j_2),\cdots,(i_t<j_t)$ be the sequence of cups such that $(k^*<\E(k^*))$ the unique cup directly above each one and such that $i_1= k^*+1$, $i_r= j_{r-1}+1$ for all $2\leq r \leq t$, and $j_t = k-1$. By the induction hypothesis applied using statement~\eqref{prop:bigprop2} to each such cup we have
\begin{equation}\label{eqn.inductionstep1}
N^{\delta(i_r<j_r)} (F_{j_r}) = F_{i_r-1}  \quad \text{ for all $1\leq r \leq t$}.
\end{equation}
Since there are $\frac{1}{2}(k-k^*-1)$ total cups between $k^*$ and $k$, 
\[
\delta(i_1<j_1)+\delta(i_2<j_2)+ \cdots + \delta(i_t<j_t) = \frac{1}{2}(k-k^*-1)
\]
and thus~\eqref{eqn.inductionstep1} gives us the first equality of Claim 1. The second follows by applying  Lemma~\ref{lem.linearalg}~\eqref{lem.linearalg2}.

\

\textbf{Proof of Claim 2:} If $\gd(i<j)=1$ then the statement holds trivially, so we may assume $\gd(i<j)\geq 2$. Let $(i_1<j_1),(i_2<j_2),\cdots,(i_t<j_t)$ be the sequence of cups nested inside the cup $(i<j)$, with no other cup between them. That is, $i_1=i+1$, $i_r = j_{r-1}+1$ for all $2\leq r \leq t$, and $j_t=j-1$. 
Applying the induction hypothesis with statement~\eqref{prop:bigprop2} to each such cup we have
\begin{eqnarray}\label{eqn.nested1}
N^{\gd(i_r<j_r)}(F_{j_r})=F_{i_r-1} \; \text{ for all } 1\leq r \leq t.
\end{eqnarray}
Since
\[
\gd(i<j)-1=\gd(i_1<j_1)+\gd(i_2<j_2)+\cdots+\gd(i_t<j_t),
\] 
we obtain the first equality of Claim 2 from~\eqref{eqn.nested1}. The second follows by Lemma~\ref{lem.linearalg}~\eqref{lem.linearalg2}.

\

\textbf{Proof of Claim 3:} Recall that $(i^*<j^*)$ denotes the cup immediately above the cup $(i<j)$.
Since $j$ is the right endpoint of a cup with left endpoint $i$, by Corollary~\ref{cor.rank_condition}~\eqref{lem.rank.2} applied with $(\E^{-1}(k)<k )= (i<j)$ and $(k^*<\E(k^*)) = (i^*<j^*)$ we have
\[
N^{\frac{1}{2}(i-i^*-1)+\gd(i<j)}(a_j) \in F_{i^*}\ .
\]
By definition of the preimage and Claim 1 applied with $k=i$ we get
\begin{eqnarray*}
N^{\gd(i<j)} (a_j) \in N^{-\frac{1}{2}(i-i^*-1)}\left(F_{i^*}\right) = F_{i-1},
\end{eqnarray*}
as desired.

\

We can now prove the induction step for the cup $(i<j)$ with $j<n$:

\

\textbf{Proof of~\eqref{prop:bigprop1} for the cup $(i<j)$:}
Let $m<j$ be the largest right endpoint of a cup that appears before $j$. By the induction hypothesis applied to $(\E^{-1}(m)<m)$, $N(a_k)\in F_{k-1}$ for all $1\leq k \leq m$. To complete the proof of~\eqref{prop:bigprop1}, it suffices to prove that $N(a_k)\in F_{k-1}$ for all $k$ such that $m+1\leq k \leq j$. 

First, assume $\gd(i<j)= 1$, so $(i<j)$ has no other cups nested inside it. Thus, $m<i=j-1$ and each $k$ with $m+1\leq k \leq i=j-1$ is the left endpoint of a cup, by maximality of $m$. By Corollary~\ref{cor.rank_condition}\eqref{lem.rank.1} we get that 
\begin{align*}
    N^{\frac{1}{2}(k-k^*+1)}(a_k) \in F_{k^*}\; \text{ for all } \; m+1\leq k \leq i =j-1
\end{align*}
where $(k^*<\E(k^*))$ is the cup directly above the cup with left endpoint $k$. Thus for all $k$ with $m+1\leq k \leq j-1$ we have
\begin{align*}
N^{\frac{1}{2}(k-k^*-1)}\left(N(a_k)\right) \in F_{k^*} &\Rightarrow N(a_k) \in N^{-\frac{1}{2}(k-k^*-1)}(F_{k^*}) \quad \text{(by definition of preimage) }\\
&\Rightarrow N(a_k)\in F_{k-1} \quad \text{ (by Claim~1).}
\end{align*}
Now it only remains to show that $N(a_j)\in F_{j-1}$, but this follows immediately from Claim~3 since $\gd(i<j)=1$ and $F_{i-1} = F_{j-2}\subseteq F_{j-1}$. Thus, $N(a_k)\in F_{k-1}$ for all $k\leq j$ whenever $(i<j)$ has size $1$. 

Now assume $\gd(i<j)\geq 2$. There must be at least one cup nested inside of $(i<j)$, in particular, $j-1$ must be the right endpoint of such a cup. Thus $m=j-1$ in this case and to complete the proof of~\eqref{prop:bigprop1} we need only prove that $N(a_j)\in F_{j-1}$. Since $F_{i-1}\subseteq F_i$ we have 
\begin{align*}
N^{\gd(i<j)}(a_j)\in F_{i} &\Rightarrow N^{\gd(i<j)-1}\left(N(a_j) \right) \in F_{i} \quad \text{ (by Claim~3)} \\
&\Rightarrow  N(a_j) \in N^{-\gd(i<j)+1 } (F_{i}) \quad \text{ (by definition of preimage) }\\
& \Rightarrow N(a_j) \in F_{j-1} \quad \text{ (by Claim~2). }
\end{align*}
We have now established that $N(a_k)\in F_{k-1}$ for all $k\leq j$, proving statement~\eqref{prop:bigprop1} for the cup $(i<j)$. 

\

\textbf{Proof of~\eqref{prop:bigprop2} for the cup $(i<j)$:}
By statement~\eqref{prop:bigprop1} for the cup $(i<j)$, we have $N\left( F_i \right) \subseteq F_{i-1}$. Now Claim~2 yields
\[
N^{\gd(i<j)} \left( F_{j-1} \right) = N\left( F_i \right) \subseteq F_{i-1}.
\]
Thus, $N^{\gd(i<j)}(F_{j})\subseteq F_{i-1}$ follows immediately from $N^{\gd(i<j)}(a_j)\in F_{i-1}$, which is true by Claim~3. Thus statement~\eqref{prop:bigprop2} holds for the cup $(i<j)$ and our proof of the induction step for $j<n$ is complete.

\

Finally, we prove the induction step for the cup $(i<j)$ with $j=n$. Since $\gs$ is prime, we have $(i<j) = (1<n)$. Now $n-1$ must be the right endpoint of a cup, and $N(a_k) \in F_{k-1}$ for all $k\leq n-1$ by the induction hypothesis. This implies $F_{n-1}$ is an $N$-stable hyperplane. Because $N$ is nilpotent, we conclude $N(a_n) \in F_{n-1}$. Thus, statement~\eqref{prop:bigprop1} holds for the cup $(1<n)$. Since $\gd(1<n) = \ell$ and $N^\ell =0$ we also have $N^\ell (F_n)=\{0\}=F_0$, proving~\eqref{prop:bigprop2} is also true in this case. 

The proof is now complete.
\end{proof}

We can now prove that any matrix $A$ with $A\in \cv(\ci_\gs)\cap GL_n(\C)$ represents a flag in the component~$\SF_\gs$. 

\begin{corollary}\label{cor.containment1}
Let $\pi: GL_n(\C) \to GL_n(\C)/B$ denote the projection map from the group of $n\times n$ invertible matrices to the flag variety, $\gs$ be a standard tableau of shape $(n-\ell,\ell)$, and $\ci_\gs$ be the ideal associated to $\gs$. Then 
\[
\cv(\ci_{\gs})\cap GL_n(\C) \subseteq \pi^{-1}(\SF_\gs)
\]
where $\cb_\gs$ denotes the irreducible component of the Springer fiber $\cb_N$ indexed by $\gs$.
\end{corollary}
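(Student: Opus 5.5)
The plan is to reduce to the prime case and then read off Fung's conditions from Proposition~\ref{prop.components}.

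\textbf{Reduction.} By Lemma~\ref{lem.reduction}, it suffices to show $\cv(\ci_{\gs_i})\cap GL_{\mu_i}(\C)\subseteq \pi^{-1}(\SF_{\gs_i})$ for each prime factor $\gs_i$ of $\gs$. When $\mu_i=1$ this is trivial, since $\ci_{\gs_i}=\{0\}$ and $\SF_{\gs_i}$ is the one-point flag variety of $\C^1$. It therefore remains to treat a prime rectangular tableau $\gs$ of shape $(\ell,\ell)$ with $n=2\ell$.

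\textbf{Prime case.} Fix $A\in\cv(\ci_\gs)\cap GL_n(\C)$ and let $F_\bullet$ be its flag. We first show that $F_\bullet\in\SF_N$. Since $\gs$ is prime, $(1<n)$ is a cup. Proposition~\ref{prop.components}\eqref{prop:bigprop1} applied to this cup gives $N(a_k)\in F_{k-1}$ for all $k\leq n$. Hence $N(F_k)\subseteq F_{k-1}$ for every $k$, so $F_\bullet\in\SF_N$.

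Next, for every cup $(i<j)$ of $\cupsigma$, Proposition~\ref{prop.components}\eqref{prop:bigprop2} gives $N^{\gd(i<j)}(F_j)=F_{i-1}$. This is exactly condition (1) of Proposition~\ref{prop.Fung}. Condition (2) is vacuous because there are no rays. Therefore $F_\bullet\in\SF_\gs$ by Fung's characterization, that is, $A\in\pi^{-1}(\SF_\gs)$.

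\textbf{Possible obstacle.} The one point to check is that Fung's description in Proposition~\ref{prop.Fung} is stated for flags already lying in $\SF_N$. That is why the first step deduces $F_\bullet\in\SF_N$ from part \eqref{prop:bigprop1} before invoking Fung's criterion. Apart from this, the argument is a direct assembly of Lemma~\ref{lem.reduction} and Proposition~\ref{prop.components}.
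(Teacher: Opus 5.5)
Your proposal is correct and matches the paper's proof: reduce to the prime case via Lemma~\ref{lem.reduction}, use part~\eqref{prop:bigprop1} of Proposition~\ref{prop.components} to get $\pi(A)\in\SF_N$, and then use part~\eqref{prop:bigprop2} together with Proposition~\ref{prop.Fung} to conclude $\pi(A)\in\SF_\gs$. Your extra details are ones the paper leaves implicit: the trivial single-box factors, applying part~\eqref{prop:bigprop1} to the cup $(1<n)$ to cover all $k$, and the fact that the ray condition is vacuous.
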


\begin{proof} By Lemma~\ref{lem.reduction}, it suffices to prove the statement when $\gs$ is prime. Consider the flag $F_\bullet$ represented by a matrix $A\in \cv(\ci_\gs)\cap GL_n(\C)$. Statement~\eqref{prop:bigprop1} of Proposition~\ref{prop.components} implies $N(F_i)\subseteq F_{i-1}$ for all $i$, so $\pi(A) \in \SF_N$. Statement~\eqref{prop:bigprop2} of Proposition~\ref{prop.components} and Proposition~\ref{prop.Fung} together imply $\pi(A)\in \SF_\gs$. 
\end{proof}

%%%%%%%%%%%%

\subsection{The containment $\pi^{-1}(\cb_\gs) \subseteq \cv(\ci_\gs)$}

The results in the previous subsection show that if $A$ is an invertible matrix in $ \cv(\ci_\gs)$  then $A\in \pi^{-1}(\cb_\gs)$. We now show that the reverse containment holds as well. The next two results focus on the case where $\gs$ is a rectangular tableau. 

\begin{lemma}\label{lemma.cellinJ}
For all $\gs\in \SYT(\ell, \ell)$, the Springer--Schubert cell $\cell$ is contained in $\cv(\cj_\gs)$, where $\cj_\gs$ is the monomial ideal from Definition~\ref{def:JSigma}.
\end{lemma}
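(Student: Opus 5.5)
The plan is to use Lemma~\ref{lem.kernel}: $A\in\cv(\cj_\gs)$ if and only if $N^{S_\gs(k)}a_k=0$ for every column $a_k$ of $A$. So it suffices to take a generic matrix $A\in\cell$ from Definition~\ref{def:cell} (with $n=2\ell$, no rays, so $r\equiv 0$) and check column by column that $a_k\in\ker N^{S_\gs(k)}$. Here
\[
\ker N^{s}=\C\{e_1,\ldots,e_s\}\oplus\C\{e_{\ell+1},\ldots,e_{\ell+s}\}.
\]
Equivalently, every nonzero entry of $a_k$ must lie in a row index $\le S_\gs(k)$, or in a row index in $[\ell+1,\ell+S_\gs(k)]$. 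When $S_\gs(k)=\ell$ there is nothing to check, which matches the condition $S_\gs(j)<\ell$ in the definition of $\cj_\gs$.

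\textbf{Case 1: $k$ is the right endpoint of a cup.} Then $a_k=e_{c(k)}$, and I need $c(k)\le S_\gs(k)$. This holds because every cup completed at or before $k$ has its left endpoint $\le k$, so it is also counted in $S_\gs(k)$.

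\textbf{Case 2: $k=i$ is the left endpoint of a cup $(i<j)$.} Set $\al=\al(i<j)$. The nonzero entries of $a_i$ occupy rows $c(i)+1,\ldots,c(i)+\al+1$ (the $a$-variables) together with row $\ell+c(i)+\al+1$ (the entry $1$). Both the largest top-row index $c(i)+\al+1$ and the bottom-row index $\ell+c(i)+\al+1$ should be bounded by $S_\gs(i)$, respectively $\ell+S_\gs(i)$. This reduces to the single identity
\[
S_\gs(i)=c(i)+\al(i<j)+1.
\]
That identity is stated in the proof of Lemma~\ref{lem.leftend}. Alternatively, it follows from Lemma~\ref{lemma.completedcups} combined with Lemma~\ref{lem.length}. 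Since $c(i)\ge 0$, all these row indices are at least $1$ (respectively at least $\ell+1$), so they fall inside the kernel.

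Both cases are routine bookkeeping, so I expect no real obstacle. The only point needing care is the index convention: here $c(i)$ counts cups completed at or before $i$, and for a left endpoint this coincides with the count of cups completed before $i$ that Lemma~\ref{lem.leftend} uses. With Case~2 established, every column of $A$ satisfies the kernel condition, so Lemma~\ref{lem.kernel} gives $\cell\subseteq\cv(\cj_\gs)$.
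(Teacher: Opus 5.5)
Your proposal is correct and follows essentially the same route as the paper: a column-by-column check, split into right endpoints (where $a_k=e_{c(k)}$ and one needs $c(k)\le S_\gs(k)$) and left endpoints (where the bound comes from an identity relating $S_\gs$, $c$, and $\al$). Lemma~\ref{lem.kernel} only repackages the entrywise vanishing conditions that the paper checks directly. Your identity $S_\gs(i)=c(i)+\al(i<j)+1$ is in fact the right one at a left endpoint. The paper's proof instead writes $S_\gs(p)=c(p)+\gd(p<\E(p))+\al$, which is actually the value $S_\gs(\E(p))$ at the right endpoint, although its final bound $i\ge c(p)+\al+2$ is still correct.
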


\begin{proof} Recall that $z_{i,j}, z_{i+\ell, j}\in \cj_\gs$ if and only if $S_\gs(j)<\ell$ and 
\[
S_\gs(j)+1\leq i \leq \ell.
\]
Let $A\in \cell$ and given $p\in [n]$, let $v_p$ denote the $p$-th column of $A$. Assume $S_\gs(p)<\ell$ and let $i$ be such that $S_\gs(p)+1\leq i \leq \ell$. We aim to prove the $i$ and $i+\ell$ entries of $v_p$ are $0$.

Suppose $p$ is the left end point of a cup $(p<\E(p))$ in $\cupsigma$ and let $\ga=\ga(p,\E(p))$. By definition, the entries of $v_p$ in rows $r$ for $1\leq r\leq c(p)$ and $c(p)+\al+2 \leq r \leq c(p)+\al+\ell$ and $c(p)+\al+\ell+2 \leq r \leq n$ are all $0$. By Lemma~\ref{lemma.completedcups}, 
\begin{eqnarray}\label{eqn.Stoc}
S_\gs(p) = c(p)+\gd(p<\E(p))+\al
\end{eqnarray}
so
\[
i\geq S_\gs(p)+1  \Rightarrow   i \geq c(i)+\gd(p<\E(p))+\al +1  \Rightarrow  i \geq c(i)+\al+2
\]
where the last implication follows from the fact that $\gd(p<\E(p))\geq 1$. We also get that $i+\ell \geq c(i)+\al+\ell+2$. Using the description of the zeros in $v_p$ above, it follows that the entries of $v_p$ in rows $i$ and $i+\ell$ are both $0$.

Now suppose $p$ is the right end point of a cup in $\cupsigma$. Then $v_p = e_{c(p)}$ so the only nonzero entry of $v_p$ occurs in position $c(p)$. Note that $c(p)\leq \ell$. Thus, to argue the $i$ and $i+\ell$ entries of $v_p$ are $0$, it suffices to show that $c(p)\leq S_\gs(p)$. This is obviously true by~\eqref{eqn.Stoc} above.

We have shown the $(i,j)$ and $(i+\ell, j)$ entries of $A\in \cell$ are $0$ whenever $z_{ij}\in \cj_\gs$, proving $F_\gs\subseteq \cv(\cj_\gs)$.  
\end{proof}

\begin{prop}\label{lemma.cellinM} Let $\gs\in \SYT(\ell, \ell)$ and suppose $(i<j)$ is a cup of size $\gd(i<j)\geq 2$ in $\cupsigma$. The Springer--Schubert cell $\cell$ is contained in $\cv(\cm_{\gs, (i<j)})$ where $\cm_{\gs, (i<j)}$ is the ideal from Definition~\ref{def:MSigma}.  
\end{prop}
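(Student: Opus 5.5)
The plan is to verify directly that every generator $\pl^{(i<j)}_{c_1,c_2}$, for $i\le c_1<c_2\le j-1$, vanishes on a generic matrix $A\in\cell$. By Remark~\ref{rem.evaluation} and Lemma~\ref{lemma.cellinJ}, the $c$-th column of $A_{(i<j)}$ is $N^{\m{i}{c}}v_c$ with its zero rows removed. This is a vector of length $2\al+2$, where $\al=\al(i<j)$. Its top half consists of the entries of $v_c$ in rows $S_\gs(c)-\al,\dots,S_\gs(c)$, and its bottom half consists of the entries in rows $\ell+S_\gs(c)-\al,\dots,\ell+S_\gs(c)$. Writing $x_t(c)$ for the top entry in row $S_\gs(c)-\al+t$ and $y_t(c)$ for the bottom entry in row $\ell+S_\gs(c)-t$, for $0\le t\le \al$, the generator becomes the bilinear expression
\[
\omega(c_1,c_2)=\sum_{t=0}^{\al}\bigl(x_t(c_1)y_t(c_2)-x_t(c_2)y_t(c_1)\bigr).
\]
So the goal is to show that the columns indexed by $i,\dots,j-1$ are pairwise $\omega$-orthogonal. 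This is consistent with Proposition~\ref{prop.aug.rank}, since $\rk A_{(i<j)}=\al+1$ is exactly the dimension of a Lagrangian subspace.

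\textbf{Computing the columns.} By Definition~\ref{def:cell}, two cases arise.

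\emph{Right endpoint.} If $c$ is a right endpoint, then $v_c=e_{c(c)}$, so every $y_t(c)=0$. The top part has a single $1$, in position $t=c(c)-S_\gs(c)+\al$; when this index lies outside $[0,\al]$ the column is simply zero.

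\emph{Left endpoint.} If $c$ is a left endpoint with $\al'=\al(c<\E(c))$, the bottom part has a single $1$, in row $\ell+c(c)+\al'+1$. The top part lists the labels $a_c,a_{c_1},\dots,a_{c_{\al'}}$ in rows $c(c)+1,\dots,c(c)+\al'+1$. Using Lemma~\ref{lemma.completedcups} ($S_\gs(c)=c(c)+\gd+\al'$) and Lemma~\ref{lem.length}, I will convert these row positions into $t$-indices. The outcome should be that $y_t(c)=1$ for exactly one value $t=t_c$, and that $x_t(c)$ is the label of the cup lying $t-(\text{offset})$ levels above $(c<\E(c))$ in the chain of cups above $c$.

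\textbf{Cases for a pair $c_1<c_2$.} If both are right endpoints, $\omega=0$ trivially because all $y$'s vanish. If exactly one is a left endpoint, only one term survives, namely $x_{t_{c_\ast}}$ of the right-endpoint column. I will check that the position of the single $1$ there differs from $t_{c_\ast}$, using the nesting inequalities between the cups containing $c_1$ and $c_2$. If both are left endpoints, $\omega$ reduces to $x_{t_{c_2}}(c_1)-x_{t_{c_1}}(c_2)$, a difference of two labels. The claim is that these are the label of the same cup, namely the smallest cup containing both $c_1$ and $c_2$, read off from the chain above $c_1$ and from the chain above $c_2$. This uses the fact that the chains of cups above $c_1$ and above $c_2$ merge at that common cup, together with Lemma~\ref{lem.Sdiff} to match the offsets. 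When that common cup lies above $(i<j)$ itself, both entries are the same label $a_{i_\beta}$ for the appropriate $\beta$.

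\textbf{Main obstacle.} I expect the hardest part to be the index bookkeeping in the left/left case: showing precisely that $t_{c_2}-(\text{offset of }c_1)$ and $t_{c_1}-(\text{offset of }c_2)$ both equal the number of levels to the common ancestor cup. I would first try to express every offset through $S_\gs$ and $\al$ via Lemma~\ref{lem.leftend}, so that both quantities become $S_\gs(\cdot)$-differences that visibly coincide. As a sanity check I will run this against the $8\times 8$ matrix of Example~\ref{ex.n=4nested}.
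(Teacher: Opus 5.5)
Your overall strategy is the paper's: evaluate each generator $\pl^{(i<j)}_{c_1,c_2}$ on a generic matrix of $\cell$, column by column, splitting into left and right endpoints. However, the bookkeeping you predict is wrong exactly where you locate the main obstacle, and the claim you plan to prove there is false.

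First, Lemma~\ref{lemma.completedcups} gives $S_\gs(\E(c))=c(c)+\gd(c<\E(c))+\al'$, not $S_\gs(c)$. Combined with Lemma~\ref{lem.length}, a left endpoint $c$ has $S_\gs(c)=c(c)+\al'+1$. So the top window (rows $S_\gs(c)-\al,\dots,S_\gs(c)$) starts at row $c(c)+1+\kappa_c$, where $\kappa_c=\al'-\al(i<j)$ is precisely the number of levels from $(c<\E(c))$ up to $(i<j)$. Consequently, for \emph{every} left endpoint $c\in[i,j-1]$:
\begin{itemize}
\item $x_t(c)=a_{i_t}$, with $a_{i_0}=a_i$;
\item $y_t(c)=1$ if and only if $t=0$.
\end{itemize}
All these columns therefore equal $(a_i,a_{i_1},\dots,a_{i_{\al}},0^{\al},1)^T$. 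For a right endpoint $c\in(i,j)$ one has $S_\gs(c)=c(c)+\al(\E^{-1}(c)<c)$. Your index $c(c)-S_\gs(c)+\al$ is then $-\bigl(\al(\E^{-1}(c)<c)-\al(i<j)\bigr)<0$, so the column is always zero. Hence $M_{(i<j)}(A)$ has rank one, each individual $2\times 2$ minor vanishes, and no cancellation across $t$ is needed. This is the paper's argument.

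In particular, the label surviving in your left/left case is always $a_i$, the label of $(i<j)$ itself, not that of the smallest cup containing $c_1$ and $c_2$. Your proposed sanity check refutes the latter. Take the $8$-vertex matching of Example~\ref{ex.n=4nested}, the cup $(1<8)$, and $c_1=3$, $c_2=5$. The smallest cup containing both is $(2<7)$, with label $a_2$. Yet the relevant entries (row $S_\gs(3)=3$ of column $3$ and row $S_\gs(5)=4$ of column $5$) are both $a_1$. (Your case "when that common cup lies above $(i<j)$" also cannot occur, since $(i<j)$ already contains both vertices.)

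So the step you planned, matching offsets through a common ancestor via Lemma~\ref{lem.Sdiff}, would fail as stated. The missing observation is that the shift by $\al(i<j)$ in the definition of $M_{(i<j)}$ calibrates every column to the cup $(i<j)$, independently of the pair $c_1,c_2$. Relatedly, Proposition~\ref{prop.aug.rank} concerns the extended matrix $\widetilde A_{(i<j)}$. On $\cell$ the matrix $A_{(i<j)}$ itself has rank one, not $\al(i<j)+1$.
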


\begin{proof} We will show that the evaluation of the generators of the ideal $\cm_{\gs, (i<j)}$ at any matrix $A$ in $\cell$ is zero. For each $p\in [n]$, let $v_p$ denote the $p$-th column of the matrix $A$. We consider the matrix $M_{(i<j)}(A)$ obtained by evaluating $M_{(i<j)}$ from~\eqref{eqn.matrix} on the matrix $A$. The columns of $M_{(i<j)}(A)$ correspond to the columns $i, i+1, \ldots, j-1$ of $A$, in particular, the $p$-th column of $A$ for $i\leq p \leq j-1$ determines the $(p-i+1)$-st column of $M_{(i<j)}(A)$.

Suppose $p$ is the left end point of a cup in $\cupsigma$ with $i\leq p<\E(p)
\leq j$. By \eqref{eqn.matrix} the $(p-i+1)$-st column of $M_{(i<j)}(A)$ consists of the entries in $v_p$ appearing in rows $S_\gs(p)-\ga(i<j)+t$ and $S_\gs(p)-\ga(i<j)+t+\ell$ where $0\leq t\leq\ga(i<j)$. Using Lemmas~\ref{lem.length} and~\ref{lemma.completedcups} we get that
\begin{align*}
S_\gs(p)-\ga(i<j)+t &= S_\gs(\E(p)) -\delta(p<\E(p)) +1 - \al(i<j) +t \\
& =  c(p) + \al(p<\E(p)) +1 -\al(i<j)+t \\
&=c(p)+\left[\ga(p<\E(p))-\ga(i<j)\right]+t+1.
\end{align*}
Since $(p<\E(p))$ is nested inside $(i<j)$, the difference $\kappa:=\ga(p<\E(p))-\ga(i<j)$ counts the number of cups that lie strictly above $(p<\E(p))$ and below $(i<j)$, including $(i<j)$. In order to compute the $(p-i+1)$-st column of $M_{(i<j)}(A)$ we must identify the entries of $v_p$ in rows $c(p)+\kappa+t+1$ and $c(p)+\kappa +\ell +t+1$ for $0\leq t \leq \al(i<j)$.

Recall from Definition~\ref{def:cell} that the vector $v_p$ is defined as 
\[
v_p=\left(0^{c(p)},a_p,a_{p_1},\cdots,a_{p_{\ga(p<\E(p))}},0^{\ell-1},1,0^{\ell-c(p)-\ga(p<\E(p))-1}\right)^T.
\]
Thus, the entry of $v_p$ in row $c(p)+1$ is $a_p$, then entry in row $c(p)+2$ is $a_{p_1}$, then entry in row $c(p)+3$ is $a_{p_2}$, and so on. In particular, we see that the entry in row $c(p) + \kappa + t +1$ is $a_{p_{\kappa+t}}$ (where we take  $a_p=a_{p_0}$).

Since $(p<\E(p))$ is nested inside $(i<j)$, we know $a_i$ appears in the vector $v_p$ as some entry below $a_p$. By definition of $\kappa$, we get $a_i = a_{p_\kappa}$ and since $\ga(i<j)=\ga(p<\E(p))-\kappa$ we have that $a_{p_{\kappa+t}}=a_{i_{t}}$ (where we take $a_{i_0}=a_i$). Thus, the entries of $v_p$ in positions $k(p)+\kappa+t+1$ for $0\leq t \leq \al(i<j)$ are precisely: $a_i, a_{i_1}, \ldots, a_{i_{\ga(i<j)}}$ where $a_{p_{\al(p<\E(p))}} = a_{p_{\kappa+\al(i<j)}} = a_{i_{\al(i<j)}}$.

By definition of the vector $v_p$, the entry $a_{i_{\ga(i<j)}}$ appears in row $c(p)+\alpha(p<\E(p))+1 = c(p)+\kappa+\ga(i<j)+1$. The next nonzero entry is $1$, which occurs exactly $\ell$ entries after $a_{i_{\ga(i<j)}}$, in row $c(p)+\alpha(p<\E(p))+\ell+1 = c(p)+\kappa+\ga(i<j)+\ell+1$.   This shows that the entries of $v_p$ in the rows $c(p)+\kappa + \ell +t +1$ for $0\leq t \leq \ga(i<j)$ are all $0$, except for the last one, which is equal to $1$.

We showed that the column $(p-i+1)$-st column of $M_{(i<j)}$ when evaluated at a generic matrix in $\cell$ is precisely
\[
\left(a_{i},a_{i_1},\cdots,a_{i_{\al(i<j)}},0^{\ga(i<j)},1\right)^T.
\]

Next, suppose $i< p\leq j-1$ is the right end point of a cup in $\cupsigma$ that is nested inside $(i<j)$. As above, the $(p-i+1)$-st column of $M_{(i<j)}(A)$ consists of the entries in $v_p$ appearing in rows $S_\gs(p)-\ga(i<j)+t$ and $S_\gs(p)-\ga(i<j)+t+\ell$ for $0\leq t\leq\ga(i<j)$. By Lemmas~\ref{lem.rightend} and~\ref{lemma.completedcups}, 
\begin{align*}
S_\gs(p)-\ga(i<j)+t&= c(\E^{-1}(p))+\delta(\E^{-1}(p)<p) + \al(\E^{-1}(p)<p) - \al(i<j) +t \\
&=c(p)+\left[\ga(\E^{-1}(p)<p)-\ga(i<j)\right]+t  \\
&=c(p)+\kappa+t,
\end{align*}
where $\kappa:= \ga(\E^{-1}(p)<p)-\ga(i<j)$. Since $p<j$, we know that $\kappa\geq 1$ and thus $c(p)+\kappa + t \geq c(p)+1$ for all $0\leq t \leq \ga(i<j)$. Since $v_p = e_{c(p)}$, the entry of $v_p$ in row $c(p)+1$ is $0$, the entry in row $c(p)+2$ is $0$, and so on.  In particular, the entries of $v_p$ in rows $c(p)+\kappa+t$ and $c(p)+\kappa+t+\ell$ for all $0\leq t\leq \ga(i<j)$ are $0$. Thus, the $(p-i+1)$-st column of $M_{(i<j)} $ when evaluated at a generic matrix in $\cell$ is the zero vector.

Recall that for every cup $(i<j)$ in $\cupsigma$ with $\gd(i<j)\geq 2 $ the ideal $\cm_{(i<j)}$ is generated by sums of $2\times 2$ minors of the form $\pl_{\{k+1,2\ga(i<j)+2-k\},\{c_1,c_2\}} (M_{(i<j)})$ where $c_1$ and $c_2$ denote column indices of $M_{(i<j)}$. If either $c_1$ or $c_2$ is equal to $p-i+1$ where $p$ is a the right endpoint of a cup nested in $(i<j)$, then the corresponding column of $M_{(i<j)}(A)$ for $A\in \cell$ is the zero vector and each minor $\pl_{\{k+1,2\ga(i<j)+2-k\},\{c_1,c_2\}}(M_{(i<j)})$ evaluated on $A\in \cell$ is obviously zero. 

To complete the proof, it suffices to consider the case where both $c_1$ and $c_2$ are equal to $p-i+1$ for some $i\leq p<j$ indexing a left end point of a cup in $\cupsigma$. In this case, the submatrix of $M_{(i<j)}(A)$ formed by these two columns is 
\[
\begin{bmatrix}
    a_{i} & a_{i}\\
    a_{i_1}& a_{i_1}\\
    \vdots& \vdots \\
    a_{i_{\ga(i<j)}}&a_{i_{\ga(i<j)}} \\
    0& 0\\
    \vdots&\vdots \\
    0& 0\\
    1& 1 \\
\end{bmatrix}.
\]
The entries in the column indexed by $c_1$ are equal to the entries in the column indexed by in $c_2$ and therefore each of the minors $\pl_{\{k+1,2\ga(i<j)+2-k\},\{c_1,c_2\}}(M_{(i<j)})$ evaluated on $A\in \cell$ is zero.
\end{proof}

Recall that $\pi: GL_n(\C) \to GL_n(\C)/B$. We now return to the situation where our tablau $\gs$ is shape $(n-\ell, \ell)$. 

\begin{corollary} \label{cor.cellinI}
For each standard tableau $\gs$ of shape $(n-\ell, \ell)$, the preimage $\pi^{-1}(\cell)$ of the Springer--Schubert cell $\cell$ is contained in $\cv(\ci_\gs)$. 
\end{corollary}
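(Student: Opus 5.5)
We first treat the case where $\gs$ is rectangular of shape $(\ell,\ell)$, and afterwards reduce the general case to it using the prime decomposition.

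For $\gs\in\SYT(\ell,\ell)$, Definition~\ref{def:MSigma} gives $\ci_\gs=\cj_\gs+\sum\cm_{\gs,(i<j)}$. Hence Lemma~\ref{lemma.cellinJ} and Proposition~\ref{lemma.cellinM} together show that every matrix representative in $\cell$, in the normal form of Definition~\ref{def:cell}, lies in $\cv(\ci_\gs)$. An arbitrary element of $\pi^{-1}(\cell)$ has the form $Ab$ with $A$ such a representative and $b\in B$. By Proposition~\ref{binv}, $\ci_\gs$ is right $B$-invariant, so $\cv(\ci_\gs)$ is stable under right multiplication by $B$. Explicitly, if $f\in\ci_\gs$ then $f(Ab)=(f\cdot b)(A)$ with $f\cdot b\in\ci_\gs$, so $f(Ab)=0$. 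This gives $\pi^{-1}(\cell)=\cell\cdot B\subseteq\cv(\ci_\gs)$.

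For general $\gs=\gs_1\circ\cdots\circ\gs_k$ we use Definition~\ref{def.general}: $A\in\cv(\ci_\gs)$ if and only if $\dot\tau_\mu A\in\cv(\ci'_\gs)$. (One must check the convention that the substitution $z_{a,b}\mapsto z_{\tau_\mu^{-1}(a),b}$ matches left multiplication by $\dot\tau_\mu$; this is consistent with Example~\ref{ex.disconnected3}.) Take $A\in\cell$. By Lemma~\ref{lem.taushift}, $\dot\tau_\mu A=\iota(A_1,\ldots,A_k)$ is block diagonal with $A_i\in F_{\gs_i}$. Block diagonality gives $\dot\tau_\mu A\in\cv(\mathcal K_\mu)$. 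Each $\ci'_{\gs_i}$ involves only variables $z_{a,b}$ with $a,b$ in the $i$-th block, where the matrix restricts to $A_i$. By the rectangular case, $A_i\in\cv(\ci_{\gs_i})$; this is trivial for one-box factors, where $\ci_{\gs_i}=0$. Therefore $\dot\tau_\mu A\in\cv(\ci'_\gs)$, and so $\cell\subseteq\cv(\ci_\gs)$.

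Finally, the lemma preceding Proposition~\ref{binv} shows that $\ci_\gs$ is right $B$-invariant in general, and the same argument as above extends the containment from $\cell$ to $\pi^{-1}(\cell)=\cell\cdot B$.

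The main point requiring care is this bookkeeping in the general case: the permutation action on variables must agree with left multiplication by $\dot\tau_\mu$, and the shifted ideals $\ci'_{\gs_i}$ must be supported only on the diagonal blocks. Apart from that, the argument just combines results already established.
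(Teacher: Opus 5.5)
Your proposal is correct and follows the paper's proof essentially step for step. The rectangular case comes from Lemma~\ref{lemma.cellinJ} and Proposition~\ref{lemma.cellinM}, the general case from the block-diagonal form $\dot\tau_\mu A=\iota(A_1,\ldots,A_k)$ of Lemma~\ref{lem.taushift}, and right $B$-invariance then passes from $\cell$ to $\pi^{-1}(\cell)$. Your explicit checks fill in details the paper leaves implicit: the convention for the $\tau_\mu^{-1}$-action, the vanishing of $\mathcal{K}_\mu$, the one-box factors, and citing the reduction lemma before Proposition~\ref{binv} for $B$-invariance of non-rectangular $\ci_\gs$.
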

\begin{proof} If $n= 2\ell$ then $\gs$ is rectangular and combining Lemma~\ref{lemma.cellinJ} and Lemma~\ref{lemma.cellinM} we get that $\cell\subseteq \cv(\ci_\gs)$.  If $n\neq 2\ell$, then $\gs$ is not prime and we consider the prime decomposition $\gs=\gs_1\circ \gs_2 \circ \cdots \circ \gs_k$. Let $A\in \cell$. By Lemma~\ref{lem.taushift}, there exists $A_i\in F_{\gs_i}$ for $i=1,2,\ldots, k$ such that $\dot\tau_\mu A = \iota(A_1, A_2, \ldots, A_k)$. Since each $\gs_i$ is rectangular, $A_i\in \cv(\ci_{\gs_i})$ for each $i$ and it follows that $\dot\tau_\mu A \in \cv(\ci_\gs')$ and thus $A\in \cv(\ci_{\gs})$. This shows $\cell \subseteq \cv(\ci_\gs)$ for all $\gs\in \SYT(n-\ell, \ell)$.

Finally, every element of $\pi^{-1}(\cell)$ is of the form $Ab$ for some $A\in \cell$ and $b\in B$. The result now follows from the fact that $\ci_\gs$ is right $B$-invariant by Proposition~\ref{binv}. 
\end{proof}

We can now complete the proof of our main result.

\begin{proof}[Proof of Theorem {\ref{thm.ideal}}] The fact that $\ci_\gs$ is right $B$-invariant is Proposition~\ref{binv}.  By Corollary~\ref{cor.containment1} in the previous subsection, $\cv(\ci_\gs)\cap GL_n(\C) \subseteq \pi^{-1}(\cb_\gs)$. On the other hand, by Corollary~\ref{cor.cellinI} above we have $\pi^{-1}(\cell) \subseteq \cv(\ci_\gs)\cap GL_n(\C)$ and because $\cell$ is a dense open subset of $\SF_\gs$, $\dim (\pi^{-1}(\SF_\gs)) \leq \cv(\ci_\gs)\cap GL_n(\C)$. Since $\pi^{-1}(\SF_\gs)$ is irreducible, the inclusion $\cv(\ci_\gs)\cap GL_n(\C) \subseteq \pi^{-1}(\cb_\gs)$ now implies $\cv(\ci_\gs)\cap GL_n(\C) = \pi^{-1}(\cb_\gs)$.
\end{proof}

%%%%%%%%%%%%%%%%%%%%%%%%%%%%%%%%%%%%%%

\section{Conjectures on the cohomology class of $\SF_\gs$} \label{sec:cohomology}

In this section, we use the ideal $\ci_\gs$ to conjecture two formulas for the cohomology class $[\SF_\gs]$ of the component $\SF_\gs$ whenever $\gs$ has two rows. We prove our conjectures for a particular family of noncrossing matchings (the ``one big cup'' family) and corresponding tableaux.

\subsection{Cohomology classes via combinatorial commutative algebra} 
Recall that for each subvariety $Y$ of the flag variety $\Fl_n(\C)$, there is a well defined cohomology class $[Y]\in H^*(\Fl_n(\C); \Z)$. Let $I^+$ be the ideal in $\Z[x_1, x_2, \ldots, x_n]$ generated by constant-free symmetric polynomials. 
The Borel presentation~\cite{Borel}: 
\[
H^*(\Fl_n(\C); \Z) \simeq \Z[x_1, x_2, \ldots, x_n]/I^+
\]
allows us to identify the class $[Y]$ as a polynomial in $x_1,\ldots, x_n$, defined uniquely up to the quotient by $I^+$ (see also~\cite{Manivel, BilleyGaoPawlowski25}). 

Let $\gs\in \SYT(n-\ell, \ell)$. We can use the ideal $\isigma$ to compute the cohomology class of each irreducible component $\cb_\gs$. To do this, we leverage combinatorial commutative algebra in the polynomial ring $\C[\mathbf{z}]$ using techniques of Knutson--Miller~\cite{KnutsonMiller2005}. Define a $\Z^d$-grading  on the polynomial ring $\C[\mathbf{z}]$ by $\operatorname{deg}\left(z_{i j}\right)=e_j$, where $e_j$ is the $j$-th coordinate vector in $\mathbb{Z}^n$. By construction, the ideal $\ci_\gs$ defined in Section \ref{sec:IdealIsig} is homogeneous with respect to this grading. This means that we can associate a graded Hilbert series to the scheme $\spec \C[\mathbf{z}]/\ci_\gs$. For a formal definition of the graded Hilbert series we refer the reader to \cite[\S 8.1, 8.2]{MillerSturmfels2005}.

The graded Hilbert series determines the \emph{multidegree} $\cc(\C[\mathbf{z}]/\ci_\gs; \mathbf{x})$ of $\spec \C[\mathbf{z}]/\ci_\gs$. The multidegree is a polynomial in $\mathbb{Z}[\mathbf{x}]=\mathbb{Z}\left[x_1, x_2, \ldots, x_d\right]$ of degree $\dim \left(\C[\mathbf{z}]/\ci_\gs\right)$, defined using a particular shift and truncation of the graded Hilbert series. A precise definition can be found in \cite[\S 8.5]{MillerSturmfels2005}. The following lemma tells us that the multidegree of $\spec \C[\mathbf{z}]/\ci_\gs$ determines the cohomology class $[\cb_\gs]$. Recall that $\pi: GL_n(\C) \rightarrow GL_n(\C) / B\simeq \Fl_n(\C)$ denotes the natural projection. The following is~\cite[Corollary 2.3.1]{KnutsonMiller2005}; see also~\cite[Proposition 3]{ITW18}.

\begin{lemma}[Knutson--Miller]\label{lemma.multidegree} Let $Y \subseteq \Fl_n(\C)$ be a closed scheme, and let $\tilde{Y}$ be a closed subscheme of the affine scheme $M_n=\spec \mathbb{C}[\mathbf{z}]$ of $n \times n$ matrices such that $\tilde{Y} \cap GL_n(\C)=\pi^{-1}(Y)$. Then the cohomology class $[Y]$ in $H^*(\Fl_n(\C);\Z)$ is represented by the multidegree $\cc(\tilde{Y};\mathbf{x})$ with respect to the grading $\operatorname{deg}\left(z_{i j}\right)=e_j$.
\end{lemma}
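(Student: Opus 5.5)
The plan is to interpret the multidegree as an equivariant fundamental class, and then restrict from $M_n$ to the open subset $GL_n(\C)$. There, equivariant cohomology for the free $B$-action is ordinary cohomology of $GL_n(\C)/B$.

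\textbf{Step 1 (multidegree as an equivariant class).} Let the torus $T$ of diagonal matrices act on $M_n$ by right multiplication, so that $z_{i,j}$ has weight $e_j$. This is exactly the grading $\deg(z_{i,j})=e_j$. Since $M_n$ is a $T$-representation, it is equivariantly contractible, so
\[
H_T^*(M_n)\simeq H_T^*(\mathrm{pt})\simeq \Z[x_1,\ldots,x_n].
\]
I would show that for any $T$-stable closed subscheme $\tilde Y\subseteq M_n$, the multidegree $\cc(\tilde Y;\mathbf{x})$ equals the equivariant fundamental class $[\tilde Y]_T$, taken with multiplicities of the top-dimensional components. The route is the axiomatic characterization of multidegrees in~\cite[\S 8.5]{MillerSturmfels2005}: both sides are additive on top-dimensional components and invariant under flat (Gr\"obner) degeneration. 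Both sides therefore reduce to coordinate subspaces $\cv(z_{i_1,j_1},\ldots,z_{i_r,j_r})$. For these, the multidegree is $\prod_k x_{j_k}$, and the equivariant class is the top Chern class $\prod_k x_{j_k}$ of the complementary representation. This is the step I expect to be the main obstacle: one must check compatibility of the degeneration argument with equivariant classes, and fix sign conventions so that $x_j$ matches on both sides.

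\textbf{Step 2 (restriction to $GL_n(\C)$).} The restriction map $H_T^*(M_n)\to H_T^*(GL_n(\C))$ sends $[\tilde Y]_T$ to $[\tilde Y\cap GL_n(\C)]_T$, because restriction of fundamental classes to an open set is compatible with intersecting. By hypothesis this class is $[\pi^{-1}(Y)]_T$. Since $B=TU$ with $U$ contractible, we have $H_T^*(GL_n(\C))\simeq H_B^*(GL_n(\C))$. Since $B$ acts freely, $H_B^*(GL_n(\C))\simeq H^*(GL_n(\C)/B)=H^*(\Fl_n(\C))$. Under these isomorphisms, the class of the $B$-stable subvariety $\pi^{-1}(Y)$ corresponds to $[Y]$. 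If $\tilde Y$ were not $B$-stable one could still argue via $T$ alone, but here only the identification of $\pi^{-1}(Y)$ with the pullback of $Y$ is needed.

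\textbf{Step 3 (identifying the map with the Borel presentation).} Finally, I would check that the composite
\[
\Z[\mathbf{x}]=H_T^*(M_n)\to H^*(\Fl_n(\C))
\]
is the Borel map. This map sends $x_j$ to the first Chern class of the line bundle $GL_n(\C)\times_B \C_{e_j}$, which is the $j$-th tautological quotient $V_j/V_{j-1}$, up to the sign convention fixed in Step 1. It is surjective with kernel $I^+$. Combining Steps 1--3, the polynomial $\cc(\tilde Y;\mathbf{x})$ maps to $[Y]$ in $\Z[\mathbf{x}]/I^+$, which is the claim.
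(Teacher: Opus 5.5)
The paper gives no argument of its own here; it simply cites Knutson--Miller (and Insko--Tymoczko--Woo). Your three steps are the standard argument behind that citation: identify the multidegree with the $T$-equivariant fundamental class in $H_T^*(M_n)\cong\Z[\mathbf{x}]$, restrict to $GL_n(\C)$, and identify $H_T^*(GL_n(\C))\cong H_B^*(GL_n(\C))\cong H^*(\Fl_n(\C))$ with Borel's presentation. Steps 1 and 3 are fine, up to the degeneration and sign bookkeeping you already flag.

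One step does fail as stated. In Step 2 you assert that restriction sends $[\tilde Y]_T$ to $[\tilde Y\cap GL_n(\C)]_T$. In Step 1, however, you defined $[\tilde Y]_T$ using only the top-dimensional components of $\tilde Y$, which is correct, since that is what the multidegree sees. Restricting to the open set $GL_n(\C)$ kills every component contained in $\cv(\det)$. What remains is the part of the cycle of $\pi^{-1}(Y)$ in dimension $\dim\tilde Y$, and this equals $[\pi^{-1}(Y)]_T$ only when $\dim\tilde Y=\dim\pi^{-1}(Y)$.

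Without that condition the lemma as written is false, so no argument can avoid the hypothesis. Take $n=3$ and $Y$ the standard flag, so $\pi^{-1}(Y)=B$. Let $\tilde Y=\cv(z_{2,1},z_{3,1},z_{3,2})\cup\cv(\det)$, cut out by the homogeneous, $B$-stable, radical ideal $\det\cdot\langle z_{2,1},z_{3,1},z_{3,2}\rangle$. Inverting $\det$ shows $\tilde Y\cap GL_3(\C)=B=\pi^{-1}(Y)$ scheme-theoretically. Yet the top-dimensional component is the hypersurface, so $\cc(\tilde Y;\mathbf{x})=x_1+x_2+x_3\in I^+$, while $[Y]$ is the nonzero point class.

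You therefore need to add the hypothesis $\dim\tilde Y=\dim\pi^{-1}(Y)$. It is automatic if $\tilde Y$ is irreducible, equidimensional, or equal to $\overline{\pi^{-1}(Y)}$. With it, Step 2 goes through, because top-dimensional components inside $\cv(\det)$ restrict to zero.

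This matters for the paper's application. For $\tilde Y=\spec\C[\mathbf{z}]/\ci_\gs$, the condition is guaranteed when $\ci_\gs$ is prime, as in Lemma~\ref{lemma.Grobner}, but primality of $\ci_\gs$ is only conjectured in general.
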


In the next section, we apply this lemma with $Y=\cb_\gs$ and $\tilde{Y} = \spec \C[\mathbf{z}] / \ci_\gs$. Lemma~\ref{lemma.multidegree} applies because Theorem~\ref{thm.ideal} shows that for every two-row tableaux $\gs$, the associated ideal $\isigma$ is such that $\pi^{-1}(\cb_{\gs}) = \cv(\ci_{\gs})\cap GL_n(\C)$. As a result, we can compute $[\cb_\gs]$ for two-row tableaux to using commutative algebra. The data so obtained motivates two conjectural formulas, which appear in the next section.

%%%%%%%%%%%%%%%%%%%%%

\subsection{Two conjectural formulas}
Using the combinatorial data coming from noncrossing partitions and the definition of the ideal $\ci_\gs$, we establish two conjectures computing $[\SF_\gs]$. We prove these conjectures for a specific family of two-row tableaux and have used SageMath~\cite{sagemath} and Macaulay2~\cite{M2} to check that they hold up to $n=8$.

Our first conjecture (a restatement of Conjecture~\ref{conj1.intro} from the introduction) computes the cohomology classes for two-row tableaux of shape $(\ell,\ell)$ as a sum of monomials. Recall that $S_\gs$ denotes the nesting sequence of the standard noncrossing matching $\cupsigma$.

\begin{conj}\label{conj:monomialproduct} 
Let $\sigma$ be a standard tableau of shape $(\ell, \ell)$. The cohomology class of $\cb_\gs$ is 
\[[\cb_\gs]=
\prod_{\substack{j\in [n]\\S_\gs(j)<\ell}} x_j^{2(\ell-S_\gs(j))}  \cdot  \prod_{\substack{(i<j)\\ \delta(i<j)\geq 2}} \left(  \sum_{k=i}^{j-1} x_ix_{i+1}\cdots \hat{x}_k \cdots x_{j-1} \right)
\]
where the product is taken over all cups $(i<j)$ in $\cupsigma$ with size at least $2$. 
\end{conj}

\begin{ex}\label{ex.min4} We compute the classes $[\SF_\gs]$ for the two standard tableaux of shape $(2,2)$; these examples also appear in Figure~\ref{fig:intro2} of the introduction.  Consider the following tableau $\gs$ of shape $(2,2)$ and its associated noncrossing matching $\cupsigma$ displayed to the right.
\begin{multicols}{2}
   \begin{center}
      \begin{ytableau}
        1&3\\
        2&4
    \end{ytableau}
   \end{center} 
\vspace*{-.1in}
\begin{tikzpicture}[scale=0.5]
	\begin{scope}[shift={(8,0)}]
		\foreach \i in {0,...,3}
		{
			\node[pnt] at (\i,0)(\i){};
		}

		\draw (0)  to [bend left=80](1);
		\draw(2)  to [bend left=80] (3);

		\draw (0,-1) node {$1$};
       
        \draw (1,-1) node {$2$};
      
        \draw (2,-1) node {$3$};
        \draw (3,-1) node {$4$};
        
	\end{scope}
\end{tikzpicture} 
\end{multicols}
\noindent We have $\ell=2$ and  $S_\gs=(1,1,2)$, so $\{j\in[4]\mid S_\gs(j)<\ell\}=\{1,2\}$. Furthermore, $\cupsigma$ has no cup of size 2 or greater. By Conjecture~\ref{conj:monomialproduct} we have that
\begin{equation*}
[\cb_\gs]=x_1^{2(\ell-S_\gs(j))}  x_2^{2(\ell-S_\gs(j))} =x_1^{2(2-1)}  x_2^{2(2-1)} =x_1^2 x_2^2.
\end{equation*}

Next, consider the following tableau $\gs$ of shape $(2,2)$ and its associated noncrossing matching $\cupsigma$ displayed to the right.
\begin{multicols}{2}
   \begin{center}
      \begin{ytableau}
        1&2\\
        3&4
    \end{ytableau}
   \end{center} 
\vspace*{-.1in}
\begin{tikzpicture}[scale=0.5]
	\begin{scope}[shift={(8,0)}]
		\foreach \i in {0,...,3}
		{
			\node[pnt] at (\i,0)(\i){};
		}
        
		\draw (0)  to [bend left=80](3);
		\draw(1)  to [bend left=80] (2);

		\draw (0,-1) node {$1$};
        \draw (1,-1) node {$2$};
        \draw (2,-1) node {$3$};
        \draw (3,-1) node {$4$};
	\end{scope}
\end{tikzpicture} 
\end{multicols}
\noindent We have $\ell=2$ and  $S_\gs=(1,2,2)$, so $\{j\in[n]\mid S_\gs(j)<\ell\}=\{1\}$. Note that $\gd(1<4)=2$. By Conjecture~\ref{conj:monomialproduct} we have,
\begin{equation*}
\begin{split}
[\cb_\gs]&=x_1^{2(\ell-S_\gs(1))} \left(x_1x_2+x_1x_3+x_2x_3\right)=x_1^{2(2-1)}\left(x_1x_2+x_1x_3+x_2x_3\right)\\
&=x_1^2\left(x_1x_2+x_1x_3+x_2x_3\right)=x_1^3x_2+x_1^3x_3+x_1^2x_2x_3 \ . 
\end{split}
\end{equation*}    

It is straightforward to confirm the validity of these formulas using Lemma~\ref{lemma.multidegree}.
\end{ex}

\begin{ex}\label{2and3rainbow}
Consider the following tableau $\gs$ of shape $(5,5)$ and its associated noncrossing matching $\cupsigma$ displayed to the right.
 \begin{multicols}{2}
\begin{center}
     \begin{ytableau}
        1&2&3&4&5\\
        6&7&8&9&10
    \end{ytableau}
\end{center}
 \vspace*{-.3in}
\begin{tikzpicture}[scale=0.5]
	\begin{scope}[shift={(8,0)}]
		\foreach \i in {0,...,9}
		{
			\node[pnt] at (\i,0)(\i){};
		}

		\draw (0)  to [bend left=80](3);
		\draw(1)  to [bend left=80] (2);

		\draw (0,-1) node {$1$};
      
        \draw (1,-1) node {$2$};
        \draw (2,-1) node {$3$};
        \draw (3,-1) node {$4$};
       \draw(4)  to [bend left=80] (9);
		\draw(5)  to [bend left=80] (8);
		\draw(6)  to [bend left=80] (7);
		\draw (4,-1) node {$5$};
       
        \draw (5,-1) node {$6$};
        \draw (6,-1) node {$7$};
        \draw (7,-1) node {$8$};
        \draw (8,-1) node {$9$};
        \draw (9,-1) node {$10$};
	\end{scope}
\end{tikzpicture} 
\end{multicols}    
\noindent We have $\ell=5$ and  $S_\gs=(1,2,2,2,3,4,5,5,5)$, so $\{j\in[n]\mid S_\gs(j)<\ell\}=\{1,2,3,4,5,6\}$. There are three cups of size greater than one in $\cupsigma$: $(1<4)$, $(5<10)$, and $(6<9)$. By Conjecture~\ref{conj:monomialproduct} we have 
\begin{align*}
[\cb_\gs] = x_1^8 x_2^6 x_3^6 x_4^6 x_5^4 x_6^2 & (x_1x_2+x_1x_3+x_2x_3) (x_5x_6x_7x_8 + \cdots + x_6x_7x_8x_9) \\
&\qquad \cdot (x_5x_6x_7x_8 + x_5x_6x_7x_9 + \cdots + x_6x_7x_8x_9). \qedhere
\end{align*}
\end{ex}

The Schubert polynomials, defined by Lascoux and Schützenberger \cite{LascouxSchutzenberger1982}, form a basis for the polynomial ring  $H^*(\Fl_n(\C); \Z)\simeq \Z[\mathbf{x}]/I^+$. Given $w\in S_n$ we denote by $\mathfrak{S}_w(\mathbf{x})$ the Schubert polynomial of $w$.
The polynomial $\mathfrak{S}_w(\mathbf{x})$ can be computed using the recurrence
$$
\mathfrak{S}_w (\mathbf{x})= \begin{cases}x_1^{n-1} x_2^{n-2} \cdots x_{n-2}^2x_{n-1} & \text { if } w=w_0  \\ \partial_i \mathfrak{S}_{w s_i}(\mathbf{x}) & \text { if } w(i)<w(i+1),\end{cases}
$$
where $w_0 = [n, n-1, \ldots, 1]$ is the longest permutation, $s_i$ is the simple reflection exchanging $i$ and $i+1$, and the divided difference operator $\partial_i$ acts on $f\in \Z[\mathbf{x}]$ by
$$
\partial_i f(\mathbf{x})=\frac{f(\mathbf{x})-s_i f(\mathbf{x})}{x_i-x_{i+1}} .
$$
Note that $\partial_i^2=0$ and $\partial_i \partial_j = \partial_j\partial_i$ for all $i$ and $j$ with $|i-j|\geq 2$. 

Our second conjecture (a restatement of Conjecture~\ref{conj2.intro} from the introduction)  uses divided difference operators to give a combinatorial formula for the expansion of $[\SF_\gs]$ in the Schubert basis.

\begin{conj}\label{conj:divdiff} Let $\gs$ be a standard tableau of shape $(\ell,\ell)$ with corresponding cup diagram $\cupsigma$.  Let $(i_1<j_1), (i_2<j_2), \ldots, (i_\ell<j_\ell)$ be any total ordering of the cups in $\cupsigma$ such that when a cup appears, every cup nested inside of that one has already appeared. 
Then 
\[
[\SF_\gs] = \left( \partial_{i_1} + \cdots + \partial_{j_1-1} \right) \left( \partial_{i_2} + \cdots + \partial_{j_2-1} \right) \cdots  \left( \partial_{i_\ell} + \cdots + \partial_{j_\ell-1} \right)\mathfrak{S}_{w_0}(\mathbf{x}).
\]
\end{conj}

As we point out below (cf.~Corollary~\ref{conj:divdiff2}) the conjecture above can also be stated for any two-row tableau.

Table~\ref{table.n6} displays all standard tableau of shape $(3,3)$ next to their cohomology classes computed using Conjectures~\ref{conj:monomialproduct} and~\ref{conj:divdiff}. In each case, we have confirmed the validity of these results using Lemma~\ref{lemma.multidegree}.

\begin{table}[h!]
\centering
\setlength{\tabcolsep}{8pt}
\newcolumntype{C}{>{\centering\arraybackslash}m{2.5cm}}
\begin{tabularx}{\textwidth}{C >{\raggedright\arraybackslash}m{6cm} X}
\toprule
$\sigma$ & Monomial expansion of~$[\SF_\sigma]$ & Schubert expansion of $[\SF_\sigma]$ \\ 
\midrule

% Row 1
\begin{ytableau} 1&3&5 \\ 2&4&6 \end{ytableau} & 
$x_1^4x_2^4x_3^2x_4^2$ & 
$\mathfrak{S}_{563412}$ \\ 
\addlinespace[10pt]

% Row 2
\begin{ytableau} 1&2&5 \\ 3&4&6 \end{ytableau} & 
$x_1^4x_2^2x_3^2x_4^2(x_1x_2+x_1x_3+x_2x_3)$ & 
$\mathfrak{S}_{546312} + \mathfrak{S}_{635412}$ \\ 

\addlinespace[10pt]

% Row 3
\begin{ytableau} 1&3&4 \\ 2&5&6 \end{ytableau} & 
$x_1^4x_2^4x_3^2(x_3x_4+x_3x_5+x_4x_5)$ & 
$\mathfrak{S}_{563241} + \mathfrak{S}_{564132}$ \\ 
\addlinespace[10pt]

% Row 4
\begin{ytableau} 1&2&4 \\ 3&5&6 \end{ytableau} & 
$x_1^4x_2^2x_3^2(x_1x_2x_3x_4 + \dots + x_2x_3x_4x_5)$ & 
$\mathfrak{S}_{645132} + \mathfrak{S}_{546231} + \mathfrak{S}_{635241}$ \\ 
\addlinespace[10pt]

% Row 5
\begin{ytableau} 1&2&3 \\ 4&5&6 \end{ytableau} & 
$x_1^4x_2^2(x_1x_2x_3x_4 + \dots + x_2x_3x_4x_5) (x_1x_2+x_1x_3+x_2x_3)$ & 
$\mathfrak{S}_{543621} + \mathfrak{S}_{634521} + \mathfrak{S}_{562431} + 2\mathfrak{S}_{642531} + \mathfrak{S}_{651432} + \mathfrak{S}_{643512} + \mathfrak{S}_{652341}$ \\ 
\end{tabularx}
\caption{Cohomology classes $[\SF_\gs]$ for all standard tableau $\gs$ of shape $(3,3)$.}
\label{table.n6}
\end{table}

\begin{ex} This example computes the Schubert expansion for the tableaux $\gs$ appearing in Example~\ref{ex.min4} (cf.~Figure~\ref{fig:intro2}) and in Example~\ref{2and3rainbow}. When $\sigma_1$ and $\sigma_2$ are, respectively, the first and second tableau from Example~\ref{ex.min4} we apply the formula from Conjecture \ref{conj:divdiff} to obtain:
\begin{align*}
[\cb_{\gs_1}]&=\partial_1\partial_3\mathfrak{S}_{w_0} = \mathfrak{S}_{w_0s_1s_3} =x_1^2x_2^2 \\
[\cb_{\gs_2}]&= \partial_2(\partial_1+\partial_2+\partial_3)\mathfrak{S}_{w_0}=\partial_2(\mathfrak{S}_{w_0s_1} + \mathfrak{S}_{w_0s_2}+\mathfrak{S}_{w_0s_3}) \\
&\qquad \qquad = \mathfrak{S}_{w_0s_1s_2}+ \mathfrak{S}_{w_0s_3s_2} =  \mathfrak{S}_{3241}+\mathfrak{S}_{4132} = x_1^3x_2+x_1^3x_3+x_1^2x_2x_3,
\end{align*}
which confirms the calculations of Example~\ref{ex.min4} and Figure~\ref{fig:intro2}. Next, let $\gs$ be as in Example~\ref{2and3rainbow}. By Conjecture~\ref{conj:divdiff} we have 
\begin{align*}
[\cb_\gs]&=\partial_2\partial_7(\partial_1+\partial_2+\partial_3)(\partial_6+\partial_7+\partial_8) (\partial_5+\partial_6+\partial_7+\partial_8+\partial_9)\mathfrak{S}_{w_0}\\
&=\mathfrak{S}_{[9,8,10,7,5,4,3,6,2,1]}+ \mathfrak{S}_{[10,7,8,9,5,4,3,6,2,1]}+ \mathfrak{S}_{[9,8,10,7,6,3,4,5,2,1]}+ \mathfrak{S}_{[10,7,8,9,6,3,4,5,2,1]}\\
&\quad +\mathfrak{S}_{[9,8,10,7,5,6,2,4,3,1]}+ \mathfrak{S}_{[10,7,8,9,5,6,2,4,3,1]}+ 2\mathfrak{S}_{[9,8,10,7,6,4,2,5,3,1]}+ 2\mathfrak{S}_{[10,7,8,9,6,4,2,5,3,1]}\\
&\quad+\mathfrak{S}_{[9,8,10,7,6,5,1,4,3,2]} +\mathfrak{S}_{[10,7,8,9,6,5,1,4,3,2]}+\mathfrak{S}_{[9,8,10,7,6,4,3,5,1,2]} +\mathfrak{S}_{[10,7,8,9,6,4,3,5,1,2]}\\
& \quad+\mathfrak{S}_{[9,8,10,7,6,5,2,3,4,1]}+\mathfrak{S}_{[10,7,8,9,6,5,2,3,4,1]}\ . 
\end{align*}
It is straightforward to confirm that this is equal to the formula computed in Example~\ref{2and3rainbow}.
\end{ex}

%%%%%%%%%%%%%%%%

\subsection{The one big cup family}
From here on we focus our attention on particular family of two-row tableaux, and show that our conjectures hold in this case. We say a tableau $\gs\in \SYT(\ell, \ell)$ is the prime tableau with \emph{one big cup} if the associated perfect matching consists of the cup $(1<n)$ with only cups of size one nested inside. Formally, $\gs$ is of the form:
\[
\begin{array}{|c|c|c|c|c|c|}
\hline
1 & 2 & 4 & \dots & 2(n-3) & 2(n-2) \\ \hline
3 & 5 & 7 & \dots & n-1 & n \\ \hline 
\end{array} \ .
\]

Recall that for an ideal $\mathcal{K} \subseteq \C[\mathbf{z}]$ and a fixed monomial order, the initial ideal $\mathrm{in}(\mathcal{K})$ is the monomial ideal generated by the leading terms of all polynomials in $\mathcal{K}$. A Gr\"obner basis for $\mathcal{K}$ is a generating set whose leading terms generate $\mathrm{in}(\mathcal{K})$. The following lemma shows that for the one big cup case, the initial ideal of $\isigma$ takes a particularly simple form under any \emph{diagonal term order}, which is any monomial order such that the leading term of the determinant of any submatrix of $Z =(z_{i,j})_{1 \leq i,j \leq n}$ is the product of its diagonal entries.

\begin{lemma}\label{lemma.Grobner} Suppose $\gs\in \SYT(\ell,\ell)$ is the prime tableau with one big cup. Then $\ci_\gs$ is prime, and furthermore, the generators used in define $\ci_\gs$ in Section~\ref{sec:IdealIsig} form a Gr\"obner basis of $\ci_\gs$ relative to a diagonal term order. 
\end{lemma}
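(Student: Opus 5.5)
The plan is first to make $\ci_\gs$ completely explicit in the one big cup case, and then to reduce both claims to classical facts about the ideal of $2\times 2$ minors of a generic $2\times m$ matrix.

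\textbf{Step 1: the generators.} Here $n=2\ell$ and the cups are $(1<n),(2<3),(4<5),\ldots,(n-2<n-1)$. The nesting sequence is
\[
S_\gs(1)=1,\qquad S_\gs(2k)=S_\gs(2k+1)=k+1 .
\]
The only cup of size at least $2$ is $(1<n)$, and $\al(1<n)=0$. Hence $\cm_\gs=\cm_{\gs,(1<n)}$ is generated by all $2\times 2$ minors of the $2\times(n-1)$ matrix $M_{(1<n)}$, whose $c$-th column is $(z_{S_\gs(c),c},\,z_{S_\gs(c)+\ell,c})^T$. By Definition~\ref{def:JSigma}, $\cj_\gs$ is generated by the variables $z_{i,c}$ and $z_{i+\ell,c}$ with $S_\gs(c)+1\le i\le\ell$. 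The row indices $S_\gs(c)$ and $S_\gs(c)+\ell$ lie outside these ranges, so:
\begin{itemize}
\item the entries of $M_{(1<n)}$ are $2(n-1)$ distinct variables;
\item none of these variables occurs among the generators of $\cj_\gs$.
\end{itemize}
So $\ci_\gs=\cj_\gs+I_2(M_{(1<n)})$, where $I_2(M)$ denotes the ideal of $2\times2$ minors of a matrix $M$, and the two summands live in disjoint sets of variables.

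\textbf{Step 2: primality.} Let $\mathbf{y}$ be the entries of $M_{(1<n)}$, $\mathbf{w}$ the variables in $\cj_\gs$, and $\mathbf{u}$ the remaining variables. Then
\[
\C[\mathbf{z}]/\ci_\gs\;\cong\;\bigl(\C[\mathbf{y}]/I_2(M_{(1<n)})\bigr)[\mathbf{u}].
\]
$I_2$ of a generic $2\times m$ matrix is prime: it is the ideal of the affine cone over a Segre variety, equivalently the coordinate ring of matrices of rank $\le1$, which is the image of the irreducible variety $\C^2\times\C^m$. A polynomial ring over a domain is a domain, so $\ci_\gs$ is prime.

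\textbf{Step 3: Gr\"obner basis.} I will use Buchberger's criterion.
\begin{itemize}
\item S-pairs between two variables in $\cj_\gs$ have coprime leading terms, so they reduce to $0$.
\item S-pairs between a variable in $\cj_\gs$ and a minor also have coprime leading terms, because the variable sets are disjoint; they reduce to $0$.
\item For S-pairs between two minors, I cite the classical fact (Sturmfels; also Miller--Sturmfels) that the maximal minors of a generic matrix form a Gr\"obner basis for any term order selecting the main diagonal of each minor. It applies to $M_{(1<n)}$ once its own diagonals are selected.
\end{itemize}

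\textbf{Main obstacle.} The crux is showing that a diagonal term order on $Z$ picks the diagonal term $z_{S(c_1),c_1}z_{S(c_2)+\ell,c_2}$ of each minor of $M_{(1<n)}$, where $c_1<c_2$ and $S=S_\gs$. This is delicate because, when $S(c_1)\neq S(c_2)$, the minor is not a minor of $Z$ itself.

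I would first treat a concrete diagonal order: row-major lex, with $z_{1,1}>z_{1,2}>\cdots>z_{1,n}>z_{2,1}>\cdots$. Since $1\le S(c_1)\le S(c_2)\le\ell$, the smallest row index among the four entries is $S(c_1)$, or a tie at $S(c_1)=S(c_2)$ broken by the column $c_1$. In either case the leading term is $z_{S(c_1),c_1}z_{S(c_2)+\ell,c_2}$.

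For an arbitrary diagonal order, I would compare the four entries through genuine $2\times2$ minors of $Z$:
\begin{itemize}
\item rows $\{S(c_1),S(c_2)+\ell\}$, columns $\{c_1,c_2\}$, whose diagonal term is $z_{S(c_1),c_1}z_{S(c_2)+\ell,c_2}$;
\item rows $\{S(c_2),S(c_1)+\ell\}$, columns $\{c_1,c_2\}$, whose diagonal term is $z_{S(c_2),c_1}z_{S(c_1)+\ell,c_2}$.
\end{itemize}
I would try to chain these inequalities to conclude $z_{S(c_1),c_1}z_{S(c_2)+\ell,c_2}>z_{S(c_2),c_2}z_{S(c_1)+\ell,c_1}$.

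If that chaining fails for general diagonal orders, I would fall back on proving the lemma for the specific lex order above. That order suffices for the subsequent multidegree computation, since multidegrees are preserved under Gr\"obner degeneration.
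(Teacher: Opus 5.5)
Your proposal is correct and follows the paper's proof: $\ci_\gs=\cj_\gs+\cm_{\gs,(1<n)}$ with the two summands in disjoint sets of variables, primality from that of the ideal of $2\times 2$ minors of a generic $2\times(n-1)$ matrix, and Sturmfels' theorem for the Gr\"obner property. The subtlety you flag is genuine, and the paper passes over it (it asserts the conclusion for every diagonal order). Your chaining cannot succeed in general: for $\ell=2$, the weight $w(z_{r,c})=f(r)\,c$ with $(f(1),f(2),f(3),f(4))=(0,4,5,6)$ is diagonal on $Z$ (because $(f(r_2)-f(r_1))(c_2-c_1)>0$ whenever $r_1<r_2$ and $c_1<c_2$), yet it makes $z_{2,2}z_{3,1}$ the leading term of the generator $z_{1,1}z_{4,2}-z_{2,2}z_{3,1}$. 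So your row-major lex fallback, which suffices for the existential ``a diagonal term order'', is the right move; in fact the Gr\"obner property holds for every term order, since maximal minors of a generic matrix form a universal Gr\"obner basis (Bernstein--Sturmfels--Zelevinsky), although the leading terms then need not be the diagonals of $M_{(1<n)}$.
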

\begin{proof}  In this case we have $\ci_\gs = \cj_\gs+\cm_{\gs, (1<n)}$ where $\cm_{\gs,(1<n)}$ is the ideal generated by all $2\times 2$ minors of the matrix:
\begin{align*}
M_{(1<n)} &= \begin{bmatrix} z_{S_\gs(1), 1} & z_{S_{\gs}(2), 2} & \cdots & z_{S_{\gs}(n-1), n-1}  \\
z_{S_\gs(1)+\ell, 1} & z_{S_{\gs}(2)+\ell, 2} & \cdots & z_{S_{\gs}(n-1)+\ell, n-1} 
\end{bmatrix} \\
&=  \begin{bmatrix} z_{1, 1} & z_{2, 2} & z_{2, 3} & z_{3,4} & z_{3,5} & \cdots & z_{\ell, n-2} & z_{\ell, n-1}  \\
z_{1+\ell, 1} & z_{2+\ell, 2} & z_{2+\ell, 3} & x_{3+\ell, 4} & z_{3+\ell, 5} & \cdots & z_{n, n-2} & z_{n, n-1} 
\end{bmatrix} \ .
\end{align*}
In particular, since both $\cj_\gs$ and $\cm_{\gs, (1<n)}$ are prime and their generating sets have no variables in common, the ideal $\isigma$ is prime.

Sturmfels~\cite[Theorem 1]{Sturmfels1990} showed that the ideal generated by $(r+1)\times(r+1)$ minors of an $m\times n$ matrix form a Gr\"obner basis for the ideal defining the variety of $m\times n$ matrices of rank at most $r$ with respect to any diagonal term order. In particular, this implies that the generating set of $\cm_{\gs,(1<n)}$ forms a Gr\"obner basis with respect to any diagonal term order. Once again, since the generating sets of $\cm_{\gs,(1<n)}$ and $\cj_\gs$ have no variables in common, we get that the generators given in Section~\ref{sec:IdealIsig} are a Gr\"obner basis of $\ci_\gs$ relative to any diagonal term order. 
\end{proof}

\begin{remark} The results of Lemma \ref{lemma.Grobner} do not generalize to all two-row prime tableaux. For example, consider the case where $\sigma$ is the standard tableau whose diagram is the ``rainbow'' from Example \ref{1component.a}. Although this ideal $\ci_\gs$ is prime, the specified generating set fails to be a Gr\"obner basis relative to any term order.  The challenge of identifying a computationally efficient Gr\"obner basis for ideals associated to larger two-row prime tableaux remains an open problem, and a significant roadblock to applying techniques from commutative algebra to prove  Conjectures~\ref{conj:monomialproduct} and~\ref{conj:divdiff}.
\end{remark}

\begin{prop}\label{mononebigcup}
Conjecture~\ref{conj:monomialproduct} holds for the prime tableau $\gs\in \SYT(\ell, \ell)$ with one big cup, that is, 
\[
[\cb_\gs] = x_1^{n-2}x_2^{n-4}x_3^{n-4}x_4^{n-6}x_5^{n-6} \cdots x_{n-4}^2x_{n-3}^2  \left(  \sum_{k=1}^{n-1} x_1x_{2}\cdots \hat{x}_k \cdots x_{n-1} \right)  .
\]
\end{prop}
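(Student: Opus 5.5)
By Lemma~\ref{lemma.multidegree} and Theorem~\ref{thm.ideal}, the class $[\cb_\gs]$ is represented by the multidegree $\cc(\C[\mathbf{z}]/\ci_\gs;\mathbf{x})$ with respect to the grading $\deg z_{i,j}=e_j$. The plan is to compute this multidegree directly from the decomposition $\ci_\gs=\cj_\gs+\cm_{\gs,(1<n)}$ of Lemma~\ref{lemma.Grobner}. The two summands are generated in disjoint sets of variables, so the quotient is a tensor product. Hence the multidegree factors as the product of the multidegrees of $\C[\mathbf{z}]/\cj_\gs$ and $\C[\mathbf{z}]/\cm_{\gs,(1<n)}$, each taken in its own polynomial subring (multidegrees are multiplicative for tensor products over disjoint variable sets). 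Alternatively, one can pass to the initial ideal under a diagonal term order, again using Lemma~\ref{lemma.Grobner}, and use that degeneration preserves multidegree.

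\textbf{The monomial factor.} The multidegree of a coordinate subspace cut out by variables $z_{i,j}$ is $\prod x_j$ over those generators. For each column $j$ with $S_\gs(j)<\ell$, the ideal $\cj_\gs$ kills exactly $2(\ell-S_\gs(j))$ variables in column $j$. This gives the factor $\prod_j x_j^{2(\ell-S_\gs(j))}$. For the one big cup matching, $S_\gs=(1,2,2,3,3,\dots,\ell,\ell)$, so the exponents are $n-2,n-4,n-4,\dots,2,2$, and columns $n-2,n-1$ contribute $x^0$. This matches the displayed monomial.

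\textbf{The determinantal factor.} The ideal $\cm_{\gs,(1<n)}$ is generated by the $2\times 2$ minors of the $2\times(n-1)$ matrix $M_{(1<n)}$, whose $k$-th column consists of two distinct variables of degree $e_k$. These variables are disjoint from each other and from those of $\cj_\gs$, and the remaining variables are free. The multidegree of the rank $\le 1$ locus of a $2\times m$ generic matrix with column grading $x_1,\dots,x_m$ is needed here. By Sturmfels' Gr\"obner basis (diagonal order), the initial ideal is $\langle z_{1,a}z_{2,b} : a<b\rangle$ in the two-row notation. This is the Stanley--Reisner ideal whose facets are $\{z_{2,1},\dots,z_{2,k-1},z_{1,k},\dots,z_{1,m}\}$ together with one extra coordinate; more simply, its components are the coordinate subspaces $\langle z_{1,1},\dots,z_{1,k-1},z_{2,k+1},\dots,z_{2,m}\rangle$ for $k=1,\dots,m$. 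Each such component has multidegree $\prod_{a\ne k}x_a$, and the scheme is reduced of codimension $m-1$. Summing over the top-dimensional components (multidegree is additive over them) gives $\sum_{k=1}^{m}x_1\cdots\hat x_k\cdots x_m$. With $m=n-1$, this is the second factor.

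\textbf{Main obstacle.} The main point to verify carefully is the last step: that the minimal primes of the initial ideal of the $2\times 2$ minors are exactly those $m$ coordinate subspaces, each with multiplicity one. A monomial $\prod_{a<b}$ covering argument settles this. A set of variables meets every $z_{1,a}z_{2,b}$ ($a<b$) minimally exactly when it is $\{z_{1,a}:a<k\}\cup\{z_{2,b}:b>k\}$. Squarefreeness gives multiplicity one. Finally, we must multiply the two factors and check the indexing conventions (columns $1,\dots,n-1$) against the statement.
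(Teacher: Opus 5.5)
Your proposal is correct and follows essentially the same route as the paper: degenerate to the initial ideal under a diagonal term order using Sturmfels' Gr\"obner basis (Lemma~\ref{lemma.Grobner}), identify the $n-1$ minimal primes $\{z_{1,a}:a<k\}\cup\{z_{2,b}:b>k\}$ of the squarefree ideal $\langle z_{1,a}z_{2,b} : a<b\rangle$, and sum their multidegrees. The only difference is cosmetic: you split off the $\cj_\gs$ factor first, using multiplicativity of multidegrees over disjoint variable sets, whereas the paper carries $\cj_\gs$ along inside each prime $\cp_i$.
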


\begin{proof} We prove the desired formula using Lemma~\ref{lemma.multidegree}. We leverage three key properties of the multidegree; recall that we are using the degree function $\deg(z_{i,j})=e_j$.
\begin{enumerate}
\item \cite[Corollary 8.47]{MillerSturmfels2005}: The multidegree is a degenerative function, and in particular, $\cc(\C[\mathbf{x}]/\ci_\gs; \mathbf{x}) = \cc(\C[\mathbf{z}]/\mathrm{in}(\ci_\gs); \mathbf{x})$.
\item \cite[Theorem 8.44]{MillerSturmfels2005}: If $\cp = \left< z_{i_1,j_1}, z_{i_2,j_2} , \ldots, z_{i_t,j_t} \right>$ is a prime monomial ideal then
\[
\mathcal{C}\left(\C[\mathbf{z}] /\cp ; \mathbf{x}\right)= x_{j_1}x_{j_2}\cdots x_{j_t}.
\]
\item \cite[Theorem 8.53]{MillerSturmfels2005}: Suppose $\ci$ is a reduced ideal with maximum dimensional associated primes $\cp_1, \cp_2, \ldots, \cp_k$. Then 
\[
\cc(\C[\mathbf{z}] / \ci; \mathbf{x}) = \sum_{i=1}^k \cc(\C[\mathbf{z}]/ \cp_i; \mathbf{x}).
\]
\end{enumerate}
Fix a diagonal term order on the variable set $\mathbf{z}$. Combining Lemma~\ref{lemma.multidegree} with Fact (1), it suffices to prove
\[
\cc(\C[\mathbf{z}] / \mathrm{in}(\ci_\gs);
\mathbf{x}) = x_1^{n-2}x_2^{n-4}x_3^{n-4}x_4^{n-6}x_5^{n-6} \cdots x_{n-4}^2x_{n-3}^2  \left(  \sum_{k=1}^{n-1} x_1x_{2}\cdots \hat{x}_k \cdots x_{n-1} \right)  ,
\]
where, by Lemma~\ref{lemma.Grobner}, 
\begin{eqnarray}\label{eqn.leadterms}
\mathrm{in}(\ci_\gs) = \cj_\gs + \langle z_{S_\gs(i),i}\,z_{S_\gs(k)+\ell,k} \mid 1\leq i<k\leq n-1\rangle.
\end{eqnarray}
To complete the proof, we apply Stanley--Reisner theory for square-free monomial ideals~\cite[Chapter 1]{MillerSturmfels2005}.  Since $\mathrm{in}(\ci_\gs)$ is square-free, it is equal to the ideal $\left< \mathbf{z}^\tau \mid \tau \notin \Delta \right>$ generated by monomials corresponding to nonfaces of a simplicial complex $\Delta$. To complete the proof, we compute $\Delta$ and use it to find the associated primes of $\mathrm{in}(\ci_\gs)$.

By definition, the simplicial complex $\Delta$ has faces equal to subsets of the variable set asssociated to monomials which are not in the initial ideal of $\ci_\gs$. Using~\eqref{eqn.leadterms}, the monomials which are not in $\mathrm{in}(\ci_\gs)$ are precisely those which are not divisible by 
\[
z_{S_\gs(i),i}\,z_{S_\gs(k)+\ell,k} \;\; \text{ for any } 1\leq i <k \leq n-1.
\]
In particular, we get 
\[
\Delta = \bigcup_{i=1}^{n-1} \left\{ z_{S_\gs(1),1}, \ldots, z_{S_\gs(n-i), n-i}, z_{S_\gs(1)+\ell,1}, \ldots, z_{S_\gs(i)+\ell, i} \right\}.
\]
By~\cite[Theorem 1.7]{MillerSturmfels2005}, each maximum dimensional associated prime of $\mathrm{in}(\ci_\gs)$ is of the form: 
\begin{align*}
\cp_0 &= \cj_\gs+\left< z_{S_\gs(1),1}, z_{S_\gs(2),2},\, \ldots,\, z_{S_\gs(n-2),n-2} \right> \\
\cp_i &= \cj_\gs+\left< z_{S_\gs(1),1}, \ldots, z_{S_\gs(i),i}, z_{S_\gs(i+2)+\ell, i+2}, \ldots,  z_{S_\gs(n-1)+\ell, n-1} \right> \; \text{ for $1\leq i \leq n-3$} ,\; \\
\cp_{n-2} &= \cj_\gs+\left< z_{S_\gs(2)+\ell,2}, z_{S_\gs(3)+\ell,3}, \ldots, z_{S_\gs(n-1)+\ell,n-1} \right> .
\end{align*} 
Since $S_\gs=(1,2,2,3,3,\ldots, \ell, \ell )$ we have by Fact (2) that  
\[
\cc(\C[\mathbf{z}]/\cj_\gs; \mathbf{x}) = x_1^{n-2}x_2^{n-4}x_3^{n-4}x_4^{n-6}x_5^{n-6} \cdots x_{n-4}^2x_{n-3}^2.
\]
Fact (2) also implies 
\begin{align*}
\cc(\C[\mathbf{z}]/\cp_{0} ; \mathbf{x}) &= \cc(\C[\mathbf{z}]/\cj_\gs; \mathbf{x}) (x_1x_2 \cdots x_{n-2})\\
\cc(\C[\mathbf{z}]/\cp_{i} ; \mathbf{x}) &= \cc(\C[\mathbf{z}]/\cj_\gs; \mathbf{x}) (x_1x_2 \cdots \hat{x}_{i+1} \cdots x_{n-1})\; \text{ for $1\leq i \leq n-3$} ,\; \\
\cc(\C[\mathbf{z}]/\cp_{n-1} ; \mathbf{x}) &= \cc(\C[\mathbf{z}]/\cj_\gs; \mathbf{x}) (x_2x_3 \cdots x_{n-2}).
\end{align*}
The desired formula now follows directly from Fact (3).
\end{proof}

\begin{prop}\label{prop.divdiff.onebigcup}
Conjecture ~\ref{conj:divdiff} holds for the prime tableau $\gs\in \SYT(\ell, \ell)$ with one big cup, that is, 
\begin{equation}\label{conj:divbigcup}
[\cb_\gs]=\partial_2\partial_4\cdots\partial_{2\ell-2}\left(\partial_1+\partial_2+\partial_3+\cdots+\partial_{2\ell-2}+\partial_{2\ell-1}\right)\mathfrak{S}_{w_0} (\mathbf{x}).
\end{equation}
\end{prop}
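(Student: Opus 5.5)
Since Proposition~\ref{mononebigcup} already identifies $[\cb_\gs]$ with the explicit polynomial
\[
P:=x_1^{n-2}x_2^{n-4}x_3^{n-4}\cdots x_{n-4}^2x_{n-3}^2\Bigl(\sum_{k=1}^{n-1} x_1\cdots\hat{x}_k\cdots x_{n-1}\Bigr),
\]
our plan is to show that the right-hand side of~\eqref{conj:divbigcup} equals $P$ modulo $I^+$. In fact we aim for equality on the nose, working with $\mathfrak{S}_{w_0}=x_1^{n-1}x_2^{n-2}\cdots x_{n-1}$ as an honest polynomial. The admissible orderings of the cups list the size-one cups $(2<3),(4<5),\ldots,(2\ell-2<2\ell-1)$ first and the big cup $(1<n)$ last. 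The operators $\partial_{2},\partial_4,\ldots,\partial_{2\ell-2}$ pairwise commute, so the right-hand side is unambiguous.

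\textbf{Step 1: apply $D:=\partial_1+\cdots+\partial_{n-1}$ to $\mathfrak{S}_{w_0}$.} Since $\partial_k\mathfrak{S}_{w_0}=\mathfrak{S}_{w_0s_k}$, we will compute directly. Write $\mathfrak{S}_{w_0}=\prod_k x_k^{n-k}$. The factor $x_k^{n-k}x_{k+1}^{n-k-1}$ equals $(x_kx_{k+1})^{n-k-1}x_k$, so $\partial_k$ acts only on the stray $x_k$. This gives
\[
\partial_k\mathfrak{S}_{w_0}=\mathfrak{S}_{w_0}/x_k,
\]
and therefore
\[
D\mathfrak{S}_{w_0}=\mathfrak{S}_{w_0}\sum_{k=1}^{n-1}x_k^{-1}=x_1^{n-2}x_2^{n-3}\cdots x_{n-2}\,e_{n-2}(x_1,\ldots,x_{n-1}),
\]
where $e_{n-2}(x_1,\ldots,x_{n-1})=\sum_k x_1\cdots\hat x_k\cdots x_{n-1}$ is the bracket in $P$.

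\textbf{Step 2: apply $\partial_2\partial_4\cdots\partial_{n-2}$.} The factor $e_{n-2}(x_1,\ldots,x_{n-1})$ is symmetric in $x_1,\ldots,x_{n-1}$, so it commutes with every $\partial_{2m}$ where $2m\le n-2$. It therefore suffices to compute $\partial_2\partial_4\cdots\partial_{n-2}$ applied to $x_1^{n-2}x_2^{n-3}x_3^{n-4}\cdots x_{n-2}$. The operators act on the disjoint pairs $(x_2,x_3),(x_4,x_5),\ldots$, and each pair contributes $x_{2m}^{a+1}x_{2m+1}^{a}=(x_{2m}x_{2m+1})^{a}x_{2m}$, where $a=n-2m-2$. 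Each $\partial_{2m}$ sends this to $(x_{2m}x_{2m+1})^{a}$. For the last pair, $\partial_{n-2}(x_{n-2})=1$, consistent with $a=0$.

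The result is $x_1^{n-2}\prod_{m}(x_{2m}x_{2m+1})^{n-2m-2}$, which is exactly the monomial prefactor of $P$. Hence the right-hand side of~\eqref{conj:divbigcup} equals $P$, and Proposition~\ref{mononebigcup} finishes the proof.

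\textbf{Where we expect difficulty.} The only delicate point is bookkeeping: matching the exponents $n-2m-2$ to the paper's sequence $x_2^{n-4}x_3^{n-4}\cdots x_{n-3}^2$, and checking the boundary pair $(x_{n-2},x_{n-1})$. One should also confirm that the symmetric factor really passes through each $\partial_{2m}$ via the Leibniz rule $\partial_i(fg)=f\,\partial_i g$ when $f$ is $s_i$-invariant. Nothing in the argument requires working modulo $I^+$.
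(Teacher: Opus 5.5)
Your proof is correct. Like the paper, it establishes an exact polynomial identity with the formula of Proposition~\ref{mononebigcup}, but it organizes the computation differently.

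The paper first uses $\partial_i^2=0$ to discard the even-indexed terms of $\partial_1+\cdots+\partial_{2\ell-1}$. It then evaluates each $\partial_2\partial_4\cdots\partial_{2\ell-2}\partial_{2i-1}\mathfrak{S}_{w_0}$ separately. For $i=1$ this is one monomial. For $i\geq 2$ the exponents of $x_{2i-2}$ and $x_{2i-1}$ in $\partial_{2i-1}\mathfrak{S}_{w_0}$ differ by two, so $\partial_{2i-2}$ produces a factor $x_{2i-2}+x_{2i-1}$. Summing the $\ell$ pieces then reassembles $\sum_k x_1\cdots\hat{x}_k\cdots x_{n-1}$.

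You instead sum first. Since $\partial_k\mathfrak{S}_{w_0}=\mathfrak{S}_{w_0}/x_k$, the whole factor $e_{n-2}(x_1,\ldots,x_{n-1})$ appears at once. You then use its $s_{2m}$-invariance to pull it through the even operators, which act on a single monomial.

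What your route buys:
\begin{itemize}
\item It avoids the case split.
\item It never needs $\partial_i^2=0$, because the even-indexed terms are absorbed automatically.
\item It explains where the elementary symmetric factor comes from.
\item The first step does not depend on the inner cups: for any connected matching the operator for $(1<n)$ acts first, and every remaining $\partial_k$ has $k\leq n-2$, so the same factorization always applies.
\end{itemize}

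The paper's term-by-term bookkeeping buys something else. Each summand is a single Schubert polynomial obtained from $\mathfrak{S}_{w_0}$ by the recurrence, so that approach stays closer to the Schubert-basis reading of the conjecture.
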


\begin{proof} Since $\partial_i^2=0$, it suffices to prove
\[
[\cb_\gs]=\partial_2\partial_4\cdots\partial_{2\ell-2}\left(\partial_1+\partial_3+\cdots+\partial_{2\ell-1}\right)\mathfrak{S}_{w_0} (\mathbf{x}).
\]
We compute the effect of applying an odd indexed divided difference operator followed by a sequence of even indexed divided difference operators. 

For any $1\leq i\leq \ell$ the divided difference operator $\partial_{2i-1}$ acts on $\mathfrak{S}_{w_0} = x_1^{n-1}x_2^{n-2}\cdots x_{n-1}$ by decreasing the power of $x_{2i-i}$ by $1$:
\[
\partial_{2 i-1}  \mathfrak{S}_{w_0}(\mathbf{x}) =x_1^{n-1} x_2^{n-2}\cdots x_{2(i-1)}^{n-2(i-1)} x_{2 i-1}^{n-2 i} x_{2 i}^{n-2 i} \cdots x_{n-1}.
\]
Thus, $x_{2i-1}$ and $x_{2i}$ have the same power in $\partial_{2 i-1}  \mathfrak{S}_{w_0}$, and the power of $x_{2(i-1)}$ is now two more than that of $x_{2i-1}$.

Next, we study the effect of applying the chain of even indexed divided difference operators $\partial_2\partial_4\cdots\partial_{2\ell-2}$ to  $\partial_{2 i-1} \mathfrak{S}_{w_0}$. Since $\partial_2, \partial_4, \ldots, \partial_{2\ell -2}$ all commute, it suffices to compute $\partial_{2k} \partial_{2i-1} \mathfrak{S}_{w_0}$ for each $k=1, \ldots, \ell -1$. 
    
If $k \neq i-1$, then applying $\partial_{2k}$ to $\partial_{2i-1} \mathfrak{S}_{w_0}$ simply reduces the exponent of $x_{2k}$ by one.  On the other hand, if $k=i-1$ (which occurs when $i>1$), then applying $\partial_{2k} = \partial_{2(i-1)}$ to $\partial_{2i-1} \mathfrak{S}_{w_0}$ breaks the monomial expression into the sum of two: a monomial where the exponent of $x_{2(i-1)}$ is reduced by one and another where the exponent of $x_{2i-1}$ is reduced by one. In particular, note that $\partial_{2}\partial_4\cdots \partial_{2\ell-2} \partial_{2i-1} \mathfrak{S}_{w_0}$ is divisible by $x_1^{n-2}x_2^{n-4}x_3^{n-4}x_4^{n-6}x_5^{n-6}\cdots x_{n-4}^2x_{n-3}^2$ for all $i$.

In general, when $i=1$ we have
\begin{align*}\label{eqn:divdiffone}
\partial_2  \partial_4  \cdots \partial_{n-2} \partial_{2 i-1}\mathfrak{S}_{w_0} (\mathbf{x})
&=\partial_2  \partial_4  \cdots  \partial_{n-2} \partial_1 \mathfrak{S}_{w_0}(\mathbf{x}) \\
&=x_1^{n-2} x_2^{n-3} x_3^{n-3}x_4^{n-5}x_5^{n-5} \cdots x_{n-4}^3x_{n-3}^3x_{n-2}^1 x_{n-1}^1 \\
&=x_1^{n-2}x_2^{n-4}x_3^{n-4}x_4^{n-6}x_5^{n-6}\cdots x_{n-4}^2x_{n-3}^2 (x_2x_3\cdots x_{n-1})
\end{align*}
If $2\leq i\leq \ell$, then
\begin{align*}
\partial_2  \partial_4  \cdots \partial_{n-2}  \partial_{2 i-1}\mathfrak{S}_{w_0}(\mathbf{x}) &=  x_1^{n-1} x_2^{n-3} x_3^{n-3} \cdots x_{2 i-4}^{n-(2 i-3)} x_{2 i-3}^{n-(2 i-3)} x_{2( i-1)}^{n-2 i} x_{2 i-1}^{n-2 i} \\ & \qquad\qquad\qquad \cdot x_{2 i}^{n-(2 i+1)} x_{2 i+1}^{n-(2 i+1)} \cdots x_{n-2} x_{n-1}\left(x_{2( i-1)}+x_{2 i-1}\right) \\
&=x_1^{n-2}x_2^{n-4}x_3^{n-4}x_4^{n-6}x_5^{n-6}\cdots x_{n-4}^2x_{n-3}^2 \\
& \qquad\qquad\qquad \cdot (x_1x_2\cdots \hat{x}_{2i-2}\cdots x_{n-1}+ x_1x_2\cdots \hat{x}_{2i-1}\cdots x_{n-1}).
\end{align*}
We conclude 
\begin{align*}
& \sum_{i=1}^{\ell} \partial_2  \partial_4  \cdots \partial_{n-2} \partial_{2 i-1}\mathfrak{S}_{w_0} (\mathbf{x}) \\
& \qquad\qquad\qquad  = x_1^{n-2}x_2^{n-4}x_3^{n-4}x_4^{n-6}x_5^{n-6}\cdots x_{n-4}^2x_{n-3}^2 \left( \sum_{k=1}^{n-1}  x_1x_2\cdots \hat{x}_k \cdots x_{n-1}\right),
\end{align*}
and the result now follows from Proposition~\ref{mononebigcup}.
\end{proof}

%%%%%%%%%%%%%%%%%%%%%%%
\subsection{Extension to the non-rectangular case}

As with all of our results, we can compute the cohomology class $[\SF_\gs]$ for $\gs$ any two-row standard tableau if we know the cohomology classes of the components indexed by its associated primes. The next example demonstrates this for a rectangular tableau, and the following lemma states the formula for general two-row tableaux.

\begin{ex}\label{ex.copiesof2rainbow}
Let $\cupsigma$ be the following noncrossing matching.
\begin{figure*}[h!]
\centering
\begin{tikzpicture}[scale=0.5]
\begin{scope}[shift={(8,0)}]
		\foreach \i in {0,...,7}
		{
			\node[pnt] at (\i,0)(\i){};
		}
        
		\draw (0)  to [bend left=80](3);
		\draw(1)  to [bend left=80] (2);
        \draw(4)  to [bend left=80] (7);
        \draw(5)  to [bend left=80] (6);
		\draw (0,-1) node {$1$};
        \draw (1,-1) node {$2$};
        \draw (2,-1) node {$3$};
        \draw (3,-1) node {$4$};
        \draw (4,-1) node {$5$};
        \draw (5,-1) node {$6$};
        \draw (6,-1) node {$7$};
        \draw (7,-1) node {$8$};
\end{scope}
\end{tikzpicture} 
\end{figure*}

\noindent In this case, we have that $\gs=\gs_1\circ\gs_2$ is the concatenation of two copies of 
\[
\begin{ytableau} 1 & 2\\ 3 & 4
\end{ytableau} \ .
\]
Using the computations from Example~\ref{ex.min4}, we have 
\[
[\cb_{\gs_1}]=[\cb_{\gs_2}]= x_1^2 (x_1x_2+x_1x_3+x_2x_3).
\]
We have that $\mu=(4,4)$ is the composition recording the sizes of the prime components of $\gs$ and
\[
\ci_\gs =  \ci_{\gs_1}'+\ci_{\gs_2}' + \left\langle  z_{5,j}, z_{6,j}, z_{7,j}, z_{8,j} \mid 1\leq j \leq 4  \right\rangle 
\]
where $\ci_{\gs_1}'=\ci_{\gs_1}$ and $\ci_{\gs_2}'$ is the image of the ideal $\ci_{\gs_2}$ under the shift $z_{a,b}\mapsto z_{a+4, b+4}$. It is straightforward to check using Macaulay2 that $\ci_\gs$ is prime and compute its multidegree. Using Lemma~\ref{lemma.multidegree} we have:
\begin{align*}
[\SF_\gs] &= x_1^4x_2^4x_3^4x_4^4 \cdot x_1^2 (x_1x_2+x_1x_3+x_2x_3) \cdot x_5^2 (x_5x_6+x_5x_7+x_6x_7) \\
&= x_1^4x_2^4x_3^4x_4^4 \cdot [\SF_{\gs_1}] \cdot [\SF_{\gs_2}](\mathbf{x}+4)
\end{align*}
where $[\SF_{\gs_2}](\mathbf{x}+4)$ is the image of $[\SF_{\gs_2}]$ after shifting $x_i\mapsto x_{i+4}$. We can also compute the Schubert expansion.  We have $[\SF_{\gs_1}]=[\SF_{\gs_2}] = \mathfrak{S}_{3241}(\mathbf{x}) + \mathfrak{S}_{4132}(\mathbf{x})$ and 
\[
[\cb_\gs] = \mathfrak{S}_{76853241}(\mathbf{x})+\mathfrak{S}_{85763241}(\mathbf{x})+\mathfrak{S}_{76854132}(\mathbf{x})+\mathfrak{S}_{85764132}(\mathbf{x}).
\]
This shows that $[\cb_\gs] =  \partial_2 \partial _6(\partial_1+\partial_2+\partial_3)(\partial_5+\partial_6+\partial_7)\mathfrak{S}_{w_0}$, confirming the results of Conjecture~\ref{conj:divdiff}.
\end{ex}

\begin{lemma}\label{lemma.genformula}
Let $\gs$ be a two-row tableau of shape $(n-\ell,\ell)$ with prime decomposition $\gs=\gs_1\circ\gs_2\circ\cdots\circ\gs_k$. For each $i$ with $1\leq i \leq k$, we set $\mu_i = |\gs_i|$ and $\parsum_i = \mu_1+\mu_2+\cdots +\mu_{i}$ denotes the $i$-th partial sum of the composition $\mu=(\mu_1, \mu_2, \ldots, \mu_k)$.
Then
\begin{equation}\label{eqn.genmultideg}
[\cb_\gs]=\left( \prod_{p=1}^{k-1}~ \prod_{j=\parsum_{p-1}+1}^{\parsum_p} x_j^{(n-\parsum_p)}\right) \left( \prod_{i=1}^k[\cb_{\gs_i}](\mathbf{x}+\parsum_{i-1}) \right)
\end{equation}
where $[\cb_{\gs_i}](\mathbf{x}+\parsum_{i-1})$ indicates that we shift the indices of the variables $x_j$ appearing in a polynomial representative of $[\SF_{\gs_i}]$ by $x_j\mapsto x_{j+\parsum_{i-1}}$
\end{lemma}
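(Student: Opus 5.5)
We will compute the multidegree of $\C[\mathbf{z}]/\ci_\gs$ and then apply Lemma~\ref{lemma.multidegree}, which is allowed because Theorem~\ref{thm.ideal} gives $\cv(\ci_\gs)\cap GL_n(\C)=\pi^{-1}(\SF_\gs)$. By Definition~\ref{def.general}, $\ci_\gs=\tau_\mu^{-1}\ci_\gs'$. The permutation $\tau_\mu^{-1}$ only permutes row indices, $z_{a,b}\mapsto z_{\tau_\mu^{-1}(a),b}$, so it is a graded automorphism of $\C[\mathbf{z}]$ for the grading $\deg z_{i,j}=e_j$. Hence the two quotients have the same multidegree, and it suffices to compute $\cc(\C[\mathbf{z}]/\ci_\gs';\mathbf{x})$.

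Next we decompose $\ci_\gs'=\mathcal{K}_\mu+\sum_i\ci_{\gs_i}'$ into ideals in pairwise disjoint sets of variables. The ideal $\mathcal{K}_\mu$ uses the variables $z_{i,j}$ strictly below the diagonal blocks. Each $\ci_{\gs_i}'$ uses only the variables $z_{a,b}$ with $a,b$ in the $i$-th diagonal block, and $\ci_{\gs_i}'=0$ when $\mu_i=1$. The remaining variables appear in no generator. Therefore
\[
\C[\mathbf{z}]/\ci_\gs' \cong \C[\mathbf{z}_{\mathcal{K}}]/\mathcal{K}_\mu \otimes \bigotimes_i \C[\mathbf{z}^{(i)}]/\ci_{\gs_i}' \otimes \C[\text{free variables}],
\]
and this is an isomorphism of $\Z^n$-graded algebras. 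The $\Z^n$-graded Hilbert series is multiplicative on such tensor products, and the multidegree is the lowest-degree part of the numerator after substituting $t_j=1-x_j$. Multidegrees are therefore multiplicative on tensor products over disjoint variable sets, and a polynomial factor in free variables contributes $1$. We can either cite this from \cite[\S 8.5]{MillerSturmfels2005} or verify it directly from the definition.

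The factor from $\mathcal{K}_\mu$ is computed by Fact (2) in the proof of Proposition~\ref{mononebigcup}, since $\mathcal{K}_\mu$ is a prime monomial ideal. Column $j$ in block $p$, that is $\parsum_{p-1}<j\le\parsum_p$, contributes one generator $z_{i,j}$ for each row $i>\parsum_p$. There are $n-\parsum_p$ such rows, and block $k$ contributes none. This gives exactly the first product in \eqref{eqn.genmultideg}.

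For the block factors, the shift $z_{a,b}\mapsto z_{a+\parsum_{i-1},b+\parsum_{i-1}}$ identifies $\C[\mathbf{z}^{(i)}]/\ci_{\gs_i}'$ with $\C[z_{a,b}: a,b\le\mu_i]/\ci_{\gs_i}$. Under this identification the grading $e_b$ becomes $e_{b+\parsum_{i-1}}$, so the multidegree of the block is the multidegree of $\ci_{\gs_i}$ in $M_{\mu_i}$ with $x_j\mapsto x_{j+\parsum_{i-1}}$. By Theorem~\ref{thm.ideal} applied to the rectangular prime tableau $\gs_i$ in $GL_{\mu_i}(\C)$, together with Lemma~\ref{lemma.multidegree}, that multidegree represents $[\SF_{\gs_i}]$ in $H^*(\Fl_{\mu_i}(\C))$. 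When $\mu_i=1$ the block factor is $1=[\SF_{\gs_i}]$.

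The main obstacle is the meaning of ``$[\cb_{\gs_i}]$'' as a polynomial, since classes are only defined modulo $I^+$. The statement should be read with $[\SF_{\gs_i}]$ denoting the specific multidegree representative of $\ci_{\gs_i}$, or any representative once one checks compatibility. Shifting a symmetric polynomial in $x_1,\dots,x_{\mu_i}$ does not give an element of $I^+$ in $n$ variables, so representative-independence is not automatic. We would state the result with the multidegree representative, which is what the proof above produces. If full independence is needed, we would argue via the fibration $\Fl_n\supset$ the Levi-type embedding $\bar\iota$ and Lemma~\ref{lem.taushift}, pushing forward $\bar\iota_*$ of product classes. We would not pursue this in depth.
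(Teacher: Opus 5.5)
Your proposal is correct and follows essentially the same route as the paper. Both arguments discard $\tau_\mu^{-1}$ because it only permutes rows, factor the multidegree of $\ci_\gs'$ over the disjoint variable sets of $\mathcal{K}_\mu$ and the shifted ideals $\ci_{\gs_i}'$, read off the $\mathcal{K}_\mu$ factor as a prime monomial ideal, and shift indices blockwise.

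There are two small differences, both in your favour. First, you justify multiplicativity through the tensor-product Hilbert series, which is cleaner than the paper's appeal to associated primes and the additivity formula, since that formula formally needs reducedness. Second, your caveat about representatives modulo $I^+$ is a real point that the paper glosses over. It is resolved just as you suggest: the first product represents $[\bar\iota(\Fl_{\mu_1}\times\cdots\times\Fl_{\mu_k})]$, and by the projection formula this class annihilates the ideal generated by positive-degree symmetric polynomials in any single block of variables, so the formula does not depend on the choice of representatives.
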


\begin{proof} By Definition \ref{def.general}, we have  $\isigma = \tau_\mu^{-1}\isigma'$. Since the action of $\tau_\mu^{-1}$ is given by $z_{a,b}\mapsto z_{\tau_\mu^{-1}(a),b}$ and our degree function is $\deg(z_{i,j}) = e_j$, we have
\[
\cc(\C[\mathbf{z}]/\isigma; \mathbf{x}) = \cc(\C[\mathbf{z}]/\isigma'; \mathbf{x}).
\]
The ideal $\isigma'$ is the sum of the shifted ideals $\ci_{\gs_i}'$ associated to every prime component $\gs_i$ of $\gs$ plus the prime monomial ideal
\begin{equation}\label{eqn.extragenI}
\mathcal{K}_\mu=\left< z_{i,j}\mid \parsum_{p-1} +1\leq j \leq \parsum_p <i \, \text{ for all }\, p= 1, \ldots ,k-1 \right>.
\end{equation}
Since each of $\ci_{\gs_1}', \ldots, \ci_{\gs_k}', \mathcal{K}_\mu$ have generators defined in distinct variable sets, any associated prime of $\ci_\gs$ is of the form $\cp_1+\cdots  + \cp_k+\mathcal{K}_\mu$ where each $\cp_i$ is an associated prime of $\ci_{\gs_i}'$. We also have $\cc(\C[\mathbf{z}]/\ci_{\gs_i}'; \mathbf{x}) = \cc(\C[\mathbf{z}]/\ci_{\gs_i}; \mathbf{x}+t_{i-1})$.
Using the properties of the multidegree (as in the proof of Proposition~\ref{mononebigcup}) and Lemma~\ref{lemma.multidegree} we get  
\begin{align*}
[\SF_{\gs}] &= \cc(\C[\mathbf{z}]/\mathcal{K}_\mu; \mathbf{x}) \cdot \prod_{i=1}^k \cc(\ci_{\gs_i}'; \mathbf{x}) \\
& = \left( \prod_{p=1}^{k-1}~ \prod_{j=\parsum_{p-1}+1}^{\parsum_p} x_j^{(n-\parsum_p)}\right)\left( \prod_{i=1}^k[\cb_{\gs_i}](\mathbf{x}+\parsum_{i-1}) \right),
\end{align*}
as desired.
\end{proof}

Using this formula we extend Conjecture~\ref{conj:divdiff} to arbitrary two-row tableaux. 

\begin{corollary} \label{conj:divdiff2} Let $\gs$ be a standard tableau of shape $(n-\ell, \ell)$ with prime decomposition $\gs= \gs_1\circ\gs_2\circ\cdots \gs_k$.  If Conjecture~\ref{conj:divdiff} holds for $\gs_1, \gs_2, \ldots, \gs_k$, then the same formula holds for $\gs$. 
\end{corollary}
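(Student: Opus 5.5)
The plan is to compute both sides explicitly and match them using Lemma~\ref{lemma.genformula}. By that lemma, $[\cb_\gs]$ equals a prefactor monomial times the shifted classes $[\cb_{\gs_i}](\mathbf{x}+\parsum_{i-1})$. By hypothesis, each rectangular prime factor $\gs_i$ satisfies
\[
[\cb_{\gs_i}]=D_{\gs_i}\,\mathfrak{S}_{w_0^{(\mu_i)}}(\mathbf{x}),
\]
where $D_{\gs_i}$ is the composite of the operators $(\partial_{a}+\cdots+\partial_{b-1})$ over the cups $(a<b)$ of $\cc(\gs_i)$, taken in an admissible order, and $w_0^{(\mu_i)}$ is the longest element of $S_{\mu_i}$. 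For a single-box factor ($\mu_i=1$) I take $[\cb_{\gs_i}]=1$ and the empty operator, which is consistent with $\mathfrak{S}_{w_0^{(1)}}=1$. So it suffices to show the polynomial identity
\[
D_\gs\, x_1^{n-1}x_2^{n-2}\cdots x_{n-1}=\Big(\prod_{p=1}^{k-1}\prod_{j=\parsum_{p-1}+1}^{\parsum_p}x_j^{n-\parsum_p}\Big)\prod_{i=1}^k \big(D_{\gs_i}\mathfrak{S}_{w_0^{(\mu_i)}}\big)(\mathbf{x}+\parsum_{i-1}),
\]
where $D_\gs$ is the operator built from all cups of $\cupsigma$ in any admissible order.

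The first step is to factor $\mathfrak{S}_{w_0}$. Write $j=\parsum_{p-1}+r$ with $1\le r\le\mu_p$. Then the exponent of $x_j$ in $\mathfrak{S}_{w_0}$ is
\[
n-j=(n-\parsum_p)+(\mu_p-r).
\]
Hence $\mathfrak{S}_{w_0}(\mathbf{x})=P(\mathbf{x})\cdot\prod_i \mathfrak{S}_{w_0^{(\mu_i)}}(\mathbf{x}+\parsum_{i-1})$, where $P$ is exactly the prefactor in Lemma~\ref{lemma.genformula}. The $p=k$ block contributes exponent $n-\parsum_k=0$, so the product in $P$ can stop at $k-1$.

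The second step is to organize the operator $D_\gs$. Every cup of $\cupsigma$ lies inside a single block, so its operator uses only $\partial_m$ with $\parsum_{p-1}+1\le m\le \parsum_p-1$; in particular the boundary indices $\parsum_p$ never occur. Consequently operators attached to different blocks involve indices differing by at least $2$ and commute. Moreover, the restriction of an admissible ordering of all cups to the cups of one block is an admissible ordering for that block. Using commutation, I regroup $D_\gs$ as $D'_1 D'_2\cdots D'_k$, where $D'_p$ is the shift by $\parsum_{p-1}$ of $D_{\gs_p}$, meaning each $\partial_m$ is replaced by $\partial_{m+\parsum_{p-1}}$. The hypothesis then gives independence of the ordering within each block.

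The third step is to push each $D'_p$ through the factorization. The relevant property is that $\partial_m(fg)=f\,\partial_m g$ whenever $f$ is symmetric in $x_m,x_{m+1}$. For $m$ interior to block $p$, both $x_m$ and $x_{m+1}$ lie in block $p$, so they carry the same exponent $n-\parsum_p$ in $P$. Also, the shifted factors for blocks $i\ne p$ do not involve $x_m$ or $x_{m+1}$. Therefore $D'_p$ acts only on the shifted factor $\mathfrak{S}_{w_0^{(\mu_p)}}(\mathbf{x}+\parsum_{p-1})$. Finally, shifting commutes with divided differences, $\partial_{m+t}(f(\mathbf{x}+t))=(\partial_m f)(\mathbf{x}+t)$, which gives the displayed identity.

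Combining this identity with Lemma~\ref{lemma.genformula}, read in $\Z[\mathbf{x}]/I^+$, completes the argument. The only point needing care is the bookkeeping that no cup operator touches a boundary index $\parsum_p$, which is the fact that makes both the commutation and the symmetry of $P$ work. I expect that check to be the main (if mild) obstacle; the rest is formal.
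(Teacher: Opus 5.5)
Your proposal is correct and follows the paper's proof: factor $\mathfrak{S}_{w_0}$ as the prefactor of Lemma~\ref{lemma.genformula} times the shifted longest-element Schubert polynomials of the blocks, regroup the cup operators by block using $\partial_i\partial_j=\partial_j\partial_i$ for $|i-j|\geq 2$, and invoke Lemma~\ref{lemma.genformula}. You also spell out why each block's operator passes through the prefactor and through the other blocks' factors. The reasons are that the prefactor is symmetric in $x_m,x_{m+1}$ for interior indices $m$, and that $\partial_m$ commutes with shifts; the paper leaves this implicit in the word ``immediately.''
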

\begin{proof} Let $\mu=(\mu_1, \mu_2, \ldots, \mu_k)$ be the composition determined by the sizes of the prime components of $\gs$. For each $i=1, 2, \ldots ,k$, let $y_{i}$ denote the longest permutation group $S_{\mu_i}$. For example if $\mu = (3,4)$, then  $y_1=321$ and $y_2=4321$.   It is straightforward to check that 
\[
\mathfrak{S}_{w_0}(\mathbf{x}) = \left( \prod_{p=1}^{k-1}~ \prod_{j=\parsum_{p-1}+1}^{\parsum_p} x_j^{(n-\parsum_p)} \right) \mathfrak{S}_{y_1}(\mathbf{x}) \mathfrak{S}_{y_2}(\mathbf{x}+\parsum_1)\cdots \mathfrak{S}_{y_k}(\mathbf{x}+t_{k-1}).
\]
If Conjecture~\ref{conj:divdiff} holds for $\gs_1 \ldots, \gs_k$, then the desired result follows immediately from Lemma~\ref{lemma.genformula} using the fact that $\partial_i\partial_j = \partial_j \partial_i$ whenever $|i-j|\geq 2$. 
\end{proof}

By Lemma~\ref{lemma.genformula} and Corollary~\ref{conj:divdiff2}, we can use Propositions~\ref{mononebigcup} and~\ref{prop.divdiff.onebigcup} to compute the cohomology class of $[\SF_\gs]$ for any two-row tableau $\gs$ whose prime components are rays and prime tableaux with one big cup.

\begin{ex}
Let $\cupsigma$ be the noncrossing matching from Example \ref{ex.disconnected3}, pictured below.
\begin{figure*}[h!]
\begin{tikzpicture}[scale=0.5] \begin{scope}[shift={(8,0)}]
\foreach \i in {0,...,9}{\node[pnt] at (\i,0)(\i){};}
\draw (0)  to [bend left=80](3);
\draw(1)  to [bend left=80] (2);
\draw (4,0) -- (4,1.5);
\draw (5,0) -- (5,1.5);
\draw (6)  to [bend left=80](9);
\draw(7)  to [bend left=80] (8);
     
\draw (0,-1) node {$1$};
\draw (1,-1) node {$2$};
\draw (2,-1) node {$3$};
\draw (3,-1) node {$4$};
\draw (4,-1) node {$5$};
\draw (5,-1) node {$6$};
\draw (6,-1) node {$7$};
\draw (7,-1) node {$8$};
\draw (8,-1) node {$9$};
 \draw (9,-1) node {$10$};
\end{scope}
\end{tikzpicture} 
\end{figure*}
\noindent The tableau $\gs$ has four prime components: $\gs_1, \gs_2, \gs_3, \gs_4$, where $\gs_1=\gs_4$ is the prime tableau appearing in Example~\ref{ex.min4} and $\gs_2=\gs_3$ is a single box. We have
\begin{align*}
[\cb_\gs]&=\partial_2\partial_8 (\partial_1+\partial_2+\partial_3)(\partial_7+\partial_8+\partial_9)\left(\mathfrak{S}_{w_0}\right) \\
&=(\partial_2\partial_1+\partial_2\partial_3)(\partial_8\partial_7+\partial_8\partial_9)\left(\mathfrak{S}_{w_0}\right)\\
&=(\partial_2\partial_1+\partial_2\partial_3)(\mathfrak{S}_{[10,9,8,7,6,5,4,1,3,2]}+ \mathfrak{S}_{[10,9,8,7,6,5,3,2,4,1]}) \\
&=\mathfrak{S}_{[9,8,10,7,6,5,3,2,4,1]}+\mathfrak{S}_{[10,7,9,8,6,5,3,2,4,1]}+\mathfrak{S}_{[9,8,10,7,6,5,4,1,3,2]}+\mathfrak{S}_{[10,7,9,8,6,5,4,1,3,2]}. \qedhere
\end{align*}
\end{ex}

%%%%%%%%%%

\end{document}